\newfont{\msam}{msam10}
\newtheorem{theorem}[]{Theorem}
\newtheorem{proposition}[]{Proposition}
\newtheorem{corollary}[]{Corollary}
\newtheorem{lemma}[]{Lemma}
\theoremstyle{definition}
\newtheorem{definition}[]{Definition}
\newtheorem{remark}[]{Remark}
\newtheorem{example}[]{Example}
\newtheorem{conjecture}[]{Conjecture}
\def\dotimes{\bar{\otimes}}
\let\nc\newcommand
\def\bthm{\begin{theorem}}
\def\ethm{\end{theorem}}
\def\blemma{\begin{lemma}}
\def\elemma{\end{lemma}}
\def\bproof{\begin{proof}}
\def\eproof{\end{proof}}
\def\bprop{\begin{proposition}}
\def\eprop{\end{proposition}}
\def\bcor{\begin{corollary}}
\def\ecor{\end{corollary}}
\nc{\la}{\label}
\def\Z{\mathbb{Z}}
\def\c{\mathbb{C}}
\def\M{\mathcal{M}}
\def\L {\mathbb{L}}
\def\R{\mathbb{R}}
\def\Com{\mathtt{Com}}
\def\Alg{\mathtt{Alg}}
\def\LieAlg{\mathtt{LieAlg}}
\def\DGL{\mathtt{DGLA}}
\def\DGC{\mathtt{DGC}}
\def\cDGC{\mathtt{DGCC}}
\def\DGLC{\mathtt{DGLC}}
\def\Tw{\mathtt{Tw}}
\def\Mod{\mathtt{Mod}}
\def\cMod{\mathtt{CoMod}}
\def\Bimod{\mathtt{Bimod}}
\def\cAlg{\mathtt{Comm\,Alg}}
\def\Sets{\mathtt{Sets}}
\def\DGA{\mathtt{DGA}}
\def\cDGA{\mathtt{DGCA}}
\def\bDGA{\mathtt{BiDGA}}
\def\bcDGA{\mathtt{BiDGCA}}
\def\D{\mathcal{D}}
\def\U{\mathcal{U}}
\def\C{\mathcal{C}}
\def\G{\mathfrak{G}}
\def\Ho{{\mathtt{Ho}}}
\nc{\Ob}{{\rm Ob}}
\nc{\Hom}{{\rm{Hom}}}
\nc{\Homcont}{{\mathcal{H}om}}
\nc{\HOM}{\underline{\rm{Hom}}}
\nc{\DER}{\underline{\rm{Der}}}
\nc{\END}{\underline{\rm{End}}}
\nc{\bSym}{\mathbf{Sym}}
\nc{\Ext}{{\rm{Ext}}}
\nc{\Rep}{{\rm{Rep}}}
\nc{\DRep}{{\rm{DRep}}}
\nc{\NCRep}{\widetilde{\rm{Rep}}}
\nc{\RAct}{{\rm{RAct}}}
\nc{\bs}{\backslash}
\nc{\ob}{{\tt{Obs}}}
\nc{\CE}{\mathcal{C}}
\nc{\nn}{{\mathrm{ab}}}
\nc{\n}{{{\natural}}}
\nc{\A}{\mathbb A}
\nc{\B}{{\mathrm{B}}}
\nc{\Ba}{\overline{\mathrm{B}}}
\nc{\bC}{\overline{C}}
\nc{\bOmega}{\boldsymbol{\Omega}}
\nc{\bB}{\boldsymbol{B}}
\nc{\EXT}{\underline{\rm{Ext}}}
\nc{\TOR}{\underline{\rm{Tor}}}
\def\H{\mathrm H}
\def\d{\mathrm D}
\def\HC{\mathrm{HC}}
\def\rHC{\overline{\mathrm{HC}}}
\def\rHD{\overline{\mathrm{HD}}}
\def\CC{\mathrm{CC}}
\def\T{\mathrm T}
\nc{\End}{{\rm{End}}}
\nc{\GL}{{\rm{GL}}}
\nc{\gl}{{\mathfrak{gl}}}
\nc{\rgl}{\overline{{\mathfrak{gl}}}}
\nc{\g}{{\mathfrak{g}}}
\nc{\aA}{{\mathfrak{a}}}
\nc{\h}{{\mathfrak{h}}}
\nc{\PGL}{{\rm{PGL}}}
\nc{\SL}{{\rm{SL}}}
\nc{\sll}{\mathfrak{sl}}
\nc{\cn}{ \mbox{\rm c\^{o}ne} }
\nc{\PSL}{{\rm{PSL}}}
\nc{\ad}{{\rm{ad}}}
\nc{\Ad}{{\rm{Ad}}}
\nc{\dlim}{\varinjlim}
\nc{\plim}{\varprojlim}
\nc{\colim}{{\tt{colim}}}
\newcommand{\HH}{{\rm{HH}}}
\newcommand{\Sym}{{\rm{Sym}}}
\newcommand{\id}{{\rm{Id}}}
\newcommand{\Der}{{\rm{Der}}}
\newcommand{\Tr}{{\rm{Tr}}}
\newcommand{\cTr}{{\rm{coTr}}}
\newcommand{\Ker}{{\rm{Ker}}}
\newcommand{\diag}{{\rm{diag}}}
\newcommand{\into}{\,\hookrightarrow\,}
\newcommand{\onto}{\,\twoheadrightarrow\,}
\newcommand{\sonto}{\,\stackrel{\sim}{\twoheadrightarrow}\,}
\def\cb{\boldsymbol{\Omega}}
\def\bs{\backslash}
\def\mfc{\mathfrak{G}}
\def\mfd{\mathfrak{H}}
\def\mfo{\mathfrak{so}}
\def\sl{\mathfrak{sl}}
\newcommand{\rar}{\xrightarrow{}}
\numberwithin{equation}{section}
\numberwithin{theorem}{section}
\numberwithin{lemma}{section}
\numberwithin{proposition}{section}
\numberwithin{corollary}{section}
\numberwithin{example}{section}
\numberwithin{remark}{section}
\newcommand{\dual}{\text{!`}}
\newcommand{\Sh}{\mathrm{Sh}}
\newcommand{\ab}{\nn}
\newcommand{\TTr}{\overline{\Tr}}
\def\arbreBA{\vcenter{\xymatrix@R=2pt@C=2pt{
&&&&\\
&&&*{}\ar@{-}[ul] & \\
&&*{}\ar@{-}[uurr] \ar@{-}[uull] \ar@{-}[d]     &&\\
&&&&
}}}
\def\arbreAB{\vcenter{\xymatrix@R=2pt@C=2pt{
&&&&\\
&*{}\ar@{-}[ur] &&& \\
&&*{}\ar@{-}[uurr] \ar@{-}[uull] \ar@{-}[d]     &&\\
&&&&
}}}
\def\arbreABC{\vcenter{\xymatrix@R=1pt@C=1pt{
&&&&&&\\
&*{}\ar@{-}[ur] &&&&& \\
&&*{}\ar@{-}[uurr] &&&&\\
&&&*{}\ar@{-}[uuurrr] \ar@{-}[uuulll] \ar@{-}[d] &&&\\
&&&&&&
}}}
\def\arbreBAC{\vcenter{\xymatrix@R=1pt@C=1pt{
&&&&&&\\
&&&*{}\ar@{-}[ul] &&& \\
&&*{}\ar@{-}[uurr] &&&&\\
&&&*{}\ar@{-}[uuurrr] \ar@{-}[uuulll] \ar@{-}[d] &&&\\
&&&&&&
}}}
\def\arbreACB{\vcenter{\xymatrix@R=1pt@C=1pt{
&&&&&&\\
&*{}\ar@{-}[ur] &&&&& \\
&&&&*{}\ar@{-}[uull] &&\\
&&&*{}\ar@{-}[uuurrr] \ar@{-}[uuulll] \ar@{-}[d] &&&\\
&&&&&&
}}}
\def\arbreBCA{\vcenter{\xymatrix@R=1pt@C=1pt{
&&&&&&\\
&&&&&*{}\ar@{-}[ul] & \\
&&*{}\ar@{-}[uurr] &&&&\\
&&&*{}\ar@{-}[uuurrr] \ar@{-}[uuulll] \ar@{-}[d] &&&\\
&&&&&&
}}}
\def\arbreCAB{\vcenter{\xymatrix@R=1pt@C=1pt{
&&&&&&\\
&&&*{}\ar@{-}[ur] &&& \\
&&&&*{}\ar@{-}[uull] &&\\
&&&*{}\ar@{-}[uuurrr] \ar@{-}[uuulll] \ar@{-}[d] &&&\\
&&&&&&
}}}
\def\arbreCBA{\vcenter{\xymatrix@R=1pt@C=1pt{
&&&&&&\\
&&&&&*{}\ar@{-}[ul] & \\
&&&&*{}\ar@{-}[uull] &&\\
&&&*{}\ar@{-}[uuurrr] \ar@{-}[uuulll] \ar@{-}[d] &&&\\
&&&&&&
}}}
\def\arbreACA{\vcenter{\xymatrix@R=1pt@C=1pt{
&&&&&&\\
&*{}\ar@{-}[ur] &&&&*{}\ar@{-}[ul] & \\
&&&&&&\\
&&&*{}\ar@{-}[uuurrr] \ar@{-}[uuulll] \ar@{-}[d] &&&\\
&&&&&&
}}}
\date{March 4, 2015}
\title{Representation Homology, Lie Algebra Cohomology and \\
the Derived Harish-Chandra Homomorphism}
\author{Yuri Berest}
\address{Department of Mathematics,
Cornell University, Ithaca, NY 14853-4201, USA}
\email{berest@math.cornell.edu}
\author{Giovanni Felder}
\address{Departement Mathematik,
ETH Z\"urich,
8092 Z\"urich, Switzerland}
\email{giovanni.felder@math.ethz.ch}
\author{Sasha Patotski}
\address{Department of Mathematics,
Cornell University, Ithaca, NY 14853-4201, USA}
\email{apatotski@math.cornell.edu}
\author{Ajay C. Ramadoss}
\address{Department of Mathematics,
Indiana University,
Bloomington, IN 47405, USA}
\email{ajcramad@indiana.edu}
\author{Thomas Willwacher}
\address{Institut f\"ur Mathematik,
Universit\"at Z\"urich,
8057 Z\"urich, Switzerland}
\email{thomas.willwacher@math.uzh.ch}
\begin{document}
\begin{abstract}
We study the derived representation scheme $ \DRep_n(A) $ parametrizing the $n$-dimensional
representations of an associative algebra $A$ over a field of characteristic zero.
We show that the homology of $ \DRep_n(A) $ is isomorphic to the Chevalley-Eilenberg homology
of the current Lie coalgebra $\, \gl_n^*(\bar{C}) $ defined over a Koszul dual
coalgebra of $A$. This gives a conceptual explanation to main results of \cite{BKR} and \cite{BR}, relating them (via Koszul duality) to classical theorems on (co)homology of current Lie algebras $ \gl_n(A) $. We extend the above isomorphism to representation schemes of Lie algebras: for a
finite-dimensional reductive Lie algebra $\g$, we define the derived affine scheme $ \DRep_{\g}(\aA) $ parametrizing the representations (in $\g$)  of a Lie algebra $\mathfrak{a}$; we show that the homology of $ \DRep_{\g}(\aA) $ is isomorphic to the Chevalley-Eilenberg homology of the Lie coalgebra $ \g^*(\bar{C}) $, where $C$ is a cocommutative DG coalgebra Koszul dual to the Lie algebra $ \aA $. We construct a canonical DG algebra map $ \Phi_{\g}(\aA):
\DRep_{\g}(\aA)^G \to \DRep_{\h}(\aA)^W $, relating the $G$-invariant part of representation homology of a Lie algebra $ \aA $ in $ \g $ to the $W$-invariant part of representation homology of $ \aA $ in a Cartan subalgebra of $ \g$. We call this map the derived Harish-Chandra 
homomorphism as it is a natural homological extension of the classical Harish-Chandra 
restriction map.

We conjecture that, for a {\it two-dimensional} abelian Lie algebra $\mathfrak{a}$,
the derived Harish-Chandra homomorphism is a quasi-isomorphism. We provide some evidence
for this conjecture, including proofs for $ \gl_2 $ and $ \sl_2 $ as well as for
$\, \gl_n,\, \sl_n,\, \mathfrak{so}_n\,$ and $\,\mathfrak{sp}_{2n}\,$
in the inductive limit as $\,n \to \infty$. For any complex reductive Lie algebra $ \g $,
we compute the Euler characteristic of $ \DRep_{\g}(\aA)^G $ in terms of matrix integrals over
$ G $ and compare it to the Euler characteristic of $ \DRep_{\h}(\aA)^W $.
This yields an interesting combinatorial identity,
which we prove for $ \gl_n $ and $ \mathfrak{sl}_n $ (for all $n$).
Our identity is analogous to the classical Macdonald identity, and
our quasi-isomorphism conjecture is analogous to the strong Macdonald conjecture
proposed in \cite{Ha1, F} and proved in \cite{FGT}. We explain this analogy
by giving a new homological interpretation of Macdonald's conjectures in terms of
derived representation schemes, parallel to our Harish-Chandra quasi-isomorphism conjecture.
\end{abstract}
\maketitle

\setcounter{tocdepth}{1}
\tableofcontents
\newpage
\section{Introduction}
This paper is a sequel to \cite{BKR} and \cite{BR} (see also \cite{BFR}),
where we study the  derived representation scheme $ \DRep_n(A) $
parametrizing the $n$-dimensional representations of an associative algebra
$A$ over a field $k$ of characteristic $0$. This
scheme is constructed in \cite{BKR} in an abstract way by extending the
classical representation functor $ \Rep_n(\,\mbox{--}\,) $ to the category of differential
graded (DG) algebras and deriving it in the sense of non-abelian homological algebra
\cite{Q1, DS}. $ \DRep_n(A) $ is represented by a commutative DG algebra as an
object in the homotopy category of DG algebras; its homology
$\, \H_\bullet[\DRep_n(A)]\,$ depends only on $A$ (and $n$)
and is called the $n$-dimensional {\it representation homology} of $A$.

From the very beginning, it was clear that representation homology must be somehow
related (dual) to the Chevalley-Eilenberg homology of matrix Lie algebras but the 
precise form of this relation has been elusive. The first goal of the present paper 
is to clarify the relation between representation homology and Lie algebra (co)homology
and offer a simple explanation of the formalism developed in \cite{BKR, BR}.
Our starting point is a basic principle of homological algebra called {\it Koszul duality}. 
In concrete terms, it can be stated as follows.
Associated to a (non-unital or augmented) algebra $A$ is a co-associative cofree DG
coalgebra $ \bB A $ called the {\it bar construction} \cite{EM}. Any natural
construction $ \C(A) $ on algebras can be formally dualized (by reversing the arrows)
to give the corresponding construction for coalgebras. When applied to
$ \bB A\, $, this dual coalgebra construction gives a natural homological 
construction for $A$ which we call the Koszul dual of $\, \C(A) \,$.
Many interesting complexes and homological structures related to
associative algebras arise in this way. For example, the Connes-Tsygan complex 
$ \CC(A) $ defining the cyclic homology of an algebra $A$ can be identified 
(up to shift in degree) with the cocommutator subspace of $ \bB A\, $, which is the Koszul dual construction of the universal algebra trace $ A \onto A/[A,A] $. The cyclic bicomplex (originally introduced in \cite{T} and \cite{LQ} to explain periodicity properties of
cyclic homology) can be interpreted as the Koszul dual of a noncommutative de Rham complex
$\,X(A) = [\ldots \to A \to \Omega^1_{A, \n} \to A \to \Omega^1_{A, \n} \to \ldots] \,$ called 
the periodic $X$-complex of $A$ (see \cite{Q3}). Another example (unrelated to cyclic homology) is Stasheff's construction of the Gerstenhaber bracket on Hochschild cohomology $ \HH^\bullet(A,A)\,$ of an algebra $A\,$: this bracket turns out  to be the Koszul dual of the usual Lie bracket on the space $ \Der(A) $ of derivations of $ A $ (see \cite{St}).

The main observation of the present paper is that $ \DRep_n(A) $ is the Koszul dual of the classical Chevalley-Eilenberg complex $ \C(\gl_n(A); k) $ computing homology of the current Lie algebra
$ \gl_n(A) $. Recall that, for any Lie algebra $ \g $, the Chevalley-Eilenberg complex $ \C(\g; k) $  has a natural structure of a cocommutative DG coalgebra.
The dual construction -- the Chevalley-Eilenberg complex $ \C^c(\G; k) $ of a Lie coalgebra $ \G $ -- has therefore the structure of a commutative DG algebra. We show
(see Theorem~\ref{main}) that, for any augmented DG algebra $A$, there is a natural isomorphism of commutative DG algebras:
\begin{equation}
\la{int1}
\DRep_n(A) \cong \C^c(\gl^*_n(\bar{\bB}A);\, k)\ ,
\end{equation}
where $\,\bar{\bB} A\,$ is the reduced bar construction of $A$ and $ \gl_n^* $
is the Lie coalgebra (linearly) dual to the matrix Lie algebra $ \gl_n\, $. Furthermore,
under \eqref{int1}, the $ \GL_n$-invariant part of $ \DRep_n(A) $
(see Section~\ref{glinv} for a precise definition) corresponds to the
{\it relative} Chevalley-Eilenberg complex of the natural Lie coalgebra map
$ \gl^*_n(\bB A) \onto \gl_n^*(k) \,$:
\begin{equation}
\la{int2}
\DRep_n(A)^{\GL} \cong \C^c(\gl^*_n(\bB A), \, \gl_n^*(k);\, k)\ ,
\end{equation}
which is Koszul dual to the relative Chevalley-Eilenberg complex
$ \C(\gl_n(A), \, \gl_n(k);\, k) $ of the Lie algebra inclusion
$ \gl_n(k) \subset \gl_n(A)\, $.

As a consequence of \eqref{int1}, the representation homology
of an algebra $ A $ is isomorphic to the Chevalley-Eilenberg homology of the
matrix Lie coalgebra  $ \gl^*_n(\bar{\bB} A) $.
This gives a conceptual explanation to many results of \cite{BKR} and \cite{BR}.
For example, the derived character maps
$\,\Tr_n(A)_\bullet:\,\HC_\bullet(A) \to \H_\bullet [\DRep_n(A)] \,$ constructed in \cite{BKR}
are Koszul dual to the natural trace maps
$\,\H_\bullet(\gl_n(A);\,k) \to \HC_{\bullet-1}(A) \,$ relating homology of matrix Lie algebras
to cyclic homology, the degree shift in cyclic homology being explained by the fact
that $ \HC_\bullet(\bB A) \cong \HC_{\bullet -1}(A) $. The stabilization theorem for
representation homology proved in \cite{BR} is Koszul dual to the classical theorem of 
Tsygan \cite{T} and Loday-Quillen \cite{LQ}, although one result does not automatically
follow from the other ({\it cf.} Section~\ref{LQTco}).

It is important to note that, in \eqref{int1} and \eqref{int2}, we can replace $ \bB A $
by any DG coalgebra $ C $, which is {\it Koszul dual} to the algebra $A$ ({\it cf.} Section~\ref{koszulduality}). In fact, the bar construction $ \bB A $ is the universal Koszul dual coalgebra of $A$ but it is often convenient to work with other coalgebras\footnote{For example, it is known ({\it cf. \cite{Le}}) that every conilpotent augmented DG coalgebra admits a (unique) minimal model, so replacing $ \bB A  $  in \eqref{int1} by its minimal model, we get another canonical complex for computing the homology of $ \DRep_n(A) $.}.

Another natural way to generalize \eqref{int1} and \eqref{int2} is to replace $ \gl_n $
by an arbitrary finite-dimensional reductive Lie algebra $ \g $. In place
of the derived representation scheme $ \DRep_n(A) $, one should consider its Lie analogue:
the derived scheme $ \DRep_{\g}(\aA) $ parametrizing the representations of a given
Lie algebra $ \aA $ in $ \g $. By a representation of $ \aA $ in $ \g $ we simply
mean a Lie algebra homomorphism $\,\aA \to \g \,$, and $ \DRep_{\g}(\aA) $
is defined by extending the representation functor $ \Rep_{\g}(\,\mbox{--}\,) $ to the
category of DG Lie algebras $ \DGL_k $ and deriving it using a natural model
structure on  $ \DGL_k $. For an arbitrary $ \g\,$, the isomorphisms \eqref{int1} and \eqref{int2} then become (see Theorem~\ref{drepg})\footnote{The
derived schemes $\DRep_n(A)$ and $ \DRep_{\g}(\aA) $ are examples of
a general operadic construction that we sketch in the Appendix.  The isomorphisms
\eqref{int1}, \eqref{int2} and \eqref{int11} are special cases of Theorem~\ref{dreplieoperads} proved in the Appendix.}
\begin{equation}
\la{int11}
\DRep_{\g}(\aA) \cong \C^c(\g^*(\bar{C});\, k)\ ,\qquad
\DRep_{\g}(\aA)^G \cong \C^c(\g^*(C),\,\g^*;\, k)\ ,
\end{equation}
where $ C $ is a cocommutative DG algebra Koszul dual to the Lie algebra $ \aA $.

Now, let $ \h $ be a Cartan subalgebra of $ \g $, and $W$ the
corresponding Weyl group. By linear duality, the natural inclusion
$ \h \into \g $ gives a morphism of Lie coalgebras $ \g^* \onto \h^* $,
which, in turn, extends to a map of commutative DG algebras
$\,\C^c(\g^*(C),\,\g^*;\, k) \onto \C^c(\h^*(C),\,\h^*;\, k) \,$.
The image of this last map consists of chains that are invariant under the
action of $W$ ({\it cf.} Proposition~\ref{Winv}): thus,
in combination with \eqref{int11}, we get a canonical map
\begin{equation}
\la{hch}
\Phi_{\g}(\aA): \DRep_{\g}(\aA)^G\, \to\, \C^c(\h^*(C),\,\h^*;\, k)^W \ .
\end{equation}
More formally, the map $\, \Phi_{\g}(\aA) $ can be defined as   
$\,\DRep_{\g}(\aA)^G \to \DRep_{\h}(\aA)^W $, which is a functorial derived extension 
of the natural restriction map $\, k[\Rep_\g(\aA)]^G \to k[\Rep_{\h}(\aA)]^W $. 
In the simplest case, 
when $ \aA $ is a one-dimensional Lie algebra,
$\,\DRep_{\g}(\aA) \cong k[\g] \,$, and  \eqref{hch} becomes
$\, k[\g]^G \to k[\h]^W $, which is the classical Harish-Chandra homomorphism\footnote{
Harish-Chandra actually defined a homomorphism $ \D(\g)^G \to \D(\h)^W $
of rings of invariant differential operators that reduces to
$ k[\g]^G \to k[\h]^W $ on the zero order differential operators (see \cite{HC}).}
(see Example~\ref{1dliehc}). In general, we will refer to \eqref{hch} as the {\it derived
Harish-Chandra homomorphism}. By a well-known theorem of Chevalley \cite{C}, the classical 
Harish-Chandra homomorphism is actually an isomorphism:
\begin{equation}
\la{chiso}
k[\g]^G \stackrel{\sim}{\to} k[\h]^W .
\end{equation}
In general, it is therefore natural to ask:
$$
\mbox{\it Is the map \eqref{hch} a quasi-isomorphism}\, ?
$$

In the present paper, we address this question for finite-dimensional {\it abelian} Lie
algebras. Note that, if $ \aA $ is abelian, a choice of linear basis in $ \aA $ identifies 
$\,\Rep_\g(\aA) $ with the commuting scheme of the reductive Lie algebra $\g $ ({\it cf.} \cite{R}). Hence, in this case, $ \DRep_{\g}(\aA) $ should be thought of as the {\it derived commuting scheme} of $\g $. We show that \eqref{hch} cannot be a quasi-isomorphism (for all $ \g $) if $\, \dim_k(\aA) \ge 3 \,$ ({\it cf.} Section~\ref{otherex});
on the positive side, we expect that the following is true (see Conjecture~\ref{conj3}):
\begin{equation}
\la{c3}
\mbox{\it If $ \aA $ is abelian and $ \dim_k(\aA) =2\,$,
then $ \Phi_{\g}(\aA) $ is a quasi-isomorphism}.
\end{equation}
In the case of $ \gl_n $, this implies (see Conjecture~\ref{conj1})
\begin{equation*}
\DRep_n(k[x,y])^{\GL} \,\cong \,k[x_1,\ldots ,x_n,y_1, \ldots ,y_n,\theta_1, \ldots ,\theta_n]^{S_n} ,
\end{equation*}
where the polynomial ring on the right is graded homologically so that the variables $
x_1,\ldots ,x_n $ and $ y_1, \ldots ,y_n $ have degree $0\,$, the variables
$ \theta_1, \ldots ,\theta_n $ have degree $1\,$, and the differential is identically
zero. The symmetric group $ S_n $ acts on this polynomial ring
diagonally by permuting the triples $ (x_i, y_i, \theta_i) $.

Conjecture \eqref{c3} can be restated in elementary terms, without using the language of derived schemes. To this end, consider the graded commutative algebra $\,k[\g \times \g] \otimes \wedge \g^* \,$, where
$\, k[\g \times \g] \,$ is the ring of polynomial functions on $ \g \times \g \,$  assigned homological degree $0$, and $\, \wedge \g^* \,$ is the exterior algebra of the dual Lie
algebra $ \g^* $ assigned homological degree $1$. The differential on
$\,k[\g \times \g] \otimes \wedge \g^* \,$ is defined by
$$
d \varphi (\xi, \eta) := \varphi([\xi, \eta])\ ,\quad \forall\,(\xi, \eta) \in
\g \times \g\, ,\quad \forall\,\varphi \in \g^* .
$$
The DG algebra $\,(k[\g \times \g] \otimes \wedge \g^*,\, d) \,$ represents the
derived scheme $ \DRep_\g(\aA) $ in the homotopy category of commutative DG algebras,
and the derived Harish-Chandra homomorphism $ \Phi_\g(\aA) $ is given in this case
by the natural restriction map (see Proposition~\ref{conj5})
\begin{equation}
\la{c33}
(k[\g \times \g] \otimes \wedge \g^*)^G \,\to\,(k[\h \times \h] \otimes \wedge \h^*)^W .
\end{equation}
Conjecture \eqref{c3} is thus equivalent to the claim that \eqref{c33} is a quasi-isomorphism.

Our second goal in this paper is to provide evidence for conjecture
\eqref{c3} and discuss some of its implications. First, in the case of $ \gl_n $,
we prove that \eqref{c3} holds for $ n = 2 $ and $ n = \infty $ (see
Theorem~\ref{conj1n2} and Theorem~\ref{surjhc} respectively); we also
prove that the map $ \H_\bullet(\Phi_{\gl_n}) $ induced by \eqref{hch} on homology
is surjective for all $ n $ (see Theorem~\ref{surjhc}). Second, we show (see
Theorem~\ref{glnvssln}) that our conjecture for $ \sl_n $ is equivalent to
that for $ \gl_n $ (and hence holds for $ \sl_2 $). Using a version of stabilization
theorem of \cite{BR}, we also verify \eqref{c3}
for the orthogonal and symplectic Lie algebras, $ \mathfrak{so}_{n} $ and
$ \mathfrak{sp}_{2n} $, in the inductive limit as $ n \to \infty \,$ (see Section~\ref{osl}).
Finally, we compute the weighted
Euler characteristics of both sides of \eqref{hch}
and show that \eqref{c3} implies the following constant term
identity (see Conjecture~\ref{conj4}):
\begin{equation}
\la{c4}
\frac{(1-qt)^{l}}{(1-q)^{l}(1-t)^{l}}\ \mathrm{CT}\, \left\{ \prod_{\alpha \in R}  \frac{(1-qte^{\alpha})(1-e^{\alpha})}{(1-qe^{\alpha})(1-te^{\alpha})}
\right\}\, = \,
\sum_{w \in W}\,\frac{\det(1 - qt\,w)}{\det(1-q\,w)\,\det(1-t\,w)}\ .
\end{equation}
Here $ R $ is a system of roots of the Lie algebra $\g$, $\,l := \dim_k(\h) \,$ is
its rank and $ \,\mathrm{CT}:\,\Z[Q] \to \Z \,$ is the constant term map defined on
the group ring of the root lattice of $ R \,$, see \eqref{cterm}. The determinants
on the right are taken in the natural (reflection) representation of $W$ on $ \h $.
One of our main results (Theorem~\ref{Tconj2}) is that the identity
\eqref{c4} holds for $ \gl_n $ and $ \sl_n $ for all $\,n \,$, in which cases
it can be described in purely combinatorial terms, see \eqref{conj2} and also
Remark~3 after Theorem~\ref{Tconj2}.

If $ k = \c $, the identity \eqref{c4} can be written in a more symmetric, integral form:
\begin{equation}
\la{ieqkt}
\int_{G}\, \frac{\det(1\,-\,qt\,\mathrm{Ad}\,g)}
{\det(1\,-\,q\,\mathrm{Ad}\,g)\,\det(1\,-\,t\,\mathrm{Ad}\,g)}\,
d g\ = \ \frac{1}{|W|}\, \sum_{w \in W}\,\frac{\det(1 - qt w)}{\det(1-q w)\,\det(1-t w)}\ .
\end{equation}
Here the integration is taken over a real compact form of the complex Lie group $ G $, which
is equipped with the invariant Haar measure $ dg $ normalized so that $ \int_{G} dg = 1 $.
Notice that, if we specialize $t=0\,$, \eqref{ieqkt} becomes the well-known identity
\begin{equation}
\la{ieqk0}
\int_{G}\, \frac{d g}{\det(1\,-\,q\,\mathrm{Ad}\,g)} \ = \ \prod_{i=1}^l \frac{1}{1-q^{d_i}}\ ,
\end{equation}
which exhibits the equality of the Poincar\'e series of both sides of the Chevalley isomorphism \eqref{chiso}. The Chevalley isomorphism \eqref{chiso} has a natural `odd' analogue: the
Hopf-Koszul-Samelson isomorphism $\,(\wedge \g)^G \cong \wedge({\tt Prim}\,\g)\,$,
identifying the space of invariants in the exterior algebra of $ \g $ with the exterior
algebra of its subspace of primitive elements (see, e.g., \cite[Chap.~10]{Me}). At the level of  Poincar\'e series,
the Hopf-Koszul-Samelson isomorphism gives the identity
\begin{equation}
\la{ihsk}
\int_{G}\, \det(1\,+\,q\,\mathrm{Ad}\,g)\,dg \ = \ \prod_{i=1}^l \,(1+q^{2d_i-1})\ ,
\end{equation}
which may be viewed as an `odd' analogue of \eqref{ieqk0}. In his original paper \cite{M}
on Macdonald conjectures, I.~Macdonald observed that \eqref{ihsk} arises as a specialization
of his constant term identity\footnote{Macdonald's constant term identity \eqref{iqtid} as well as
its generalization (the so-called inner product identity) was proved
for an arbitrary root system by I.~Cherednik \cite{Ch}, using the theory of double affine Hecke algebras. For a gentle introduction to Cherednik's theory
and his proof of Macdonald's conjectures we refer the reader to \cite{Kir}.}
\begin{equation}
\la{iqtid}
\frac{1}{|W|}\, \mathrm{CT}\left\{\prod_{n \geq 0} \prod_{ \alpha \in R} \frac{1-q^{n}e^{\alpha}}{1-q^nte^{\alpha}} \right\}
  =  \prod_{n \geq 0} \prod_{i=1}^l  \frac{(1-q^nt)(1-q^{n+1}t^{d_i-1})}{(1-q^{n+1})(1-q^nt^{d_i})} \,,
\end{equation}
and he asked  ({\it cf.} \cite{M}, Remark~2, p. 997) whether \eqref{ieqk0} admits a
$(q,t)$-generalization analogous to \eqref{iqtid}. It seems that
\eqref{c4} is an answer to Macdonald's question.

The above analogy raises the question if Macdonald's identity \eqref{iqtid} has a homological
origin similar to that of \eqref{c4}. Building on \cite{M}, Hanlon \cite{Ha1, Ha2} 
(and indepedently Feigin \cite{F}) gave an interpretation of \eqref{iqtid} in terms of cohomology of certain nilpotent Lie algebras: they made a precise conjecture ({\it cf.} \cite{Ha1}, Conjecture 1.5)
on the structure of this cohomology that entails \eqref{iqtid}. The Hanlon-Feigin conjecture
(a.k.a the strong Macdonald conjecture) was proved in full generality by Fishel, Grojnowski and
Teleman \cite{FGT}. In the present paper, we will
give a different interpretation of the Macdonald identity that clarifies its relation to the identity \eqref{c4}.

We begin with a general remark. Working with derived representation schemes $ \DRep_{\g}(\aA) $,
it is natural to put on $ \aA $ a (homological) grading: indeed, even when
$ \Rep_{\g}(\aA) $ is trivial (for example, when the grading on $ \aA $ does not allow any representations $ \aA \to \g $ other than zero), the derived scheme $ \DRep_{\g}(\aA) $ may be
highly nontrivial. If $ \aA $ is abelian and $ \dim_k(\aA) = 2 $, putting a
grading on $ \aA $ amounts to splitting it into the sum of two one-dimensional
subspaces of homological degrees $p$ and $r$ (we will write $  \aA = \aA_{p,r} $ in this case).
It turns out that the structure of $ \DRep_{\g}(\aA_{p,r}) $ essentially depends only on
the parities of $p$ and $r$ ({\it cf.} Proposition~\ref{parity}), and therefore there are 3 possibilities, which we refer to as the {\it even}, {\it mixed} and {\it odd} cases
(depending on whether $p$ and $r$ are both even, have opposite parities or
both odd).

In the even case, we show that Conjecture~\eqref{c3}
holds for $ \aA_{p,r} $ if and only if it holds for $ \aA $ with trivial grading
(i.e., $p=r=0$); thus, in this case, we expect the  Harish-Chandra
homomorphism \eqref{hch} to be a quasi-isomorphism, and the resulting constant
term identity is \eqref{c4}.

In the mixed case, the situation is quite different: the derived Harish-Chandra homomorphism 
is no longer a quasi-isomorphism, and we need to construct a new map. 
To explain the construction we return for a moment to the general situation.
In \cite{Dr}, Drinfeld introduced a natural functor on the category of Lie algebras 
that associates to a Lie algebra $ \aA $ the universal invariant bilinear form:
$$
\lambda(\aA) = \Sym_k^2(\aA)/\langle [x,y]\cdot z - x \cdot [y,z]\,:\,x,y,z \in \aA \rangle\ .
$$
Following a suggestion of Kontsevich \cite{K}, Getzler and Kapranov \cite{GK} defined
(an analogue of) cyclic homology for Lie algebras (and more generally, for algebras over an
arbitrary cyclic operad) as the non-abelian derived functor of the functor $ \lambda $.
Our starting point is a natural extension of the Drinfeld-Getzler-Kapranov construction:
for an integer $ d \ge 1 $, we consider the functor $\, \lambda^{(d)}: \DGL_k \to \Com_k \,$ assigning to a Lie algebra $ \aA $ (the target of) the universal invariant multilinear form on $ \aA $ of degree $d$ (so that $ \lambda^{(2)} = \lambda $). We prove
(see Theorem~\ref{dlambda}) that, for any $ d \,$, this functor has a left derived functor
$ \,\L\lambda^{(d)}:\ \Ho(\DGL_k) \to \Ho(\Com_k) \,$ defined
on the homotopy category of DG Lie algebras, and we let $ \HC^{(d)}_\bullet({\tt Lie}, \aA)
 $ denote the homology of $\,\L\lambda^{(d)}(\aA) \,$. The meaning of this construction
is clarified by Theorem~\ref{hodgeuel}, which asserts that the (reduced) cyclic homology
of the universal enveloping algebra $ \U(\aA) $ of any Lie algebra $ \aA $
has a canonical Hodge-type decomposition\footnote{This result provides a (partial) answer to a question of V.~Ginzburg about the
existence of Hodge decomposition for cyclic homology of noncommutative algebras.}
\begin{equation}
\la{lhodge}
\rHC_\bullet(\U \aA) \,=\, \bigoplus_{d \ge 1} \, \HC^{(d)}_\bullet({\tt Lie}, \aA)\ .
\end{equation}
The decomposition \eqref{lhodge} may be viewed as a Koszul dual of the classical Hodge decomposition of the cyclic homology of commutative algebras. In particular, as in the case of commutative
algebras ({\it cf.}  \cite{BV}), there are Adams operations $ \psi^p $ acting on
$ \rHC_\bullet(\U\aA) $, whose (graded) eigenspaces are precisely
$ \HC^{(d)}_\bullet({\tt Lie}, \aA) $ (see Section~\ref{LieHodge}).
Next, for any homogeneous invariant polynomial on $\g$ of degree $d$,
we construct natural trace maps (see Section~\ref{secdrinfeldtr})
$$
\Tr^{(d)}_{\g}(\aA):\ \L\lambda^{(d)}(\aA) \to \DRep_{\g}(\aA)^G\ ,
$$
that are analogues of the derived character maps of \cite{BKR} for Lie algebras.
Letting $ d $ run over the set $ \{d_1. \ldots, d_l\} $ of
fundamental degrees of $ \g $, we then define the homomorphism of commutative
DG algebras
\begin{equation}
\la{drtr}
\bSym_k[\oplus_{i=1}^l \Tr^{(d_i)}_{\g}(\aA)] :\
\bSym_k[\oplus_{i=1}^l \L\lambda^{(d_i)}(\mathfrak{a})] \rar \DRep_{\g}(\aA)^G ,
\end{equation}
which we call the {\it Drinfeld trace map}. In the simplest case when $ \aA $ is
a one-dimensional Lie algebra, the Drinfeld trace map coincides with the inverse of
the Chevalley isomorphism \eqref{chiso} ({\it cf.} Example~\ref{1dieliedt}).

Returning to derived commuting schemes, we may now state our last main result (see
Theorem~\ref{macdonqism}): for the two-dimensional
abelian Lie algebra $ \aA = \aA_{p,r} $ graded in such a way that $p$ and
$r$ have opposite parities, the Drinfeld trace map \eqref{drtr} is a quasi-isomorphism, and
at the level of Euler characteristics, it gives precisely the Macdonald identity
\eqref{iqtid}. Unfortunately, our proof of Theorem~\ref{macdonqism} is not entirely
self-contained: apart from results proved in this paper, it relies on one of the main
theorems of \cite{FGT}. Still, we believe that our interpretation of the Macdonald
identity in terms of representation homology is, in some respects, more natural than
the classical one in terms of Lie cohomology, and at the very least, it clarifies the
relation between \eqref{iqtid} and  \eqref{c4}.
We would also like to mention an interesting recent paper \cite{Kh}
which gives yet another homological interpretation of Macdonald's theory in terms
of representation theory of current Lie algebras. It seems that our approach is related
to that of \cite{Kh} in a very natural way (via Koszul duality at the
level of derived module categories); it would be interesting to study this relation
in detail, especially with a view towards understanding \eqref{c4}.

Finally, we have to mention that, in the odd case (when $p$ and $r$ are both odd), the structure of
the derived commuting scheme $ \DRep_{\g}(\aA_{p,r})^G $ remains mysterious to us. We do not
know whether there exists a numerical identity analogous to \eqref{c4} and \eqref{iqtid} in this case.


The paper is organized as follows. Section~\ref{sec2} is preliminary: here, we recall
basic facts of differential homological algebra and review the construction of derived representation schemes from \cite{BKR} and \cite{BR}.
In Section~\ref{sec3}, we prove
our first main result, Theorem~\ref{main}, which relates representation homology to Lie (co)homology, and discuss its implications.
In Section~\ref{sec4}, we construct the derived Harish-Chandra homomorphism for representation schemes of associative algebras and state our main conjecture for $ \gl_n $ (see Conjecture~\ref{conj1}).
The key results of this section are Theorem~\ref{conj1n2} (proof of Conjecture~\ref{conj1}
for $n=2$), Theorem~\ref{surjhc} (surjectivity of the Harish-Chandra homomorphism on homology for all $n $ and bijectivity for $ n = \infty $) and Theorem~\ref{q-conj1} (proof of the analog of Conjecture~\ref{conj1} for $q$-polynomial algebras). Next,
in Section~\ref{sec5}, we compute Euler characteristics and deduce our
constant term identity in the case of $ \gl_n $, see \eqref{conj2}. The main result of this section is Theorem~\ref{Tconj2}, which proves the
constant term identity for $ \gl_n $ for all $n$.
In Section~\ref{sec6}, we define representation homology for an arbitrary (reductive) Lie algebra $ \g $. The main result of this section, Theorem~\ref{drepg}, is a natural generalization of Theorem~\ref{main}.
In Section~\ref{sec7}, we construct the derived Harish-Chandra homomorphism and the Drinfeld trace maps for representation schemes of Lie algebras. The main result is Theorem~\ref{hodgeuel} that gives a Hodge decomposition for the cyclic homology of
the universal enveloping algebra of any DG Lie algebra.
In Section~\ref{sec8}, we consider the derived commuting variety
 of a reductive Lie algebra; we state our quasi-isomorphism conjecture in full generality (Conjecture~\ref{conj3}) and deduce the corresponding constant term identity (Conjecture~\ref{conj4}); we also provide some evidence in favor of Conjecture~\ref{conj3}, including its verification for orthogonal and symplectic Lie algebras in the limit $ n\to \infty $ (Theorem~\ref{oddorthlimit}).
In Section~\ref{Macd}, we explain the relation of our conjectures to the classical Macdonald conjecture \cite{M} and the strong Macdonald conjecture of \cite{Ha1} and \cite{FGT}. The main result of this section is Theorem~\ref{macdonqism} showing that the Drinfeld trace map is
a quasi-isomorphism in the mixed case.
Finally, in Appendix~\ref{appendix}, we generalize our construction
of derived representation schemes to algebras over an arbitrary binary
quadratic operad. This puts some of the main results of present paper in proper perspective.

\subsection*{Acknowledgements}{\footnotesize
We would like to thank A.~Alekseev, I.~Cherednik, V.~Ginzburg, I.~Gordon,
D.~Kaledin, V.~Lunts, E.~Meinrenken and B.~Tsygan for interesting discussions and comments.
We are particularly grateful to I.~Cherednik for bringing to our attention the paper
\cite{Kh} and to V.~Ginzburg for several useful suggestions and for the reference \cite{G2}.
Research of G.F. and T.W. was partially supported by the NCCR SwissMAP,
funded by the Swiss National Science Foundation.}

\section{Preliminaries} \la{sec2}
In this section, we introduce notation and recall some basic results from the literature. In particular, we review the construction of derived representation schemes and derived character
maps from~\cite{BKR}.

\subsection{Notation and conventions}
\la{notation}
Throughout this paper, $ k $ denotes a base field of characteristic zero. An unadorned tensor product
$\, \otimes \,$ stands for the tensor product $\, \otimes_k \,$ over $k$. An algebra means an
associative $k$-algebra with $1$; the category of such algebras is denoted $ \Alg_k $. Unless stated otherwise, all differential graded (DG) objects are equipped with differentials of degree $-1$, and the Koszul sign rule is systematically used. The homological degree of a homogeneous element $x$ in a graded vector space $V$ will often be denoted by $|x|$. If $V$ is a graded $k$-vector space, we denote by
$T_kV$ its tensor algebra and by $\bSym_k(V)$ its graded symmetric algebra. Thus, $\bSym_k(V)= \Sym_k(V_{\mathrm{ev}}) \otimes \Lambda_k(V_{\mathrm{odd}})$, where $V_{\mathrm{ev}}$ and $V_{\mathrm{odd}}$ are the even and odd components of $V$ respectively. $\Sym^d(V)$ shall denote the $d$-th symmetric power of $V$. Thus, $\Sym^d(V)\,=\,\oplus_{p+q=d} \Sym^p(V_{\mathrm{ev}}) \otimes \wedge^q(V_{\mathrm{odd}})$ and $\bSym_k(V)\,=\,\oplus_{d=0}^{\infty} \Sym^d(V)$. The graded symmetric {\it coalgebra} on $V$ will be denoted by $\bSym^{c}(V)$.
For $V, W \,\in\,\Com_k$, we define $\mathbf{Hom}(V, W)$ to be the complex whose space of $p$-chains is
$$ \mathbf{Hom}(V, W)_p\,:=\, \prod_n \Hom_k(V_n, W_{n+p}) $$
and the differential is given by $\, df := d_W \circ f -{(-1)}^p f \circ d_V\,$,
where $\, f \in  \mathbf{Hom}(V, W)_p\,$.

\subsection{The bar/cobar construction}
In this and the next section, we briefly recall some classical results from
differential homological algebra which are needed for the present paper.
An excellent modern reference for this material is \cite{LV}.
\subsubsection{DG algebras and coalgebras}
\la{basicdga}
Let $\DGA_k$ denote the category of associative unital DG algebras over $k$ with differential of degree $-1$. Recall that $ A \in \DGA_k $ is {\it augmented} if it is given together with a DG algebra map $ \varepsilon: A \to k $. A morphism of augmented algebras $\, (A, \varepsilon_A) \to (B, \varepsilon_B) \,$ is a morphism $ f: A \to B $ in $ \DGA_k $ satisfying $ \varepsilon_B \circ f = \varepsilon_A $.
We denote the category of augmented DG algebras by $ \DGA_{k/k} $. This category
is known to be equivalent to the category $ \DGA $ of non-unital DG algebras: the
mutually inverse functors are given by
$$
B' \mapsfrom B\ ,\ \,\,\,   \DGA_{k/k} \,\rightleftarrows \, \DGA \ ,\,\,\,
A \mapsto \bar{A}
$$
where $ \bar{A} := \Ker(\varepsilon) $ is the kernel of the augmentation map of $A$
and $ B' := k \oplus B $ is the unitalization of $B$ equipped with the canonical projection $ \varepsilon: B' \to k $. Similarly, we define the (equivalent) categories $\cDGA_{k/k}$ and $ \cDGA $ of commutative DG algebras: augmented and nonunital, respectively.

Let $\DGC$ (resp., $\DGC_k$) denote the
category of coassociative (resp., coassociative counital) DG coalgebras over $k$. We shall often work with augmented coassociative counital DG coalgebras $ C $ which are {\it conilpotent}
in the sense that
\begin{equation}
\la{cocom}
\bar{C} = \bigcup_{n \ge 2}\, \Ker\,[\,C \xrightarrow{\Delta^{(n)}} C^{\otimes n}
\onto \bar{C}^{\otimes n}\,]
\end{equation}
where $ \Delta^{(n)} $ denotes the $n$-th iteration of the comultiplication map
$\, \Delta_C: C \to C \otimes C \,$ and
$\, \bar{C} \,$ is the cokernel of the augmentation map $ \varepsilon_C: k \to C \,$.
We denote the category of such coalgebras by $ \DGC_{k/k} $. Similarly, $\cDGC$ (resp., $\cDGC_k$) shall denote the category of cocommutative (resp., cocommutative counital) DG coalgebras over $k$ and $\cDGC_{k/k}$ shall denote the category of coaugmented conilpotent cocommutative DG coalgebras over $k$.

\subsubsection{Twisting cochains}
Given an algebra $ R \in \DGA_{k/k} $ and a coalgebra $ C \in \DGC_{k/k} $,
we define a {\it twisting cochain} $\, \tau: {C} \to {R} $ to be a linear map of
degree $-1$ satisfying
$$
d_R \,\tau + \tau\,d_C + m_R \,(\tau \otimes \tau)\,\Delta_C = 0\,,\,\,\,\tau \circ \varepsilon_C =0\,,\,\,\,\varepsilon_R \circ \tau=0\,,
$$
where $ d_R $ and $ d_C $ are the differentials on $ R $ and $ C $ and $ m_R $ is the
multiplication map on $ R $.
We write $ \Tw(C,R) $ for the set of all twisting cochains from $C$ to $R$. It is easy to show that, for a fixed algebra $ R $, the functor
$$
\Tw(\mbox{--}, R):\,\DGC_{k/k} \to \Sets\ ,\quad C \mapsto \Tw(C,R)\ ,
$$
is representable; the corresponding coalgebra $ \bB(R) \in \DGC_{k/k} $ is called the {\it bar construction} of $ R \,$: it is defined as the tensor coalgebra $ T_k(\bar{R}[1]) $ with differential lifting $ d_R $ and $ m_R $. Dually, for a fixed coalgebra $ C $, the functor
$$
\Tw(C, \mbox{--}):\,\DGA_{k/k} \to \Sets\ ,\quad R \mapsto \Tw(C,R)\ ,
$$
is corepresentable; the corresponding algebra $ \bOmega(C) \in \DGA_{k/k} $ is called the
{\it cobar construction} of $ C \,$: it is defined as the tensor algebra $ T_k(C[-1]) $
with differential lifting $ d_C $ and $ \Delta_C $.
Thus, we have canonical isomorphisms
\begin{equation}
\la{fism}
\Hom_{\DGA_{k/k}}(\bOmega(C), R)\, = \,\Tw(C,R)\, =\, \Hom_{\DGC_{k/k}}(C, \bB(R))
\end{equation}
showing that $ \bOmega: \DGC_{k/k} \rightleftarrows \DGA_{k/k}: \bB $ are adjoint functors.

\subsubsection{Koszul-Moore equivalence}
The categories $\DGA_k$ and $\cDGA_k$ carry natural model structures (see~\cite{Hi}). The weak equivalences in these model categories are the quasi-isomorhisms and the fibrations are the degreewise surjective maps. The cofibrations are characterized in abstract terms: as morphisms satisfying the left lifting property with respect to the acyclic fibrations. The model structures on $\DGA_{k}$ and $\cDGA_{k}$ naturally induce model structures on the corresponding categories of augmented algebras ({\it cf.} \cite[3.10]{DS}). In particular, a morphism $ f\,:\,A \rar  B$ in $\DGA_{k/k}$ is a weak equivalence (resp., fibration; resp., cofibration) iff $ f\,:\,A \rar B$ is a weak equivalence (resp., fibration; resp., cofibration) in $\DGA_k $. All objects in $ \DGA_{k} $ and $ \DGA_{k/k} $ are fibrant. The cofibrant objects in $ \DGA_{k/k} $ can be described more explicitly than in $ \DGA_k $: every cofibrant
$ A \in \DGA_{k/k} $ is isomorphic to a retract of $ \boldsymbol{\Omega}(C) $, where $ \boldsymbol{\Omega}(C) $ is the cobar construction of an augmented conilpotent coassociative DG coalgebra $ C $ (see \cite[Theorem~4.3]{Ke}).

There is a dual model structure on $ \DGC_{k/k} $, where the weak equivalences are the
morphisms $f$ such that $ \bOmega(f) $ is a quasi-isomorphism. A well-known
result, due to J.~C.~Moore, asserts that the model categories $ \DGA_{k/k} $ and
$ \DGC_{k/k} $ are Quillen equivalent: more precisely,
\begin{theorem}[\cite{Mo}]
\la{cbbeq}
The pair of adjoint functors
\begin{equation} \la{cobarbar} \bOmega: \DGC_{k/k} \rightleftarrows \DGA_{k/k}: \bB \end{equation}
is a Quillen equivalence.
In particular, the functors~\eqref{cobarbar} induce mutually inverse equivalences between the homotopy categories
$$ \L \cb\,:\,\Ho(\DGC_{k/k}) \,\cong\, \Ho(\DGA_{k/k})\,:\, \R \bB \,\text{.}$$
\end{theorem}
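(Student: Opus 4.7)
The plan is to establish the Quillen equivalence in three steps. First, I would spell out the model structure on $\DGC_{k/k}$ to be used (following e.g.\ Hinich or Lef\`evre--Hasegawa): the cofibrations are the degreewise injections, the weak equivalences are the morphisms $f$ such that $\bOmega(f)$ is a quasi-isomorphism, and the fibrations are determined by right lifting. With these conventions, the pair \eqref{cobarbar} is immediately a Quillen adjunction: as a graded algebra $\bOmega(C) = T_k(\bar C[-1])$, so $\bOmega$ preserves injections, hence cofibrations, while preservation of acyclic cofibrations is built into the very definition of weak equivalence on the coalgebra side. All objects of $\DGA_{k/k}$ being fibrant, checking the Quillen equivalence thus reduces to showing that, for every $C \in \DGC_{k/k}$ and every $A \in \DGA_{k/k}$, a morphism $\bOmega(C) \to A$ is a quasi-isomorphism if and only if its adjoint $C \to \bB(A)$ is a weak equivalence in $\DGC_{k/k}$.

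Second, I would reduce this criterion to the single assertion that the counit $\varepsilon_A : \bOmega \bB(A) \to A$ is a quasi-isomorphism for every augmented DG algebra $A$. Any morphism $f : \bOmega(C) \to A$ factors as $f = \varepsilon_A \circ \bOmega(\tilde f)$, where $\tilde f : C \to \bB(A)$ is its adjoint, and by definition $\tilde f$ is a weak equivalence iff $\bOmega(\tilde f)$ is a quasi-isomorphism; two-out-of-three then gives the criterion once $\varepsilon_A$ is known to be a quasi-isomorphism. By the triangle identity, $\bOmega(\eta_C)$ is a one-sided inverse to $\varepsilon_{\bOmega(C)}$ and hence also a quasi-isomorphism, so the unit $\eta_C : C \to \bB\bOmega(C)$ is a weak equivalence in $\DGC_{k/k}$ for every $C$, which upgrades the Quillen equivalence to an equivalence of homotopy categories.

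The main step, and the principal obstacle, is therefore to prove that $\varepsilon_A : \bOmega\bB(A) \to A$ is always a quasi-isomorphism. My approach would be to introduce the increasing filtration on $\bOmega\bB(A)$ by the total bar-length, i.e., the sum of the lengths of the bar-words appearing across all cobar-tensor factors. This filtration is exhaustive and bounded below, so the associated spectral sequence converges; on $E^0$, only the piece of the differential coming from the multiplication of $A$ and the bar-coproduct on $\bB(A)$ survives, yielding the classical acyclic bar complex, and the standard ``extract the last letter'' contracting homotopy collapses $E^1$ onto the subcomplex $A$. A cross-check and conceptual alternative is to identify the two-sided bar complex $A \otimes \bB(A) \otimes A$ (with its natural twisted differential) as a semifree $A$-bimodule resolution of $A$ and to deduce the counit quasi-isomorphism by tensoring down over $A^e$. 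The subtle point in either approach is convergence, which is exactly where the conilpotence condition \eqref{cocom} on the coalgebra side is essential: without it the bar differential produces infinite sums and the filtration ceases to be regular.
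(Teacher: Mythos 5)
The paper records this as a theorem of Moore and refers to [LV] for a proof, so the relevant comparison is with the standard bar--cobar argument rather than with a proof in the paper. Your first two steps are correct and are essentially what the literature does: every object of $\DGA_{k/k}$ is fibrant, every object of $\DGC_{k/k}$ is cofibrant, and with the triangle identities the Quillen equivalence reduces to the single assertion that the counit $\varepsilon_A : \bOmega\bB(A) \to A$ is a quasi-isomorphism. The gap is in your treatment of $\varepsilon_A$. The differential on $\bOmega\bB(A)$ has three summands: one induced by $d_A$, one induced by the multiplication $m_A$ (the bar differential inside each bar-word), and one induced by the deconcatenation coproduct of $\bB(A)$ (the cobar differential splitting bar-words). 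Of these, the \emph{multiplication} summand lowers the total bar-length $\ell$ by one, while the other two preserve $\ell$. So on the associated graded of your increasing filtration by $\ell$ it is precisely the multiplication piece that dies, and what survives on $E^0$ is $d_A$ together with the deconcatenation --- the opposite of what you claim. In particular $E^0$ is not ``the classical acyclic bar complex''; it is $\bOmega\bB$ of the square-zero algebra $k \oplus \bar A$, whose contractibility is (a special case of) the very statement being proved, and the ``extract the last letter'' homotopy, which uses $m_A$, is not available on this page. The spectral sequence therefore does not collapse as asserted.

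Your ``conceptual alternative'' is in fact the correct route, and it is the one [LV, Thm.~2.3.2, Lemma~2.3.5] follows: show directly that the twisted tensor product $A \otimes_\pi \bB(A)$ (equivalently, the two-sided bar complex $A \otimes_\pi \bB(A) \otimes_\pi A$) is acyclic by the explicit extra-degeneracy contraction $a \otimes [a_1|\cdots|a_n] \mapsto 1 \otimes [\bar a | a_1 |\cdots| a_n]$, and then pass from acyclicity of the Koszul complex to the quasi-isomorphism $\varepsilon_A$ via the comparison theorem --- this is exactly the equivalence (ii) $\Leftrightarrow$ (v) in the paper's Theorem~\ref{Kosc} (= [LV, Thm.~2.3.1]). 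That comparison is a nontrivial filtration argument in its own right, not a consequence of ``tensoring down over $A^e$'', and it should be the main step of your proof rather than a side remark. Once the counit is known to be a quasi-isomorphism, your two-out-of-three reduction and the unit argument go through as you wrote them.
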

The proof of this theorem can be found, for example, in \cite{LV}.

\subsection{Koszul duality}
\la{koszulduality}

Let $C \in \DGC_{k/k}$ and let $R \in \DGA_{k/k}$. Let $ \Mod(R) $ denote the category of right DG modules over $ R $, and dually
let $ \cMod(C) $ denote the category of right DG comodules over $ C $ which
are conilpotent in a sense similar to \eqref{cocom}.
Given a twisting cochain $\, \tau \in \Tw(C,R) \,$  one can define the functors
\begin{equation*}
\mbox{--} \otimes_{\tau} C :\, \Mod(R) \rightleftarrows \cMod(C)\,:
\mbox{--} \otimes_{\tau} R
\end{equation*}
called the {\it twisted tensor products}. Specifically, if $ M \in \Mod(R) $,
then $ M \otimes_{\tau} C $ is defined to be the DG $C$-comodule whose
underlying graded comodule is $ M \otimes_k C $ and whose differential is given by
$$
d = d_M \otimes \id + \id \otimes d_C +
(m \otimes \id)\,(\id \otimes \tau \otimes \id)\,(\id \otimes \Delta)\ .
$$
Similarly, for a DG comodule $ N \in \cMod(C) $, one defines a
DG $R$-module $ N \otimes_\tau R $. In the same fashion, for a DG bicomodule $\mathcal N \,\in\, \mathtt{Bicomod}(C)$, one defines a DG $R$-bimodule
$ R_{\tau} \otimes \mathcal N \otimes_{\tau} R$.

Next, recall that the {\it derived category}
$ \D(R) $ of DG modules is obtained by localizing $ \Mod(R) $ at the class of all quasi-isomorphisms. To introduce the dual notion for DG comodules one has to replace
the quasi-isomorphisms by a more restricted class of morphisms in $ \cMod(C) $.
We call a morphism $f$ in $ \cMod(C) $ a weak equivalence if
$\,f \otimes_{\tau_C} \bOmega(C) \,$ is quasi-isomorphism in $ \Mod\,\bOmega(C)\,$, where $ \tau_C: C \to \bOmega(C) $ is the universal twisting cochain corresponding to the identity map under \eqref{fism}. The {\it coderived category} $ \D^c(C) $ of DG comodules is then defined by localizing $ \cMod(C) $ at the class of weak equivalences. It is easy to check that the twisted tensor products induce a pair of adjoint functors
\begin{equation}
\la{quieq1}
\mbox{--} \otimes_{\tau} C \, :\, \D(R) \rightleftarrows \D^c(C)\,: \,
\mbox{--} \otimes_{\tau} R\ .
\end{equation}
The following theorem characterizes the class of twisting cochains for which \eqref{quieq1}
are equivalences.
\bthm[see \cite{LV}, Theorem~2.3.1]
\la{Kosc}
For $\,\tau \in \Tw(C,R)\,$, the following are equivalent:
\begin{enumerate}
\item[{\rm (i)}] the functors \eqref{quieq1} are mutually inverse equivalences of categories;
\item[{\rm (ii)}] the complex $\,C \otimes_\tau R \,$ is acyclic;
\item[{\rm (iii)}] the complex $\,R \otimes_\tau C \,$ is acyclic;
\item[{\rm (iv)}] the natural morphism $ R \otimes_\tau C \otimes_\tau R \stackrel{\sim}{\to} R $ is a quasi-isomorphism;
\item[{\rm (v)}] the morphism $ \bOmega(C) \stackrel{\sim}{\to} R $ corresponding to $\tau $ under \eqref{fism} is a quasi-isomorphism in $ \DGA_{k/k}$;
\item[{\rm (vi)}] the morphism $ C \stackrel{\sim}{\to} \bB(R) $ corresponding to $\tau $ under
\eqref{fism} is a weak equivalence in $ \DGC_{k/k} $.
\end{enumerate}
If the conditions {\rm (i)} - {\rm (vi)} hold, the DG algebra $R$ is determined by $C$ up to isomorphism in
$ \Ho(\DGA_{k/k}) $ and the DG coalgebra $C$ is determined by $ R $ up to isomorphism
in $ \Ho(\DGC_{k/k}) $.
\ethm
A twisting cochain $ \tau \in \Tw(C,R) $ satisfying the conditions of
Theorem~\ref{Kosc} is called {\it acyclic}. In this case, the DG coalgebra $C$
is called {\it Koszul dual} to the DG algebra $ R $ and $ R $
is called {\it Koszul dual} to $ C $.

\subsection{Derived representation schemes}
\la{secdrep}
Derived representation schemes were originally discussed in \cite{CK} as part of a general program of deriving {\it Quot}  schemes and other moduli spaces in algebraic geometry. A different, more algebraic approach was developed in \cite{BKR}, where it was shown, among other things, that the representation functor is a (left) Quillen functor on the model category of associative DG algebras. In this section, we briefly review the basic construction of \cite{BKR}.

\subsubsection{The representation functor}
\la{S1.2.1}
For an integer $ n \ge 1 $, let $ \M_n(k) $ denote the algebra of $ n \times n $ matrices with entries in $ k $. If $ B \in \DGA_k $, we write $\, \M_n(B) := \M_n(k) \otimes B \,$; this
gives a functor on the category of DG algebras: $\,\M_n(\,\mbox{--}\,):\,\DGA_k \to \DGA_k \,$.
Next, we define
\begin{equation}
\la{root}
\sqrt[n]{\,\mbox{--}\,}\,:\,\DGA_k \rar \DGA_k\ ,\quad
A \mapsto  [A \ast_k \M_n(k)]^{\M_n(k)}\ ,
\end{equation}
where $\,A \,\ast_k\, \M_n(k) \,$ is the free product (coproduct) in $ \DGA_k $ and $\,[\,\ldots\,]^{\M_n(k)} $ denotes the (graded) centralizer of $ \M_n(k) $ as the subalgebra in $\,A \,\ast_k\, \M_n(k) \,$.

The following proposition is a generalization (to DG algebras) of a classical
result of G.~Bergman (see \cite{BKR}, Proposition~2.1).
\bprop
\la{Bprop}
The functor $\,\sqrt[n]{\,\mbox{--}\,}\,$ is left adjoint to
$\,\M_n(\,\mbox{--}\,)\,$.
\eprop

Now, let $ \cDGA_k $ be the category of commutative DG algebras. The natural inclusion 
functor $\, \cDGA_k \to \DGA_k \,$ has an obvious left adjoint given by
\begin{equation}
\la{ab}
(\,\mbox{--}\,)_{\nn} :\ \DGA_k \rar \cDGA_k \ ,\quad A \mapsto A_\nn := A/\langle [A,A]\rangle\ ,
\end{equation}
where $\,\langle [A,A]\rangle\,$ is the two-sided DG ideal of $A$ generated by the
graded commutators. Combining  \eqref{root} and \eqref{ab}, we define
\begin{equation}
\la{rootab}
(\,\mbox{--}\,)_n :\ \DGA_k \rar \cDGA_k\ ,\quad A \mapsto A_n := (\!\sqrt[n]{A})_{\nn}\ .
\end{equation}
Then, as a consequence of Proposition~\ref{Bprop}, we get
\begin{theorem}
\la{nS2.1t1}
The functor $\,(\,\mbox{--}\,)_n\,$ is left adjoint to $\,\M_n(\,\mbox{--}\,)\,$
on the category of commutative DG algebras. Thus, for any $\,A \in \DGA_k\,$, the
DG algebra $ A_n $ (co)represents the functor of points of the affine DG scheme
\begin{equation}
\la{rep}
\Rep_n(A):\ \cDGA_k \to \Sets\ ,\quad B \mapsto \Hom_{\DGA_k}(A,\, \M_n(B))\ ,
\end{equation}
parametrizing the $n$-dimensional representations of $A$.
\end{theorem}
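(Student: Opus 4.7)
The plan is to obtain Theorem~\ref{nS2.1t1} by composing two adjunctions, since by the very definition~\eqref{rootab} the functor $(\,\mbox{--}\,)_n: \DGA_k \to \cDGA_k$ factors as
$$(\,\mbox{--}\,)_n \;=\; (\,\mbox{--}\,)_\nn \,\circ\, \sqrt[n]{\,\mbox{--}\,}\,,$$
so it suffices to recognize each factor as a left adjoint in its own right and then concatenate.

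First I would invoke Proposition~\ref{Bprop} (the Bergman-type adjunction), which supplies a natural bijection
$$\Hom_{\DGA_k}\bigl(\sqrt[n]{A},\, R\bigr) \;\cong\; \Hom_{\DGA_k}\bigl(A,\, \M_n(R)\bigr)$$
for arbitrary $A, R \in \DGA_k$; note that $R$ is not required to be commutative, so I may legitimately specialize $R := B$ for any $B \in \cDGA_k \subset \DGA_k$. Next I would record the standard abelianization adjunction, namely
$$\Hom_{\cDGA_k}(S_\nn,\, B) \;\cong\; \Hom_{\DGA_k}(S,\, B)$$
for $S \in \DGA_k$ and $B \in \cDGA_k$, which is immediate from the universal property of the quotient~\eqref{ab}: a DG map $S \to B$ factors through $S_\nn$ precisely because the target $B$ is graded commutative. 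Applying this bijection with $S := \sqrt[n]{A}$ and chaining with the first one yields
$$\Hom_{\cDGA_k}(A_n,\, B) \;\cong\; \Hom_{\DGA_k}\bigl(\sqrt[n]{A},\, B\bigr) \;\cong\; \Hom_{\DGA_k}\bigl(A,\, \M_n(B)\bigr)\,,$$
which is precisely the asserted adjunction between $(\,\mbox{--}\,)_n$ and $\M_n(\,\mbox{--}\,)$ on $\cDGA_k$.

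For the corepresentability claim, I would simply observe that the right-hand side above is, by construction~\eqref{rep}, the value of the functor $\Rep_n(A)$ on $B$; the chain of isomorphisms therefore exhibits $A_n$ as the commutative DG algebra corepresenting $\Rep_n(A)$, so that $\Rep_n(A) \cong \Spec(A_n)$ as affine DG schemes.

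I do not anticipate a serious obstacle: the argument is a purely formal diagram chase, with the only substantive input being Proposition~\ref{Bprop}, whose DG extension of Bergman's classical result is already stated in the paper. The one point to verify with care is naturality of the two bijections in both variables (so that they compose to a genuine adjunction rather than a mere bijection of hom-sets), but this is automatic from the construction of~\eqref{root} and~\eqref{ab} as left adjoints in the DG setting.
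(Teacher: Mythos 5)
Your argument is correct and is exactly the one the paper intends: the paper states Theorem~\ref{nS2.1t1} as an immediate consequence of Proposition~\ref{Bprop}, and the implicit mechanism is precisely the composition of the Bergman adjunction with the abelianization adjunction that you spell out. You have simply written out the diagram chase that the paper leaves to the reader.
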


\vspace{1ex}

\noindent
Theorem~\ref{nS2.1t1} implies that there is a natural bijection
\begin{equation}
\la{eS1.2.1}
\Hom_{\DGA_k}(A,\M_n(B)) \,\cong \, \Hom_{\cDGA_k}(A_n, B) \ ,
\end{equation}
functorial in $ A \in \DGA_k $ and $B \in \cDGA_k $. Accordingly, we refer to
\eqref{rootab} as the functor of $n$-dimensional representations, or
simply the {\it $n$-th representation functor} on $\DGA_k $.
Letting $\,B = A_n\,$ in \eqref{eS1.2.1}, we have a canonical DG algebra map
$\, \pi_n: \, A \to \M_n(A_n) \,$, called the {\it universal}\,
$n$-dimensional representation of $A$.

The algebra $\, A_n \,$ has the following
canonical presentation described in \cite[Section~2.4]{BKR}.
Let $ \{e_{ij}\}_{i,j=1}^n $ be the basis of elementary matrices in $ \M_n(k) $.
For each $\,a \in A \,$, define the `matrix' elements of $ a $ in
$\,A \ast_k \M_n(k) \,$ by
$$
a_{ij} := \sum_{k=1}^n\, e_{ki}\,a\,e_{jk}\ .
$$
Then $\,a_{ij} \in \sqrt[n]{A} \,$ for all $\,i,j=1,2,\ldots, n\,$, and we
also write $ a_{ij} $ for the corresponding elements in $ A_n $.

\blemma
\la{exppr}
The algebra $A_n$ is generated by the elements $\, \{ a_{ij}\, :\, a \in A\} \,$
satisfying the relations
$$
(a+b)_{ij} = a_{ij} + b_{ij}\ ,\quad
(ab)_{ij} = \sum_{k=1}^n\, a_{ik} \, b_{kj}\ ,\quad  
\lambda_{ij} = \delta_{ij}\,\lambda\ ,\quad \forall\,a,b \in A\ ,\ \lambda \in k\ .
$$
The differential on $ A_n $ is determined by the formula: $ d(a_{ij}) = (d a)_{ij} \,$.
The universal $n$-dimensional representation of $A$ is given by
$$
\pi_n: A \to \M_n(A_n) \ ,\quad a \mapsto \|a_{ij}\|\ .
$$
\elemma

\vspace{1ex}

In this paper, we will work with augmented DG algebras. Note that, if
$\, A \,$ is augmented, then
 $\, A_n $ has a natural augmentation $\, \varepsilon_n : A_n \to k $ coming from \eqref{rootab} applied to the augmentation map of $A$. This defines a functor $\,\DGA_{k/k} \to \cDGA_{k/k} $, which we again denote by $ (\,\mbox{--}\,)_n $. On the other hand, the matrix algebra
functor  can be modified in the following way:
\begin{equation} \la{augmatrixfunctor}
\M_n'(\mbox{--}):\,\cDGA_{k/k} \rar \DGA_{k/k}\ ,\qquad \M_n'(B) := k \oplus \M_n(\bar{B})\ ,
\end{equation}
With this modification, Theorem~\ref{nS2.1t1} holds for augmented DG algebras.
Moreover, as a special case of~\cite[Theorem~2.2]{BKR}, we have

\begin{theorem}
\la{nS2.1t2}
$(a)$ The adjoint functors $\,(\,\mbox{--}\,)_n\,:\, \DGA_{k/k} \rightleftarrows \cDGA_{k/k} \,:\, \M_n'(\mbox{--}) \,$ form a Quillen pair.

$(b)$ The functor $\, (\, \mbox{--}\,)_n\,:\,\DGA_{k/k} \rar \cDGA_{k/k}$ has a total left derived functor defined by
$$
\L(\,\mbox{--}\,)_n:\,\Ho(\DGA_{k/k}) \to \Ho(\cDGA_{k/k}) \ ,
\quad A  \mapsto  (QA)_n\ ,
$$
where $QA \stackrel{\sim}{\twoheadrightarrow} A$ is any cofibrant resolution of $A$ in $\DGA_{k/k}$.

$(c)$ For any $A$ in $\DGA_{k/k}$ and $B$ in $\cDGA_{k/k}$, there is a canonical isomorphism
$$ \Hom_{\Ho(\cDGA_{k/k})}(\L(A)_n \,,\, B)\,\cong\, \Hom_{\Ho(\DGA_{k/k})}(A\,,\,\M_n'(B))\,\text{.}$$
\end{theorem}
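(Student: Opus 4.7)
The plan is to verify part (a) directly via the standard right-adjoint criterion for Quillen adjunctions, and then obtain (b) and (c) as formal consequences of (a) together with the abstract theory of Quillen functors. The underlying adjunction $(\,\mbox{--}\,)_n \dashv \M'_n(\mbox{--})$ on the augmented categories is already in place: it follows from Proposition~\ref{Bprop} and the adjunction $(\mbox{--})_\nn \dashv {\rm incl}$ between $\DGA_k$ and $\cDGA_k$, restricted to augmented objects via the modified matrix functor \eqref{augmatrixfunctor}. Thus the only real content is the Quillen condition.

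To prove (a), I would check that $\M'_n(\mbox{--})$ preserves fibrations and acyclic fibrations. Recall from Section~\ref{basicdga} that a morphism $f:B\to B'$ in $\cDGA_{k/k}$ is a fibration iff it is degreewise surjective, and a weak equivalence iff it is a quasi-isomorphism. If $f$ is surjective, then so is its restriction $\bar f: \bar B \to \bar{B'}$ to augmentation ideals, hence
$$
\M'_n(f) \;=\; \id_k \oplus (\M_n(k)\otimes \bar f):\; k \oplus \M_n(\bar B) \;\longrightarrow\; k \oplus \M_n(\bar{B'})
$$
is surjective in $\DGA_{k/k}$. Since $\M_n(k)$ is a free (hence flat) $k$-module, tensoring with it is exact, so if $f$ is additionally a quasi-isomorphism, then so is $\M_n(k)\otimes \bar f$ and therefore $\M'_n(f)$. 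This verifies that $\M'_n(\mbox{--})$ is right Quillen, so $(\mbox{--})_n \dashv \M'_n$ is a Quillen pair.

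Part (b) is then a formal consequence: by Ken Brown's lemma applied to the left Quillen functor $(\mbox{--})_n$, this functor sends weak equivalences between cofibrant objects to weak equivalences in $\cDGA_{k/k}$; hence the assignment $A\mapsto (QA)_n$ descends to a well-defined functor $\L(\mbox{--})_n : \Ho(\DGA_{k/k})\to \Ho(\cDGA_{k/k})$ that is independent of the chosen cofibrant resolution $QA\sonto A$, and it is the total left derived functor of $(\mbox{--})_n$ in the sense of Quillen. For (c), I would use the standard derived adjunction: every object of $\cDGA_{k/k}$ is fibrant, so the right derived functor of $\M'_n$ is $\M'_n$ itself, and the derived adjunction is obtained from the underived one of (a) by restricting to the cofibrant source $QA$ and fibrant target $B$, namely
$$
\Hom_{\Ho(\cDGA_{k/k})}(\L(A)_n, B) \;\cong\; \Hom_{\Ho(\DGA_{k/k})}(A, \M'_n(B)),
$$
which is the desired isomorphism. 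The only nontrivial step in the whole argument is the flatness observation used to check that $\M'_n$ preserves quasi-isomorphisms; once (a) is in place, (b) and (c) are purely formal.
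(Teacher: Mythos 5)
Your proof is correct and follows the standard argument for establishing a Quillen pair and its derived adjunction; the paper does not reproduce the proof here but simply cites \cite[Theorem~2.2]{BKR}, and the route you take — checking that $\M_n'$ preserves (acyclic) fibrations using exactness of $\M_n(k)\otimes_k(\mbox{--})$ over the field $k$, then invoking Brown's lemma for (b) and the standard derived adjunction together with fibrancy of all objects of $\cDGA_{k/k}$ for (c) — is exactly the expected one.
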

\vspace{1ex}

\begin{definition} Given $ A \in \Alg_{k/k}\,$, we define
$\, \DRep_n(A) := \L(QA)_n \,$, where $ QA \sonto A $ is a cofibrant resolution of $ A $ in $\DGA_{k/k} $. The homology of $ \DRep_n(A) $
is an augmented (graded) commutative algebra, which is independent of
the choice of resolution (by Theorem~\ref{nS2.1t2}). We set
\begin{equation}
\la{rephom} \mathrm{H}_{\bullet}(A,n)\,:=\,\mathrm{H}_{\bullet}[\DRep_n(A)] \end{equation}
and call \eqref{rephom} the  {\it $n$-dimensional representation homology} of $A$.
\end{definition}

\vspace{1ex}

By \cite[Theorem~2.5]{BKR}, for any $ A \in \Alg_k $, there is a natural isomorphism of algebras
$$
\H_0(A,n) \cong A_n \ .
$$
Hence $\, \DRep_n(A) $ may indeed be viewed as  a `higher' derived version of the representation
functor \eqref{rootab}.

\subsubsection{$\GL$-invariants}
\la{glinv}
The group $ \GL_n(k) $ acts naturally on  $\,A_n\,$ by DG algebra
automorphisms. Precisely, each $ g \in \GL_n(k) $ defines a unique automorphism of $\,A_n\,$ corresponding under \eqref{eS1.2.1} to the composite map
\begin{equation}
\la{GLact}
A \xrightarrow{\pi_n} \M_n(A_n) \xrightarrow{\text{Ad}(g)} \M_n(A_n)\ .
\end{equation}
This action is natural in $A$ and thus defines the functor
\begin{equation}
\la{vgl}
(\,\mbox{--}\,)_n^{\GL} :\ \DGA_{k/k} \to \cDGA_{k/k}\ ,\quad
A \mapsto A_n^{\GL_n}\ ,
\end{equation}
which is a subfunctor of the representation functor $ (\,\mbox{--}\,)_n $.
On the other hand, there is a natural action of $ \GL_n(k) $ on the $n$-th
representation homology of $ A $ so we can form the invariant subalgebra
$ \H_\bullet(A, n)^{\GL_n} $. The next theorem, which is a consequence of
\cite[Theorem~2.6]{BKR}, shows that these two constructions agree.
\begin{theorem}
\la{nS2.1t5}
$(a)$ The functor $(\mbox{--})_n^{\GL}$ has a total left derived functor
$$
\L[(\,\mbox{--}\,)_n^{\GL}]:\,\Ho(\DGA_{k/k}) \to \Ho(\cDGA_{k/k})\,,\,\,\,\, A \mapsto (QA)_n^{\GL} \ .
$$

$(b)$ For any $ A \in \DGA_{k/k} $, there is a natural isomorphism of graded algebras
$$
\H_\bullet[\L(A)_n^{\GL}] \cong \H_\bullet(A, n)^{\GL_n}\ .
$$
\end{theorem}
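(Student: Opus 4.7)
The plan is to deduce both statements from Theorem~\ref{nS2.1t2} together with the fact that $\GL_n(k)$ is linearly reductive in characteristic zero. First I would observe that the action \eqref{GLact} is natural in $A$: any morphism $f: A \to B$ in $\DGA_{k/k}$ induces a $\GL_n$-equivariant DG algebra map $f_n: A_n \to B_n$, so $(-)_n^{\GL}$ is a well-defined subfunctor of $(-)_n$. Moreover, when $A$ is cofibrant in $\DGA_{k/k}$, the explicit presentation of Lemma~\ref{exppr} exhibits $A_n$ as a graded polynomial algebra on the matrix-entry generators $a_{ij}$, on which $\GL_n(k)$ acts by the adjoint representation; consequently, each graded component of $A_n$ is a rational $\GL_n$-module.

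For part~(a), the strategy is to combine the left Quillen property of $(-)_n$ with exactness of $\GL_n$-invariants. By Theorem~\ref{nS2.1t2}(a), $(-)_n$ carries weak equivalences between cofibrant objects of $\DGA_{k/k}$ to quasi-isomorphisms in $\cDGA_{k/k}$. Since $\mathrm{char}(k)=0$, the group $\GL_n(k)$ is linearly reductive and the invariants functor $V \mapsto V^{\GL_n}$ is exact on rational $\GL_n$-modules; applied degreewise, it preserves quasi-isomorphisms between rational $\GL_n$-equivariant DG algebras. Composing these two facts, if $f: QA \sonto Q'A$ is any weak equivalence between cofibrant replacements of $A$, then the induced map $(QA)_n^{\GL} \to (Q'A)_n^{\GL}$ is a quasi-isomorphism in $\cDGA_{k/k}$. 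This yields the total left derived functor $\L[(-)_n^{\GL}](A) = (QA)_n^{\GL}$, as asserted.

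For part~(b), the same exactness of $\GL_n$-invariants supplies the required commutation with homology. Indeed, for any rational $\GL_n$-equivariant complex $V^{\bullet}$ one has a natural isomorphism $\H_\bullet(V^{\GL_n}) \cong \H_\bullet(V)^{\GL_n}$; applied to $V = (QA)_n$ this gives
$$\H_\bullet[\L(A)_n^{\GL}] \,\cong\, \H_\bullet((QA)_n)^{\GL_n} \,=\, \H_\bullet(A, n)^{\GL_n}.$$
The graded algebra structure is respected because $\GL_n$-invariants form a graded subalgebra of the graded commutative algebra $\H_\bullet((QA)_n)$.

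The principal technical obstacle I anticipate is verifying the rationality of the $\GL_n$-action on $(QA)_n$ in full generality. For semi-free cofibrant replacements this is transparent from Lemma~\ref{exppr}, which presents $(QA)_n$ as a graded-commutative polynomial algebra in matrix-entry generators with $\GL_n$ acting by the adjoint representation; for a general cofibrant $QA \in \DGA_{k/k}$ one reduces to the semi-free case using the characterization of cofibrants as retracts of semi-free extensions of $k$, noting that rationality of a $\GL_n$-action passes to retracts.
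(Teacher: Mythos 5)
Your argument is correct and is essentially the one the paper is relying on: the paper itself gives no proof here but cites \cite[Theorem~2.6]{BKR}, and that result is proved by exactly the combination you use, namely linear reductivity of $\GL_n(k)$ in characteristic zero (hence exactness of the invariants functor on rational $\GL_n$-modules, which commutes with homology) together with the left-Quillen property of $(\,\mbox{--}\,)_n$ and Ken Brown's lemma. One small simplification you could make: rationality of the $\GL_n$-action on $A_n$ holds for \emph{every} $A$, not only semi-free or cofibrant ones, since by Lemma~\ref{exppr} the algebra $A_n$ is a quotient of a graded polynomial algebra on copies of $\M_n(k)^*$ (with the adjoint action) by a $\GL_n$-stable ideal, and quotients of rational modules are rational; this sidesteps the retract reduction you anticipated as the main technical obstacle.
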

\noindent
Abusing notation we will often write $ \DRep_n(A)^{\GL} $ instead of $ \L(A)_n^{\GL} $
for any DG algebra $A$.

\subsubsection{Trace maps}
\la{trmaps}
Recall that, for an augmented DG algebra $ R \in \DGA_{k/k} $, we denote by
$ \bar{R} \subset R $ the kernel of the augmentation map of $R$. Now, we set
$$
R_\n := \bar{R}/[\bar{R},\bar{R}] \,\cong\, R/(k + [R,R])\ ,
$$
where $ [\bar{R},\bar{R}] $ is the subcomplex of $ \bar{R} $ spanned by the
commutators in $\bar{R} $. This defines the functor
\begin{equation}
\la{cycf}
(\,\mbox{--}\,)_\n : \,\DGA_{k/k} \to \Com_{k}\ ,\quad R \mapsto R_{\n} \ ,
\end{equation}
which we call the (reduced) cyclic functor.

The next theorem, which is a well-known result due to
Feigin and Tsygan, justifies our terminology.
\begin{theorem}[\cite{FT}]
\la{ftt}
$ (a) $ The functor \eqref{cycf} has a total left derived
functor
$$
\L(\,\mbox{--}\,)_\n:\,\Ho(\DGA_{k/k}) \to \Ho(\Com_{k})\ ,\quad A \mapsto (QA)_\n \ ,
$$
where $ QA $ is a(ny) cofibrant resolution of $A$ in $ \DGA_{k/k} $.

$(b)$ For $ A \in \Alg_{k/k} $, there is a natural isomorphism
of graded vector spaces
$$
\H_\bullet[\L(A)_\n] \cong \rHC_\bullet(A)\ ,
$$
where $ \rHC_\bullet(A) $ denotes the (reduced) cyclic homology of
$A$.
\end{theorem}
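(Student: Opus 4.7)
For part (a), the strategy is to verify that the functor $(-)_\n$ preserves weak equivalences between cofibrant objects in $\DGA_{k/k}$; by a standard argument (\emph{cf.}~Brown's lemma) this will yield the total left derived functor and the formula $\L(A)_\n = (QA)_\n$. Every cofibrant DG algebra in $\DGA_{k/k}$ is a retract of a quasi-free algebra $(T_k V, d)$ on a graded vector space $V$ equipped with an exhaustive filtration compatible with $d$. On such algebras one has the explicit formula
\begin{equation*}
(T_k V)_\n \,=\, \bigoplus_{n \ge 1}\, (V^{\otimes n})_{C_n},
\end{equation*}
where $C_n$ acts by signed cyclic permutation of tensor factors. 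The first step is to reduce a weak equivalence between cofibrant DG algebras to a tower of free attachments of contractible generator cells; since taking $C_n$-coinvariants preserves acyclicity over a field of characteristic zero, an inductive filtration argument then shows that the induced map on $(-)_\n$ is a quasi-isomorphism.

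For part (b), the plan is to compute $\L(A)_\n$ using the canonical cobar--bar resolution $QA := \bOmega(\bar{\bB} A) \xrightarrow{\sim} A$ supplied by Theorem~\ref{cbbeq}. Since $\bOmega(C) = T_k(C[-1])$ as a graded algebra, the formula from part (a) specializes to
\begin{equation*}
(QA)_\n \,=\, \bigoplus_{k \ge 1}\, \bigl( \bar{\bB}(A)[-1]^{\otimes k} \bigr)_{C_k},
\end{equation*}
equipped with the total differential combining the internal bar differential on each $\bar{\bB}(A)$-factor with the cobar differential coming from the deconcatenation coproduct on $\bar{\bB} A$. Unpacking $\bar{\bB}(A) = \bigoplus_{n \ge 1} \bar{A}^{\otimes n}[n]$ presents $(QA)_\n$ as a bicomplex; the plan is then to construct an explicit chain map from this bicomplex to the normalized reduced Connes--Tsygan cyclic complex $\rCC(A)$, and to verify via the length-of-word filtration that the associated spectral sequence collapses and that this map is a quasi-isomorphism.

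The main obstacle will be the combinatorial bookkeeping in part (b): carefully aligning Koszul signs, the nested cyclic group actions on tensor strings of tensors, and the two differentials (bar and cobar) with the Connes boundary on $\rCC(A)$. This step is essentially Quillen's interpretation of cyclic homology as a non-abelian derived functor of the commutator quotient, so rather than reconstruct the chain-level quasi-isomorphism from scratch I would extract it from the original proof of Feigin--Tsygan~\cite{FT}. Part (a), by contrast, is largely formal once the explicit description of $(-)_\n$ on quasi-free algebras is in hand.
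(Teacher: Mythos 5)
The paper does not actually prove this result: Theorem~\ref{ftt} is attributed to Feigin--Tsygan, and the only remark following it refers to \cite{BKR}, Section~3, for a ``conceptual proof.'' So you are supplying an argument the authors delegate to the literature, and the comparison is really with that conceptual route rather than with anything on the page.

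Your part (a) is the standard Brown's-lemma argument and is sound. One small clarification: Brown's lemma only requires you to show $(-)_\n$ sends \emph{acyclic cofibrations} between cofibrant objects to quasi-isomorphisms, not arbitrary weak equivalences; your ``tower of free contractible cell attachments'' is precisely the right cellular picture for acyclic cofibrations (built from the generating ones by pushout, transfinite composition, and retract). Exactness of $C_n$-coinvariants in characteristic zero plus the word-length filtration then closes the argument.

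For part (b), your plan---take $QA=\bOmega(\bar\bB A)$, unpack $(QA)_\n$ as a bicomplex, and build a direct chain map to the reduced Connes--Tsygan complex of $A$---is, as you acknowledge, the hands-on route of \cite{FT}. The conceptual proof the present paper points to (and uses implicitly in Section~\ref{LQTco}) instead factors through Quillen's identification
$$
\bOmega(C)_\n \,\cong\, \overline{\mathrm{CC}}(C)[-1]
$$
for a conilpotent DG coalgebra $C$, applied to $C=\bar\bB A$, together with the Koszul-duality degree shift $\rHC_\bullet(\bB A)\cong \rHC_{\bullet-1}(A)$. That packages the sign and filtration bookkeeping you flag as the main obstacle into two reusable isomorphisms (Quillen's computation of $\bOmega(C)_\n$ and the bar-construction shift for cyclic homology) rather than one bespoke chain-level comparison with the Connes--Tsygan complex of $A$ itself. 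Both routes land in the same place, and deferring the chain-level verification to \cite{FT} is reasonable; just be aware that what you would be extracting from \cite{FT} is essentially the content that the conceptual version isolates as Quillen's lemma, so you are not actually saving work over the approach the paper cites.
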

\noindent

For a conceptual proof of Theorem~\ref{ftt} we refer to \cite{BKR}, Section~3.

\vspace{2ex}

Now, fix $ n \ge 1 $ and, for $ R \in \DGA_k $, consider the composite map
$$
R \xrightarrow{\pi_n} \M_n(R_n) \xrightarrow{\Tr} R_n
$$
where $ \pi_n $ is the universal representation of $R$ and $\,\Tr \,$ is the usual matrix trace. This map factors through $ R_\n $ and its image lies in $ R_n^{\GL} $. Hence, we get a morphism of complexes
\begin{equation}
\la{e2s1}
\Tr_n(R)_{\bullet}:\ R/[R,R] \to R_n^{\GL}\ ,
\end{equation}
which extends by multiplicativity to a map of graded commutative algebras
\begin{equation}
\la{trm}
\bSym\, \Tr_n(R)_{\bullet}:\ \bSym(R/[R,R]) \to R_n^{\GL}\ .
\end{equation}
If $ R \in \DGA_{k/k} $ is augmented, the natural inclusion $\,\bar{R} \into R \,$ induces
a morphism of complexes $\, R_\n \to R/[R,R] $. Composed with \eqref{e2s1} this
defines a morphism of functors that
extends to a morphism of the derived functors from $ \Ho(\DGA_{k/k}) $ to $ \Ho(\Com\,k) \,$:
\begin{equation}
\la{trm2}
\L\Tr_n:\, \L(\,\mbox{--}\,)_\n \to \L(\mbox{--}\,)_n^\GL\, .
\end{equation}
Now, for an ordinary $k$-algebra $ A \in \Alg_{k/k} $, applying \eqref{trm2} to a cofibrant resolution  $\, R = QA \,$  of $ A $ in $ \DGA_{k/k}$, taking homology and using the identification of Theorem~\ref{ftt}$(b)$, we get natural maps
\begin{equation}
\la{trm3}
\Tr_n(A)_{\bullet} :\,\rHC_{\bullet}(A) \to
\H_{\bullet}(A,n)^{\GL_n}\ ,\quad \forall\,n \ge 0 \ .
\end{equation}
In degree zero, $ \Tr_n(A)_0 $ is induced by the obvious linear map
$ A \to k[\Rep_n(A)]^{\GL_n} $ defined by taking characters of representations. Thus, the higher components of \eqref{trm3} may be thought of as derived (or higher) characters of $n$-dimensional representations of $A$. For each $ n \ge 1 $, these characters assemble to a single homomorphism of graded commutative algebras which we denote
$$
\bSym\, \Tr_n(A)_{\bullet}:\,
\bSym\,[\rHC_{\bullet}(A)] \to
\mathrm{H}_{\bullet}(A,n)^{\GL}\ .
$$
An explicit formula evaluating $\Tr_n(A)_{\bullet}$ on cyclic chains is given in~\cite[Section 4.3]{BKR}.

\section{Representation homology vs Lie (co)homology} \la{sec3}
The main result of this section (Theorem~\ref{main}) identifies the representation homology of a DG algebra $A$ with homology of the Lie coalgebra $\gl_n^{\ast}(C)$ defined over a Koszul dual DG coalgebra $C$. This crucial observation is the starting point for the present paper.

\subsection{Chevalley-Eilenberg complexes}
\la{CEcoalg}
First, we recall the definition of the classical Chevalley-Eilenberg complex (cf. \cite{Q2}, Appendix B). If $\g$ is a DG Lie algebra, the Chevalley-Eilenberg complex of
$\g$ with trivial coefficients is the (coaugmented, conilpotent) cocommutative DG coalgebra
$$
\C(\g;k)\,:=\, (\bSym^c(\g[1]),\, d_1+d_2)\ ,
$$
where $d_1$ is induced by the differential on $\g$ and $d_2$ is the coderivation whose corestriction to $\g[1]$ is given by the composite map
$$
\bSym^2(\g[1])\,\cong\, k[1] \otimes k[1] \otimes \wedge^2 \g
\xrightarrow{\mu_1 \otimes [\mbox{--}\,,\,\mbox{--}]} k[1] \otimes \g \,\cong\,\g[1] \,\text{.}
$$
Here, $\,\mu_1: k[1] \otimes k[1] \to k[1] \,$ is the natural map of degree $-1$
and $[\mbox{--}\,,\,\mbox{--}]$ is the Lie bracket on $\g$.
If $\h \subset \g$ is a DG Lie subalgebra, the relative Chevalley-Eilenberg complex
$ \C(\g,\h;k) $ is the DG coalgebra $ (\bSym^c[(\g/\h) [1]], d_1+d_2)_{\h} $, where
$ (\,\mbox{--}\,)_{\h} $ denotes the subcomplex of $\h$-coinvariants in
$ \bSym^c[(\g/\h) [1]] $. We denote the homology of the complex $\C(\g;k)$ (resp.,
$\,\C(\g,\h;k)$) by $\,\H_{\bullet}(\g;k)\,$ (resp., $\,\H_{\bullet}(\g,\h;k)\,$).

Now, dually, if $\mfc$ is a DG Lie coalgebra with Lie cobracket $]\, \mbox{--}\, [ \,:\,\mfc \rar \wedge^2\mfc$, the Chevalley-Eilenberg complex of $\mfc$ is defined to be the (augmented) commutative DG algebra
$$
\C^c(\mfc;k)\,:=\, (\bSym(\mfc[-1]),\, d_1+d_2)\ ,
$$
where $d_1$ is induced by the differential on $\mfc$ and $d_2|_{\mfc[-1]}$ is given by the composite map
\begin{equation} \la{diff2}
\begin{diagram}
\mfc[-1] \cong k[-1] \otimes \mfc &\rTo^{\Delta_{-1} \otimes ]\,\mbox{--}\,[\ } & k[-1] \otimes k[-1] \otimes \wedge^2\mfc \,\cong\, \bSym^2(\mfc[-1])\, \text{.}
\end{diagram}
\end{equation}
Here, $\Delta_{-1}\,:\,k[-1] \rar k[-1] \otimes k[-1]$ takes $1_{k[-1]}$ to $-1_{k[-1]} \otimes 1_{k[-1]}$. When one has a surjection $\mfc \twoheadrightarrow \mfd$
of DG Lie coalgebras, the commutative DG algebra
$ (\bSym(\Ker(\mfc \rar \mfd)[-1]), d_1+d_2)$ comes equipped with a coaction of $\mfd$.
The Chevalley-Eilenberg complex $\C^c(\mfc,\mfd;k)$ of the pair $(\mfc,\mfd)$ is the
DG subalgebra of $\mfd$-invariants of  $(\bSym(\Ker(\mfc \rar \mfd)[-1]), d_1+d_2)$.
Note that if $\h$ is a finite-dimensional DG Lie algebra and $\mfd =\mathfrak{h}^{\ast}$,
\[
\C^c(\mfc,\mfd;k) \,\cong\, (\bSym(\Ker(\mfc \rar \mfd)[-1]), d_1+d_2)^{\mathfrak{h}} \,\text{.}
\]
We denote the homology of the complex $\C^c(\mfc;k)$ (resp., $\,\C^c(\mfc,\mfd;k)$) by $\H_{\bullet}(\mfc;k)$ (resp., $\H_{\bullet}(\mfc,\mfd;k)$).
Note that if $\mfc$ is concentrated in homological degree $0$, then
$$\H_{-1}(\mfc;k) \,\cong\, \Ker(\,]\,\mbox{--}\,[\,:\,\mfc \rar \wedge^2\mfc)\,\text{.}$$
This is dual to the basic fact that $\H_1(\g;k)
\cong \g/[\g,\g]$ for a Lie algebra $\g$ concentrated in degree $0$.

We note that the Chevalley-Eilenberg complex $\CE(\g;k)$ of a DG Lie algebra $\g$ is the analogue of the bar construction of a DG algebra, while the Chevalley-Eilenberg complex $\CE^c(\mfc;k)$ of a DG Lie coalgebra is the analog of the cobar construction of a DG coalgebra (see Section~\ref{sec6.1}). We write $ \CE(\g;k) = \bB_{\mathtt{Lie}}(\g)$ and $ \CE^c(\mfc;k) = \cb_{\mathtt{Lie}}(\mfc)$ when
we want to emphasize these facts.

\subsection{Derived representation schemes and homology of Lie coalgebras}
For a DG coalgebra $M$ and an augmented DG coalgebra $ C \in \DGC_{k/k}\,$, define 
the tensor product $\,\dotimes\, $ by $\, M \,\dotimes\, C := 
k \oplus (M \varotimes \bar{C})\,$.
Note that $\,\overline{M \dotimes C}\,=\,M  \otimes \bar{C}\,$.
Recall from~\eqref{augmatrixfunctor} that $\M_n'(B)$ denotes the unitalization of the nonunital DG algebra $\M_n(\bar{B})$ for $B\,\in\,\cDGA_{k/k}\,$; further, recall from \eqref{root} the (noncommutative) representation functor $\,\sqrt[n]{\,\mbox{--}\,}\,$.
The following key observation relates $\,\sqrt[n]{\,\mbox{--}\,}\,$ to the cobar construction.

\bprop
\la{ncmain}
There is a natural isomorphism of functors from $\DGC_{k/k}$ to $\DGA_{k/k}$
\begin{equation} \la{cobarncrep}  \sqrt[n]{\,\mbox{--}\,} \circ \cb(\mbox{--})\,\cong\, \cb(\M_n^{\ast}(k) \dotimes \,\mbox{--}\,) \end{equation}
In particular, for any $C\,\in\,\DGC_{k/k}$, there is a natural isomorphism of DG algebras
$$ 
\sqrt[n]{\cb(C)}\,\cong\, \cb(\M_n^{\ast}(k) \dotimes C)\,\text{.}
$$
\eprop

\begin{proof}
Indeed, for any $B \in \DGA_{k/k}$, we have
\begin{eqnarray*}
\Hom_{\DGA_{k/k}}(\cb(\M_n(k)^{\ast} \dotimes C), B) & \cong & \Tw(\M_n^{\ast}(k) \dotimes C, B)\\
                                                                               & \stackrel{\mathtt{def}}{=} & \mathbf{MC}(\mathbf{Hom}(\M_n^{\ast}(k) \otimes \bar{C}, \bar{B}))\\
                                 & \cong & \mathbf{MC}(\mathbf{Hom}(\bar{C}, \mathbf{Hom}(\M_n^{\ast}(k), \bar{B}))\\
                                & = & \Tw(C, \mathbf{Hom}(\M_n^{\ast}(k), \bar{B})')\\
                                & = & \Tw(C, \M_n'(B))\\
                                & \cong & \Hom_{\DGA_{k/k}}(\cb(C), \M_n'(B))\\
                                & \cong & \Hom_{\DGA_{k/k}}(\!\sqrt[n]{\cb(C)} , B)
\end{eqnarray*}
The desired proposition now follows from Yoneda's lemma.
\end{proof}

\begin{remark} 
Proposition~\ref{ncmain} implies that $ \sqrt[n]{\,\mbox{--}\,}\, $ is a homotopy functor
on $ \DGA_{k/k} $ in the sense that it preserves weak equivalences: in fact, it induces a functor
on $ \Ho(\DGA_{k/k}) $ given by the formula
$$
\sqrt[n]{\,\mbox{--}\,}\,\cong\, \cb \circ \M_n^{\ast}(\,\mbox{--}\,) \circ \bar{\bB}\ ,
$$
where $ \bar{\bB} $ is the reduced bar construction.
\end{remark}

\vspace{1ex}

The following theorem makes precise the statement made in the Introduction
that representation homology is Koszul dual to Lie algebra homology.
This is one of the key results of the present paper. While it is an easy corollary of Proposition~\ref{ncmain}, we state it as a theorem in order to highlight its importance.
\begin{theorem}
\la{main}
Let $A \in \DGA_{k/k}$ and let $ C \in \DGC_{k/k} $ be a Koszul dual coalgebra of $ A $. Then, for any $n \ge 1$, there are isomorphisms in $\Ho(\cDGA_{k/k})\,$:
\begin{equation}
\la{F1}
 \DRep_n(A) \,\cong \, \C^c(\gl_n^{\ast}(\bar{C});k)\ ,\quad
 \DRep_n(A)^{\GL} \,\cong \, \C^c(\gl_n^{\ast}(C),\,\gl_n^{\ast}(k);k)\,\text{.}
\end{equation}
Consequently,
\begin{equation}\la{F2}
\H_{\bullet}(A,n) \, \cong\, \H_{\bullet}(\gl_n^{\ast}(\bar{C});k)\ ,\quad
\H_{\bullet}(A,n)^{\GL} \, \cong \, \H_{\bullet}(\gl_n^{\ast}(C),\,\gl_n^{\ast}(k);k)\,\text{.}
\end{equation}
\end{theorem}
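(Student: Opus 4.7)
My plan is to derive Theorem~\ref{main} by chaining together Koszul duality, Proposition~\ref{ncmain}, and an identification of the abelianized cobar construction with a Chevalley--Eilenberg complex. First, since $C$ is Koszul dual to $A$, Theorem~\ref{Kosc}(v) supplies a weak equivalence $\cb(C)\xrightarrow{\sim} A$ in $\DGA_{k/k}$; as every cobar construction is cofibrant in $\DGA_{k/k}$ (see the discussion preceding Theorem~\ref{cbbeq}), the algebra $\cb(C)$ serves as a cofibrant resolution of $A$. Hence, by Theorem~\ref{nS2.1t2}(b),
\[
\DRep_n(A) \;\cong\; \cb(C)_n \;=\; \bigl(\!\sqrt[n]{\cb(C)}\bigr)_{\nn}
\]
in $\Ho(\cDGA_{k/k})$. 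Applying Proposition~\ref{ncmain} gives $\sqrt[n]{\cb(C)} \cong \cb(\M_n^{\ast}(k)\,\dotimes\, C)$, so the task reduces to computing the abelianization of $\cb(D)$ for $D := \M_n^{\ast}(k)\,\dotimes\, C$.

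The key step is a natural isomorphism of commutative DG algebras valid for any $D \in \DGC_{k/k}$,
\[
\cb(D)_{\nn} \;\cong\; \C^c(\bar{D}_{\mathrm{Lie}};\,k),
\]
where $\bar{D}_{\mathrm{Lie}}$ denotes the augmentation ideal $\bar{D}$ equipped with the cocommutator cobracket $\bar{\Delta}-\tau\circ\bar{\Delta}$. Both sides agree as graded commutative algebras with $\bSym_k(\bar{D}[-1])$. To match differentials, I would restrict to generators $\bar{D}[-1]$: the cobar differential $d_2$ sends $x\mapsto \bar{\Delta}(x) \in T^2(\bar{D}[-1])$, and its image in $\bSym^2(\bar{D}[-1])$ kills the Koszul-antisymmetric part; tracking signs, this image coincides (up to a rescaling absorbed into the sign convention $\Delta_{-1}(1)=-1\otimes 1$ of~\eqref{diff2}) with the Chevalley--Eilenberg differential of the Lie coalgebra $\bar{D}_{\mathrm{Lie}}$.

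Applying this lemma to $D = \M_n^{\ast}(k)\,\dotimes\, C$, for which $\bar{D} = \M_n^{\ast}(k)\otimes\bar{C}$, I would identify $(\M_n^{\ast}(k)\otimes\bar{C})_{\mathrm{Lie}}$ with the Lie coalgebra $\gl_n^{\ast}(\bar{C})$: this follows from the fact that the Lie bracket on $\gl_n = \M_n(k)$ is the associative commutator, so its linear dual cobracket on $\gl_n^{\ast}$ is exactly the cocommutator of the matrix coalgebra $\M_n^{\ast}(k)$, and tensoring with the coalgebra $\bar{C}$ preserves this identification. Combining with the previous steps yields the first isomorphism in~\eqref{F1}. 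For the second isomorphism, I would track the $\GL_n$-action through each step: the conjugation action on $\cb(C)_n$ defined via~\eqref{GLact} corresponds, after applying Proposition~\ref{ncmain} and the lemma, to the coadjoint action of $\GL_n$ on the first tensor factor of $\gl_n^{\ast}(\bar{C})$. Passing to invariants and using Theorem~\ref{nS2.1t5}(b), together with the description of $\C^c(\gl_n^{\ast}(C),\gl_n^{\ast}(k);k)$ as the $\gl_n(k)$-invariant part of $\bSym(\gl_n^{\ast}(\bar{C})[-1])$, produces the second identification in~\eqref{F1}. The homology statements in~\eqref{F2} then follow immediately by taking $\H_{\bullet}$ of both sides.

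The main obstacle I anticipate is the sign bookkeeping in the core lemma $\cb(D)_{\nn}\cong \C^c(\bar{D}_{\mathrm{Lie}};k)$: the choice $\Delta_{-1}(1)=-1\otimes 1$ entering~\eqref{diff2} is engineered precisely to match the symmetrized image of the cobar differential, but confirming this requires an explicit Koszul-sign computation and careful tracking of the identification $k[-1]\otimes k[-1]\otimes \wedge^2\mfc \cong \bSym^2(\mfc[-1])$. A secondary technical point is verifying that the isomorphism of Proposition~\ref{ncmain} is $\GL_n$-equivariant for the coadjoint action on $\M_n^{\ast}(k)$; this should follow from the naturality of the Yoneda argument used in its proof, but merits an explicit check.
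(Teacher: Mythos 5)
Your proposal is correct and follows essentially the same route as the paper: Koszul duality supplies the cofibrant resolution $\cb(C)\stackrel{\sim}{\to}A$, Proposition~\ref{ncmain} gives $\sqrt[n]{\cb(C)}\cong\cb(\M_n^\ast(k)\,\dotimes\,C)$, and the abelianized cobar construction is identified with the Chevalley--Eilenberg complex of the cocommutator Lie coalgebra. The one place where you diverge is cosmetic: the ``key lemma'' $\cb(D)_{\nn}\cong\C^c(\mathcal{L}ie^c(D);k)$ that you propose to establish by a direct sign computation is precisely the isomorphism of functors~\eqref{isofun3} in Theorem~\ref{variouscbb}, which the paper cites rather than re-proves; so you are supplying the proof of a step the paper delegates, not taking a different path.
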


\begin{proof}
Note that $\,\DRep_n(A) \,\cong \,\cb(C)_n\,=\,[\!\sqrt[n]{\cb(C)}]_{\nn} $.
By Proposition~\ref{ncmain},
$$
[\!\sqrt[n]{\cb(C)}]_{\nn}\,\cong\, \cb(\M_n^{\ast}(k) \dotimes C)_{\nn}\,\text{.}
$$
By Theorem~\ref{variouscbb} (in particular, see~\eqref{isofun3}),
$$
\cb(\M_n(k) \dotimes C)_{\nn}\,\cong\, \C^c({\mathcal Lie}^c(\M_n^{\ast}(k) \dotimes C);k) \,=\,
\C^c(\gl_n^{\ast}(\bar{C});k)\,\text{.}
$$
This proves that $\DRep_n(A)\,\cong\,\C^c(\gl_n^{\ast}(\bar{C});k)$. Now, the $\GL_n(k)$-invariant subalgebra of $\C^c(\gl_n^{\ast}(\bar{C});k)$ is indeed
$\C^c(\gl_n^{\ast}(C),\gl_n^{\ast}(k);k)$. This completes the proof of the desired theorem.
\end{proof}

\bcor
Let $A, C$ be as in Theorem~\ref{main}. Assume, in addition, that $ \dim_k(C) < \infty $, and
let $E\,:=\,C^{\ast}$ be the (linear) dual DG $k$-algebra. Then,
\begin{equation} \la{F8}
\H_{\bullet}(A,n)\,\cong\, \H^{-\bullet}(\gl_n(\bar{E});k)\,,\quad\,\H_{\bullet}(A,n)^{\GL} \, \cong \, \H^{-\bullet}(\gl_n(E),\gl_n(k);k) \ ,
\end{equation}
where $\H^{-\bullet}$ denotes Lie algebra {\rm cohomology} with negative grading.
\ecor

\bproof
If $C$ is finite-dimensional and $E\,:=\,C^{\ast}$, then $\H_{\bullet}(\gl_n^{\ast}(\bar{C});k) \,\cong\, \H^{-\bullet}(\gl_n(E);k)$ by~\eqref{duality2}. Hence, $\H_{\bullet}(\gl_n^{\ast}(C),\gl_n^{\ast}(k);k)\,\cong\,\H^{-\bullet}(\gl_n(E),\gl_n(k);k)$. The result is now immediate from Theorem~\ref{main}.
\eproof

\begin{remark}
Theorem~\ref{main} gives a natural interpretation to the Chevalley-Eilenberg Lie homology
of the matrix Lie coalgebra $ \gl_n^{\ast}(\bar{C}) $. Note that, in the first isomorphisms
in \eqref{F1} and \eqref{F2}, $\, \gl_n^\ast $ is defined
over $ \bar{C} = \mathrm{Coker}(\varepsilon_C)\,$, not $ C $ itself.
It is natural to ask whether the complex $ \C^c(\gl_n^{\ast}(C); k) $ for an arbitrary
(but not necessarily augmented) coalgebra $C$ can be identified with
the $ \DRep_n $ of some DG algebra $A$. The answer
is `yes'\,: the corresponding $A$ is given by the {\it extended} cobar
construction $ \cb^{\rm ext}(C) $ introduced in \cite{AJ}.
Precisely, for any counital coalgebra $C$, Anel and Joyal (see \cite[Sect. 5.3]{AJ})
define $ \cb^{\rm ext}(C) $ as the Sweedler product\footnote{See Appendix,
\eqref{swidpr}, for
the definition of the Sweedler product.} $ C \rhd {\rm MC} \,$ of $ C $
with the Maurer-Cartan algebra $ {\rm MC} $, which is the free DG algebra generated by
one element $ u $ of degree $-1$ with differential $ du = - u^2 $. The argument of
Proposition~\ref{ncmain} combined with results of \cite{AJ} gives a
natural isomorphism of functors $\,\DGC_{k} \to \DGA_{k} \,$:
\begin{equation*}
\sqrt[n]{\,\mbox{--}\,} \circ \cb^{\rm ext}(\mbox{--})\,\cong\,
\cb^{\rm ext}(\M_n^{\ast}(k)\otimes \,\mbox{--}\,) \ ,
\end{equation*}
and the result of Theorem~\ref{main} can thus be extended to arbitrary DG coalgebras as
$$
\C^c(\gl_n^{\ast}(C);k)\,\cong\,\DRep_n[\cb^{\rm ext}(C)]\ .
$$
\end{remark}

\subsubsection{Representation homology of bimodules}
In~\cite[Section 5]{BKR}, we defined representation homology of a DG algebra $A$ with coefficients
in an arbitrary DG bimodule over $A$. Proposition~\ref{VdBprop} below gives an interpretation
of this construction in terms of homology of Lie coalgebras.

Let $\mathtt{Bimod}(A)$ denote the category of DG bimodules over $A$.
For a fixed $ n \ge 1 $, the universal representation $\pi_n\,:\,A \rar \M_n(A_n)$ makes
$\M_n(A_n) $ a DG $A$-bimodule, which has also a natural DG $A_n$-module structure
compatible with the action of $A$. This allows one to define the functor
$$
(\,\mbox{--}\,)^{\mathrm{ab}}_n\,:\,
\mathtt{Bimod}(A) \rar \mathtt{Mod}(A_n)\,,\quad\,
M \mapsto M \otimes_{A^{\mathrm{e}}} \M_n(A_n)\,\text{.}
$$
which we call the van den Bergh functor (as it first appeared in \cite{VdB}).
In \cite[Section 5]{BFR}, it is shown that $\,(\,\mbox{--}\,)^{\mathrm{ab}}_n \,$
is the abelianization of the non-additive representation functor \eqref{rootab}
in the sense of model categories (cf. \cite[Section~II.5]{Q1}), whence its notation.

The derived functor of the van den Bergh functor is computed by the
formula (see \cite[Theorem~5.1]{BKR})
$$
\L (M)^{\mathrm{ab}}_n = [F(R,M)]^{\mathrm{ab}}_n\ ,
$$
where $\,R \stackrel{\sim}{\rar} A\,$ is a cofibrant resolution of $A$ in $\DGA_k$ and
$F(R,M)$ is a semi-free resolution of $M$ in $\mathtt{Bimod}\,R$.
This is independent of the choice of resolutions and allows one to define
the $n$-th representation homology of a DG bimodule $ M \in \Bimod(A) $ by
$$
\H_{\bullet}(M,n)\,:=\, \H_{\bullet}[F(R,M)^{\mathrm{ab}}_n]\ .
$$
Now, assume that $A \,\in\,\DGA_{k/k}$ and let $C\,\in\,\DGC_{k/k}$ be a Koszul dual coalgebra
to $A$.
\bprop \la{VdBprop}
Let $M \in \Bimod(A) $, and let $N$ be a DG $C$-bicomodule such that there is a quasi-isomorphism
$ \cb(C) \otimes_{\tau_C} N \otimes_{\tau_C} \cb(C) \stackrel{\sim}{\rar} M$ in $
\mathtt{Bimod}[\cb(C)] \,$, with $\tau_C\,:\,C \rar \cb(C)$ denoting the universal twisting 
cochain. Then, there is an isomorphism
$$ \H_{\bullet}(M, n)\,\cong\, \H_{\bullet}(\gl_n^{\ast}(\bar{C});\gl_n^{\ast}(N))\,,$$
where $\gl_n^{\ast}(N)$ denotes $\M_n^{\ast}(N)$ viewed as a DG Lie comodule over $\gl_n^{\ast}(\bar{C})$ via the coadjoint coaction.
\eprop

\bproof
Indeed, $\cb(C) \stackrel{\sim}{\rar} A$ is a cofibrant resolution of $A$ in $\DGA_{k/k}$ and $ \cb(C) \otimes_{\tau_C} N \otimes_{\tau_C} \cb(C) \stackrel{\sim}{\rar} M$ is a semifree resolution $M$ in $\mathtt{Bimod}\,\cb(C)$. Hence,
$$\L M_n^{\mathrm{ab}} \,\cong\, [\cb(C) \otimes_{\tau_C} N \otimes_{\tau_C} \cb(C)]^{\mathrm{ab}}_n \,\text{.}$$
Since $\cb(C) \otimes_{\tau_C} N \otimes_{\tau_C} \cb(C) \,\cong\, N \otimes \cb(C)^{e}$ as graded $\cb(C)$-bimodules, $[\cb(C) \otimes_{\tau_C} N \otimes_{\tau_C} \cb(C)]^{\mathrm{ab}}_n$ is generated as a graded $\cb(C)_n$-module by $\M_n^{\ast}(N)$. By Theorem~\ref{main}, $\cb(C)_n\,\cong\,\C^c(\gl_n^{\ast}(\bar{C});k)$. A direct computation then verifies that the differential on elements of $\M_n^{\ast}(N)$ in $[\cb(C) \otimes_{\tau_C} N \otimes_{\tau_C} \cb(C)]^{\mathrm{ab}}_n$ is the sum of a term induced by the coadjoint coaction of $\gl_n^{\ast}(\bar{C})$ on $\M_n^{\ast}(N)$ and another term induced by the differential of $N$. This proves that
$$ [\cb(C) \otimes_{\tau_C} N \otimes_{\tau_C} \cb(C)]^{\mathrm{ab}}_n\,\cong\, \C^c(\gl_n^{\ast}(\bar{C});\gl_n^{\ast}(N))\,\text{.}$$
The desired result is now immediate.
\eproof

\subsection{The Tsygan-Loday-Quillen theorem for coalgebras}
\la{LQTco}
For a coalgebra $ C \in \DGC_{k/k}$, let $\overline{\mathrm{CC}}(C)$ denote
the complex of reduced cyclic chains on $C$ (the definition of $\overline{\mathrm{CC}}(C)$  is formally dual to the definition of the reduced cyclic complex $\overline{\mathrm{CC}}(A)$ of an augmented DG algebra $A\,$, see~\cite[2.1.4 and 2.2.13]{L}).
For each $ n \ge 0 $, there is a generalized cotrace map (cf. \cite[2.2.10]{L}):
$$
\cTr_n(C)\,:\, \overline{\mathrm{CC}}(C) \rar \overline{\mathrm{CC}}
[\M_n^{\ast}(k)\, \dotimes\, C] \ ,
$$
where $\,\M_n^{\ast}(k) := \M_n(k)^* $ is the matrix coalgebra dual to $ \M_n(k) $.
Furthermore, dual to the construction of \cite[10.2.3]{L}, one has a natural morphism of
complexes
$$
\theta\,:\,\overline{\mathrm{CC}}(\M_n^{\ast}(C))[-1] \rar
\C^c(\gl_n^{\ast}(C),\gl_n^{\ast}(k);k)\,,
$$
which is induced by the projection
$$
(\alpha_0,\ldots,\alpha_n) \mapsto \alpha_0 \wedge \ldots \wedge \alpha_n \,\text{.}$$
Abusing notation, we denote the composite map $\,\theta \circ \cTr_n(C)\,$ by $\cTr_n(C)$.
Note that for any $n$, there is a canonical epimorphism in $\cDGA_{k/k}$:
$$
\mu_{n+1,n}\,:\,\C^c(\gl_{n+1}^{\ast}(C), \,\gl_{n+1}^{\ast}(k);k)
\twoheadrightarrow \C^c(\gl_n^{\ast}(C),\,\gl_n^{\ast}(k);k)
$$
Therefore, we can pass to the projective limit
$$
\C^c(\gl_{\infty}^{\ast}(C),\, \gl^{\ast}_{\infty}(k);k) \,:=\, \varprojlim_n \,\C^c(\gl_n^{\ast}(C),\,\gl_n^{\ast}(k);k)\,\text{.}$$
It is easy to check that $\mu_{n+1,n} \circ \cTr_{n+1}(C)\,=\,\cTr_n(C)\,$. Hence we have
a morphism of complexes
$$
\cTr_{\infty}(C)\,:\, \overline{\mathrm{CC}}(C)[-1] \rar
\C^c(\gl_{\infty}^{\ast}(C), \gl^{\ast}_{\infty}(k);k)\text{.}
$$
This extends by multiplicativity to a morphism of DG algebras
\begin{equation}
\la{symcotr}
\bSym(\cTr_{\infty}) :\, \bSym_k(\overline{\mathrm{CC}}(C)[-1]) \rar  \C^c(\gl_{\infty}^{\ast}(C), \gl^{\ast}_{\infty}(k);k)\text{.}
\end{equation}
Now, by a result of Quillen \cite{Q3}, there is a natural isomorphism
$$
\cb(C)_{\n} \,\cong\, \overline{\mathrm{CC}}(C)[-1]\ ,
$$
where $ (\,\mbox{--}\,)_\n $ is the cyclic functor defined by \eqref{cycf}.
On the other hand, by Theorem~\ref{main}, we have an isomorphism $\, \cb(C)_n^{\GL} \,\cong \,
\C^c(\gl_n^{\ast}(C),\,\gl_n^{\ast}(k);k)\,$. The following lemma is verified by an easy computation which we leave to the reader.
\blemma
\la{cotreqtrfin}
For any $n \ge 0 $, the following diagram commutes:
$$
\begin{diagram}[small, tight]
    \cb(C)_{\n} & \rTo_{\ \rm{Quillen}\ }^{\cong} & \overline{\mathrm{CC}}(C)[-1]\\
        \dTo^{\Tr_n}  & & \dTo_{\cTr_n}\\
      \cb(C)_n^{\GL} & \rTo_{\ \rm{Theorem~\ref{main}}\ }^{\cong} & \C^c(\gl_n^{\ast}(C),\gl_n^{\ast}(k);k)
 \end{diagram}
$$
\elemma
As a consequence, we get an isomorphism of (topological) DG algebras
\begin{equation}
\la{topiso}
\C^c(\gl_{\infty}^{\ast}(C), \gl^{\ast}_{\infty}(k);k)\,\cong\,
[\cb(C)]_{\infty}^{\GL_{\infty}}
\end{equation}
\bprop
\la{cotreqtr}
Under \eqref{topiso}, the morphism \eqref{symcotr} is identified with
$\,\bSym\,(\Tr_{\infty}) :\, \cb(C)_{\n} \rar \cb(C)_{\infty}^{\GL_{\infty}}$.
\eprop
Proposition~\ref{cotreqtr} combined with \cite[Lemma~4.1]{BR} implies that
the image of \eqref{symcotr} is a dense DG subalgebra of $\,
\C^c(\gl_{\infty}^{\ast}(C), \gl^{\ast}_{\infty}(k);k)\,$, which following \cite{BR}, we
denote by $\,\C^{\Tr}(\gl_{\infty}^{\ast}(C), \gl^{\ast}_{\infty}(k);k)$.
The main theorem of \cite{BR} (precisely, \cite[Theorem~4.4]{BR}) then
implies the following result, which is a (relative variant) of the Tsygan-Loday-Quillen
theorem for coalgebras.

\begin{theorem} \la{tlq}
For any $C \in \DGC_{k/k} $, there are natural isomorphisms of DG algebras
$$\bSym_k(\overline{\mathrm{CC}}(C)[-1]) \,\cong\, \bSym_k[\cb(C)_{\n}] \,
\cong\, \C^{\Tr}(\gl_{\infty}^{\ast}(C),\, \gl^{\ast}_{\infty}(k);k)\ .
$$
\end{theorem}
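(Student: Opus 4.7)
The plan is to combine three previously established ingredients: Quillen's isomorphism $\cb(C)_\n \cong \overline{\mathrm{CC}}(C)[-1]$, the $\GL_n$-equivariant identification of $\cb(C)_n^{\GL}$ with the relative Chevalley--Eilenberg complex of $\gl_n^*$ (Theorem~\ref{main}), and the DG Tsygan--Loday--Quillen theorem of \cite[Theorem~4.4]{BR}.

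The left isomorphism is essentially tautological: Quillen's natural isomorphism of complexes $\cb(C)_\n \cong \overline{\mathrm{CC}}(C)[-1]$ is functorial, so applying the graded symmetric algebra functor $\bSym_k$ yields the claimed isomorphism of commutative DG algebras.

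For the right isomorphism, I would pass to the inverse limit $n \to \infty$ of Theorem~\ref{main}. The isomorphism~\eqref{topiso} identifies $\C^c(\gl_\infty^*(C), \gl_\infty^*(k); k)$ with the topological DG algebra $\cb(C)_\infty^{\GL_\infty}$, and under this identification Proposition~\ref{cotreqtr} states that the multiplicative cotrace $\bSym(\cTr_\infty)$ of~\eqref{symcotr} is precisely the multiplicative trace $\bSym(\Tr_\infty) : \bSym_k[\cb(C)_\n] \to \cb(C)_\infty^{\GL_\infty}$ built from the universal representations of the augmented DG algebra $R := \cb(C)$. I would then apply \cite[Theorem~4.4]{BR} to $R$: that theorem asserts that $\bSym(\Tr_\infty)$ is injective with dense image in $R_\infty^{\GL_\infty}$. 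By definition, this image is $\C^\Tr(\gl_\infty^*(C), \gl_\infty^*(k); k)$, so $\bSym(\Tr_\infty)$ descends to the required isomorphism onto its image.

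The main point to verify (rather than a genuine obstacle) is that the finite-$n$ identifications assemble coherently across the tower $\mu_{n+1,n}$: one must check that the isomorphisms of Theorem~\ref{main} are compatible with the transition maps (so that~\eqref{topiso} is genuinely an isomorphism of topological DG algebras), and that Proposition~\ref{cotreqtr} is indeed the projective limit of the finite-$n$ commutative squares recorded in Lemma~\ref{cotreqtrfin}. Once this compatibility is in hand, the theorem reduces to the algebra version of Tsygan--Loday--Quillen in the DG setting proved in \cite{BR}, transported to the coalgebra side via Koszul duality.
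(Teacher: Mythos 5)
Your proposal follows the paper's argument exactly: Quillen's identification $\cb(C)_\n \cong \overline{\mathrm{CC}}(C)[-1]$ for the left isomorphism, then Lemma~\ref{cotreqtrfin}, the limit identification \eqref{topiso}, Proposition~\ref{cotreqtr}, and the stabilization theorem of \cite{BR} to conclude. The only cosmetic discrepancy is that the paper credits the density of the image to \cite[Lemma~4.1]{BR} and reserves \cite[Theorem~4.4]{BR} for the isomorphism statement, whereas you fold both into Theorem~4.4; this is a bibliographic detail and does not affect the substance.
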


\vspace{1ex}

\noindent
{\bf Remarks.}
1. Theorem~\ref{tlq} can be also proven by dualizing the invariant-theoretic
arguments in~\cite[Ch. 10]{L}. With Theorem~\ref{main}, one can then presumably give
a different proof of~\cite[Theorem~4.4]{BR}. This, however, does not seem to answer the
questions of \cite{BR} regarding Koszul duality (see \cite[Question~5.2.1]{BR}).

2. If the DG coalgebra $C$ is bigraded (cf. Section~\ref{bigset} below),
then $\,\C^{\Tr}(\gl_{\infty}^{\ast}(C),\, \gl^{\ast}_{\infty}(k);k)$ coincides with
$\,\C^c(\gl_{\infty}^{\ast}(C),\, \gl^{\ast}_{\infty}(k);k)$. The bigraded dual of
Theorem~\ref{tlq} yields (see~\eqref{duality3}) the Tsygan-Loday-Quillen theorem
for the bigraded DG algebra $A\,:=\, C^*$. Hence, \cite[Proposition~7.5]{BR} implies
a version of the traditional Tsygan-Loday-Quillen theorem.

3. Let $E \in \DGA_{k/k}$ be finite-dimensional. If $C=E^*$, the homology of the complex
$\C^c(\gl_{\infty}^{\ast}(C),\gl_{\infty}^{\ast}(k);k)$ is isomorphic to $\H^{-\bullet} (\gl_{\infty}(E),\gl_{\infty}(k);k)$ by~\eqref{duality2}. By the cohomological version of the 
Tsygan-Loday-Quillen Theorem, the latter is isomorphic to
$\bSym_k [\rHC^{-(\bullet-1)}(E)]\,$. On the other hand, $\overline{\mathrm{CC}}(C)[-1]$ also has homology
$\rHC^{-(\bullet-1)}(E)$. It follows from Theorem~\ref{tlq} that
$$
\C^{\Tr}(\gl_{\infty}^{\ast}(C), \gl^{\ast}_{\infty}(k);k)\,\simeq\, \C^c(\gl_{\infty}^{\ast}(C), \gl^{\ast}_{\infty}(k);k)\text{.}
$$
It would be interesting to know if this is true in general. A nonrelative version of Theorem~\ref{tlq} was proven in~\cite{Kay} using a direct invariant-theoretic approach. It remains to be seen if such an approach can help settle the above question.

\vspace{1ex}

\subsection{Bigraded algebras}
\la{bigset}
Recall  from~\cite[Section 7]{BR}, that a {\it bigraded} DGA is a DG algebra equipped with homological as well as weight (polynomial) grading
that is finite-dimensional in each weight degree\footnote{Bigraded DGAs are also referred to as weight-graded DGAs or WDGAs. See~\cite[Sec. 1.5.11]{LV}.}. We remark that unlike in~\cite{BR}, we allow our DGA to have nonzero components in negative homological degree as well.
The differential is required to obey the graded Leibniz rule with respect to the
homological grading. Additionally, we require that a bigraded DGA be concentrated in nonnegative weight degrees and that
the component in weight degree $0$ be isomorphic to $k$. A bigraded DGA is therefore,  augmented as well.
Following~\cite[Section 7]{BR}, one can verify that the category $\bDGA_k$ of bigraded DGAs is a model category whose weak
equivalences are the quasi-isomorphisms and whose fibrations are the surjective morphisms.
The category $\bcDGA_k$ of bigraded commutative DGAs has an analogous model structure,

Similarly, a coalgebra $C \in \DGC_{k/k}$ will be called bigraded if $\bar{C}$ has a homological as well as weight grading,
is concentrated in positive weight degree and is finite-dimensional in each weight degree.
Note that in this case, $\cb(C)_{\nn}$ is a bigraded commutative DGA. In the bigraded setting,
we have the following result, the first part of which is Theorem~\ref{main} in the bigraded setting.

\begin{theorem} \la{mainbgr}
Let $ A \in \bDGA_k$, and let $ C $ be a Koszul dual coalgebra to $A$. Assume that
$ C $ is bigraded, and the quasi-isomorphism $\cb(C) \stackrel{\sim}{\twoheadrightarrow} A $ is
in $\bDGA_k$. Then,

$(a)$  In  $\,\Ho(\bcDGA_{k/k})$, there are isomorphisms
$$
\DRep_n(A)\,\cong\,\C^c({\gl}_n^{\ast}(\bar{C});k)\,,\,\,\,\,\,\DRep_n(A)^{\GL}\,\cong\,
\C^c(\gl_n^{\ast}(C),\gl_n^{\ast}(k);k)\,\text{.}
$$

$(b)$ If $E\,:=\,C^*$ is bigraded dual of $C$, then
$$ \H_{\bullet}(A,n) \,\cong\, \H^{-\bullet}(\gl_n(\bar{E});k)\,,\,\,\,\,
\H_{\bullet}(A,n)^{\GL}\,\cong\,\H^{-\bullet}(\gl_n(E),\gl_n(k);k)\ ,
$$
where $\,\H^{-\bullet}\,$ denotes (continuous) Lie algebra {\rm cohomology}.
\end{theorem}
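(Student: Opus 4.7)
The plan is to follow the proof of Theorem~\ref{main} essentially verbatim, verifying at each stage that the constructions involved preserve the bigrading. Specifically, if $C \in \DGC_{k/k}$ is bigraded as in Section~\ref{bigset}, then the cobar construction $\cb(C) = T_k(\bar{C}[-1])$ inherits a weight grading from $C$, and the noncommutative representation functor $\sqrt[n]{\,\mbox{--}\,}$ and the abelianization $(\,\mbox{--}\,)_{\nn}$ both preserve weight (since the matrix coalgebra $\M_n^*(k)$ is placed in weight zero, and the free-product / commutator constructions are homogeneous). Hence the functors of Section~\ref{secdrep} restrict to functors $\bDGA_{k/k} \to \bcDGA_{k/k}$. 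Since we are given a quasi-isomorphism $\cb(C) \sonto A$ in $\bDGA_k$, and $\cb(C)$ is cofibrant in the model structure of Section~\ref{bigset}, it serves as a cofibrant replacement for computing $\DRep_n(A)$ in $\Ho(\bcDGA_{k/k})$.

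For part (a), the first step is to check that Proposition~\ref{ncmain} has a bigraded version: the isomorphism $\sqrt[n]{\cb(C)} \cong \cb(\M_n^*(k)\dotimes C)$ holds in $\bDGA_{k/k}$. The proof is the same Yoneda argument, since the twisting cochain adjunction, the tensor product, and the internal $\mathbf{Hom}$ all respect the weight grading (and $\M_n^*(k)$ being finite-dimensional in weight zero causes no convergence issues). Abelianizing and invoking Theorem~\ref{variouscbb} then identifies $\DRep_n(A) \simeq \cb(\M_n^*(k)\dotimes C)_{\nn}$ with $\C^c(\gl_n^*(\bar{C});k)$ in $\Ho(\bcDGA_{k/k})$. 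Taking $\GL_n$-invariants yields the second isomorphism exactly as in Theorem~\ref{main}, since the $\GL_n$-action preserves the weight grading.

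For part (b), the key input is bigraded linear duality. Because $\bar{C}$ is finite-dimensional in each weight degree, the bigraded dual $\bar{E} = \bar{C}^*$ is pro-finite-dimensional, and there is a natural pairing of bigraded complexes
\[
\bSym(\gl_n^*(\bar{C})[-1]) \,\cong\, \bSym(\gl_n(\bar{E})[-1])^*
\]
in each weight, where the right-hand side is understood via the continuous/completed tensor product. This is the bigraded analogue of \eqref{duality2}, and it identifies the Chevalley-Eilenberg complex of the Lie coalgebra $\gl_n^*(\bar{C})$ with the (continuous) Chevalley-Eilenberg cochain complex of the topological Lie algebra $\gl_n(\bar{E})$, up to reversing the homological grading. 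Combining this with part (a) gives the stated isomorphisms for $\H_\bullet(A,n)$; the relative version follows by the same argument applied to the quotient $\gl_n^*(C)/\gl_n^*(k)$.

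The main technical obstacle, and the only real departure from the proof of Theorem~\ref{main}, is the precise handling of the topological structure on $\gl_n(\bar{E})$ in part (b). One must confirm that the weight-graded duality interchanges the symmetric algebra on $\gl_n^*(\bar{C})[-1]$ with the appropriate completed symmetric algebra on $\gl_n(\bar{E})[-1]$, and that under this correspondence the Chevalley-Eilenberg differential $d_2$ of \eqref{diff2} goes to the continuous Chevalley-Eilenberg cohomology differential. Once this bookkeeping is set up correctly, both parts follow directly from the non-bigraded case.
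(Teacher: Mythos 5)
Your proposal follows the same route as the paper: the authors prove Theorem~\ref{mainbgr}(a) by noting that the proof of Theorem~\ref{main} carries over word for word in the bigraded setting, and obtain (b) from (a) together with the bigraded version of the duality isomorphism~\eqref{duality2} (stated in the remark following Theorem~\ref{cbbdual}). The only blemish in your write-up is a sign error in the shift in the displayed duality for part (b): the underlying graded algebra of $\C^c(\gl_n^*(\bar C);k)$ is $\bSym(\gl_n^*(\bar C)[-1])$, which is the bigraded dual of $\bSym^c(\gl_n(\bar E)[1])$, not of $\bSym(\gl_n(\bar E)[-1])$; this is exactly the content of~\eqref{duality2} and does not affect the argument.
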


\begin{proof}
The proof of Theorem~\ref{main} goes through word for word in the bigraded setting: this gives
$(a)$. Part $(b)$ follows immediately from $(a)$ and~\eqref{duality2}.
\end{proof}

Note that for $A \in \bDGA_k$, at least one $E$ as in Theorem~\ref{mainbgr} exists: namely, the bigraded dual of the bar construction $\boldsymbol{B}(A)$.

\section{Derived Harish-Chandra homomorphism}
\la{sec4}
The classical Harish-Chandra homomorphism is defined by restricting the characters
of representations (viewed as functions on a representation variety)
to the subvariety of diagonal representations. In this section, we construct a
derived version of this homomorphism.

\subsection{Basic construction}
\la{basic_hc}
Recall that, for a fixed
DG algebra $ A \in \DGA_k $ and an integer $ n \ge 1 $, the affine DG scheme
of $n$-dimensional representations of $ A $ is defined by the functor $ \Rep_n(A) $ 
which is represented by the commutative DG algebra $ A_n $ (see Section~\ref{S1.2.1}). 
Now, we introduce the functor of {\it diagonal representations}:
\begin{equation}
\la{diag}
{\rm Diag}_n(A):\, \cDGA_k \to \Sets\ ,\quad B \mapsto \Hom_{\DGA_k}(A,\,B^{\times n})\ ,
\end{equation}
where $\,B^{\times n} \,$ denotes the (direct) product of $n$ copies of $ B $ in the category
of commutative DG algebras. There are natural isomorphisms of sets
$$
\Hom_{\DGA_k}(A,\,B^{\times n})\, \cong\,
\Hom_{\DGA_k}(A,\,B)^{\times n}\, \cong\,
\Hom_{\cDGA_k}(A_{\nn},\,B)^{\times n}\, \cong\,
 \Hom_{\cDGA_k}((A_\nn)^{\otimes n},\,B)\ ,
$$
which show that \eqref{diag} is (co)represented by the commutative DG algebra
$ (A_\nn)^{\otimes n} $, the $n$-th tensor power of the abelianization of $A$.

Next, observe that the functor  $\,{\rm Diag}_n(A)\,$ comes together with a natural
transformation
\begin{equation}
\la{diagrep}
{\rm Diag}_n(A)\,\to \,\Rep_n(A)\ ,
\end{equation}
defined by the algebra maps $\, B^{\times n} \into \M_n(B) \,$
identifying $ B^{\times n} $ with the subalgebra of diagonal matrices in $ \M_n(B) $.
By Yoneda's Lemma, this natural transformation gives a homomorphism of commutative DG algebras
\begin{equation}
\la{HChom}
\Phi_n(A):\, A_n\,\to\, (A_\nn)^{\otimes n}\ ,
\end{equation}
which is obviously functorial in $ A \in \DGA_k $. In terms of the generators of
Lemma~\ref{exppr}, the  homomorphism $ \Phi_n(A) $ is given by
\begin{equation}
\la{HChomf}
a_{ij} \mapsto \delta_{ij}\,a_i\ ,
\end{equation}
where $\, a_i = 1 \otimes \ldots \otimes a \otimes \ldots \otimes 1 \,$ is
the image in $ (A_\nn)^{\otimes n} $ of an element $ a \in A $
sitting in the $i$-th tensor factor.

Now, for a commutative DG algebra $ B \in \cDGA_k $, let $ S^n(B) $ denote the $n$-th
symmetric power of $B$, i.e. $\, S^n(B) := [B^{\otimes n}]^{S_n} \,$. By definition,
$ S^n(B) $ is a subalgebra of $ B^{\otimes n} \,$;
since $ k $ has characteristic $0$, we may identify $ S^n(B) $ with
the image of the symmetrization map $\, \bSym^n(B) \into B^{\otimes n} \,$.
\blemma
\la{resthom}
The map \eqref{HChom} restricts to a homomorphism of commutative DG algebras
\begin{equation}
\la{HChom1}
\Phi_n(A):\, A^{\GL}_n\,\to\, S^n(A_\nn)\ .
\end{equation}
\elemma
\bproof
Consider the (right) action of $ S_n $ on $ \M_n(k) $ by permuting
the columns of matrices: i.e., if $\,M = \|M_1, \ldots, M_n\| \in \M_n(k) \,$
then $ M^{\sigma} := \|M_{\sigma^{-1}(1)}, \ldots, M_{\sigma^{-1}(n)}\|\,$ for
$ \sigma \in S_n $. Define $\,\iota: S_n \to \GL_n(k) \,$ by $ \iota(\sigma) = I_n^{\sigma} $,
where $ I_n $ is the identity matrix in $ \M_n(k) $. Then, for any $ M \in \M_n(k) $,
we have $\,M^\sigma = M\,\iota(\sigma)\,$. Hence $ \iota $ is an injective group homomorphism.
We claim that \eqref{HChom} is an $ S_n $-equivariant map provided $ S_n $ acts
on $ A_n $ by restricting the canonical $\GL_n$-action via $ \iota $.
To see this recall that the $\GL_n $-action on $A_n$ comes from the adjoint action
on $ \M_n(k) $, see \eqref{GLact}. Hence it suffices to check that the
following diagram commutes for all $ \sigma \in S_n \,$:
\begin{equation}
\la{AdS}
\begin{diagram}[small, tight]
\M_n(A_n)           &   \rTo^{\id \otimes \Phi_n}  &   \M_n[(A_\nn)^{\otimes n}] \\
\dTo^{\Ad_{\iota(\sigma)}} &                               & \dTo_{\id \otimes \sigma } \\
\M_n(A_n)           &   \rTo^{\id \otimes \Phi_n}  &   \M_n[(A_\nn)^{\otimes n}]
\end{diagram}
\end{equation}
In terms of generators of $A_n $, the left vertical map in \eqref{AdS} is given by
$$
\Ad_{\iota(\sigma)}\,\| a_{ij}\| =
\iota(\sigma)\,\| a_{ij}\|\,\iota(\sigma)^{-1} = \iota(\sigma)\,\| a_{ij}\|\,\iota(\sigma^{-1}) =
I_n^{\sigma}\,\| a_{ij}\|\,I_n^{\sigma^{-1}} = \|a_{\sigma(i)\sigma(j)}\|
$$
On the other hand, by \eqref{HChomf}, the horizontal maps are
$\, \|a_{ij}\| \mapsto \diag(a_1, a_2, \ldots, a_n) \,$. The commutativity
of \eqref{AdS} is therefore obvious. Since $ \Phi_n(A) $ is an equivariant map, it takes invariants to invariants.
\eproof

The functor $\,S^n(\,\mbox{--}\,)_\nn: \DGA_k \to \cDGA_k \,$ is not a homotopy functor in the 
sense that it does not preserve weak equivalences and hence does not descend to homotopy categories. However, just as the representation functor \eqref{rootab}, $\,S^n(\,\mbox{--}\,)_\nn\,$ admits a left derived functor that gives a functorial approximation to the induced functor at the level of homotopy categories.
\bprop
\la{derfu}
$(a)$ The functor $ S^n(\,\mbox{--}\,)_\nn $ has a total left derived functor given by
$$
\L S^n(\,\mbox{--}\,)_\nn :\, \Ho(\DGA_k) \to \Ho(\cDGA_k)\ , \quad
A \mapsto S^n[(QA)_\nn]\ ,
$$
where $ QA $ is a cofibrant resolution of $A$.

$(b)$ For any $ A \in \DGA_k $, there is a natural isomorphism of graded commutative algebras
$$
\H_\bullet[\L S^n(A)_\nn] \,\cong\, S^n[\H_\bullet(A,\,1)]\ .
$$
\eprop
\bproof
$(a)$ The abelianization $\, (\,\mbox{--}\,)_\nn :\, \DGA_k \to \cDGA_k $
is a left Quillen functor adjoint to the inclusion $\,\cDGA_k \into \DGA_k \,$.
Hence it maps acyclic cofibrations in $ \DGA_k $ to weak equivalences. On the other hand,
the functor $ S^n $ commutes with taking homology
(see, e.g., \cite[Appendix~B, Prop.~2.1]{Q2}), hence it maps weak equivalences
(= quasi-isomorphisms) in $\,\cDGA_k $ to weak equivalences. It follows that
$\,S^n(\,\mbox{--}\,)_\nn = S^n \circ (\,\mbox{--}\,)_\nn\,$ maps acyclic
cofibrations to weak equivalences. Hence $\,S^n(\,\mbox{--}\,)_\nn $ has a total left derived functor by Brown's Lemma (see, e.g., \cite[Lemma~A.2]{BKR}).

$(b)$ follows from the obvious isomorphism $\,(\,\mbox{--}\,)_\nn \cong (\,\mbox{--}\,)_{1}\,$
and the fact that $\,S^n \circ \H_{\bullet} \cong  \H_{\bullet} \circ S^n \,$.
\eproof

\noindent
{\bf Notation.} We will write the derived functor $\, \L S^n(\,\mbox{--}\,)_\nn  \,$ as
$\, S^n\DRep_1(\,\mbox{--}\,)\,$. This is justified by Proposition~\ref{derfu}: indeed,
for $ A \in \DGA_k $, we have $\, \DRep_1(A) \cong \L(A)_\nn \,$, so
$ S^n\DRep_1(A) $ is to be computed by the same formula as $ \L S^n(A)_\nn  $.

\vspace{1ex}

Now, by Lemma~\ref{resthom}, we have a natural transformation of functors
$\, \Phi_n: (\,\mbox{--}\,)_n^{\GL} \to S^n(\,\mbox{--}\,)_\nn \,$.
By Theorem~\ref{nS2.1t5} and Proposition~\ref{derfu}, both $ (\,\mbox{--}\,)_n^{\GL} $ and $ S^n(\,\mbox{--}\,)_\nn $ have left derived functors. Hence, $\, \Phi_n \,$ induces a
(unique) natural transformation at the level of homotopy categories:
\begin{equation}
\la{HCres}
\L\Phi_n :\, \L(\,\mbox{--}\,)_n^{\GL}\, \to\, \L S^n(\,\mbox{--}\,)_\nn\ .
\end{equation}
For a fixed $A \in \DGA_k $, this gives a canonical morphism in $ \Ho(\cDGA_k)\,$:
\begin{equation}
\la{ehc3}
\Phi_n(A):\ \DRep_n(A)^\GL \to S^n\DRep_1(A)\ ,
\end{equation}
which we call the {\it derived Harish-Chandra homomorphism}\footnote{This terminology will be
justified in the next section where we reinterpret the construction of $ \Phi_n(A) $
in Lie algebraic terms.}.

Next, we introduce natural maps combining \eqref{ehc3} with
the trace maps constructed in Section~\ref{trmaps}. To this end, we note that the functors $ (\,\mbox{--}\,)_n^{\GL} $, $\, S^n(\,\mbox{--}\,)_\nn $ and the natural transformation $ \Phi_n $ are well-defined on the category of augmented DG algebras ({\it cf.} Theorem~\ref{nS2.1t2}). Hence we can regard \eqref{HCres} as a morphism of functors from $ \Ho(\DGA_{k/k}) $ to $ \Ho(\cDGA_{k/k}) $.
Composing this morphism with the trace \eqref{trm2}, we get a morphism of functors
from $ \Ho(\DGA_{k/k}) $ to $ \Ho(\Com_k)$:
\begin{equation}
\la{redtr}
\L \TTr_n \,:\ \L(\,\mbox{--}\,)_\n \, \to \,  \L S^n(\,\mbox{--}\,)_\nn
\end{equation}
The next proposition shows that $ \L\TTr_n $ is essentially determined by $ \L\TTr_1 $.
\bprop
\la{prtrmain}
For each $ n \ge 1 $, the morphism $ \L\TTr_n $ factors as
$$
\L\TTr_n \cong S^n \circ \L\TTr_1 \ ,
$$
where $ S^n $ is the (additive) symmetrization morphism.
\eprop
\bproof
It suffices to check this on cofibrant objects. If $ A \in \DGA_{k/k} $
is cofibrant, then $ \L\TTr_n(A) = \TTr_n(A) $, and by Lemma~\ref{exppr}, the
composite map  $\, \TTr_n(A):\, A_\n \to A_n^{\GL} \to S^n(A_\nn) \,$
is given  by
$$\,
\bar{a} \mapsto \sum_{i=1}^n a_{ii} \mapsto \sum_{i=1}^n a_i \ .
$$
where $ a \in A $, $\,\bar{a} \in A_\n\,$ and $\,
a_i = 1 \otimes \ldots \otimes a \otimes \ldots 1 \in  (A_\nn)^{\otimes n}\,$
(with $ a \in A_\nn $ in the $i$-th position). Hence $ \TTr_n(A) $
factors through the canonical map $ A_\n \to A_\nn $ which is nothing but
$ \TTr_1(A) $. Note that the induced map is given by symmetrization:
$\, S^n: A_\nn \to S^n(A)_\nn\,$,$\ a \mapsto \sum_{i=1}^n a_i \,$.
\eproof

For any $A \in \DGA_{k/k} $, the morphism \eqref{redtr} induces on homology natural maps
\begin{equation}
\la{redttr}
\TTr_n(A)_{\bullet}:\, \rHC_\bullet(A) \to [\H_\bullet(A, \,1)^{\otimes n}]^{S_n}\ .
\end{equation}
We call \eqref{redttr} the {\it reduced} trace maps. By Proposition~\ref{prtrmain},
it suffices to compute these maps for $ n = 1 $. We will give an explicit
combinatorial formula for  $ \TTr_1(A)_{\bullet} $ in our subsequent paper \cite{BFPRW}.

\subsection{Interpretation in terms of Lie (co)homology}
We now reinterpret the derived Harish-Chandra homomorphism \eqref{ehc3} in Koszul dual terms of Lie coalgebras. Let $\h_n(k) $ denote the Cartan subalgebra of $\gl_n(k) $, which is the subalgebra
$ \D_n(k)  $ of diagonal matrices in $\M_n(k)$ viewed as a Lie algebra. Dually, one has a surjection of coalgebras $\M_n^{\ast}(k) \twoheadrightarrow \D_n^{\ast}(k)$ where $ \D_n^{\ast}(k)$ denotes the $k$-linear dual of $ \D_n(k)$.
Hence, for any $ C \,\in\, \DGC_{k/k}$, one has a morphism $\M_n^{\ast}(C) \twoheadrightarrow \D_n^{\ast}(C)$ of DG coalgebras that induces the
natural map $\M_n^{\ast}(k)  \twoheadrightarrow \D_n^{\ast}(k)$. Viewing $\M_n^{\ast}(C)$ and $ \D_n^{\ast}(C)$ as DG Lie coalgebras, one
obtains a morphism $\gl_n^{\ast}(C) \rar \h_n^{\ast}(C)$ of DG Lie coalgebras that induces the natural morphism $\gl_n^{\ast}(k) \rar \h_n^{\ast}(k)$ of Lie coalgebras.
As a result, by functoriality, we get a morphism of commutative DG algebras
\begin{equation}
\la{ehc1}
\Phi_n(C)\,:\ \C^c(\gl_n^{\ast}(C),\, \gl_n^{\ast}(k);k) \rar
\C^c(\h_n^{\ast}(C),\,\h_n^{\ast}(k);k)\ \text{.}
\end{equation}

Now, let $ A = \cb(C) $ be the cobar construction of $ C $, which is a cofibrant
DG algebra in $ \DGA_{k/k} $.  Since $\h_n$ is abelian and $\dim\,\h_n = n\, $,
we have canonical isomorphisms in $ \cDGA_{k/k} $:
\begin{equation}
\la{ehc2}
\C^c(\h_n^{\ast}({C}),\,\h_n^{\ast}(k);k) \,\cong\,
[\,\C^c(\gl_1^{\ast}(\bar{C})\,]^{\otimes n}\,\cong \, (A_{\nn})^{\otimes n}\ .
\end{equation}
On the other hand, by Theorem~\ref{main},
\begin{equation}
\la{ehc11}
\C^c(\gl_n^{\ast}(C),\, \gl_n^{\ast}(k); k) \cong  A_n^{\GL}\ .
\end{equation}

With these isomorphisms we can compare \eqref{ehc1} with the Harish-Chandra homomorphism \eqref{HChom1}. The following proposition is a direct consequence of Lemma~\ref{resthom}.
\bprop
\la{phc1}
With the identifications \eqref{ehc2} and \eqref{ehc11}, the map $ \Phi_n(C) $
agrees with  $ \Phi_n(A)$. The image of $ \Phi_n(C) $
is contained in
$\,\C^c(\h_n^{\ast}(C),\,\h_n^{\ast}(k);k)^{S_n}\,$, where the action
of $ S_n $ on the Chevalley-Eilenberg complex comes from the natural
action on $ \h_n $.
\eprop
In Section~\ref{sec7}, we will construct the derived Harish-Chandra homomorphism
for an arbitrary reductive Lie algebra $ \g \,$, generalizing the above construction
for $ \gl_n $. Proposition~\ref{phc1} then shows that the Harish-Chandra homomorphism for associative algebras is a special case of the one for Lie algebras.

\subsection{Symmetric algebras}
\la{sdhc1}

In this subsection, we take $A$ to be the symmetric algebra $ \Sym(V) $, where $V$ is a vector space
of dimension $ r \ge 1 $ concentrated in homological degree $0$.
This is a quadratic Koszul algebra defined by the quadratic data $(V,S) $ with
$ S \subset V\otimes V $ spanned by the vectors of the form
$v\otimes u - u \otimes v $ (cf.\, \cite[Example 3.2.4.2]{LV}).

\subsubsection{The minimal resolution} Recall that $A$ has a canonical (minimal)
semi-free resolution $ R = \Omega(A^\dual) $ given by the cobar construction of the Koszul
dual coalgebra $ A^\dual = C(sV, s^2S) $ (see \cite[Sect. 3.2.1]{LV}).
The algebra $ R $ is the tensor algebra generated by the vector space
$ s^{-1} \Lambda(sV) $, whose elements of degree $ k-1 $ we denote by
\begin{equation}
\la{lambda}
\lambda(v_1, v_2, \ldots, v_k) :=  s^{-1}(sv_1 \wedge sv_2 \wedge \ldots \wedge sv_k)
\,\in\, s^{-1} \wedge^k(sV) \ .
\end{equation}
With this notation, the differential on $ R $ satisfies
\begin{eqnarray}
&& d \lambda(v_1, v_2) = - [v_1, \,v_2] \ , \la{d1} \\
&& d \lambda(v_1, v_2, v_3) = - [v_1, \lambda(v_2, v_3)] -  [v_2, \lambda(v_3, v_1)] -
[v_3, \lambda(v_1, v_2)]\ . \la{d2}
\end{eqnarray}
In general, we have
\blemma
\la{diff_res}
The differential $d$ on the minimal resolution $R$ of $A=\Sym(V)$ is defined by
\begin{equation*}
d\lambda(v_1,\ldots,v_n)=\sum\limits_{\substack{p+q=n\\1\leq p\leq q}}(-1)^p\sum\limits_{\sigma\in\Sh(p,q)}
(-1)^{\sigma}\left[\lambda(v_{\sigma(1)},\ldots,v_{\sigma(p)}),\,\lambda(v_{\sigma(p+1)},\ldots,v_{\sigma(p+q)})\right]\ ,
\end{equation*}
where $\Sh(p,q)$ denotes the set of $(p,q)$-shuffles, i.e. $\sigma\in S_{p+q}$ of the form $\sigma=(i_1,\ldots,i_p;i_{p+1},\ldots,i_{p+q})$ with $i_1<\ldots<i_p$ and $i_{p+1}<\ldots<i_{p+q}$.
\elemma
\bproof
The proof is by direct computation using the definition of the differential on the cobar construction. We leave the details of this computation to the interested reader.
\eproof

It follows from Lemma~\ref{diff_res} that $\, R_{\ab} $ is isomorphic to
is the graded symmetric algebra of $ s^{-1}\Lambda(sV) $ equipped with zero differential.
Explicitly (omitting the shifts), we can write
\begin{equation}
\la{resab}
R_{\ab} =  \bSym(\wedge^1 V \oplus \wedge^2 V \oplus \wedge^3 V \oplus \ldots \oplus \wedge^r V) = \Sym(V) \otimes \bSym(\wedge^2 V \oplus \wedge^3 V \oplus \ldots \oplus \wedge^r V)
\end{equation}
with understanding that the elements of $\wedge^{k}V$ have (homological) degree $ k-1 $.

%
%

\subsubsection{Main conjecture for $\gl_n $}
For $A=\Sym(V)$, the homomorphism \eqref{ehc3} is given by
\begin{equation*}
\Phi_n :\,\DRep_n(A)^{\GL} \,\rar\, \bSym[\h_n^{\ast} \otimes
(\wedge^1V \oplus \wedge^2 V \oplus \wedge^3 V \oplus \ldots \oplus \wedge^r V)]^{S_n}\ ,
\end{equation*}
where  $\DRep_n(A)\,\cong\,\bSym[\M_n^{\ast}(k) \otimes  (\wedge^1V \oplus \wedge^2 V \oplus \wedge^3 V \oplus \ldots \oplus \wedge^r V)]$ as a graded commutative algebra. Explicitly, $\Phi_n$ is the restriction to $\DRep_n(A)^{\GL}$ of the map of commutative DG-algebras taking
each generator of the form $e^{ij} \otimes \alpha$ to $\delta_{ij}e^i \otimes \alpha$. Here, the $e^{ij}$'s are a basis of $\M_n^{\ast}(k)$ dual to the elementary matrices and the $e^i$'s form the basis of $\h_n^{\ast}$ dual to the standard basis and $\alpha$ is any element in $\wedge^1V \oplus \wedge^2 V \oplus \wedge^3 V \oplus \ldots \oplus \wedge^r V$.

On the $0$-th homologies, $\, \Phi_n $ induces an algebra map
$$
\H_0(\Phi_n):\,k[\Rep_n(A)]^{\GL} \,\rar\, \Sym(\h_n^{\ast} \otimes V)^{S_n} \ .
$$
By \cite[Theorem~4.1]{Do} (see also \cite[Theorem~3]{Va}), this map is known to be
an isomorphism for all $ V $ and all $ n \ge 1 $. It is therefore tempting to conjecture 
that $ \Phi_n $ is actually a quasi-isomorphism, i.e. induces isomorphisms on homology 
in {\it all}\, homological degrees. This is indeed the case when $ \dim(V) = 1 $ (since
$ \Sym(V) $ is a cofibrant DG algebra when $ \dim(V) = 1 \,$, and hence $ \DRep_n(A) \cong A_n $
has no higher homology). On the other hand, by evaluating Euler characteristics,
we will show in Section~\ref{otherex} that $ \Phi_n $ cannot be a quasi-isomorphism (for all $n$)
when $ \dim(V) \ge 3 $. In the case $ \dim(V) = 2 $, we believe that the conjecture
is still true.

To state our conjecture in more explicit terms, we choose a basis $\{x,y\}$ in $V$ and identify $ \Sym(V) = k[x,y]\,$. Denote by $ \theta $ the degree 1 element $ \lambda(x,y) \in \wedge^2 V $
and write $x_i$ (resp., $y_i$, $ \theta_i$) for the elements $ e^i \otimes x $
(resp., $e^i \otimes y, e^i \otimes \theta $) in $\, \h_n^{\ast}
\otimes (\wedge^1 V \oplus \wedge^2 V) $. Then
$\, \bSym[\h_n^{\ast} \otimes
(\wedge^1V \oplus \wedge^2 V)] \cong k[x_1,\ldots ,x_n,y_1, \ldots ,y_n,\theta_1, \ldots ,\theta_n]\,$, and we have

\begin{conjecture}
\la{conj1}
For any $ n \ge 1 $, the map
\begin{equation}
\la{phiV}
\Phi_n\,:\,\DRep_n(k[x,y])^{\GL} \,\rar\,k[x_1,\ldots ,x_n,y_1, \ldots ,y_n,\theta_1, \ldots ,\theta_n]^{S_n}
\end{equation}
is a quasi-isomorphism. Consequently, there is an isomorphism of graded commutative algebras
\begin{equation*}
\la{phind}
\H_\bullet(k[x,y], n)^{\GL}
\cong k[x_1, \ldots ,x_n,y_1, \ldots,y_n,\theta_1, \ldots ,\theta_n]^{S_n}\ .
\end{equation*}
\end{conjecture}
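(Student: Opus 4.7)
The plan is to pass to an explicit model. By Theorem~\ref{main}, $\DRep_n(k[x,y])^{\GL}$ is represented by the commutative DG algebra $R_n^{\GL}$, where $R = T_k(x, y, \theta)$ is the minimal cofibrant resolution of $k[x,y]$ with $|\theta|=1$ and $d\theta = [x,y]$ (cf.~Lemma~\ref{diff_res}). Concretely, $R_n^{\GL}$ is the $\GL_n$-invariant subalgebra of $k[x_{ij}, y_{ij}, \theta_{ij}]_{1\le i,j \le n}$ with differential $d\theta_{ij} = [X,Y]_{ij}$, where $\GL_n$ acts by simultaneous conjugation on the three generic matrices $X, Y, \Theta$. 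In these coordinates the map $\Phi_n$ is induced by $a_{ij} \mapsto \delta_{ij} a_i$ on generators and respects the natural tri-grading by $x$-weight $a$, $y$-weight $b$, and homological degree $i$ (with $\theta$ assigned homological degree $1$ and $(x,y)$-weight $(1,1)$). Since the target $k[x_i, y_i, \theta_i]^{S_n}$ has zero differential, is finite-dimensional in each $(a,b)$-bidegree, and is supported in homological degrees at most $\min(a, b, n)$, it suffices to prove that $H_\bullet(\Phi_n)$ is a bijection in every tri-degree $(a, b, i)$.

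Next, I would combine two inputs that are already established in the paper. First, Theorem~\ref{surjhc} gives surjectivity of $H_\bullet(\Phi_n)$ for all $n$. Second, Theorem~\ref{Tconj2} (the constant term identity for $\gl_n$) says that the tri-graded Euler characteristics of source and target agree, so in each fixed $(a,b)$-bidegree the alternating sum $\sum_i (-1)^i \dim H_i$ coincides on both sides. If one could additionally prove a \emph{concentration} statement, namely that $H_i[\DRep_n(k[x,y])^{\GL}]$ vanishes in each $(a,b)$-bidegree outside the range where the target is supported, then surjectivity would upgrade to degree-by-degree surjectivity, and the Euler characteristic match would then upgrade it to a bijection in every tri-degree, completing the proof.

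The main obstacle is precisely this concentration/vanishing result, which is essentially what remains of the conjecture after granting the two inputs above. Three strategies appear promising. (a) \emph{Spectral sequence}: filter $R_n^{\GL}$ by $\theta$-count; the $E_1$-page has zero differential and is identified with the classical invariant ring of three matrices with appropriate grading, and one would need to trace the higher differentials induced by $d\theta_{ij} = [X,Y]_{ij}$. (b) \emph{Invariant-theoretic}: use the Procesi--Razmyslov description of $\GL_n$-invariants of several generic matrices (with $\Theta$ in odd degree) to produce an explicit presentation and bound the degree of the homology via generators and relations. (c) \emph{Interpolation}: bootstrap between the known case $n = 2$ (Theorem~\ref{conj1n2}) and the stable case $n = \infty$ from Theorem~\ref{surjhc} by studying the stabilization maps in $n$, perhaps via the Koszul dual Lie-coalgebra picture of Theorem~\ref{main} where one has good control over the $W=S_n$ coinvariants. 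The hard part in each approach is to control the interplay between the commutator differential $d\theta_{ij} = [X,Y]_{ij}$ and the intricate structure of $\GL_n$-invariants of several matrices; this interplay is the fundamental source of difficulty and is presumably why the conjecture remains open in general.
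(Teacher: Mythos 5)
This statement is labeled \emph{Conjecture}~\ref{conj1} in the paper and is not proved there in general; the paper establishes only the case $n=2$ (Theorem~\ref{conj1n2}), the $n\to\infty$ limit and surjectivity of $\H_\bullet(\Phi_n)$ for every $n$ (Theorem~\ref{surjhc}), and the associated Euler characteristic identity (Theorem~\ref{Tconj2}). You correctly recognize this and correctly assemble the available inputs, and your three strategy sketches are reasonable directions. However, the reduction you state as the target of those strategies has a gap.

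You claim that once one has (i) degreewise surjectivity of $\H_\bullet(\Phi_n)$, (ii) equality of Euler characteristics in each $(a,b)$-bidegree, and (iii) a concentration statement that the source homology vanishes outside the range where the target is supported, bijectivity follows. This implication is false. Fix a bidegree $(a,b)$, let $v_i$ and $w_i$ denote the dimensions of the $i$-th homology of the source and target, and set $e_i := v_i - w_i \ge 0$ (surjectivity). The Euler characteristic match gives only $\sum_i (-1)^i e_i = 0$, which permits nonzero $e_i$ that cancel in pairs of adjacent parities (e.g.\ $e_0 = e_1 = 1$, all others zero), entirely within any common support range. Because the target $k[x_1,\dots,x_n,y_1,\dots,y_n,\theta_1,\dots,\theta_n]^{S_n}$ genuinely has contributions in several homological degrees for a fixed weight bidegree (already for $(a,b)=(1,1)$ both degree $0$ and degree $1$ appear via $\sum x_i y_j$ and $\sum\theta_i$), you cannot sidestep this by arguing that everything is concentrated in a single degree. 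So what "remains" after granting your two inputs is not merely a vanishing/concentration statement: you would need something genuinely stronger, such as degreewise injectivity, or a Poincar\'e series (not merely Euler characteristic) match, or a formality/collapse result for the spectral sequence in strategy (a). The paper itself makes exactly this observation after Theorem~\ref{surjhc}: to settle the conjecture it suffices to prove that $\H_\bullet(\Phi_n)$ is injective, and this is the step none of the available tools supply.
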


\vspace{1ex}

\begin{remark}
When $ \dim(V) = 2 $, the map $ \H_0(\Phi_n) $ coincides with the usual Harish-Chandra homomorphism for $ \gl_n $ as defined, for example, in \cite{J}; it is therefore natural to call $\Phi_n $ the derived Harish-Chandra homomorphism.
\end{remark}

\vspace{1ex}

As a first evidence for Conjecture~\ref{conj1}, we recall a vanishing theorem
for the representation homology of $ A = k[x,y] $ proved in \cite{BFR}
(see {\it op. cit.}, Theorem~27): for all $ n \ge 1 \,$,
$$
\H_i(k[x,y],\, n) = 0\ , \quad \forall\, i > n \ .
$$
This implies that $\, \H_i(k[x,y],\, n)^{\GL} = 0 \,$ for $ i > n $, and
hence $\, \Phi_n $ induce isomophisms (which are actually the zero maps)
in homological degrees $ i > n $.

For $ n = 1 $, Conjecture~\ref{conj1} is obvious. Furthermore, we have
\begin{theorem}
\la{conj1n2}
Conjecture~\ref{conj1} is true for $n=2$.
\end{theorem}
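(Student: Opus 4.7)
The plan is to reduce the theorem to an explicit computation for $\sll_2$ in low homological degrees, using the vanishing theorem from~\cite{BFR} and surjectivity from Theorem~\ref{surjhc} to bound the range of the computation. First, using the minimal resolution from Section~\ref{sdhc1}, I identify $\DRep_2(k[x,y])$ with the free graded-commutative DGA on matrix entries $X_{ij}, Y_{ij}$ (degree $0$) and $\Theta_{ij}$ (degree $1$) with differential $d\Theta = -[X, Y]$. Decomposing each matrix as $X = \bar{x}\,I + X_0$, $Y = \bar{y}\,I + Y_0$, $\Theta = \bar\theta\,I + \Theta_0$ into scalar and traceless parts, the identities $[X,Y] = [X_0, Y_0]$ and $\tr[X_0, Y_0] = 0$ give a tensor factorization
$$\DRep_2(k[x,y])^{\GL_2} \,\cong\, (k[\bar{x}, \bar{y}] \otimes \Lambda[\bar\theta]) \,\otimes\, \bigl(\Sym(\sll_2^{\oplus 2}) \otimes \Lambda(\sll_2)\bigr)^{\SL_2},$$
with zero differential on the scalar factor. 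A parallel decomposition of the target reduces the theorem to the corresponding $\sll_2$-statement (in the spirit of Theorem~\ref{glnvssln}).

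Next, invoking the vanishing $\H_i(k[x,y], 2) = 0$ for $i > 2$ (recalled just before the theorem statement) together with the surjectivity of $\H_\bullet(\Phi_2)$ (Theorem~\ref{surjhc}), the problem reduces to verifying injectivity in homological degrees $1$ and $2$. In degree $0$, the Harish-Chandra map is the classical Chevalley-Harish-Chandra isomorphism for $\gl_2$ (Vaccarino's theorem), with both sides identified as the coordinate ring $k[A, B, C]/(AB - C^2)$ of the commuting variety of pairs of $\sll_2$-matrices, where $A = \tr X_0^2$, $B = \tr Y_0^2$, $C = \tr X_0 Y_0$.

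For the remaining two degrees, the plan is to carry out an explicit $\SL_2$-invariant-theoretic computation. The $\SL_2$-invariants of $\Sym(\sll_2^{\oplus 2}) \otimes \Lambda(\sll_2)$ are generated by $A, B, C$, the degree-$1$ traces $D = \tr X_0 \Theta_0$, $E = \tr Y_0 \Theta_0$ and $G = \tr(X_0 Y_0 \Theta_0)$, and additional degree-$2$ even invariants such as $H_1 = \tr(X_0 \Theta_0^2)$ and $H_2 = \tr(Y_0 \Theta_0^2)$ (noting that $\tr \Theta_0^2 = 0$ automatically by the antisymmetry of odd products, although $\Theta_0^2 \neq 0$ as a matrix), subject to relations from a super-analogue of the $2 \times 2$ Cayley-Hamilton identity. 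A direct calculation using $d\Theta_0 = -[X_0, Y_0]$, trace cyclicity, and the polarization $X_0 Y_0 + Y_0 X_0 = C\,I$ for traceless $2 \times 2$ matrices yields
$$dG = AB - C^2, \qquad dH_1 = 2(CD - AE), \qquad dH_2 = 2(CE - BD).$$
These differentials make the ``excess'' classes in the source exact and match the defining module relations $cv = av'$, $cv' = bv$, $ab = c^2$ on the target module $k[v_1^2, v_2^2, v_1 v_2] \cdot \{1, v_1\phi, v_2\phi\}$. Checking the analogous compatibility for composite degree-$2$ invariants (involving $G \cdot D$, $G \cdot E$, products of $H_i$, and so on) completes the injectivity verification.

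The hard part will be the systematic bookkeeping of all invariants and relations in the super setting and the verification that the computed cohomology of the source matches the target in the full range of degrees. A conceptually cleaner alternative would be to apply Koszul duality (Theorem~\ref{mainbgr}) to rephrase both sides as relative Lie algebra cohomology of the bigraded finite-dimensional Lie algebras $\gl_2(\Lambda[x, y])$ and $\h_2(\Lambda[x, y])$, and to compute via the Hochschild-Serre spectral sequence of the nilpotent ideal $\gl_2 \otimes \Lambda^+(x, y)$, where the Chevalley restriction theorem for $\gl_2$ controls the top invariants. Either way, the vanishing-theorem bound is crucial for keeping the computation finite.
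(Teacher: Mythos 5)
Your proposal takes a genuinely different route from the paper. The paper's proof constructs an explicit candidate inverse $\Psi_\bullet$ to $\H_\bullet(\Phi_2)$: on degree $0$ it sends $x_1,x_2\mapsto X,X^*$ (the universal matrix and its classical adjugate, which commute) and likewise for $y$; on degree $1$ it sends $\T(\theta),\T(x\theta),\T(y\theta)$ to the three generators $\tau,\xi,\eta$ of $\H_\bullet(k[x,y],2)$ over $\H_0$. The well-definedness check is then carried out against the explicit presentation of $\H_\bullet(k[x,y],2)$ taken from \cite{BKR}, Example~4.2 --- that presentation is precisely the structural input that makes the argument finite. You instead propose to compute the source and target DGAs from scratch via $\SL_2$-invariant theory of the mixed-parity space $\sll_2^{\oplus 2}\oplus\sll_2[1]$, after peeling off the scalar part (a reduction essentially identical to the one in Theorem~\ref{glnvssln}) and bounding the range using the vanishing theorem and the surjectivity in Theorem~\ref{surjhc} (whose proof is independent of the present theorem, so there is no circularity).

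The reductions you make are sound, and your differential computations check out against the $2\times 2$ Cayley--Hamilton identities $P^2=\tfrac12\tr(P^2)I$ and $PQ+QP=\tr(PQ)I$ for traceless matrices: indeed $dG=AB-C^2$ and $dH_1=2(CD-AE)$ (the sign in $dH_2$ is convention-dependent). But there is a concrete gap: you never produce a complete set of generators and relations for $\bigl(\Sym(\sll_2^{\oplus 2})\otimes\Lambda(\sll_2)\bigr)^{\SL_2}$, and your candidate list is in fact missing entries. For instance, $\tr(\Theta_0^3)$ is a nonzero $\SL_2$-invariant spanning $\Lambda^3(\sll_2)^{\SL_2}$; it lives in homological degree $3$, weight $(3,3)$, and a weight count shows it is not a polynomial in $A,B,C,D,E,G,H_1,H_2$. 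Because the $\sll_2$-factor must have vanishing homology in degrees $\ge 2$ (this follows from the vanishing theorem combined with your tensor factorization), the boundaries out of degree $3$ --- which you do not account for --- are essential to verifying injectivity in degree $2$. You defer exactly this to ``systematic bookkeeping,'' but without a first-and-second-fundamental-theorem statement for this mixed-parity $\SL_2$-representation the verification cannot be completed. Your Koszul-duality alternative does not escape the difficulty: the $E_2$-page of the Hochschild--Serre spectral sequence for $\gl_2\otimes\Lambda^+(x,y)$ is $\gl_2$-invariant Lie cohomology, which is the same invariant-theoretic computation in different clothing.
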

\begin{proof}
We will explicitly construct the inverse map to $ \Phi $ at the level of homology.
To simplify notation we set $\, A = k[x,y] \,$ and, using \cite[Theorem~2.5]{BKR},
identify $\, \H_0(A,2) = A_2 \,$, where $ A_2 $ is the coordinate ring of the
commuting scheme of $2\times 2$ matrices:
\[
A_2 = k[x_{11},x_{12},x_{21},x_{22},y_{11},y_{12},y_{21},y_{22}]/I\text{,}
\]
with $I$ generated by the relations
\begin{eqnarray}
\label{relations_xy}
&& x_{12}y_{21}-y_{12}x_{21} = 0\nonumber\\
&& x_{11}y_{12}+x_{12}y_{22}-y_{11}x_{12}-y_{12}x_{22}= 0\\
&& x_{21}y_{11}+x_{22}y_{21}-y_{21}x_{11}-y_{22}x_{21}= 0\nonumber
\end{eqnarray}

With this identification, we define an algebra homomorphism
\begin{equation*}
\Psi_0:\, k[x_1,x_2,y_1,y_2] \to \M_2(A_2)
\end{equation*}
by
\[
x_1 \mapsto X\,,\quad x_2\mapsto X^*\,,\quad y_1\mapsto Y\, ,\quad y_2\mapsto Y^*\ ,
\]
where
\[
X := \begin{pmatrix}
x_{11} & x_{12}\\
x_{21} & x_{22}
\end{pmatrix}
\quad ,\quad
X^* := \begin{pmatrix}
x_{22} & -x_{12}\\
-x_{21} & x_{11}
\end{pmatrix}
\]
and similarly for $Y$ and $Y^*$. Note that $X^*$ and $Y^*$ are the classical adjoints
of $X$ and $Y$ and the relations~\eqref{relations_xy} imply that $[X,Y]=0$. It follows
that the matrices $\,X,\,Y,\,X^*,\,Y^*\,$ pairwise commute, and the map $ \Psi_0 $ is thus
well-defined.

We claim that $ \Psi_0 $ restricts to an algebra \emph{isomorphism}
\begin{equation}
\la{psi0}
\Psi_0:\, k[x_1,x_2,y_1,y_2]^{S_2}\stackrel{\sim}{\to} A_2^{\GL_2}\ ,
\end{equation}
where $ A_2^{\GL} \subset A_2 $ is identified with a subalgebra of scalar matrices in
$ \M_2(A_2) $.
Indeed, the invariant subalgebra $k[x_1,x_2,y_1,y_2]^{S_2}$ is generated by the five
elements: $\,x_1+x_2$, $\,y_1+y_2$, $\,x_1x_2$, $\,y_1y_2\,$ and $\,x_1y_1+x_2y_2\,$,
which are mapped by \eqref{psi0} to
$ \Tr(X) $, $\, \Tr(Y) $, $\, \det(X) $, $\, \det(Y)$ and $\, \Tr(XY) $, respectively.
It is immediate to see that $ \H_0(\Phi)\circ \Psi_0 = \id $. On the other hand, as mentioned
above, the map $\, \H_0(\Phi) \,$ is known to be an isomorphism\footnote{In fact, the map
$ \H_0(\Phi) $ coincides with the isomorphism $ \Delta' $ constructed in \cite[Theorem~3]{Va}.}.
The map $ \Psi_0 $ is thus the inverse of $ \H_0(\Phi) $, and hence an isomorphism as well.
We will use the following notation for the generators of $\,k[x_1,x_2,y_1,y_2]^{S_2}\,$:
$$
\T(x) := x_1+x_2\,,\quad \T(y):= y_1+y_2\,,\quad \d(x) :=x_1x_2\,,\quad
\d(y):= y_1y_2\,,\quad \T(xy) := x_1y_1+x_2y_2 \ .
$$

Now, it is easy to check that the graded algebra
$ k[x_1,x_2,y_1,y_2,\theta_1,\theta_2]^{S_2} $ is generated by its degree zero subalgebra
$ k[x_1,x_2,y_1,y_2]^{S_2} $ and three extra elements of degree $1$, which we denote by
$$
\T(\theta) := \theta_1+\theta_2\,,\quad \T(x\theta) := x_1\theta_1+x_2\theta_2\,,\quad
\T(y\theta) := y_1\theta_1+y_2\theta_2\ .
$$
These elements satisfy the following relations
\begin{eqnarray*}
\label{relations_ABC}
&& \!\!\!\!\!\!\!\!\!
\T(y)\cdot \T(x\theta)\cdot \T(\theta) -
\T(x)\cdot \T(y\theta)\cdot \T(\theta) = 2\, \T(x\theta)\cdot \T(y\theta) \\
&&\!\!\!\!\!\!\! \!\!
(\T(x)^2-4 \d(x))\cdot \T(y\theta) -
(2\T(xy)-\T(x)\T(y))\cdot \T(x\theta) =
(\T(x)^2 \T(y)-2 \d(x) \T(y)-\T(x) \T(xy))\cdot \T(\theta) \nonumber\\
&& \!\!\!\!\!\!\!\!\!
(2\T(xy)-\T(x)\T(y))\cdot \T(y\theta) -
(\T(y)^2-4\d(y))\cdot \T(x\theta) = (\T(y)^2\T(x)-2\d(y)\T(x)-\T(y)\T(xy))\cdot
\T(\theta)\nonumber
\end{eqnarray*}
On the other hand, by \cite{BKR}[Example~4.2],
the full representation homology algebra $ \H_\bullet(A,2) $ is
generated by $ \H_0(A,2) = A_2 $ and three invariant elements
$\,\tau,\,\xi,\,\eta\,$ of degree $1$, satisfying
\begin{eqnarray*}
\label{relation_theta_eta_xi}
&& x_{12}\eta-y_{12}\xi = (x_{12}y_{11}-y_{12}x_{11})\tau \nonumber\\
&& x_{21}\eta-y_{21}\xi = (x_{21}y_{22}-y_{21}x_{22})\tau\\
&& (x_{11}-x_{22})\eta - (y_{11}-y_{22})\xi = (x_{11}y_{22}-y_{11}x_{22})\tau \nonumber\\
&& \xi\eta=y_{11}(\xi\tau)-x_{11}(\eta\tau) = y_{22}(\xi \tau)-x_{22}(\eta\tau)\nonumber
\end{eqnarray*}

We now extend the map \eqref{psi0} to the map
\[
\Psi_{\bullet}:\, k[x_1,x_2,y_1,y_2,\theta_1,\theta_2]^{S_2}\to \H_\bullet(A,2)
\]
by sending
$$
\T(\theta) \mapsto \tau \ ,\quad
\T(x\theta) \mapsto \xi \ ,\quad
\T(y\theta) \mapsto \eta \ .
$$

A straightforward (but tedious) calculation, using the above relations, shows
that $ \Psi_\bullet $ is a well-defined algebra map. Its image coincides with
$\, \H_\bullet(A,2)^{\GL} \,$, since $ \H_\bullet(A,2) $ is generated over
$ \H_0(A,2) $ by $\GL$-invariant elements, and we already know that $ \Psi_0 $ is
an isomorphism onto $ \H_0(A,2)^{\GL} $. On the other hand, it is again easy to
see that $ \H_\bullet(\Phi) \circ \Psi_\bullet = \id $. Hence $ \Psi_\bullet $ is injective and
is thus the inverse of $  \H_\bullet(\Phi) $.
\end{proof}

\subsection{Conjecture~\ref{conj1} in the limit}
In this section, we prove that Conjecture~\ref{conj1} holds in the limit $n \rar \infty$.
\subsubsection{Supersymmetric polynomials and power sums}
Let $\,x_1,\ldots,\,x_d\,$ be variables of homological degree $0$. Let the symmetric group $S_d$ act on the $x_i$'s by permutations. Consider the power sums $P_i\,:=\,\sum_{j=1}^d x_j^i$. It is a classical fact that the symmetric polynomials in $x_1,\ldots,x_d$ can be rewritten as polynomials of degree $\leq d$ in the power sums $P_i\,,\,i \geq 1$. Further, equip the variables $x_1,\ldots,x_d$ with weight $1$, making $k[x_1,\ldots,x_d]$ a weight graded (commutative) algebra. Let $k[x_1,\ldots,x_d,\ldots]^{S_{\infty}}\,:=\,\varprojlim_d k[x_1,\ldots,x_d]^{S_d}$ where the projective limit is taken in the category of weight graded algebras. Then, the homomorphism
$$k[q_1,\ldots,q_d,\ldots] \rar k[x_1,\ldots,x_d,\ldots]^{S_{\infty}}\,,\,\,\,\, q_i \mapsto \sum_{j=1}^{\infty} x_j^i $$
is an isomorphism.

We now generalize this classical fact. Consider the set of $\, N d \,$ variables $\{x_{\alpha, i}\,,\,\,1 \leq \alpha \leq N\,,\,1 \leq i \leq d\}$. Assume that the variables $x_{1,i},\ldots,x_{m,i}\,,\,1 \leq i \leq d$ have odd homological degree, with the remaining variables having even homological degree. In particular, in $k[x_{\alpha, i}\,|\,1 \leq \alpha \leq N\,,\,1 \leq i \leq d]$, we have $x_{\alpha, i}^2=0$ for $\alpha \leq m $. Let $S_d$ act on these variables, with a permutation $\sigma \in S_d$ taking $x_{\alpha,i}$ to $x_{\alpha, \sigma(i)}$. Let
$k[x_{\alpha, i}\,|\, \alpha \leq N \,,\,i \in \mathbb N]^{S_{\infty}}\,:=\,\varprojlim_d k[x_{\alpha,i}\,|\,1 \leq \alpha \leq N\,,\,1 \leq i \leq d]^{S_d}$ where the projective limit is taken in the category of bigraded DG algebras and the variables $x_{\alpha,i}$ have positive weight $d_{\alpha}$. Consider the power sums $P_{\mathbf{a}}\,:=\, \sum_{i \geq 1} \prod_{\alpha =1}^N x_{\alpha,i}^{a_{\alpha}}$ where $\mathbf{a}\,:=\,(a_1,\ldots, a_N)$ runs over $\{0,1\}^m \times \Z_{\geq 0}^{N-m}$.

\bprop
\la{sympowsum}
Let $V$ be a $k$-vector space generated by variables
$ \{q_{\mathbf{a}}\} $ where $\, \mathbf{a} \in \{0,1\}^m \times \Z_{\geq 0}^{N-m}$.

{\rm (i)} The homomorphism of bigraded (commutative) DG algebras
 \begin{equation}
\la{powsum} \bSym_k V \rar  k[x_{\alpha,i}\,|\,1 \leq \alpha \leq N\,,\,1 \leq i \leq d]^{S_d}\,,\,\,\,\, q_{\mathbf{a}} \mapsto P_{\mathbf{a}}\,\text{.}
\end{equation}
restricts to an isomorphism of bigraded $k$-vector spaces
$$
\bSym^{\leq d}_kV \,\cong\,  k[x_{\alpha,i}\,|\,1 \leq \alpha \leq N\,,\,1 \leq i \leq d]^{S_d}
$$
{\rm (ii)} The homomorphisms~\eqref{powsum} induce an isomorphism of bigraded (commutative) DG algebras
$$\bSym_k V \,\cong\, k[x_{\alpha, i}\,|\, \alpha \leq N \,,\,i \in \mathbb N]^{S_{\infty}}\,\text{.}$$
\eprop

\begin{proof}
Clearly, (i) implies (ii). We therefore, show (i). Any element of $k[x_{\alpha,i}\,|\,1 \leq \alpha \leq N\,,\,1 \leq i \leq d]^{S_d}$ is a $k$-linear combination of orbit sums of the form
$$O_{\bf{\mu}}\,:=\,\sum_{\sigma \in S_d} \sigma \left(\prod_{\alpha =1}^N \prod_{j=1}^{p} x_{\alpha,j}^{\mu_{\alpha,j}} \right)
$$ where $p \leq d$ and no column of the $N \times p$-matrix $\mathbf{\mu}:=(\mu_{\alpha,j})\,,\,\,1 \leq \alpha \leq N, 1 \leq j \leq p$ is identically zero. If $p=1$, $O_{\mathbf{\mu}}$ is precisely the power sum $P_{\mathbf{a}}$ where $\mathbf{a}=(\mu_{\alpha,1},\ldots,\mu_{\alpha,N})$.
Note that
$$O_{\mathbf{\mu}} \,=\, \pm \prod_{j=1}^p (P_{(\mu_{1,j},\ldots,\mu_{N,j})}) + \sum_{\mathbf{\nu}} c_{\mathbf{\nu}}O_{\mathbf{\nu}}$$
where $\mathbf{\nu}$ runs over matrices with less than $p$-columns (and where $c_{\mathbf{\nu}}\,\in\,k$). Hence, by induction on $p$, we see that $O_{\mathbf{\mu}}$ is represented as an element in the image of $\bSym_k^{\leq p}V$ under the homomorphism~\eqref{powsum}. Clearly, this representation is unique. This proves (i).
\end{proof}

\subsubsection{Proof of Conjecture~\ref{conj1} as $ n \to \infty $}\la{hcp2var}
The minimal free resolution of $ A = k[x,y] $ is given by $ R\,=\,k\langle x, y, \theta\rangle $ with differential $ d\theta =[x,y]$. Then $\,R_\nn \cong k[x, y, \theta]\,$ with $d \theta = 0 $ and
$$
S^n(R_\nn)\, \cong \,k[x_1, \ldots ,x_n,y_1, \ldots,y_n,\theta_1, \ldots ,\theta_n]^{S_n}\ , \ n \ge 1\ .
$$
By Theorem~\ref{ftt}, $\,\H_\bullet(R_\n) \cong \rHC_\bullet(A)\,$;
on the other hand, by \cite[Theorem~3.4.12]{L}, we can identify
$\,\rHC_{\bullet}(A) \cong \Omega^{\bullet}(A)/d\Omega^{\bullet -1}(A)\,$, where $ \Omega^{\bullet}(A) $ is the algebraic de Rham
complex of $A$. With these identifications, the reduced trace maps
\eqref{redttr} are given by the following formulas (see \cite{BFPRW})
\begin{flalign*}
& \TTr_n(A)_0\,:\,A \rar S^n(R_{\nn})\,,\,\,\,\, P(x,y) \mapsto \sum_{i=1}^n \,P(x_i,y_i) \ ,\\
&\TTr_n(A)_1\,:\,\Omega^1(A)/dA \rar S^n(R_{\nn})\,,\,\,\,\, [P(x,y)dx + Q(x,y)dy]
\mapsto \sum_{i=1}^n \,\left(P_y(x_i,y_i) - Q_x(x_i, y_i)\right)\theta_i \ \text{.}
\end{flalign*}
Using these formulas, it is easy to prove the following fact.
\blemma
\la{lempowsum}
The image of $\,\TTr_n(A)_{\bullet}\,$ in $\, S^n(R_{\nn})\,$ is spanned by the power sums
$$P_{abc}\,:=\,\sum_{i=1}^n x_i^ay_i^b\theta_i^c \ ,\quad (a,b,c) \in \Z_{\geq 0}^2 \times  \{0,1\} \,\text{.}$$
\elemma

Endow $R$ with the structure of a bigraded DG algebra, where $x,y\,\in\,R$ have weight $1$ and homological degree $0$ and $\theta\,\in\,R$ has weight $2$ and homological degree $1$. Note that there is a natural (surjective) homomorphism
$ S^n(R_{\nn}) \rar S^{n-1}(R_{\nn})$ for any $n \geq 2$. Let
$ S^{\infty}(R_{\nn})\,:=\,\varprojlim_n S^n(R_{\nn})$ where the projective limit is taken in the category of bigraded commutative (DG) algebras.
Using Proposition~\ref{sympowsum} and Lemma~\ref{lempowsum}, we can now establish
the following result, which can be viewed as a further evidence in favor of Conjecture~\ref{conj1}.

\begin{theorem}
\la{surjhc}
Let $ \Phi_n $ be the derived Harish-Chandra homomorphism for $ A = k[x,y]$, see \eqref{phiV}.

{\rm (i)}\, For any $ n \ge 1 $, the map
$\ \H_{\bullet}(\Phi_n)\,$ is degreewise surjective.

{\rm (ii)}\ The map $\, \H_{\bullet}(\Phi_{\infty})\,$ is an isomorphism of bigraded commutative algebras.
\end{theorem}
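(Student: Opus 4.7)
The plan is to exploit the factorization $\TTr_n = \Phi_n \circ \Tr_n$, which is how the reduced trace is defined in Section~\ref{basic_hc}, together with the explicit description of the image of $\TTr_n$ given by Lemma~\ref{lempowsum} and the combinatorial Proposition~\ref{sympowsum} on power sums. Both parts of the theorem essentially reduce to showing that the power sums $P_{abc}$ lie in the image of $\H_\bullet(\Phi_n)$, after which Proposition~\ref{sympowsum} supplies the needed algebraic conclusions.

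For (i), I first note that $R_\nn = k[x,y,\theta]$ carries zero differential, so $S^n(R_\nn)$ has zero differential and $\H_\bullet[S^n(R_\nn)] \cong S^n(R_\nn)$. The image of $\H_\bullet(\Phi_n)$ is therefore a graded subalgebra of $S^n(R_\nn)$. Since $\TTr_n = \Phi_n \circ \Tr_n$, this subalgebra contains the image of $\H_\bullet(\TTr_n)$, which by Lemma~\ref{lempowsum} is the span of all power sums $P_{abc}$ with $(a,b,c)\in\Z_{\ge 0}^2\times\{0,1\}$. By Proposition~\ref{sympowsum}(i) these power sums generate $S^n(R_\nn)$ as a commutative algebra, so $\H_\bullet(\Phi_n)$ is surjective. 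Passing to the inverse limit, $\H_\bullet(\Phi_\infty)$ is surjective as well. For the injectivity needed in (ii), I would consider the composition
\[
\bSym_k[\rHC_\bullet(A)] \xrightarrow{\ \H_\bullet(\bSym\,\Tr_\infty)\ } \H_\bullet[\DRep_\infty(A)^{\GL}] \xrightarrow{\ \H_\bullet(\Phi_\infty)\ } S^\infty(R_\nn).
\]
The first arrow is an isomorphism by Theorem~\ref{tlq}: since the minimal resolution $R$ is bigraded with $x,y$ of weight $1$ and $\theta$ of weight $2$, Remark~2 following Theorem~\ref{tlq} gives $\C^{\Tr} = \C^{c}$, and combining Quillen's identification $\cb(C)_\n \cong \rCC(C)[-1]$ with Theorem~\ref{ftt} yields $\H_\bullet(\bSym_k[\cb(C)_\n]) \cong \bSym_k[\rHC_\bullet(A)]$. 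The composite equals $\H_\bullet(\bSym\,\TTr_\infty)$; unwinding the formulas for $\TTr_n(A)_0$ and $\TTr_n(A)_1$ (using that $\rHC_1(A) \cong A$ via $[P\,dx + Q\,dy]\mapsto P_y - Q_x$), one sees that it sends each generator $q_{abc}$ of $\bSym_k[\rHC_\bullet(A)]$ to the corresponding power sum $P_{abc}$. By Proposition~\ref{sympowsum}(ii), this composite is an isomorphism, and since the first arrow is too, $\H_\bullet(\Phi_\infty)$ is an isomorphism of bigraded commutative algebras.

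The main obstacle is the handling of the limit $n \to \infty$: ensuring compatibility of all maps with the identifications $\DRep_\infty(A)^{\GL} \cong \C^{c}(\gl_\infty^{\ast}(C), \gl_\infty^{\ast}(k); k) \cong \C^{\Tr}(\gl_\infty^{\ast}(C), \gl_\infty^{\ast}(k); k)$ (the last equality holding only in the bigraded setting), and verifying that $\bSym\,\Tr_\infty$ realizes the coalgebraic Loday--Quillen--Tsygan isomorphism at the level of homology. By contrast, the finite-$n$ argument is essentially a direct combinatorial application of Proposition~\ref{sympowsum}(i), once the image of $\TTr_n$ has been identified via Lemma~\ref{lempowsum}.
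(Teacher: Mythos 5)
Your proof is correct and follows essentially the same route as the paper: both parts hinge on the factorization $\TTr_n = \Phi_n \circ \Tr_n$, Lemma~\ref{lempowsum} identifying the image of $\TTr_n$ with power sums, and Proposition~\ref{sympowsum} supplying the combinatorics; for (ii) you cite Theorem~\ref{tlq} together with Remark~2 to get that $\bSym\,\Tr_\infty$ is an isomorphism, whereas the paper invokes \cite[Theorem~4.4]{BR} directly, but these are equivalent since Theorem~\ref{tlq} is itself a consequence of that result.
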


\begin{proof}
Note first that we have a commutative diagram
$$
\begin{diagram}
\bSym_k(\rHC_{\bullet}(A)) & & \\
  \dTo^{\bSym\,\Tr_n(A)_{\bullet}} & \rdTo^{\bSym\,\TTr_n(A)_{\bullet}} &\\
  \H_{\bullet}(A,n)^{\GL} & \rTo^{\H_{\bullet}(\Phi_n)} & S^n(R_{\nn})
\end{diagram}
$$
The map $\Phi_n$ being surjective follows from Lemma~\ref{lempowsum} and Proposition~\ref{sympowsum} (i). The map $\Phi_{\infty}$ being an isomorphism follows from Proposition~\ref{sympowsum} (ii), which says that $\bSym\,\TTr_{\infty}(A)_{\bullet}\,$ is an isomorphism and from~\cite[Theorem~4.4]{BR}, which says that $\bSym\,\Tr_{\infty}(A)_{\bullet} $ is an isomorphism.
\end{proof}

Note that part one of Theorem~\ref{surjhc} is the surjectivity part of
Conjecture~\ref{conj1}: thus, to prove Conjecture~\ref{conj1} it suffices
to prove that the map $\, \H_{\bullet}(\Phi_n)\,$ is injective.
As a corollary of Theorem~\ref{surjhc}, we obtain (in the $\gl_n$ case)
a classical result of A.~Joseph ({\it cf.} \cite{J}, Theorem 2.9).

\bcor
The restriction map
\begin{equation} \la{reshc} \Sym_k [\gl_n(k) \oplus \gl_n(k)]^{\GL} \rar \Sym_k [\h_n(k) \oplus
\h_n(k)]^{S_n} \,\cong \,k[x_1,\ldots,x_n,y_1,\ldots,y_n]^{S_n}
\end{equation}
is (graded) surjective.
\ecor

\begin{proof}
Note that \eqref{reshc} is precisely the composite map
$$\begin{diagram}k[\Rep_n(k\langle x,y\rangle)]^{\GL} & \rOnto & k[\Rep_n(k[x,y])]^{\GL} & \rTo^{\H_0(\Phi_n)} & S^n(k[x,y])\end{diagram}$$
The surjectivity of~\eqref{reshc} thus follows from that of $\H_0(\Phi_n)$.
\end{proof}

\subsubsection{The case of three variables}
\la{shcp3var}
One might expect that Conjecture~\ref{conj1} extends to polynomial algebras
of more than two variables. We now show that this is not the case. In fact,
Conjecture~\ref{conj1} fails already for polynomials of three variables.
Let $ A = k[x,y,z] $. As in~\cite[Section 6.3.2]{BFR},
we write the minimal resolution of $A$ in the form $\,R = k \langle x,y,z, \xi,\theta, \lambda, t\rangle\,$, where $\deg x=\deg y=\deg z=0$, $\deg \xi=\deg \theta=\deg\lambda=1$ and $\deg t=2$. The differential on $R$ is defined by
$$
d\xi = [y,z]\ ,\quad d\theta = [z,x]\ ,\quad d\lambda = [x,y]\ ,\quad dt = [x, \xi] + [y, \theta] + [z, \lambda]\ .
$$
Again, we equip $R$ with the structure of a bigraded DG algebra, with $x,y,z$ having weight $1$, $\xi,\theta,\lambda$ having weight $2$ and $t$ having weight $3$.

As in Section~\ref{hcp2var}, we can define $ S^{\infty}(R_{\nn})$ as a projective limit in the category of bigraded commutative (DG) algebras. Note that Proposition~\ref{sympowsum} (ii) implies that $ S^{\infty}(R_{\nn})$ can be naturally identified with the free bigraded commutative (DG) algebra generated by the power sums
\begin{equation}
\la{powersums}
P_{\mathbf{a}}\,:=\,\sum_{i=1}^{\infty} \xi_i^{a_1}\lambda_i^{a_2}\theta_i^{a_3}t_i^{a_4}x_i^{a_5}y_i^{a_6}z_i^{a_7} \ ,
\end{equation}
where the multi-index $ \mathbf{a}\,:=\,(a_1,\ldots,a_7)$ runs over $\{0,1\}^3 \times \Z_{\geq 0}^4$. 

Now, by Theorem~\ref{ftt} and \cite[Theorem~3.4.12]{L}, we can identify
$$
\H_\bullet(R_\n) \cong \rHC_\bullet(A) \cong \Omega^{\bullet}(A)/d\Omega^{\bullet -1}(A)\
$$
where $ \Omega^{\bullet}(A) $ is the algebraic de Rham complex of $A $.
This shows, in particular, that $ \rHC_i(A) $ vanish for $ i \ge 3 $. Representing the cyclic classes in $ \rHC_0(A) $, $\, \rHC_1(A) $ and $ \rHC_2(A) $
by differential forms
$$
\omega_0 = P\ ,\quad
\omega_1=Pdx+Qdy+Rdz\ ,\quad
\omega_2=Pdx\wedge dy+Qdy\wedge dz+R dz\wedge dx\ ,
$$
we have the following formulas for the (reduced) trace maps \eqref{redttr} derived in \cite{BFPRW}:
\begin{eqnarray*}
\TTr_{\infty}(A)_{0}\,[\omega_0]\,& = &\, \sum_{i=1}^{\infty}\,
P^i \ ,\\
\TTr_{\infty}(A)_{1}\,[\omega_1]\,& = &\,\sum_{i=1}^{\infty}\, (P_y^i - Q_x^i)\,\lambda_i
 + (Q_z^i -R_y^i)\,\xi_i+ (R_x^i - P_z^i)\,\theta_i\ ,\\
\TTr_{\infty}(A)_{2}\,[\omega_2]\,& = &\, \sum_{i=1}^{\infty}\, (P_{xz}^{i}+Q_{xx}^{i}+R_{xy}^{i})\,\theta_i\,\lambda_i + (P_{yz}^{i}+Q_{xy}^{i}+R_{yy}^{i})\,\lambda_i\,\xi_i +
(P_{zz}^{i}+Q_{xz}^{i}+R_{yz}^{i})\,\xi_i\,\theta_i\\
& + & \sum_{i=1}^{\infty} (P_z^{i}+Q_x^{i}+R_y^{i})\,t_i\ .
\end{eqnarray*}
Here, $ P_x, P_y, P_z, P_{xy},\, \ldots $ denote the
derivatives of a polynomial $ P \in k[x,y,z] $, and $P^{i}$ stand for the corresponding elements $ P(x_i, y_i, z_i) \in S^{\infty}(R_{\nn}) $.

The above formulas show that the image of the map $\,\TTr_{\infty}(A)_{\bullet} $ is spanned by
power sums \eqref{powersums}. However this map is {\it not} surjective: for example, it is easy to see that no power sum
$P_{\mathbf{a}}$ with $a_4 \geq 2$ appears in the image of $\TTr_{\infty}(A)_{\bullet}$.
By~\cite[Theorem 4.4]{BR}, we conclude
\bprop
\la{hcp3var} The map $\H_{\bullet}(\Phi_{\infty})\,:\,\H_{\bullet}(A,\infty)^{\GL} \rar S^{\infty}(R_{\nn})$ is injective but not surjective.
\eprop
Proposition~\ref{hcp3var} implies that if $ \dim(V) = 3 $, the derived Harish-Chandra homomorphism $ \Phi_n(A) $ for $ A = \Sym(V) \,$ cannot be a quasi-isomorphism for any $ n $.
More generally, in Section~\ref{otherex}, we will show that $ \Phi_n(A) $ is not a
quasi-isomorphism when $ \dim(V) \ge 3 $. Thus Conjecture~\ref{conj1} does not extend to higher dimensions. There is, however, another natural way to generalize this conjecture in
which case we can actually prove that it holds.

\subsection{Quantum polynomial algebras}
\la{qpoly}
In this section, we will show that (the analogue of) Conjecture~\ref{conj1}
holds for the $q$-polynomial algebra
\[
A_q := k\langle x,y\rangle/(xy-qyx) \ ,\quad q \in k^*\ .
\]
Precisely, we will prove
\begin{theorem}
\la{q-conj1}
If $q \in k^* $ is not a root of unity, the derived Harish-Chandra homomorphism
\[
\Phi_n(A_q):\,\DRep_n(A_q)^{\GL} \to S^n\DRep_1(A_q)
\]
is a quasi-isomorphism for all $\, n \ge 1 $.
\end{theorem}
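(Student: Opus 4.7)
The plan is to show that, for $q$ not a root of unity, both sides of $\Phi_n(A_q)$ are concentrated in homological degree zero, thereby reducing the statement to a classical invariant-theoretic isomorphism.

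Concretely, using the minimal semi-free resolution $R_q := k\langle x, y, \theta\rangle$ of $A_q$ with $|x|=|y|=0$, $|\theta|=1$ and $d\theta = xy - qyx$, Lemma~\ref{exppr} identifies $\DRep_n(A_q)$ with the DG algebra $k[x_{ij}, y_{ij}, \theta_{ij}]_{1\le i,j \le n}$ equipped with differential $d\theta_{ij} = (XY - qYX)_{ij}$, i.e.\ with the Koszul complex on the $n^2$ polynomials $f_{ij} := (XY-qYX)_{ij}$ in $k[x_{ij}, y_{ij}]$. In the same way, $\DRep_1(A_q)$ is the small complex $k[x,y,\theta]$ with $d\theta = (1-q)xy$, which for $q\ne 1$ is quasi-isomorphic to $(A_q)_\nn = k[x,y]/(xy)$; hence by Proposition~\ref{derfu}(b), $\H_\bullet[S^n\DRep_1(A_q)] \cong S^n((A_q)_\nn)$.

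The key technical step is to prove that, for $q$ not a root of unity, $\{f_{ij}\}$ forms a regular sequence in $k[x_{ij}, y_{ij}]$, so that $\H_\bullet[\DRep_n(A_q)] \cong (A_q)_n$ is concentrated in degree zero. Equivalently, the $q$-commuting scheme $\{(X,Y)\in\M_n\times\M_n : XY = qYX\}$ is a complete intersection of (expected) dimension $n^2$. For $q$ not a root of unity this can be established via a weight-space/Jordan stratification: if $XY = qYX$, then $Y$ sends the $\lambda$-eigenspace of $X$ to the $q^{-1}\lambda$-eigenspace, and since $q$ is not a root of unity every orbit of the $q$-scaling action on $k^\times$ is infinite, forcing tight compatibility constraints. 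A stratification by Jordan type then yields the dimension bound; equivalently, one can verify that the bigraded Hilbert series of $(A_q)_n$ coincides with the Euler characteristic of $(R_q)_n$ read off the Koszul complex.

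Once both sides are known to be concentrated in degree zero, it remains to show that the induced map $\H_0(\Phi_n)\colon (A_q)_n^\GL \to S^n((A_q)_\nn)^{S_n}$ is an algebra isomorphism. The key identity, valid for $q$ not a root of unity, is $(1-q^{ab})\tr(X^a Y^b) = 0$ in $(A_q)_n$ for $a,b \ge 1$ (obtained by combining cyclicity of the trace with $YX = q^{-1}XY$, which gives $Y^bX^a = q^{-ab}X^aY^b$). This kills all traces of ``mixed'' monomials, so $(A_q)_n^\GL$ is generated by $\{\tr(X^k), \tr(Y^k)\}_{k\ge 1}$. Dually, the relations $x_iy_i = 0$ in the target kill all mixed power sums, so $S^n((A_q)_\nn)^{S_n}$ is generated by $\{p_k(x_\bullet), p_k(y_\bullet)\}_{k\ge 1}$. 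The map sends $\tr(X^k)\mapsto p_k(x_\bullet)$ and $\tr(Y^k)\mapsto p_k(y_\bullet)$; a direct comparison of the remaining Cayley--Hamilton-type relations on each side then yields the isomorphism. The main obstacle is the regular-sequence property above: this is a nontrivial geometric statement about the $q$-commuting scheme and requires the careful stratification/Hilbert-series analysis sketched in the middle paragraph.
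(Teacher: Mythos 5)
Your reduction to ``both sides are concentrated in degree zero, so it suffices that $\H_0(\Phi_n)$ is an isomorphism'' is correct in outline, and the first concentration claim is indeed equivalent to the vanishing $\H_i(A_q,n)=0$ for $i>0$ that the paper imports from \cite[Section~6.2.2]{BFR} as Lemma~\ref{bfrvan}; your Jordan-stratification sketch for this is plausible in spirit but is a substantial piece of work you are deferring, not a remark. The route you propose after that diverges from the paper's: the paper never describes $(A_q)_n^\GL$ explicitly, but instead compares both $\DRep_n(A_q)^\GL$ and $S^n\DRep_1(A_q)$ with the auxiliary complex $W^{\le n}=\bSym^{\le n}(R_\n)$ of symmetric powers of cyclic words, computes $\H_\bullet(W^{\le n})$ via Wambst's vanishing of $\rHC_{>0}(A_q)$, and then uses degreewise surjectivity of the trace map $\Tr_n\colon W^{\le n}\to\DRep_n(A_q)^\GL$ (\cite[Theorem~3.1]{BR}) together with Lemma~\ref{bfrvan} to force both $\Tr_n$ and $\Phi_n$ to be quasi-isomorphisms simultaneously.

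The genuine gap in your proposal is the last step. Your mixed-trace identity $\tr(X^aY^b)=0$ (for $a,b\ge 1$, $q$ not a root of unity) together with Procesi's theorem does show that $(A_q)_n^\GL$ is generated by $\tr(X^k)$ and $\tr(Y^k)$, giving surjectivity of $\H_0(\Phi_n)$. But ``a direct comparison of the remaining Cayley--Hamilton-type relations'' is not an argument for \emph{injectivity}, and injectivity is the actual content. Injectivity says that a $\GL_n$-invariant regular function on $\{XY=qYX\}$ is determined by its restriction to diagonal pairs $(\diag(x_i),\diag(y_i))$ with $x_iy_i=0$; equivalently, that the only relations among $\tr(X^k),\tr(Y^k)$ in $(A_q)_n^\GL$ are those holding among $p_k(x),p_k(y)$ in $S^n(k[x,y]/(xy))$. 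The Domokos/Vaccarino theorem (\cite[Theorem~4.1]{Do}, \cite[Theorem~3]{Va}) applies to $\Sym(V)$, not to $A_q$, so you cannot quote it here. Without an independent proof of this injectivity --- which is precisely the step the paper's cyclic-homology comparison through $W^{\le n}$ is engineered to bypass --- your proposal does not close.
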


We expect that the claim of Theorem~\ref{q-conj1} holds for an arbitrary value of $ q $ (except
possibly for $ q = 0 $); however, our proof relies on the following vanishing result that requires the restriction on $q$.
\blemma[\cite{BFR}]
\la{bfrvan}
If  $q \in k^* $ is not a root of unity, then, for all $\,n\ge 1\,$,
\[
\H_i(A_q,\,n) = 0\ ,\quad i > 0 \ .
\]
\elemma
For the proof of Lemma~\ref{bfrvan} we refer the reader to \cite[Section~6.2.2]{BFR}.

\begin{proof}[Proof of Theorem~\ref{q-conj1}]
Let us restate the claim of the theorem in explicit terms.
The minimal cofibrant resolution of $ A_q $ is given by the free DG algebra
$ R = \c\langle x,y, \theta \rangle $ with $ x, y $ of degree
$0$, $\,\theta $ of degree $1$, and the differential defined by
$\, d\theta = xy-qyx \,$. Hence, $ \DRep_n(A_q) $ and $ S^n\DRep_1(A_q) $ are
represented respectively by the following DG algebras
\begin{equation}
\la{algB}
B_n := k[x_{ij}, y_{ij},\theta_{ij}\,:\, i, j = 1,2, \ldots, n]^{\GL}
\ ,\quad  d\theta_{ij} = \sum^n_{k=1}\, (x_{ik} y_{kj}\,-\,q y_{ik} x_{kj})\ ,
\end{equation}
\begin{equation}
\la{algE}
E_n := k[x_{i}, y_{i},\theta_{i}\,:\, i=1,2,\ldots, n]^{S_n}
\ ,\quad  d\theta_{i} = (1-q) x_i y_i\ .
\end{equation}
The derived Harish-Chandra homomorphism \eqref{ehc3} is explicitly given by
\begin{equation}
\la{HChom2}
\Phi_n :\, B_n \to E_n\ ,\quad
x_{ij} \mapsto \delta_{ij} x_i\ ,\quad
y_{ij} \mapsto \delta_{ij} y_i\ , \quad
\theta_{ij} \mapsto \delta_{ij} \theta_i\ .
\end{equation}
We need to show that \eqref{HChom2} is a quasi-isomorphism.

First, we put an augmentation on the algebra $ A_q $ letting
$ \varepsilon(x) = \varepsilon(y) = 0 $.
The DG algebra $ R $, and hence $ B_n $ and $ E_n $, then become augmented in
a natural way, and the map \eqref{HChom2} preserves the augmentations. Next,
using the notation of Section~\ref{trmaps}, we set $\, R_\n := \bar{R}/[\bar{R}, \bar{R}] \,$
and define $ W $ to be $ \bSym(R_\n) $.
Note that $ W $ is filtered by the graded vector spaces
$\, W^{\leq n} := \bSym^{\leq n}(R_\n) \,$, each of which carries a
differential induced from $ R $. We may think of $ R_\n $ as the space of
cyclic words in letters $\, x,\,y,\,\theta \,$; then, $ W^{\leq n} $
is spanned by products of at most $ n $ such words.

Now, consider the composition of morphisms of complexes
\begin{equation}
\la{qqism}
W^{\leq n} \xrightarrow{\Tr_n} B_n \xrightarrow{\Phi_n} E_n\ .
\end{equation}

The first map in \eqref{qqism} is defined by the trace morphism \eqref{trm2}: explicitly,
it sends a cyclic word to the corresponding trace expression (e.g., $\,xy\theta \mapsto \sum_{i,j,k} x_{ij} y_{jk} \theta_{ki}$). The second map is the Harish-Chandra homomorphism \eqref{HChom2}.

We claim that each arrow in \eqref{qqism} is actually a quasi-isomorphism.
To see this we first show that the composition is a quasi-isomorphism.
By Theorem~\ref{ftt}, the homology of $ R_\n $
can be identified with the reduced cyclic homology $ \rHC_\bullet(A_q) $.
When $q$ is not a root of unity, the latter is known to vanish in all
positive degrees while spanned by the cyclic words $ \{x^p\} $ and $ \{y^p\} $
in degree $0$ (see \cite[Th\'eor\`eme~2.1]{Wa}): i.e.,
$$
\rHC_\bullet(A_q) = \rHC_0(A_q) \cong \,\bigoplus_{p \geq 1}\, (k x^p \oplus k y^p)
$$
This implies
\begin{equation}
\la{iss1}
\H_\bullet(W^{\leq n}) =
\H_0(W^{\leq n})\cong
\Sym^{\leq n}[\,\H_0(R_\n)\,] \cong
\Sym^{\leq n}[\,\rHC_0(A_q)\,] \cong
k[X_p,\,Y_p\,:\,p=1,2,\ldots]^{\leq n}\, ,
\end{equation}
where the variables $ X_p $ and $ Y_p $ correspond to the cyclic words $ x^p $ and $y^p $.

Now, by Proposition~\ref{sympowsum}(i), we can identify
$$
E_n \cong \bSym^{\leq n}(\bar{R}_\nn)\ ,
$$
where $ R_\nn \cong k[x,y,\theta] $ with differential $ d\theta = (1-q)xy $.
In this case, $ \H_\bullet(R_\nn) $ is concentrated in degree 0  and
$ \H_0(R_\nn) $ is spanned (as a vector space) by the classes of
$x^p $ and $y^p $ with $\, p\ge 1 $. Hence the natural projection
$ R_\n \onto \bar{R}_\nn $ induces an isomorphism of graded vector spaces
$ \H_\bullet(R_\n) \stackrel{\sim}{\to} \H_\bullet(\bar{R}_\nn) $, which gives
\begin{equation}
\la{iss2}
\H_\bullet(W^{\leq n}) \stackrel{\sim}{\to} \H_\bullet(E_n)\ .
\end{equation}
It is easy to see that the isomorphism \eqref{iss2} is actually induced by the composite
map \eqref{qqism}: thus \eqref{qqism} is a quasi-isomorphism. Now, by
\cite[Theorem~3.1]{BR}, we know that the trace map $ \Tr_n: W^{\leq n} \to R_n^{\GL} $
is (degreewise) surjective. Since both $ W^{\leq n} $ and $ R_n^{\GL} $ are non-negatively
graded, the map induced by $ \Tr_n $ on the zero homology is also surjective:
\begin{equation}
\la{surj1}
\H_0(\Tr_n):\, \H_0(W^{\leq n}) \onto \H_0(B_n)
\end{equation}
On the other hand, by Lemma~\ref{bfrvan}, the homology of $ R_n $ is concentrated in degree $0$. Since $ \GL_n(k) $ is reductive, this implies
\begin{equation}
\la{iss3}
\H_\bullet(B_n) \cong \H_\bullet(R_n)^{\GL} \cong \H_0(R_n)^{\GL}
\cong \H_0(B_n)\ .
\end{equation}
Combining \eqref{iss2}, \eqref{surj1} and \eqref{iss3}, we now see that \eqref{qqism}
induces an isomorphism
\begin{equation}
\la{iss4}
\H_\bullet(W^{\leq n}) \onto \H_\bullet(B_n) \to  \H_\bullet(E_n) \ ,
\end{equation}
where the first arrow is also surjective. It follows that both maps in \eqref{iss4}
are isomorphisms. In particular, $ \Phi_n $ is a quasi-isomorphism.
\end{proof}

\begin{remark}
Note that \eqref{iss1} gives isomorphisms
\begin{equation}
\la{iss5}
\H_\bullet(B_n) \cong \H_\bullet(E_n) \cong k[X_p,\,Y_p\,:\, p =1,2,\ldots]^{\leq n}\ ,
\end{equation}
where the variables $ X_p $ and $ Y_p $ have homological degree $0$.
\end{remark}

\section{Euler characteristics and constant term identities}
\la{sec5}

In this section, we compare the Euler characteristics of both sides of Conjecture~\ref{conj1} and prove the resulting identities for all $n$. Throughout, we assume that $k =\c$.

\subsection{A constant term identity for $ \gl_n $}
\la{secid}
The target of the Harish-Chandra homomorphism~\eqref{phiV} is the graded commutative
algebra $ E_n := \c[x_1,\dots,x_n,y_1,\dots,y_n,\theta_1,\dots,\theta_n]^{S_n}$
with $x_i,y_i$ of
homological degree $0$ and $ \theta_i $ of degree $1$. It has an additional
$\mathbb Z^2$ grading, which we call weight,
such that wt$(x_i)=(1,0)$, wt$(y_i)=(0,1)$,
wt$(\theta_i)=(1,1)$.

Let $ G_n(q,t,s)$ be the generating function
\[
G_n(q,t,s)=\sum\mathrm{dim}(E_n)_{d,a,b}s^dq^at^b
\]
of the dimensions of
the homogeneous components $ (E_n)_{d,a,b} $ of degree $d$ and weight $(a,b)$

\blemma
\la{lemAA}
$\, G_n(q,t,s)$ is equal to the coefficient of $v^n$ in the
Taylor expansion at $v=0$ of
\[
\prod_{a,b\geq0}\frac{1+q^{a+1}t^{b+1}sv}{1-q^at^bv}
\]
\elemma
\begin{proof}
A generating set is given by the $S_n$ orbit
sums of monomials $\theta^\nu x^\lambda y^\mu$ with exponents $\lambda,\mu\in
\mathbb Z_{\geq0}^n$, $\nu\in \{0,1\}^n$. Such an orbit sum
contributes $q^{|\lambda|+|\nu|}t^{|\mu|+|\nu|}s^{|\nu|}$ to the generating function,
where $|\lambda|=\sum\lambda_i$.
To get a basis we take the subset
of such orbit sums whose exponents obey
 (i) $\nu$ is a partition: $\nu_i\geq \nu_{i+1}$
for all $i=1,\dots,n-1$, (ii) $\lambda_i\geq\lambda_{i+1}$ whenever $\nu_i=\nu_{i+1}$, (iii) $\mu_i\geq\mu_{i+1}$
whenever $\nu_i=\nu_{i+1}=0$ and $\lambda_i=\lambda_{i+1}$ and (iv) $\mu_i>\mu_{i+1}$ whenever $\nu_i=\nu_{i+1}=1$ and $\lambda_i=\lambda_{i+1}$. Such data are
in one-to-one correspondence with families $(m_{dab})$ of
non-negative integers labeled by triples $(d,a,b)\in\{0,1\}\times\mathbb Z_{\geq0}^2$ such that $|m|=\sum_{d,a,b}m_{dab}=n$ and $m_{1ab}\leq 1$:
for $(\nu,\lambda,\mu)$ obeying (i)--(iv),
$m_{dab}$ is the number of parts of size $b$ of the partition $(\mu_j)_{j\in J}$
where $j\in J$ iff $\nu_j=d$ and $\lambda_j=a$. This gives
\begin{align*}
\sum_{n=0}^\infty G_n(q,t,s)v^n&=
\prod_{a,b\geq 0}
\sum_{m=0}^\infty q^{ma}t^{mb}v^m
\prod_{a,b\geq 0}
\sum_{m=0}^1 q^{m(a+1)}t^{m(b+1)}s^{m}v^m
\\
&=\prod_{a,b\geq0}\frac{1+q^{a+1}t^{b+1}sv}{1-q^at^bv}
\end{align*}
\end{proof}
The (weighted)
Euler characteristic of $ E_n $ is the value of $ G_n$ at $s=-1$, where
the formula simplifies considerably. Let us use the standard notation
$(v;q)_\infty :=\prod_{j=0}^\infty(1-q^jv)$, $(v;q)_n :=(v;q)_\infty/(q^nv;q)_{\infty}$.
\begin{corollary}
\la{corA}
The Euler characteristic $\chi(E_n,q,t)= G_n(q,t,s=-1)$ is the coefficient of
$v^n$ in the Taylor expansion at $v=0$ of the function
\[
\frac{1-v}{(v;q)_\infty\,(v;t)_\infty}
\]
More explicitly,
\[
\chi(E_n,q,t)=\sum_{j=0}^n\frac {t^j}{(q;q)_{n-j}(t;t)_j}.
\]
\end{corollary}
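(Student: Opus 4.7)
The plan is to specialize the generating function of Lemma~\ref{lemAA} at $s=-1$ and then simplify the resulting infinite product. After setting $s=-1$, the generating function becomes
\[
\prod_{a,b\geq 0}\frac{1-q^{a+1}t^{b+1}v}{1-q^at^bv}.
\]
The key observation is that the numerator, indexed by $(a,b)$ with $a\geq 0,\,b\geq 0$, matches the denominator factors indexed by $(a,b)$ with $a\geq 1,\,b\geq 1$. After this cancellation, only the denominator factors with $a=0$ or $b=0$ survive, giving
\[
\frac{1}{(1-v)\,(qv;q)_\infty\,(tv;t)_\infty}.
\]

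Next, I would rewrite this using the trivial identities $(v;q)_\infty=(1-v)(qv;q)_\infty$ and $(v;t)_\infty=(1-v)(tv;t)_\infty$; multiplying numerator and denominator by $(1-v)$ immediately yields
\[
\frac{1-v}{(v;q)_\infty\,(v;t)_\infty},
\]
which is the first formula in the statement.

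For the explicit sum, I would factor the extra $(1-v)$ into one of the Pochhammer symbols, writing the generating function as $1/[(v;q)_\infty(tv;t)_\infty]$, and then apply Euler's $q$-exponential identity $1/(z;q)_\infty=\sum_{n\geq 0}z^n/(q;q)_n$ to each factor separately. Expanding both factors as power series in $v$ and reading off the coefficient of $v^n$ via the Cauchy product immediately gives the sum $\sum_{j=0}^{n}t^{j}/[(q;q)_{n-j}(t;t)_j]$. No step is genuinely difficult; the whole proof is essentially a sequence of cancellations followed by the standard $q$-binomial series expansion, so there is no serious obstacle to anticipate.
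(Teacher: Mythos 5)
Your proof is correct and is essentially the same as the paper's: the first formula comes from the telescoping cancellation in the $s=-1$ specialization of Lemma~\ref{lemAA}, and the second from expanding $1/[(v;q)_\infty(tv;t)_\infty]$ via the $q$-exponential (Rothe) identity and taking the Cauchy product. The paper's proof is just a terser version of the same argument.
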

\begin{proof}
The first statement is immediate from Lemma~\ref{lemAA};
the second follows from the well-known Rothe identity
\[
\frac1{(v;q)_\infty}=\sum_{n=0}^\infty \frac {v^n}{(q;q)_n}\ .
\]
\end{proof}

Let us compare this Euler characteristic with the Euler characteristic
of the graded commutative algebra $ B_n := R_n^{\GL}$ underlying
the invariant part of the complex computing representation homology. Here
\[
R_n= \c[x_{ij}, y_{ij},\theta_{ij} \,:\,i,j=1,\dots,n].
\]
Again, we assign degree $0$, $0$, $1$ and weight $(1,0)$, $(0,1)$ and $(1,1)$
to $x_{ij}$, $y_{ij}$ and $\theta_{ij}$, respectively. The weighted Euler
characteristic of $B_n$ can be extracted from the character valued
weighted Euler characteristic
\[
\chi(R_n,q,t,u)=\sum \mathrm{trace}(u|(R_n)_{d,a,b})(-1)^dq^at^b,\quad u=\mathrm{diag}(u_1,\dots,u_n)\in \GL_n
\]
of the $\GL_n$-module $ R_n$.
\blemma
\[
\chi(R_n,q,t,u)=\prod_{1\leq i,j\leq n}\frac{1-q t u_i/u_j}
{(1-q u_i/u_j)(1-t u_i/u_j)}
\]
\elemma
\begin{proof}
We use the basis of monomials, noticing that
$u$ acts on $x_{i,j},y_{i,j},\theta_{i,j}$ by multiplication by
$u_i/u_j$.
\end{proof}
To find the weighted Euler characteristic of $B_n$ we need to select the
coefficient of $1$ in the expansion of the coefficients of the
character valued Euler characteristic in Schur polynomials. This is
obtained by taking the $1/n!$ times the
constant term of the product with the Weyl denominator:

\begin{corollary}
\la{corB}
Let $\mathrm{CT}\colon\mathbb Z[u_1^{\pm1},\dots,u_n^{\pm1}][[q,t]]\to\mathbb Z[[q,t]]$ be the constant term map. Then
\[
\chi(B_n,q,t)=\frac1{n!}\,\frac{(1-qt)^n}{(1-q)^n(1-t)^n}\ \
\mathrm{CT}
\prod_{1\leq i\neq j\leq n}
\frac{(1-qt u_i/u_j)(1-u_i/u_j)}{(1-q u_i/u_j)(1-t u_i/u_j)}.
\]
\end{corollary}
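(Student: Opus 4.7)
The plan is to apply the Weyl integration formula (or its algebraic analogue, the constant-term projection onto $\GL_n$-invariants) to the character-valued Euler characteristic provided by the preceding lemma. For a rational $\GL_n$-module $V$ whose character $\chi_V(u)$ lies in $\Z[u_1^{\pm 1},\dots,u_n^{\pm 1}]$, the dimension of $V^{\GL_n}$ is obtained by integrating $\chi_V$ against the Haar measure on the compact form $U(n)$, which in diagonal coordinates reduces to
\[
\dim V^{\GL_n}\,=\,\frac{1}{n!}\,\mathrm{CT}\!\left[\chi_V(u)\,\prod_{1\le i\ne j\le n}\!(1-u_i/u_j)\right],
\]
where $\prod_{i\ne j}(1-u_i/u_j)=\Delta(u)\overline{\Delta(u)}$ is the square of the Weyl denominator. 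Since both $R_n$ and its $\GL$-invariant subalgebra $B_n$ are polynomial in $x_{ij},y_{ij},\theta_{ij}$ and the weight grading $(a,b)$ is preserved by the $\GL_n$-action, each graded piece $(R_n)_{d,a,b}$ is finite-dimensional and a rational representation, so the formula above applies degree by degree.

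First I would recall why this identity holds in the graded setting: the character of $R_n$ is a formal power series in $q,t$ whose coefficients are Laurent polynomials in $u_1,\dots,u_n$, and extracting $\GL_n$-invariants commutes with the $(q,t)$-grading. Applying the constant-term projection to the expression
\[
\chi(R_n,q,t,u)\,=\,\prod_{1\le i,j\le n}\frac{1-qt\,u_i/u_j}{(1-q\,u_i/u_j)(1-t\,u_i/u_j)}
\]
from the preceding lemma therefore yields
\[
\chi(B_n,q,t)\,=\,\frac{1}{n!}\,\mathrm{CT}\!\left[\prod_{1\le i,j\le n}\frac{1-qt\,u_i/u_j}{(1-q\,u_i/u_j)(1-t\,u_i/u_j)}\cdot\prod_{1\le i\ne j\le n}\!(1-u_i/u_j)\right].
\]

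Next I would simply separate the diagonal terms $i=j$ from the off-diagonal terms $i\ne j$ in the first product. Setting $u_i/u_j=1$ in each diagonal factor gives
\[
\prod_{i=1}^{n}\frac{1-qt}{(1-q)(1-t)}\,=\,\frac{(1-qt)^{n}}{(1-q)^{n}(1-t)^{n}},
\]
which is independent of $u$ and can be pulled outside the constant term. The remaining off-diagonal factors combine with the Weyl denominator to produce exactly
\[
\prod_{1\le i\ne j\le n}\frac{(1-qt\,u_i/u_j)(1-u_i/u_j)}{(1-q\,u_i/u_j)(1-t\,u_i/u_j)},
\]
which yields the claimed formula.

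The only real subtlety, and the step I would spend the most care on, is the formal-power-series justification: the quotient $\tfrac{1-qt\,u_i/u_j}{(1-q\,u_i/u_j)(1-t\,u_i/u_j)}$ must be expanded as a formal series in $q$ and $t$ whose coefficients are polynomials in $u_i/u_j$, so that the constant term in $u$ is taken at each bidegree $(a,b)$. Once this is set up correctly, there is no convergence issue and the argument reduces to the elementary bookkeeping above.
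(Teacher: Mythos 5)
Your proposal is correct and follows the same route the paper sketches: the paper simply states that the $\GL_n$-invariant Euler characteristic is obtained by taking $\tfrac{1}{n!}$ times the constant term of the character-valued Euler characteristic against the Weyl denominator $\prod_{i\neq j}(1-u_i/u_j)$. Your splitting of the $i=j$ diagonal factors (yielding $(1-qt)^n/((1-q)^n(1-t)^n)$, which is $u$-independent and can be pulled out) from the $i\neq j$ factors is exactly the bookkeeping the paper leaves implicit.
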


Now, Conjecture~\ref{conj1} implies the equality
\begin{equation}
\la{eqeuler}
\chi(E_n,q,t)=\chi(B_n,q,t)\ ,
\end{equation}
which, using Corollary~\ref{corA} and Corollary~\ref{corB}, we can write as the following
identity
\begin{equation}
\label{conj2}
\frac1{n!}\,\frac{(1-qt)^n}{(1-q)^n(1-t)^n}\ \
\mathrm{CT}
\prod_{1\leq i\neq j\leq n}
\frac{(1-qt u_i/u_j)(1-u_i/u_j)}{(1-q u_i/u_j)(1-t u_i/u_j)}\
=\ \sum_{j=0}^n\frac {t^j}{(q;q)_{n-j}(t;t)_j}\ .
\end{equation}

The main result of this section is
\begin{theorem}
\la{Tconj2}
The identity \eqref{conj2} holds for all $ n \ge 1 $.
\end{theorem}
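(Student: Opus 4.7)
The plan is to match both sides of \eqref{conj2} as coefficients of $v^{n}$ in the same generating function $\frac{1-v}{(v;q)_{\infty}(v;t)_{\infty}}$. By Corollary~\ref{corA} this identification is immediate for the right-hand side (which equals $\chi(E_n,q,t)$), so the work reduces to proving the corresponding generating-function formula for the left-hand side.

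First, I would absorb the prefactor $\frac{(1-qt)^{n}}{(1-q)^{n}(1-t)^{n}}$ into the product over $i\neq j$ as the missing diagonal ($i=j$) factors, rewriting the LHS of \eqref{conj2} in the compact form
\begin{equation*}
\chi(B_n,q,t)=\frac{1}{n!}\,\mathrm{CT}\!\left[\prod_{i,j=1}^{n}\frac{1-qt\,u_i/u_j}{(1-q\,u_i/u_j)(1-t\,u_i/u_j)}\prod_{i\neq j}(1-u_i/u_j)\right].
\end{equation*}
Since $\prod_{i\neq j}(1-u_i/u_j)$ is the Weyl denominator for $U(n)$ and the integrand depends only on eigenvalues, the Weyl integration formula identifies this with the Haar integral $\int_{U(n)}\det(1-qt\,\mathrm{Ad}(g))/[\det(1-q\,\mathrm{Ad}(g))\det(1-t\,\mathrm{Ad}(g))]\,dg$. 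The key step is then to establish the Weyl-group localization
\begin{equation}\la{loc}
\chi(B_n,q,t)\;=\;\sum_{\lambda\vdash n}\frac{1}{z_\lambda}\prod_{i=1}^{\ell(\lambda)}\frac{1-(qt)^{\lambda_i}}{(1-q^{\lambda_i})(1-t^{\lambda_i})},
\end{equation}
which is the $G=U(n)$ instance of \eqref{ieqkt}.

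Assuming \eqref{loc}, the rest is formal. Setting $f(m):=\frac{1-(qt)^{m}}{(1-q^{m})(1-t^{m})}$, the standard cycle-index generating-function identity gives
\begin{equation*}
\sum_{n\geq 0}v^{n}\sum_{\lambda\vdash n}\frac{1}{z_\lambda}\prod_i f(\lambda_i)=\exp\!\left(\sum_{m\geq 1}\frac{v^{m}f(m)}{m}\right).
\end{equation*}
A direct algebraic check gives the crucial simplification $f(m)=\frac{1}{1-q^{m}}+\frac{1}{1-t^{m}}-1$, and substituting this together with the logarithmic expansions $\sum_{m\geq 1}\frac{v^{m}}{m(1-q^{m})}=-\log(v;q)_{\infty}$, $\sum_{m\geq 1}\frac{v^{m}}{m(1-t^{m})}=-\log(v;t)_{\infty}$ and $\sum_{m\geq 1}\frac{v^{m}}{m}=-\log(1-v)$ evaluates the exponent to $\frac{1-v}{(v;q)_{\infty}(v;t)_{\infty}}$, completing the match.

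The main obstacle is proving the localization \eqref{loc}. One approach is residue-theoretic: compute the constant term by iterated residues at the poles $u_i=q\,u_j$ and $u_i=t\,u_j$ of the integrand and organize them according to the cycle structure of the chain of identifications $i\mapsto j$ they enforce, reproducing the sum over cycle types $\lambda\vdash n$ with the combinatorial weight $1/z_\lambda$. An alternative character-theoretic route is to expand the integrand via the Cauchy identity $\prod_{i,j}(1-xu_i/u_j)^{-1}=\sum_\lambda x^{|\lambda|}s_\lambda(u)\overline{s_\lambda(u)}$ for the two denominator factors and the dual Cauchy identity for the numerator, collect into Schur bilinears $s_\rho(u)\overline{s_\sigma(u)}$, apply the orthonormality $\int_{U(n)}s_\rho\,\overline{s_\sigma}\,dg=\delta_{\rho\sigma}\mathbf{1}_{\ell(\rho)\leq n}$ to reduce to a sum over partitions of length at most $n$, and convert back to the power-sum basis via the Frobenius character formula $s_\lambda=\sum_\mu z_\mu^{-1}\chi^{\lambda}_{\mu}p_\mu$ combined with column-orthogonality of $S_n$-characters.
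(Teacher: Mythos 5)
Your reduction of \eqref{conj2} to the localization \eqref{loc}, and the subsequent cycle-index manipulation (using $f(m)=\frac{1}{1-q^m}+\frac{1}{1-t^m}-1$ and the logarithmic series for $(v;q)_\infty$, $(v;t)_\infty$, $1-v$), is formally correct. But it does not advance the proof, because \eqref{loc} is simply a restatement of the identity you set out to prove: the sum $\sum_{\lambda\vdash n}z_\lambda^{-1}\prod_i f(\lambda_i)$ is the Molien series of the $S_n$-action on $\h_n\oplus\h_n\oplus\h_n[1]$, which equals $\chi(E_n,q,t)$ and hence the right-hand side of \eqref{conj2} by Corollary~\ref{corA}. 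Your cycle-index computation is thus an alternative derivation of Corollary~\ref{corA}, not a proof of the left-hand-side evaluation. The entire content of the theorem is concentrated in the step you flag as ``the main obstacle'' and leave unaddressed.

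Neither of the two sketched strategies for \eqref{loc} is carried out, and one of them is known not to work directly. The iterated-residue computation of the constant term is discussed in Remark~3 following Theorem~\ref{Tconj2} (which you presumably did not see): the poles at $u_i=qu_j$ and $u_i=tu_j$ organize into \emph{Young diagrams} rather than permutation cycle types, and one obtains $Z(v,q,t)=\exp\bigl(\sum_\lambda\frac{v^{|\lambda|}}{|\lambda|}W_\lambda(q,t)\bigr)$ with a Nekrasov-type box-product weight $W_\lambda$. Matching this to the cycle-index generating function requires the nontrivial partition identity $\sum_{|\lambda|=n}W_\lambda(q,t)=\frac{1-q^nt^n}{(1-q^n)(1-t^n)}$, which the paper records as being \emph{equivalent to} \eqref{conj2}, not a step toward it. The Schur-function route is plausible in outline --- Cauchy and dual Cauchy expansions, Schur orthogonality, then Frobenius/column-orthogonality --- but you have not verified that the triple product of Schur bilinears actually collapses to the claimed form; the bookkeeping of Littlewood--Richardson multiplications in this triple product is exactly where a gap would hide, and nothing in your sketch certifies it closes.

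The paper's actual proof is of an entirely different nature. It deduces \eqref{conj2} as a one-line corollary of Theorem~\ref{q-conj1}, which asserts that the derived Harish-Chandra homomorphism $\Phi_n(A_q)$ is a quasi-isomorphism for the $q$-polynomial algebra $A_q=k\langle x,y\rangle/(xy-qyx)$ when $q$ is not a root of unity. Since $B_n$ and $E_n$ of Section~\ref{secid} coincide, as weight-graded vector spaces, with the complexes \eqref{algB} and \eqref{algE} representing $\DRep_n(A_q)^{\GL}$ and $S^n\DRep_1(A_q)$, the quasi-isomorphism forces equality of weighted Euler characteristics, which is \eqref{conj2}. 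That argument in turn rests on the vanishing theorem $\H_i(A_q,n)=0$ for $i>0$ (Lemma~\ref{bfrvan}) and the computation of $\rHC_\bullet(A_q)$, i.e.\ on homological algebra rather than symmetric-function combinatorics. If you want to salvage a purely combinatorial proof you would need to actually execute the Schur-orthogonality computation; as written, the proposal identifies the correct target but proves nothing beyond what was already established by Corollary~\ref{corA}.
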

\bproof
This is an immediate consequence of Theorem~\ref{q-conj1}. Indeed, if we forget
differentials, the commutative algebras $ B_n $ and $ E_n $
introduced in this section coincide with the algebras \eqref{algB}
and \eqref{algE} introduced in Section~\ref{qpoly}.
The differentials in \eqref{algB} and \eqref{algE} respect the weight
gradings and so does the Harish-Chandra homomorphism \eqref{HChom2}.
Theorem~\ref{q-conj1} thus implies the equality \eqref{eqeuler} which is
equivalent to \eqref{conj2}.
\eproof

\vspace{1ex}

\noindent
{\bf Remarks.}
1. The last isomorphism in \eqref{iss5} allows one to compute the right-hand side of
the identity \eqref{conj2} directly, without
using Lemma~\ref{lemAA}. Indeed, $ \c[X_p,\,Y_p\,:\,p=1,2,\ldots]^{\leq n} $
consists of polynomials in $X_1,Y_1, X_2,Y_2, \ldots\, $ of degree
$ \leq n $. The number of monomials of degree exactly
$ n $ and of bidegree $(a,b)$ is the coefficient of $v^n q^at^b$ in
the generating function $\,\prod_{k \ge 1}(1-vq^k)^{-1} (1-vt^k)^{-1} $.
The number of polynomials of degree $ \leq n $ can be obtained by multiplying this function
by $1/(1-v)$. The result of Corollary~\ref{corA} follows then easily
from \eqref{iss5}.

2. The identity \eqref{conj2} can be rewritten equivalently in the following form
\[
\int_{U(n)}\,
\frac{\det(1\,-\,qt\,\mathrm{Ad}(g))}
{\det(1\,-\,q\,\mathrm{Ad}(g))\,\det(1\,-\,t\,\mathrm{Ad}(g))}\
d g\ =\
\frac{1}{n!} \sum_{\sigma \in S_n} \frac{\mathrm{det}(1-qt\sigma)}{\mathrm{det}(1-q\sigma)\mathrm{det}(1-t\sigma)}\ ,
\]
where the integration on the left is taken with respect to the normalized Haar measure
over the $n$-th unitary group $ U(n) $ and the determinants on the
right are taken with respect to the natural action of $S_n$ on $\c^n$.
In this form, the identity \eqref{conj2} extends to an arbitrary reductive Lie
algebra (see Section~\ref{constid}).

3. The left-hand side of \eqref{conj2} is the expansion at $\,q=t=0\,$ of
an integral over the product of unit circles $|u_i|=1$ defined for
$\,|q|,\,|t|<1\,$ and can be computed by iterated residues, with the
following result: let $\,Z_n(q,t)\,$ be the left-hand side of
\eqref{conj2} and $\,Z(v,q,t)=1+\sum_{n=1}^\infty Z_n(q,t)v^n$ the
generating function. Then
\[
Z(v,q,t)=\exp\left(\sum_{\lambda}\frac {v^{|\lambda|}}{|\lambda|}W_\lambda(q,t)\right).
\]
The sum is over all partitions (non-increasing integer sequences converging to $0$) $\lambda=(\lambda_1\geq\lambda_2\geq \cdots\geq 0)$ of positive size
 $|\lambda|=\sum\lambda_i$. The coefficient $W_\lambda(q,t)$ is a
regularized product over the boxes $(i,j)$ of the Young diagram
\[
Y(\lambda)=\{(i,j)\in\mathbb Z^2, 1\leq j\leq \lambda_i\}=
\{(i,j)\in\mathbb Z^2, 1\leq i\leq \lambda_j'\}
\]
of the partition $\lambda$, with conjugate partition $\lambda'\,$:
\[
W_\lambda(q,t)=
\prod_{(i,j)\in Y(\lambda)}\hspace{-0.5cm}{}'\hspace{0.5cm}\frac{(1-q^jt^{i})(1-q^{-j+1}t^{-i+1})}
{(1-q^{\lambda_i-j+1}t^{-\lambda'_j+i})(1-q^{-\lambda_i+j}t^{\lambda'_j-i+1})}
\]
In the regularized product $\Pi'$ we omit the factor $1-q^0t^0$ appearing
at $(i,j)=(1,1)$. Using the product formula of Corollary \ref{corA}
for the generating function of the right-hand side of \eqref{conj2} we
see that the identity \eqref{conj2} is equivalent to
\[
\sum_{|\lambda|=n}W_\lambda(q,t)=\frac{1-q^nt^n}{(1-q^n)(1-t^n)}, \quad
n=1,\,2,\,\dots
\]
This calculation is a trigonometric version of a similar calculation
occurring in supersymmetric gauge theory \cite{Nekrasov}:
the Nekrasov instanton partition function of
$\mathcal N=2$ super Yang--Mills theory on
$\mathbb{R}^4$ with $U(N)$ gauge group has both an
integral representation and an expression as a sum over collections of
partitions (see \cite{Nekrasov}, (3.10) and (1.6), respectively).
They are related by iterated residues.

\subsection{Other examples}
\la{otherex}
As mentioned earlier, the Harish-Chandra homomorphism
$ \Phi_n(A) $ induces an isomorphism on the 0-th homology
for any commutative algebra. In Section~\ref{shcp3var}, we have shown that
this isomorphism  cannot be extended to higher homologies for
 $ A = k[x,y,z] $ . We now give a general argument showing that
$ \Phi_n $ is not a quasi-isomorphism for $ A = \Sym(V) $ when
$ \dim(V) \ge 3 $.  We will also look the algebra of dual numbers,
in which case the analog of Conjecture~\ref{conj1} also fails.

\subsubsection{Euler characteristics for arbitrary symmetric algebras}
\la{scounter}
Our aim is to prove
\bprop \la{counter}
The  derived Harish-Chandra homomorphism
$$
\Phi_n\,:\,\DRep_n[\Sym(V)]^{\GL} \,\rar\, \bSym(\h_n^{\ast} \otimes (\wedge^1V \oplus \wedge^2 V \oplus \wedge^3 V \oplus \ldots \oplus \wedge^r V))^{S_n}
$$
is not a quasi-isomorphism when the dimension $r$ of $V$ is at least $3$ (and when $n \geq 2$).
\eprop

\begin{proof}
Suppose that $r := \dim_k V = 3$. Choose a basis $\{x,y,z\}$ of $V$. Note that both $\DRep_n(\Sym(V))^{\GL}$ and $\bSym(\h_n^{\ast} \otimes (\wedge^1V \oplus \wedge^2 V \oplus \wedge^3 V))^{S_n}$ are $\Z^3$-graded, with $x$ having weight $(1,0,0)$, $y$ having weight $(0,1,0)$ and $z$ having weight $(0,0,1)$ (these weights uniquely determine the weight of each generator of $\DRep_n(\Sym(V))$ as well as the weight of each generator of $\bSym(\h_n^{\ast} \otimes
(\wedge^1V \oplus \wedge^2 V \oplus \wedge^3 V))$). Further note that by its very construction, the derived Harish-Chandra homomorphism is weight preserving.
We then claim that
\blemma \la{checkcounter}
The subcomplexes of weight $(1,1,1)$ in $\DRep_n(\Sym(V))^{\GL}$ and $\bSym(\h_n^{\ast} \otimes (\wedge^1V \oplus \wedge^2 V \oplus \wedge^3 V))^{S_n}$ have different Euler characteristics.
\elemma
This shows that $\Phi_n$ cannot be a quasi-isomorphism when $r=3$ and $n \geq 2$.
Further, when $r >3$, one can choose a basis $\{x_1,\ldots,x_r\}$ of $V$, and equip both
$\DRep_n(\Sym(V))^{\GL}$ and
$\bSym(\h_n^{\ast} \otimes (\wedge^1V \oplus \wedge^2 V \oplus \wedge^3 V \oplus \ldots \oplus \wedge^r V))^{S_n}$
with the $\Z^r$-grading determined by fixing the weight of $x_i$ to be $(0, \ldots ,0,1,0,\ldots,0)$
(with $i$-th coordinate $1$). Again, the derived Harish-Chandra homomorphism is weight preserving
by its very construction. Further, it is easy to see that the subcomplexes of weight $(1,1,1,0, \ldots ,0)$
in $\DRep_n(\Sym(V))^{\GL}$ and $\bSym(\h_n^{\ast} \otimes
(\wedge^1V \oplus \wedge^2 V \oplus \wedge^3 V \oplus \ldots \oplus \wedge^r V))^{S_n}$
are isomorphic to their counterparts for the case when $r=3$.
Thus, Lemma~\ref{checkcounter} implies the desired proposition in general.

It remains to verify Lemma~\ref{checkcounter}. For notational brevity, we write
$\zeta \,:=\, -\lambda(x,y)$, $\eta\,:=\,-\lambda(z,x)$, $\xi\,:=\,-\lambda(y,z)$ and
$t\,:=\,-\lambda(x,y,z)$. Then, the minimal resolution $R$ is generated by
$x,y,z,\zeta, \eta, \xi,t$ with $d\zeta \,=\,[x,y]$,
$d\eta = [z,x]$, $d \xi = [y,z]$ and $dt = [\xi, x]+[\eta, y]+[\zeta, z]$.
Thus, $\DRep_n(k[x,y,z])$ is generated by the variables
$x_{ij}, y_{ij}, z_{ij}, \zeta_{ij}, \eta_{ij}, \xi_{ij}, t_{ij}$ for $ 1 \leq i,j \leq n$
where $x_{ij} \,:=\,e^{ij} \otimes x$, etc. The subcomplex of
$\DRep_n(k[x,y,z])^{\GL}$ of weight $(1,1,1)$ is of the form $0 \rar C_2 \rar C_1 \rar C_0 \rar 0$ where
\begin{eqnarray*}
C_2\,& := &\, \mathrm{Span}\{ \mathrm{Tr}(t) \} \\
C_1\,& := &\,\mathrm{Span}\{ \mathrm{Tr}(\xi x),\Tr(\eta y), \Tr(\zeta z), \Tr(\xi)\Tr(x), \Tr(\eta)\Tr(y),  \Tr(\zeta)\Tr(z) \}\\
C_0\,& := &\, \mathrm{Span}\{\mathrm{Tr}(xyz), \Tr(xzy), \Tr(xy)\Tr(z), \Tr(yz)\Tr(x), \Tr(zx)\Tr(y), \Tr(x)\Tr(y)\Tr(z) \}
\end{eqnarray*}
For $n \geq 3$ the above elements are linearly independent. When $n=2$, there is one relation:
$$\Tr(xyz)+\Tr(xzy)-\Tr(xy)\Tr(z)-\Tr(yz)\Tr(x)-\Tr(zx)\Tr(y)+\Tr(x)\Tr(y)\Tr(z)=0\,\text{.}$$
Hence, $\chi(\DRep_n(k[x,y,z])^{\GL}_{(1,1,1)})\,=\,1$ when $n \geq 3$ and $0$ for $n=2$.

On the other hand, writing $\,x_i\,:=\,e^i \otimes x\,$, etc., we identify
$\,\bSym[\h_n^{\ast} \otimes (\wedge^1V \oplus \wedge^2 V \oplus \wedge^3 V)]^{S_n}\,$ with
$$
A := k[x_1,\ldots,x_n,y_1,\ldots,y_n,z_1,\ldots,z_n,\xi_1,\ldots,\xi_n,\eta_1,\ldots,\eta_n,\zeta_1,\ldots,\zeta_n,t_1,\ldots,t_n]^{S_n}\,\text{.}
$$
In homological degree $0$, we have
$$A_0(1,1,1)\,=\,\mathrm{Span}(\sum_{(a,b,c) \in O} x_ay_bz_c) $$
where $O$ runs over the orbits of $S_n$ in $\{1,\ldots ,n\}^3$. For $n \geq 3$ there are five such orbits: those of $(1,1,1),(1,1,2),(1,2,1),(2,1,1)$ and $(1,2,3)$. In homological degree $1$,
$$A_1(1,1,1)\,=\,\mathrm{Span}(\sum_{(a,b) \in O}\xi_ax_b  , \sum_{(a,b) \in O} \eta_ay_b, \sum_{(a,b) \in O} \zeta_az_b ) $$
where $O$ runs over the orbits of $S_n$ in $\{1, \ldots ,n\}^2$. There are two such orbits: those of $(1,1)$ and $(1,2)$.
Finally, $$A_2(1,1,1) = \mathrm{Span}(t_1+\ldots+t_n)\,\text{.}$$
Thus, the Euler characteristic of $A(1,1,1)$ is $0$ when $n \geq 3$ (and $-1$ when $n=2$). This verifies Lemma~\ref{checkcounter}, thereby completing the proof of Proposition~\ref{counter}.
\end{proof}

\subsubsection{Dual numbers}
Let $A\,:=\,\c[x]/(x^2)\,$, the algebra of dual numbers. In this case, $C\,=\,T^c(V)$ where $V$ is a $1$-dimensional vector space in
homological degree $1$. Explicitly, $\cb(C)$ is the free DG algebra $R\,:=\,\c\langle x,t_1,t_2,\ldots.\rangle$ with $x$ having homological degree $0$
and $t_i$ having homological degree $i$ for all $i \in \mathbb N$. The differential on $\cb(C)$ is given by
$$dt_p \,=\,xt_{p-1}-t_1t_{p-2} + \ldots +(-1)^{p-1}t_{p-1}x \,,\,\,\,p \geq 1\,\text{.}$$
Note that as DG algebras, both $A$ and $R$ are {\it weight graded} as well, with the weight of $x$ being $1$ and that of $t_p$ being $p+1$ for
all $p \geq 1$. Note that the derived Harish-Chandra homomorphism is weight preserving by its very construction. Also, all the complexes involved
have finite total dimension in each weight degree. To see that the derived Harish-Chandra homomorphism is not a quasi-isomorphism in this case,
it suffices to check that the weighted Euler characteristics
$\chi(\DRep_2(A)^{\GL},q)$ and $\chi((\DRep_1(A)^{\otimes 2})^{S_2},q)$ differ. Here, for $V$ a weight graded complex of $\c$-vector spaces that has
finite total dimension in each weight degree, we define the weighted Euler characteristic of $V$ by
$$\chi(V,q)\,:=\,\sum_{i,j} (-1)^i\dim_{\c}V_i(j)q^j$$
where $V_i(j)$ is the component of $V_i$ of weight $j$.

Note that
$$\chi(S^2\DRep_1(A), q) \,=\,\frac{1}{2}[\chi(\DRep_1(A),q)^2+\chi(\DRep_1(A),q^2)]\,\text{.}$$
Since $\chi(\DRep_1(A),q)\,=\,\prod_{k=1}^{\infty}\,
(1-q^{2k+2}) (1-q^{2k+1})^{-1}  = \sum_{j=0}^{\infty}\, q^{j(j+1)/2}\,$,
\begin{equation} \la{ehc4} \chi(S^2\DRep_1(A),q) \,=\,1+q+q^2+q^3+q^4+2q^6+ \ldots
\end{equation}
On the other hand (see~\cite[Section 7.6]{BR}),
$$\chi(\DRep_2(A)^{\GL},q)\,=\, \int_{U(2)} \,
\prod_{i=1}^{\infty} \mathrm{det}(1\,- \,q^i\,\mathrm{Ad}(g))^{(-1)^i}d g \,\text{.}
$$
The above integral is taken over the unitary group $ U(2) \subset \GL_2(\c)$,
where the Haar measure $ d g $ is normalized so that the volume of
$U(2)$ is $1$. The determinant is taken in the adjoint representation of $\GL_2(\c)$ on $\M_2(\c)$. The above integral can be computed directly, giving
\begin{equation} \la{ehc5}  \chi(\DRep_2(A)^{\GL},q) \,=\,\frac{1}{1-q} \,\text{.}\end{equation}
Comparing~\eqref{ehc4} and~\eqref{ehc5}, one sees that the derived Harish-Chandra homomrphism
$$
\Phi_2\,:\,\DRep_2(\c[x]/(x^2))^{\GL_2} \rar S^2\DRep_1[\c[x]/(x^2)]
$$
is not an isomorphism in $\Ho(\cDGA_{k/k})$.

\section{Derived representation schemes of Lie algebras}
\la{sec6}

In this section, we construct a (derived) representation functor on the category of DG Lie algebras. We prove a basic result (see Theorem~\ref{drepg}) relating the derived representation functor of  Lie algebra with the homology of a Lie coalgebra defined over a Koszul dual cocommutative coalgebra.

\subsection{Classical representation schemes}

Let $\g$ be a finite-dimensional Lie algebra. For any Lie algebra $\mathfrak{a}$, there is an affine scheme $\Rep_{\g}(\mathfrak{a})$ parametrizing representations of $\mathfrak{a}$ in $\g$. More precisely, the functor
$$ \g(\,\mbox{--}\,)\,:\,\cAlg_k \rar \LieAlg_k\,,\,\,\,\, B \mapsto \g(B)\,:=\,\g \otimes B $$
has a left adjoint
$$ (\,\mbox{--}\,)_{\g}\,:\,\LieAlg_k \rar \cAlg_k\,,\,\,\,\,\mathfrak{a} \mapsto \mathfrak{a}_{\g} \,\text{.}$$ In particular, for a fixed Lie algebra $\mathfrak{a}$, the commutative algebra $\mathfrak{a}_{\g}$ represents the functor\footnote{This functor in a more general operadic setting is briefly discussed in~\cite[Section 6]{Gi}. }
$$
\Rep_{\g}(\mathfrak{a})\,:\,\cAlg_k \rar \mathtt{Sets}\,,\,\,\,\, B \mapsto \Hom_{\LieAlg_k}(\mathfrak{a},\g(B))\,\text{.}
$$
By definition,
$\,\mathfrak{a}_{\g}$ is the coordinate ring of the affine scheme parametrizing the representations of $\mathfrak{a}$ in $\g$.
For example, if $\mathfrak{a}$ is the abelian two dimensional Lie algebra over $k$ and $\g$ is reductive, then $\Rep_{\g}(\mathfrak{a})$ is the classical commuting scheme  of the Lie algebra $\g$.

In this section, we extend the functor $(\,\mbox{--}\,)_{\g}$ to the category $\DGL_k$ of DG Lie algebras and derive it using the natural model structure on $\DGL_k $. We then
define the representation homology $ \H_\bullet(\mathfrak{a}, \g) $ as the homology of the
corresponding derived functor.

\subsection{Quillen equivalences for Lie (co)algebras}
\la{sec6.1}
We begin with a brief review of the bar/cobar formalism in the Lie setting.

\subsubsection{Basic functors} Recall that $\cDGA_{k/k}$ denotes the category of commutative DG algebras augmented over $k$. Let $\DGL_k$ denote the category of DG Lie algebras over $k$. Let $\cDGC_{k/k}$ denote the category of DG cocommutative conilpotent coalgebras co-augmented over $k$. Similarly, let $\DGLC_k$ denote the category of conilpotent DG Lie coalgebras.

There is a pair of adjoint functors
\begin{equation}
 \la{cbbccom}
\cb_{\mathtt{Comm}}\,:\,
\mathtt{DGCC}_{k/k} \rightleftarrows \DGL_k \,:\,\bB_{\mathtt{Lie}} \ ,
\end{equation}
where $ \bB_{\mathtt{Lie}} $ is defined by the classical Chevalley-Eilenberg complex
of a DG Lie algebra (see Section~\ref{CEcoalg}) and $ \cb_{\mathtt{Comm}} $ is the functor 
assigning to a cocommutative coalgebra $C$ the free graded Lie 
algebra
on the vector space $\bar{C}[-1]$. The differential on 
$ \cb_{\mathtt{Comm}}(C) $ is given by $\,d_1+d_2\,$, where
$d_1$ is induced by the inner differential on $ C $ and $d_2$ is the lift of the 
linear map
\begin{diagram} k[-1] \otimes \bar{C} & \rTo^{\ \Delta_{-1} \otimes \Delta\ } & k[-1] \otimes k[-1] \otimes \bSym^2(\bar{C})\,\cong\, \wedge^2 (\bar{C}[-1])\ ,
\end{diagram}
where $ \Delta_{-1}\,:\,k[-1] \rar k[-1] \otimes k[-1]$ takes $1_{k[-1]}$ to $-1_{k[-1]} \otimes 1_{k[-1]}$ and $\Delta $ is the coproduct on $\bar{C}$.

Dually, there is a pair of adjoint functors
\begin{equation}
\la{cbbcom}
\cb_{\mathtt{Lie}}: \DGLC_k \rightleftarrows \cDGA_{k/k} : \bB_{\mathtt{Comm}} \ ,
\end{equation}
where $ \cb_{\mathtt{Lie}} $ is defined by the Chevalley-Eilenberg complex of a
DG Lie coalgebra (see Section~\ref{CEcoalg}), and
$ \bB_{\mathtt{Comm}} $ is the functor taking $R \in \cDGA_{k/k}$ to the cofree DG Lie coalgebra
$\mathfrak L(\bar{R}[1])$ equipped with (co)differential $d_1+d_2$ where $d_1$ is induced by the differential on $R$ and $d_2$ is determined by the linear map
\begin{diagram} \wedge^2(\bar{R}[1]) \,\cong\, k[1] \otimes k[1] \otimes \bSym^2(\bar{R}) & \rTo^{\ \mu_{1} \otimes \mu\ } &
 k[1] \otimes \bar{R}\,\cong\, \bar{R}[1]
\end{diagram}
(Here, $\mu_{1}$ identifies $1_{k[1]} \otimes 1_{k[1]}$ with $1_{k[1]}$ and 
$\mu: \bSym^2(\bar{R}) \to \bar{R} $ is induced by the multiplication map on $ R$).

\vspace{2ex}

\noindent
{\bf Notation.}\ If there is no danger of confusion, we will use the notation
$\, \CE := \bB_{\mathtt{Lie}} \,$ and $\, \CE^c := \cb_{\mathtt{Lie}} $ for the
Chevalley-Eilenberg functors on Lie algebras and Lie coalgebras, respectively.

\subsubsection{Model structures and Quillen theorems}
The following theorem collects basic facts about the model structures and
Quillen equivalences for Lie (co)algebras: part (i) is well known
(essentially, due to Quillen \cite{Q2}); for part (ii) and (iii),
see, for example, \cite[Theorems 3.1 and 3.2]{Hi} and \cite[Corollary 4.15]{SW}.
\begin{theorem}
\la{quilleneq}
$\mathrm{(i)}$ The categories $\cDGA_{k/k}$ and\, $\DGL_k$ have model structures where the weak-equivalences are the quasi-isomorphisms and the fibrations are the degreewise surjective maps.

$\mathrm{(ii)}$ The category $\DGLC_k$ $($resp., $\cDGC_{k/k}$$)$ admits a model structure, where the weak equivalences are the maps $f$ such that $\cb_{\mathtt{Comm}}(f)$ $($resp., $\cb_{\mathtt{Lie}}(f)$$)$ is a quasi-isomorphism and the cofibrations are degreewise monomorphisms.

$\mathrm{(iii)}$ For the above model structures, the pairs of functors~\eqref{cbbccom} and~\eqref{cbbcom} are Quillen equivalences.
\end{theorem}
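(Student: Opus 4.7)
The proof proceeds in three interlocking parts, following the classical strategies in rational homotopy theory.

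For part (i), the plan is to transfer the standard projective model structure on the category $\Com_k$ of chain complexes along the free/forgetful adjunctions $\bSym_k : \Com_k \rightleftarrows \cDGA_{k/k}$ and $\mathtt{Lie}_k : \Com_k \rightleftarrows \DGL_k$, where $\bSym_k$ is the graded symmetric algebra and $\mathtt{Lie}_k$ the free graded Lie algebra. By Quillen's transfer principle it suffices to check that the generating acyclic cofibrations of $\Com_k$ (the inclusions $0 \hookrightarrow D^n$ of acyclic disks) go to weak equivalences on both sides, and to exhibit a functorial path object detecting fibrations. In characteristic zero both checks are routine: the free functors preserve quasi-isomorphisms, and a functorial path object is obtained by tensoring with the polynomial de Rham algebra $\Omega^{\bullet}(\Delta^1) = k[t,dt]$ of the interval.

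For part (ii), one takes cofibrations and weak equivalences as stated and defines fibrations by the right lifting property. The principal task is to verify the factorization axiom and the non-trivial half of the lifting axiom, and the principal conceptual difficulty is that colimits of coalgebras are delicate. Following Hinich's approach, the plan is to apply the small object argument to a set of generating (acyclic) cofibrations obtained by applying the cofree conilpotent functor (right adjoint to the forgetful functor to $\Com_k$) to the generating (acyclic) cofibrations of $\Com_k$, and then to detect weak equivalences by means of the cobar functors. The conilpotence hypothesis is essential: it ensures that every object is a filtered colimit of finite-dimensional subobjects, so that the small object argument terminates and the cobar functors behave well under the resulting colimits.

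For part (iii), once $(\cb_{\mathtt{Comm}}, \bB_{\mathtt{Lie}})$ and $(\cb_{\mathtt{Lie}}, \bB_{\mathtt{Comm}})$ have been identified as Quillen pairs (which is immediate from the definition of weak equivalences in (ii)), the Quillen equivalence reduces to showing that for every cofibrant $C \in \cDGC_{k/k}$ the unit $C \to \bB_{\mathtt{Lie}}\,\cb_{\mathtt{Comm}}(C)$ is a weak equivalence, and dually for $\DGLC_k$; by the very definition of weak equivalence in (ii), this amounts to showing that the map induced on cobar constructions is a quasi-isomorphism. This is the Lie/commutative analogue of the Moore--Koszul--Quillen theorem (Theorem~\ref{cbbeq} and Theorem~\ref{Kosc}). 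The key input is a canonical acyclic twisting cochain $\tau_C$ realizing the identity under convolution, whose associated twisted tensor product is filtered by the coradical filtration on $C$ with associated graded a Koszul complex for the pair of Koszul dual operads $(\mathtt{Lie}, \mathtt{Comm})$; acyclicity of this Koszul complex is the operadic statement that $\mathtt{Lie}$ and $\mathtt{Comm}$ are Koszul dual.

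The main obstacle is part (ii): standard transfer theorems apply to algebras but not directly to coalgebras, and one cannot right-induce along $\cb_{\mathtt{Comm}}$ since the right adjoint $\bB_{\mathtt{Lie}}$ goes the wrong way. The workaround sketched above (left-induce cofibrations along the forgetful functor to chain complexes, and use conilpotence to make the small object argument converge) is now standard but requires careful bookkeeping. By comparison, part (iii) is mostly a clean acyclicity check once the Koszul duality between the operads $\mathtt{Lie}$ and $\mathtt{Comm}$ is invoked.
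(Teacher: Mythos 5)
The paper does not prove this theorem at all: it cites Quillen \cite{Q2} for part (i) and Hinich \cite{Hi} and Sinha--Walters \cite{SW} for parts (ii) and (iii), and moves on. So your proposal is a reconstruction of the cited literature rather than a comparison with an argument in the paper. With that understood: your sketches for (i) and (iii) are sound and follow the standard route (transfer along free/forgetful with the path object $\,\mbox{--}\,\otimes k[t,dt]$; Quillen-pair-plus-unit-is-weak-equivalence, with acyclicity of the twisted tensor product coming from Koszul duality of $\mathtt{Lie}$ and $\mathtt{Comm}$, filtered by the coradical filtration).

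Part (ii), however, contains a concrete misstep. You propose to run the small object argument on a set of generating (acyclic) cofibrations obtained by \emph{applying the cofree conilpotent functor} $T^c$ (the right adjoint of the forgetful $U:\cDGC_{k/k}\to\Com_k$) to the generating (acyclic) cofibrations of $\Com_k$. This cannot produce generating cofibrations: generating cofibrations come from applying a \emph{left} adjoint to generators in the base, whereas $T^c$ is a right adjoint (it does not preserve colimits, and a map of the form $T^c(i)$ is a map of cofree coalgebras, which would be a candidate generating \emph{fibration} in a right-induced structure, not a cofibration). Moreover, the model structure you want is neither right-induced via $U$ (because the weak equivalences are the $\cb_{\mathtt{Comm}}$-equivalences, not the quasi-isomorphisms) nor cleanly left-induced; the cofibrations are left-induced from $\Com_k$ but the weak equivalences are not. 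Hinich's actual proof does not proceed by the small object argument on a set of generators of this shape. Instead, he fixes cofibrations (monos) and weak equivalences ($\cb$-equivalences), produces fibrant objects as images of $\bB_{\mathtt{Lie}}$, and verifies the factorization and lifting axioms directly, relying crucially on the fact that every conilpotent coalgebra is the filtered union of its finite-dimensional subcoalgebras (the fundamental theorem of coalgebras). Your intuition that conilpotence is essential, and that colimits of coalgebras are the sticking point, is exactly right; the mechanism you wrote down to exploit it is not.
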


Note that part (iii) says that the functors~\eqref{cbbccom} and~\eqref{cbbcom} induce derived
equivalences
\begin{eqnarray*}
\L \cb_{\mathtt{Comm}}\,:\,\Ho(\cDGC_{k/k}) \rightleftarrows \Ho(\DGL_k) \,:\, \R \bB_{\mathtt{Lie}}\,, \\
\L \cb_{\mathtt{Lie}}\,:\, \Ho(\DGLC_k) \rightleftarrows \Ho(\cDGA_{k/k}) \,:\,\R \bB_{\mathtt{Comm}} \,\text{.}
\end{eqnarray*}

\subsubsection{Relation to DG algebras and linear duality}
We now introduce the following functors on coalgebras: the co-abelianization functor
$\,(\,\mbox{--}\,)^{\nn} :\,\DGC_{k/k} \rar \cDGC_{k/k}\,$ assigning to a DG coalgebra $ C $
its maximal cocommutative DG subcoalgebra $ C^{\nn} \subseteq C $ (this functor is
dual to the abelianization functor \eqref{ab}); the universal co-enveloping coalgebra
functor $\,{\mathcal U}^c\,:\,\DGLC_k \rar \DGC_{k/k}\,$ dual to the universal
enveloping algebra functor $\,\mathcal U\,:\,\DGL_k \rar \DGA_{k/k}\,$; the Lie coalgebra functor
$\, {\mathcal Lie}^c\,:\,\DGC_{k/k} \rar \DGLC_k\,$ assigning to each $C$ the co-augmentation coideal $\bar{C}$ viewed as a DG Lie coalgebra (this functor is dual to the Lie algebra functor
$\, {\mathcal Lie}\,:\,\DGA_{k/k} \rar \DGL_k$ assigning to $A \in \DGA_{k/k}$ the augmentation ideal $\bar{A}$ viewed as a DG Lie algebra.
Their relationship to Quillen equivalences~\eqref{cbbccom} and~\eqref{cbbcom} is
summarized by the following theorem.

\begin{theorem}
\la{variouscbb}
In each of the following diagrams the square subdiagrams obtained by starting at any corner and mapping to the opposite corner commute (up to isomorphism).
\begin{equation} \la{cbb1}
\begin{diagram}[tight]
\DGC_{k/k} & \pile{\rTo^{\cb} \\ \lTo_{\bB}} & \DGA_{k/k} & & \DGC_{k/k} & \pile{\rTo^{\cb} \\ \lTo_{\bB}} & \DGA_{k/k}\\
  \uInto^{\mathit{in}} \dTo_{(\mbox{--})^{\nn}} &  & \uTo^{\mathcal U} \dTo_{{\mathcal Lie}} & & \dTo^{{\mathcal Lie}^c} \uTo_{{\mathcal U}^c} & & \dTo^{(\mbox{--})_{\nn}} \uInto_{\mathit{in}} \\
 \cDGC_{k/k} &  \pile{\rTo^{\cb_{\mathtt{Comm}}} \\ \lTo_{\bB_{\mathtt{Lie}}}}& \DGL_k &  & \DGLC_k & \pile{\rTo^{\cb_{\mathtt{Lie}}} \\ \lTo_{\bB_{\mathtt{Comm}}}}& \cDGA_{k/k}
\end{diagram}
\end{equation}
\end{theorem}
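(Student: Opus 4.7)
The plan is to observe that each of the two diagrams is a square of categories in which both horizontal pairs and both vertical pairs are adjunctions: $\cb \dashv \bB$, $\cb_{\mathtt{Comm}} \dashv \bB_{\mathtt{Lie}}$ and $\cb_{\mathtt{Lie}} \dashv \bB_{\mathtt{Comm}}$ are recorded in Sections~\ref{basicdga} and~\ref{sec6.1}, while for the columns one has the classical adjunctions $\mathtt{in} \dashv (-)^{\nn}$, $(-)_{\nn} \dashv \mathtt{in}$, $\mathcal{U} \dashv \mathcal{Lie}$, and $\mathcal{Lie}^c \dashv \mathcal{U}^c$. With these adjunctions in hand, to prove that all the indicated subdiagrams commute up to natural isomorphism, it suffices to establish the natural isomorphism of the single subdiagram of four \emph{left} adjoints in each diagram: the commutativity of the companion subdiagram of right adjoints is then the adjoint mate, and every mixed subdiagram (obtained by selecting some left and some right adjoints) reduces to one of these two via the units and counits of the adjunctions.

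For the left diagram, I would prove the primary isomorphism $\cb\circ \mathtt{in}\,\cong\, \mathcal{U}\circ \cb_{\mathtt{Comm}}$ as functors $\cDGC_{k/k}\to \DGA_{k/k}$ directly. For a cocommutative DG coalgebra $C$, the underlying graded Lie algebra of $\cb_{\mathtt{Comm}}(C)$ is, by definition, the free graded Lie algebra on $\bar{C}[-1]$; by Poincar\'e--Birkhoff--Witt applied to a free Lie algebra, its universal enveloping algebra is canonically the tensor algebra $T(\bar{C}[-1])$, which agrees with the underlying graded associative algebra of $\cb(C)$. To match differentials it is enough to compare quadratic components on the generating subspace $\bar{C}[-1]$: on $\cb(C)$, the piece $d_2$ of the cobar differential is obtained from the suspension of the reduced coproduct $\Delta\colon \bar{C}\to \bar{C}\otimes \bar{C}$; on $\mathcal{U}(\cb_{\mathtt{Comm}}(C))$, the corresponding piece first lands in $\wedge^2(\bar{C}[-1])$ (using cocommutativity of $\Delta$) and is then embedded into $T(\bar{C}[-1])$ via the commutator bracket $[v,w]=vw-(-1)^{|v||w|}wv$. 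Cocommutativity of $\Delta$ together with the Koszul signs induced by the suspension $[-1]$ identifies these two maps, giving the required isomorphism of DG algebras; compatibility with the linear piece $d_1$ is immediate.

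For the right diagram, I would prove the parallel commutativity $(-)_{\nn}\circ \cb\,\cong\, \cb_{\mathtt{Lie}}\circ \mathcal{Lie}^c$ as functors $\DGC_{k/k}\to \cDGA_{k/k}$ by a completely dual argument. For any $C\in \DGC_{k/k}$, the abelianization of $\cb(C)=T(\bar{C}[-1])$ is the graded symmetric algebra $\bSym(\bar{C}[-1])$, which coincides as a graded commutative algebra with the Chevalley--Eilenberg complex $\cb_{\mathtt{Lie}}(\mathcal{Lie}^c(C))$ of $\mathcal{Lie}^c(C)=\bar{C}$ equipped with the cobracket $\delta=(1-\tau)\circ\Delta$. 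The cobar differential $d_2$ descends under abelianization to the derivation of $\bSym(\bar{C}[-1])$ whose restriction to generators is the projection of the shifted coproduct onto the antisymmetric square, which is precisely the Chevalley--Eilenberg differential of $(\bar{C},\delta)$. Passage to right adjoints of each primary square then yields the companion commutativities $(-)^{\nn}\circ \bB\cong \bB_{\mathtt{Lie}}\circ \mathcal{Lie}$ and $\mathcal{U}^c\circ \bB_{\mathtt{Comm}}\cong \bB\circ \mathtt{in}$, completing the proof.

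The main technical obstacle I anticipate is the careful bookkeeping of Koszul suspension signs when identifying graded symmetric and antisymmetric tensors before and after the shift by $[-1]$: these signs govern whether the identifications $\bSym^2(\bar{C})\cong \wedge^2(\bar{C}[-1])$ and $\wedge^2(\bar{C})\cong \bSym^2(\bar{C}[-1])$ are used with the correct orientation, and hence whether the translations between coproduct/bracket and comultiplication/cobracket come out correctly. Beyond this routine sign-chasing, no conceptual difficulty arises.
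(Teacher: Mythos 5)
Your main argument is correct and takes essentially the same route as the paper: to establish $\cb\circ\mathtt{in}\,\cong\,\mathcal{U}\circ\cb_{\mathtt{Comm}}$ you use the PBW identification $\mathcal{U}(\mathfrak{L}V)\cong T_kV$ of graded algebras, then observe that cocommutativity of $\Delta$ forces the quadratic differentials to agree on generators; for the second diagram you dualize this to get $(-)_{\nn}\circ\cb\,\cong\,\cb_{\mathtt{Lie}}\circ\mathcal{Lie}^c$; and you pass to the right-adjoint squares via adjoint mates. This is precisely what the paper does.

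One remark in your reduction, however, should be dropped or weakened: you claim that "every mixed subdiagram (obtained by selecting some left and some right adjoints) reduces to one of these two via the units and counits of the adjunctions." If this is read as asserting that the mixed-variance diagonals also commute up to natural isomorphism, it is false. For instance, in the left square the path $\mathcal{Lie}\circ\cb$ applied to a cocommutative $C$ gives the reduced tensor algebra $T(\bar{C}[-1])$ with its commutator bracket, whereas $\cb_{\mathtt{Comm}}\circ(-)^{\nn}$ gives the free Lie algebra $\mathfrak{L}(\bar{C}[-1])$; these are not isomorphic Lie algebras. Similarly the dual diagonal compares $\bB\circ\mathcal{U}$ with $\mathtt{in}\circ\bB_{\mathtt{Lie}}$, which are only related by a PBW \emph{quasi-}isomorphism $\bSym^c(\g[1])\to\bB(\mathcal{U}\g)$, not an isomorphism. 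Admittedly the paper's own wording ("starting at any corner and mapping to the opposite corner") is loose on exactly this point, and its proof, like yours, establishes only the two pure-variance diagonals; so the intended scope of the theorem is clearly just those two, and your proof of them is fine. But the explicit claim about mixed subdiagrams, as stated, overreaches and should be removed.
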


\begin{proof}
Let $C \in \cDGC_{k/k}$. Let $\mathfrak L\,V$ denote the free Lie algebra generated by a graded vector space $V$. Since $T_k\,V \,\cong\, {\mathcal U}(\mathfrak L V)$ as graded $k$-algebras,
we have an isomorphism of graded algebras
$$
\cb(C) \,\cong\, \mathcal U[\cb_{\mathtt{Comm}}(C)]\ .
$$
The fact that this isomorphism commutes with differentials follows from the fact that the coalgebra
$C$ is cocommutative. Hence, on the category of cocommutative DG coalgebras, we have an isomorphism of functors
\begin{equation}
\la{isofun1} \cb  \,\cong\, \mathcal U \circ \cb_{\mathtt{Comm}}\,\text{.}
\end{equation}
By adjunction, this gives an isomorphism
\begin{equation}
\la{isofun2}\bB_{\mathtt{Lie}} \circ {\mathcal Lie} \,\cong\, (\mbox{--})^{\nn} \circ \bB \,\text{.}
\end{equation}
which proves the result for the first diagram in~\eqref{cbb1}.
A similar argument shows that
\begin{equation}
\la{isofun3} (\mbox{--})_{\nn} \circ \cb \,\cong\, \cb_{\mathtt{Lie}} \circ {\mathcal Lie}^c\,\text{.}\end{equation}
which by adjunction (together with \eqref{isofun3}) gives an isomorphism of functors
on commutative DG algebras:
\begin{equation} \la{isofun4}
\bB \,\cong\, {\mathcal U}^c \circ \bB_{\mathtt{Comm}} \,\text{.}
\end{equation}
This establishes the desired result for the second diagram in~\eqref{cbb1}.
\end{proof}

The following theorem is an immediate consequence of~\cite[Theorem 4.17]{SW}, which explains the canonical linear dualities relating Lie algebraic and coalgebraic Quillen functors.
\begin{theorem}
\la{cbbdual}
$(\mathrm{i})$ For $\g \,\in\,\DGL_k$, there is a natural isomorphism
\begin{equation} \la{duality2}
\cb_{\mathtt{Lie}}(\g^{\ast})\,\cong\,\bB_{\mathtt{Lie}}(\g)^{\ast}\,\text{.}
\end{equation}
$(\mathrm{ii})$ For $\mfc \in \DGLC_k$, we have a natural isomorphism
\begin{equation} \la{duality3} \bB_{\mathtt{Lie}}(\mfc^{\ast})\,\cong\, \cb_{\mathtt{Lie}}(\mfc)^{\ast}\,\text{.} \end{equation}
\end{theorem}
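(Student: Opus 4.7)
The plan is to check the duality directly at the level of underlying graded vector spaces, differentials, and (co)multiplications, rather than invoke~\cite{SW} as a black box.

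For part (i), I would first unpack the definitions: by construction $\bB_{\mathtt{Lie}}(\g)=(\bSym^c(\g[1]),\,d_1+d_2)$ is the cofree graded cocommutative coalgebra on $\g[1]$ with an added coderivation, while $\cb_{\mathtt{Lie}}(\g^{\ast})=(\bSym(\g^{\ast}[-1]),\,d_1+d_2)$ is the free graded commutative algebra on $\g^{\ast}[-1]$ with a derivation. The canonical pairing $\g[1]\otimes\g^{\ast}[-1]\to k$ extends (using the Koszul sign rule and the compatibility of graded symmetric powers with duality) to an isomorphism of graded vector spaces
\[
\bSym^c(\g[1])^{\ast}\,\cong\,\bSym(\g^{\ast}[-1])\ ,
\]
under which the cofree cocommutative coproduct on the left corresponds to the free commutative product on the right. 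If $\g$ is infinite-dimensional one interprets $(\,\mbox{--}\,)^{\ast}$ as the graded dual; conilpotency ensures this is well-behaved symmetric-power by symmetric-power.

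Next, I would verify that the two differentials match under this identification. The internal differential on $\g$ dualizes to the internal differential on $\g^{\ast}$, so the $d_1$-pieces correspond. For $d_2$, the coderivation on $\bB_{\mathtt{Lie}}(\g)$ is determined by its corestriction to cogenerators, namely the composition $\bSym^2(\g[1])\cong k[1]\otimes k[1]\otimes\wedge^2\g\xrightarrow{\mu_1\otimes[\,\mbox{--}\,,\,\mbox{--}\,]}k[1]\otimes\g\cong\g[1]$, while $d_2$ on $\cb_{\mathtt{Lie}}(\g^{\ast})$ is the derivation whose restriction to generators is~\eqref{diff2} with cobracket $]\,\mbox{--}\,,\,\mbox{--}\,[\,=\,[\,\mbox{--}\,,\,\mbox{--}\,]^{\ast}$. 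The two determining maps are linear duals of each other: the sign in $\Delta_{-1}(1_{k[-1]})=-1_{k[-1]}\otimes1_{k[-1]}$ is exactly the dual of the Koszul sign implicit in $\mu_1$. Since a (co)derivation on a free (co)commutative (co)algebra is determined by its (co)restriction to (co)generators, the match on (co)generators propagates to a match of the full differentials, establishing part (i) naturally in $\g$.

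Part (ii) will be formally dual and will require no new ideas: the same argument, with the roles of $\g$ and $\mfc$ swapped and with $(\,\mbox{--}\,)^{\ast}$ now applied to the Lie coalgebra $\mfc$, will produce the natural isomorphism $\bB_{\mathtt{Lie}}(\mfc^{\ast})\cong\cb_{\mathtt{Lie}}(\mfc)^{\ast}$. The main (and essentially only) subtlety throughout is the systematic bookkeeping of Koszul signs arising from the shift functors $[1]$, $[-1]$ and the graded symmetric (co)products; pinning down these conventions once and for all is the step I expect to be the only real obstacle, but it is mechanical once the pairing $\g[1]\otimes\g^{\ast}[-1]\to k$ (and its analogue for $\mfc$) is fixed.
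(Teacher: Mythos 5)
Your strategy is to prove the duality by hand instead of citing \cite[Theorem~4.17]{SW}, which is what the paper does: in effect you are reproducing (a sketch of) the argument that the citation hides. The overall shape is right — at the level of underlying graded (co)algebras the free commutative algebra on $\g^{\ast}[-1]$ pairs with the cofree conilpotent cocommutative coalgebra on $\g[1]$, and since a (co)derivation on a free (co)commutative (co)algebra is determined by its (co)restriction to (co)generators, matching $d_1$ and $d_2$ there propagates to a match of the full differentials. This is the content of the theorem, and it is genuinely the more self-contained route.

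The one place you should be more careful is the identification $\bSym^c(\g[1])^{\ast}\cong\bSym(\g^{\ast}[-1])$. You attribute its validity to conilpotency, but that is not the right justification: conilpotency of $\bSym^c(\g[1])$ is automatic and does nothing to ensure that the (graded) dual of a countable direct sum is again a direct sum rather than a product. What you actually need is that $\g$ is degreewise finite-dimensional together with a boundedness hypothesis (e.g.\ $\g$ concentrated in non-negative degrees, so that $\Sym^n(\g[1])$ sits in degrees $\geq n$ and only finitely many $n$ contribute in each homological degree), or — as the paper's Remark after this theorem indicates — the bigraded setting, where $(\,\mbox{--}\,)^{\ast}$ means bigraded dual and the weight grading separates the symmetric powers. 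Without one of these, $\bSym^c(\g[1])^{\ast}$ is a completed symmetric algebra, not $\bSym(\g^{\ast}[-1])$, and the claimed isomorphism fails. The same caveat applies to part (ii), where you additionally need $\mfc^{\ast}$ to be a bona fide Lie algebra with $\wedge^2(\mfc^{\ast})=(\wedge^2\mfc)^{\ast}$ — again a degreewise finiteness condition. Apart from making this hypothesis explicit (and actually tracking the Koszul sign for the $\mu_1$ versus $\Delta_{-1}$ step rather than asserting it), your argument is sound.
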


\noindent

\textbf{Remark.} Note that $\bB_{\mathtt{Lie}}(\g)^{\ast}$ is precisely the complex of Lie {\it cochains} of $\g$ with trivial coefficients. In particular, $\H_{\bullet}[\bB_{\mathtt{Lie}}(\g)^{\ast}]\,=\, \H^{-\bullet}(\g;k)$ (where $\H^{\bullet}$ denotes Lie {\it cohomology}). Thus, the isomorphism~\ref{duality2} explicitly relates the homological and cohomological Chevalley-Eilenberg complexes.    In addition, Theorem~\ref{variouscbb} and Theorem~\ref{cbbdual} hold in the bigraded setting.
In that setting, $\, (\,\mbox{--}\,)^{\ast} $ means taking the bigraded dual.

\subsection{Derived representation schemes}

\subsubsection{Convolution Lie algebras}
For a fixed $\mfc \in \DGLC_k$ and $A \in \cDGA_k$, we define a Lie bracket on
$ \mathbf{Hom}(\mfc, A) $ by
$$
[f,\,g] := m\, \circ\, (f \otimes g) \,\circ\, ]\,\mbox{--}\,[
$$
where $\, m: A \otimes A \to A\,$ is the multiplication map on $A$, and
$\,]\,\mbox{--}\,[\,:\,\mfc \rar \mfc \otimes \mfc$ is the Lie cobracket on $\mfc$.
For $\,\mfc\in \DGLC_k\,$ fixed, this gives a functor
\begin{equation} \la{lieconv}
\mathbf{Hom}(\mfc,\, \mbox{--}\,):\,\cDGA_k \rar \DGL_k \,\text{.}
\end{equation}
which we call a convolution functor.

\subsubsection{The left adjoint functor} \la{leftadjlie}

For arbitrary elements $\xi, \eta$ in a DG Lie algebra $\mathfrak{a}$ and for $x \in \mfc$, let $( ]x[, \xi, \eta)$ denote the image of
$ x \otimes \xi \otimes \eta $ under the composite map
\begin{equation}
\la{cobrack}
\begin{diagram}
 \mfc  \otimes \mathfrak{a} \otimes \mathfrak{a} & \rTo^{\,\,\,]\mbox{--}[\,\,\,\,\,} & \mfc \otimes \mfc \otimes \mathfrak{a} \otimes \mathfrak{a} & \rTo^{\cong\,\,} & (\mfc \otimes \mathfrak{a} )^{\otimes 2} & \rOnto & \bSym^2(\mfc \otimes \mathfrak{a} ) & \rInto & \bSym_k(\mfc \otimes \mathfrak{a} ) \end{diagram}
\end{equation}
The following proposition describes the left adjoint for the Lie convolution algebra
functor $\mathbf{Hom}(\mfc, \mbox{--})$.

\bprop \la{lierep}
The functor~\eqref{lieconv} has a left adjoint
$\,(\,\mbox{--}\,)_{\mfc} \,:\,\DGL_k \rar \cDGA_k \,$, which is given by
$$
\mathfrak{a} \mapsto \mathfrak{a}_{\mfc} \,:=\, \bSym_k(\mfc \otimes \mathfrak{a}) / \langle x \otimes [\xi, \eta]  - (]x[, \xi, \eta) \rangle\ ,
$$
where $\,(]x[, \xi, \eta)\,$ is defined by \eqref{cobrack}.
\eprop
\begin{proof}
For $B \in \cDGA_k$ consider natural maps
\[\xymatrix@=0.6cm{
	\Hom_{\DGL_k}(\mathfrak{a}, \mathbf{Hom}(\mfc, B )) \ar@{^{(}->}[r] \ar[dd]^-{\simeq} &  \Hom_{\Com_k}(\mathfrak{a}, \mathbf{Hom}(\mfc, B)) \ar[d]^-{=} \\
	& \Hom_{\Com_k}(\mfc \otimes \mathfrak{a}, B)\ar[d]^-{\simeq}\\
	\Hom_{\cDGA_k}(\mathfrak{a}_{\mfc},\, B) \ar@{^{(}->}[r] & \Hom_{\cDGA_k}(\bSym_k(\mfc \otimes \mathfrak{a}), B)
}
\]
The map
\begin{equation} \la{homnatlie} \Hom_{\DGA_k}(\mathfrak{a}, \mathbf{Hom}(\mfc, B ))  \hookrightarrow \Hom_{\cDGA_k}(\bSym_k(\mfc \otimes \mathfrak{a}), B) \end{equation}
obtained by following the upper right part of the above diagram is explicitly given by
$$ (f\,:\,\mathfrak{a} \rar \mathbf{Hom}(\mfc, B)) \mapsto [\hat{f}\,:\,\bSym_k(\mfc \otimes_k \mathfrak{a}) \rar B\,,\,\,\,\, x \otimes \xi \mapsto {(-1)}^{|x||\xi|} f(\xi)(x)] \,\text{.}$$
Further, by a straightforward calculation,
$f$ is a DG Lie algebra homomorphism iff $f([\,\xi\,,\,\eta\,]) = [\,f(\xi)\,,\, f(\eta)]$ iff for all $\xi, \eta \in \mathfrak{a}$ and $x \in \mfc$,
$\hat{f}\left( x \otimes [\xi\,,\,\eta]-(]x[, \xi, \eta) \right)=0$.
This shows that the image of the map~\eqref{homnatlie} is precisely $\Hom_{\cDGA_k}(\mathfrak{a}_{\mfc}, B)$.
This proves the desired proposition.
\end{proof}
%

%
%

As a consequence of Proposition~\ref{lierep}, we obtain:

\begin{theorem} \la{dlierep}
The pair of functors $(\mbox{--})_{\mfc}\,:\,\DGL_k \rightleftarrows \cDGA_k\,:\, \mathbf{Hom}(\mfc,\mbox{--})$ is a Quillen pair. As a result, the functor $(\mbox{--})_{\mfc}$ has a (total) left derived functor
$$ \L (\mbox{--})_{\mfc}\,:\,\Ho(\DGL_k) \rar \Ho(\cDGA_k)\,,\,\,\,\, \mathfrak{a} \mapsto (Q\mathfrak{a})_{\mfc} $$
where $Q\mathfrak{a} \stackrel{\sim}{\rar} \mathfrak{a}$ is any cofibrant resolution in $\DGL_k$.
\end{theorem}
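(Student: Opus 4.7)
The plan is to verify the two defining properties of a Quillen pair, namely that the right adjoint $\mathbf{Hom}(\mfc,\mbox{--})$ preserves fibrations and acyclic fibrations. The adjunction itself is already established by Proposition~\ref{lierep}, and by Theorem~\ref{quilleneq}(i), both $\DGL_k$ and $\cDGA_k$ carry model structures in which the fibrations are the degreewise surjections and the weak equivalences are the quasi-isomorphisms. The final ``consequently'' clause will then be a direct appeal to the standard construction of total left derived functors for left Quillen functors (\textit{cf.}~Brown's Lemma, as used in the proof of Proposition~\ref{derfu}$(a)$).

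First I would verify that $\mathbf{Hom}(\mfc,\mbox{--})$ sends degreewise surjections to degreewise surjections. Given $f\colon A \twoheadrightarrow B$ in $\cDGA_k$, the map of $p$-chains $\mathbf{Hom}(\mfc, f)_p = \prod_n \Hom_k(\mfc_n, f_{n+p})$ is surjective because each $\mfc_n$ is free as a $k$-module (since $k$ is a field), so $\Hom_k(\mfc_n, \mbox{--})$ is exact, and products of surjections of $k$-vector spaces are surjective. This also shows that $\ker \mathbf{Hom}(\mfc,f) \cong \mathbf{Hom}(\mfc, \ker f)$ as DG Lie algebras.

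The second, more delicate step is showing that $\mathbf{Hom}(\mfc,\mbox{--})$ preserves acyclic fibrations. Thanks to the short exact sequence $0 \to \mathbf{Hom}(\mfc, K) \to \mathbf{Hom}(\mfc, A) \to \mathbf{Hom}(\mfc, B) \to 0$ with $K = \ker f$, it suffices to prove that $\mathbf{Hom}(\mfc, K)$ is acyclic whenever $K$ is an acyclic complex of $k$-vector spaces. For fixed $n$, the shifted $\Hom$-complex $\Hom_k(\mfc_n, K[\bullet])$ is acyclic because $\mfc_n$ is a free $k$-module and $K$ is acyclic. To combine these across $n$, I would use the descending filtration
\[
F^p \mathbf{Hom}(\mfc, K) \,:=\,\{\,\varphi\,:\,\varphi|_{\mfc_{<p}} = 0\,\}\,,
\]
whose successive quotients are (up to shift) $\Hom_k(\mfc_p, K)$ and hence acyclic. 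This filtration is complete and exhaustive (using that the product defining $\mathbf{Hom}(\mfc, K)$ is a limit of its truncations), so the associated spectral sequence converges and yields acyclicity of $\mathbf{Hom}(\mfc, K)$.

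The main technical obstacle I anticipate is the convergence/completeness argument in the last paragraph: a priori $\mfc$ and $K$ need not be bounded, so care is needed in justifying passage from acyclicity of the associated graded to acyclicity of the total complex. This can be handled by exploiting the conilpotence of $\mfc$ (which gives a weight filtration with bounded-below pieces) together with the fact that the filtration $F^\bullet$ above is complete Hausdorff. Once this is in place, the Quillen pair property follows and part (b) of the theorem is immediate: $\L(\mbox{--})_\mfc$ is defined on the homotopy category by $\mathfrak{a}\mapsto (Q\mathfrak{a})_\mfc$ for any cofibrant replacement $Q\mathfrak{a}\xrightarrow{\sim}\mathfrak{a}$, independently of the choice of resolution.
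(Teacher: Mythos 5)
Your reduction to ``$\mathbf{Hom}(\mfc,K)$ acyclic whenever $K$ is acyclic'' is the right move, and the paper's own proof is just a one-line appeal to \cite[Remark 9.8]{DS} plus the assertion that preservation of degreewise surjections and quasi-isomorphisms is ``obvious.'' Where your write-up diverges is in how it handles this acyclicity, and there you make it substantially harder than it needs to be -- and in fact the patch you propose for the hard part does not actually work.

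The simple fact you are missing is that over a field every acyclic complex of vector spaces is \emph{contractible}: the short exact sequences $0 \to Z_n \to K_n \to Z_{n-1} \to 0$ all split, giving a contracting homotopy $s$ with $ds + sd = \id_K$. Post-composition with $s$ is then a contracting homotopy of $\mathbf{Hom}(\mfc, K)$ for \emph{any} graded $\mfc$, unbounded or not, with no filtration and no convergence issue whatsoever. (The same remark disposes of preservation of arbitrary quasi-isomorphisms -- which is what the paper's criterion actually demands -- since $\mathbf{Hom}(\mfc,-)$ commutes with mapping cones and acyclic cones are contractible.) This is presumably why the paper calls the step obvious.

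By contrast, your filtration $F^p = \{\varphi : \varphi|_{\mfc_{<p}} = 0\}$ is genuinely problematic when $\mfc$ is unbounded below, because $\bigcup_p F^p$ need not be all of $\mathbf{Hom}(\mfc, K)$, so the filtration is not exhaustive and the spectral sequence argument collapses. The remedy you suggest -- invoking conilpotence of $\mfc$ to get ``bounded-below pieces'' -- does not help: conilpotence in $\DGLC_k$ is a condition on the iterated cobracket (the weight filtration), and says nothing about boundedness of the \emph{homological} grading, which is what your filtration by $\mfc_{<p}$ is keyed to. So the route you chose has a gap that your proposed fix does not close, while the easy contractibility argument renders the whole issue moot.
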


\begin{proof}
By Proposition~\ref{lierep} and~\cite[Remark 9.8]{DS}, it suffices to check that $\mathbf{Hom}(\mfc,\mbox{--})$ preserves degree-wise surjections and quasi-isomorphisms. This is obvious.
\end{proof}

The functor $\mathbf{Hom}(\mfc,\mbox{--})$ can be modified naturally to give a functor on {\it augmented} commutative DG algebras
$$ \mathbf{Hom}(\mfc,\mbox{--})\,:\,\cDGA_{k/k} \rar \DGL_k \,,\,\,\,\, A \mapsto \mathbf{Hom}(\mfc,\bar{A})\,\text{.}$$
The left adjoint $(\mbox{--})_{\mfc}$ of $\mathbf{Hom}(\mfc,\mbox{--})\,:\,\cDGA_{k/k} \rar \DGL_k$ is the functor assigning to each $\mathfrak{a} \in \DGL_k$ the commutative DG algebra $\mathfrak{a}_{\mfc}$ equipped with the canonical augmentation
$$\varepsilon\,:\,\mathfrak{a}_{\mfc} \rar k $$
corresponding to the $0 \in \Hom_{\DGL_k}(\mathfrak{a}, \mathbf{Hom}(\mfc,k))$ under the adjunction~\eqref{lieadj}.
As in Theorem~\ref{dlierep}, it is easy to verify that the pair of functors $(\mbox{--})_{\mfc}\,:\,\DGL_k \rightleftarrows \cDGA_{k/k} \,:\, \mathbf{Hom}(\mfc,\mbox{--})$ is Quillen. Hence, $(\mbox{--})_{\mfc}$ has a left derived functor
$$ \L (\mbox{--})_{\mfc}\,:\,\Ho(\DGL_k) \rar \Ho(\cDGA_{k/k})\,,$$
which, after applying the forgetful functor $\Ho(\cDGA_{k/k}) \rar \Ho(\cDGA_k)$ coincides with $\L (\mbox{--})_{\mfc}\,:\,\Ho(\DGL_k) \rar \Ho(\cDGA_{k})$.

\subsubsection{Derived representation schemes}

Let $\g$ be a finite-dimensional Lie algebra over $k$. Let $\mfc \,:=\,\g^{\ast}$, the dual Lie coalgebra. Then,
$$\mathbf{Hom}(\mfc, A) \,=\, \mathbf{Hom}(\g^{\ast}, A)\,\cong\, \g \otimes A \,=:\,\g(A)\,\text{.}$$
Thus, the commutative DG algebra $\mathfrak{a}_{\g}\,:=\,\mathfrak{a}_{\mfc}$ represents the functor
$$ \Rep_{\g}(\mathfrak{a})\,:\,\cDGA_k \rar \mathtt{Sets}\,,\,\,\,\, A \mapsto \Hom_{\DGL_k}(\mathfrak{a}, \g(A))\ ,
$$
that is, there is a natural isomorphism of sets
\begin{equation} \la{lieadj}
\Hom_{\cDGA_k}(\mathfrak{a}_{\g}, A) \,\cong\, \Hom_{\DGL_k}(\mathfrak{a}, \g(A))\,\text{.}
\end{equation}

As in the associative case, we now define
$$
\DRep_{\g}(\mbox{--})\,:=\,\L (\mbox{--})_{\g}\,:\,\Ho(\DGL_k) \rar \Ho(\cDGA_k)\,\text{.}$$
We call $\DRep_{\g}(\mathfrak{a})$ {\it the derived representation scheme parametrizing representations of $\mathfrak{a}$ in $\g$}. Further, if $G$ is a Lie group whose Lie algebra is $\g$, $G$ acts (via the adjunction~\eqref{lieadj}) on $\mathfrak{a}_{\g}$ by automorphisms for any $\mathfrak{a} \in \DGL_k$. One can therefore, form the subfunctor
$$(\mbox{--})_{\g}^G\,:\,\DGL_k \rar \cDGA_k\,,\,\,\,\, \mathfrak{a} \mapsto \mathfrak{a}_{\g}^G\, $$
of $(\mbox{--})_{\g}$. An argument using Brown's lemma similar to the proof of~\cite[Theorem 2.6]{BKR} shows that the functor
$(\mbox{--})_{\g}^G$ has a total left derived functor $$\DRep_{\g}(\mbox{--})^G\,:=\,\L (\mbox{--})_{\g}^G\,:\,\Ho(\DGL_k) \rar \Ho(\cDGA_k)\,\text{.}$$
We define the representation homologies
\begin{eqnarray*}
\H_{\bullet}(\mathfrak{a},\g) \,:= \,\H_{\bullet}(\DRep_{\g}(\mathfrak{a}))\,\,\,,\,\,\,
 \H_{\bullet}(\mathfrak{a},\g)^G \,:=\, \H_{\bullet}(\DRep_{\g}(\mathfrak{a})^G)\,\text{.}
\end{eqnarray*}
More generally, $\g$ acts (via  the adjunction~\eqref{lieadj}) on  $\mathfrak{a}_{\g}$ by derivations. One can therefore, form the functor
$$ (\mbox{--})_{\g}^{\ad \,\g} \,:\,\DGL_k \rar \cDGA_k\,,\,\,\,\, \mathfrak{a} \mapsto \mathfrak{a}_{\g}^{\ad\,\g}\,$$
Again, an argument paralleling the proof of ~\cite[Theorem 2.6]{BKR} shows that the functor $ (\mbox{--})_{\g}^{\ad\, \g} $  has a total left derived functor $$ \DRep_{\g}(\mbox{--})^{\ad\,\g}\,:=\, \L  (\mbox{--})_{\g}^{\ad\, \g} \,:\,\Ho(\DGL_k) \rar \Ho(\cDGA_k)\,\text{.}$$
We define
 $$  \H_{\bullet}(\mathfrak{a},\g)^{\ad\,\g} \,:=\,
\H_{\bullet}[\DRep_{\g}^{\ad\,\g}(\mathfrak{a})]\,\text{.}$$
Note that if $\g$ is the Lie algebra of a reductive Lie group $G$,  the functors $(\mbox{--})_{\g}^G$ and $(\mbox{--})_{\g}^{\ad\,\g}$ coincide. Hence, in this situation, their derived functors coincide as well.

The discussion in Section~\ref{leftadjlie} points out that $\DRep_{\g}(\mathfrak{a})$ can be viewed as an object in $\Ho(\cDGA_{k/k})$ (rather than $\Ho(\cDGA_k)$. In the same way, $\DRep_{\g}(\mathfrak{a})^{\ad\,\g}$ can be viewed as an object in $\Ho(\cDGA_{k/k})$. Similarly, if $G$ is a Lie group whose Lie algebra is $\g$, one can consider
$\DRep_{\g}(\mathfrak{a})^G$ as an object in $\Ho(\cDGA_{k/k})$. In this case, $\DRep_{\g}(\mathfrak{a})^G\,\cong\,\DRep_{\g}(\mathfrak{a})^{\ad\,\g}$.

\subsubsection{Representation homology and Lie cohomology}

The following proposition leads to the main result of this section (Theorem~\ref{drepg}).

\bprop
\la{lierepcobar}
For any $\mfc \in \DGLC_k$, the following diagram commutes (upto isomorphism of functors):
$$
\begin{diagram}[small, tight]
\cDGC_{k/k} & \pile{ \rTo^{\cb_{\mathtt{Comm}}} \\ \lTo_{\bB_{\mathtt{Lie}}} }  & \DGL_k\\
  \dTo^{\mfc \dotimes \mbox{--}} & & \dTo^{(\mbox{--})_{\mfc}} \uTo_{\mathbf{Hom}(\mfc, \mbox{--})}\\
  \DGLC_k & \pile{ \rTo^{\cb_{\mathtt{Lie}}} \\ \lTo_{\bB_{\mathtt{Comm}}}  }& \cDGA_{k/k}
\end{diagram}$$
where $\mfc \dotimes C\,:=\,\mfc \otimes \bar{C}$ for any $C \in \cDGC_{k/k}$. Thus, there is an isomorphism of functors
\begin{equation} \la{isofunrep} (\mbox{--})_{\mfc} \circ \cb_{\mathtt{Comm}}\,\cong\, \cb_{\mathtt{Lie}} \circ (\mfc \dotimes \mbox{--}) \,=\, \CE^c \circ (\mfc \dotimes \mbox{--})\,\text{.} \end{equation}
\eprop

\begin{proof}
For any $C \in \cDGC_{k/k}$ and $\mathfrak L \in \DGL_k$, let $\mathtt{Tw}(C, \mathfrak L)$ denote the set of Maurer-Cartan elements (i.e, elements satisfying $d\alpha+\frac{1}{2}[\alpha,\alpha]=0$) in the DG Lie algebra $\mathbf{Hom}(\bar{C}, L)$. Similarly, for $\mathfrak L^c \in \DGLC_k$ and $A \in \DGA_{k/k}$, $\mathtt{Tw}(\mathfrak L^c, A)$ shall denote the set of Maurer-Cartan elements in the DG Lie algebra $\mathbf{Hom}(\mathfrak L^c, \bar{A})$. Now, for any $A \in \cDGA_{k/k}$ and $C \in \cDGC_{k/k}$, we have
\begin{eqnarray*}
 \Hom_{\cDGA_{k/k}}(\cb_{\mathtt{Comm}}(C)_{\mfc}, A) &\cong & \Hom_{\DGL_k}(\cb_{\mathtt{Comm}}(C), \mathbf{Hom}(\mfc, \bar{A}))\\
 & \cong & \mathtt{Tw}( C \,,\,\mathbf{Hom}(\mfc, \bar{A}))\\
 & \cong & \mathtt{Tw} ( \mfc \dotimes C, A)\\
 & \cong & \Hom_{\cDGA_{k/k}}(\CE^c(\mfc \dotimes C), A)\,\text{.}
\end{eqnarray*}
The first isomorphism above is Proposition~\ref{lierep}, the second is from~\cite{Hi}, the third is because the DG Lie algebras $\mathbf{Hom}(\bar{C}, \mathbf{Hom}(\mfc, A))$ and $\mathbf{Hom}(\mfc \dotimes C, \bar{A})$ are isomorphic by the standard hom-tensor duality and the fourth is from arguments similar to those in~\cite{Hi} proving the second. ~\eqref{isofunrep} follows from this by Yoneda's lemma. The rest of the desired proposition follows from~\eqref{isofunrep} by adjunction.
\end{proof}

\bthm
\la{drepg}
$(a)$ Suppose that $\mfc \,=\,\g^{\ast}$ for some Lie algebra $\g$. If $\cb_{\mathtt{Comm}}(C) \stackrel{\sim}{\rar} \mathfrak{a}$ is a quasi-isomorphism for some $C \in \cDGC_{k/k}$, then
$$
\DRep_{\g}(\mathfrak{a}) \, \cong\ \, \CE^c(\g^{\ast}(\bar{C}); k)\,, \,\,\,\,
\DRep_{\g}(\mathfrak{a})^{\ad\,\g} \,\cong \, \CE^c(\g^{\ast}(C), \g^{\ast}; k) \,\text{.}
$$
In particular,
\begin{eqnarray*}
\H_{\bullet}(\mathfrak{a}, \g) \,\cong \,  \H_{\bullet}(\g^{\ast}(\bar{C}) ; k) \,,\,\,\,\,
 \H_{\bullet}(\mathfrak{a}, \g)^{\ad\,\g} \, \cong \, \H_{\bullet}(\g^{\ast}(C), \g^{\ast}; k)\,\text{.}
\end{eqnarray*}
$(b)$ Suppose, in addition, that $\g$ and $C$ are finite-dimensional or bigraded. Let $A = C^{\ast}$ (with bigraded duals being taken in the bigraded setting). Then,
 \begin{eqnarray*}
\DRep_{\g}(\mathfrak{a}) \, \cong\ \, \CE(\g(\bar{A}); k)^{\ast}\,, \,\,\,\,
\DRep_{\g}(\mathfrak{a})^{\ad\,\g} \, \cong \, \CE(\g(A), \g; k)^{\ast} \,\text{.}
\end{eqnarray*}
In particular,
\begin{eqnarray*}
\H_{\bullet}(\mathfrak{a}, \g) \, \cong \, \H^{-\bullet}(\g(\bar{A}) ; k) \,,\,\,\,\,
 \H_{\bullet}(\mathfrak{a}, \g)^{\ad\,\g} \,\cong \, \H^{-\bullet}(\g(A), \g; k)\,\text{.}
\end{eqnarray*}
Here, in the bigraded setting, ``Lie cohomology" means ``continuous Lie cohomology".
\ethm

\begin{proof}
That $\,\DRep_{\g}(\mathfrak{a}) \cong \CE^c(\g^{\ast}(\bar{C}); k)\,$ is immediate from
Proposition~\ref{lierepcobar}. That
$\,\DRep_{\g}(\mathfrak{a})^{\ad\,\g} \cong  \CE^c(\g^{\ast}(C), \g^{\ast}; k)$ then follows from the fact that $\CE^c(\g^{\ast}(C), \g^{\ast}; k) = \CE^c(\g^{\ast}(\bar{C}); k)^{\ad\,\g}$. This proves $(a)$. Part $(b)$ follows from $(a)$ and~\eqref{duality2}.
\end{proof}

\noindent
\textbf{Remark.} Recall that, by~\eqref{isofun1},
$\, \cb(C)\,\cong\,\mathcal U(\cb_{\mathtt{Comm}}(C))\,$ for any $ C \in \cDGC_{k/k}$.

\vspace{1ex}

Now, take $\g\,=\,\gl_n(k)$ and $\mfc\,=\,\g^{\ast} \in \DGLC_k$ and $M \,=\,\M_n(k)^{\ast} \in \DGC_k$. The following proposition clarifies the relation between derived representation schemes of Lie algebras and their associative counterparts.

\bprop \la{dreplieas} Let $C \in \cDGC_{k/k}$. Then there is a natural isomorphism of commutative
DG algebras
\begin{equation}
\la{drla1} \cb(C)_n \,\cong\, \cb_{\mathtt{Comm}}(C)_{\gl_n} \,\text{.}
\end{equation}
Hence, for any DG Lie algebra $\,\mathfrak{a}\,$,
\begin{equation}\la{drla2}  \DRep_n(\mathcal U(\mathfrak{a}))\,\cong\, \DRep_{\gl_n}(\mathfrak{a})\,\text{.}\end{equation}
\eprop

\begin{proof}
For any $B \in \cDGA_k$, we have
\begin{eqnarray*}
\Hom_{\cDGA_k}(\cb(C)_n,B) & \cong & \Hom_{\DGA_k}(\cb(C), \M_n(B)) \\
                                             & \cong  &  \Hom_{\DGA_k} (\mathcal U(\cb_{\mathtt{Comm}}(C)), \M_n(B))\\
                                              & \cong & \Hom_{\DGL_k}(\cb_{\mathtt{Comm}}(C), \gl_n(B))\\
                                               & \cong & \Hom_{\cDGA_k}(\cb_{\mathtt{Comm}}(C)_{\gl_n}, B)
\end{eqnarray*}
The isomorphism~\eqref{drla1} now follows from Yoneda's lemma. To prove~\eqref{drla2}, note that for any $\mathfrak{a} \in \DGL_k$, one can find $C \in \cDGC_{k/k}$ such that $\cb_{\mathtt{Comm}}(C) \rar \mathfrak{a}$ is a cofibrant resolution in $\DGL_k$ (for example, $C= \bB_{\mathtt{Lie}}(\mathfrak{a})$). Then, $\cb(C)\,\cong\,\mathcal U(\cb_{\mathtt{Comm}}(C)) \rar \mathcal U(\mathfrak{a})$ is a cofibrant resolution of $\mathcal U(\mathfrak{a})$ in $\DGA_k$. Hence,
$$ \DRep_n(\mathcal U(\mathfrak{a})) \,\cong\, \cb(C)_n \,\cong\, \cb_{\mathtt{Comm}}(C)_{\gl_n} \,\cong\, \DRep_{\gl_n}(\mathfrak{a})\,\text{.}
$$
This completes the proof of the proposition.
\end{proof}

\section{Derived  Harish-Chandra homomorphism and Drinfeld traces}
\la{sec7}
The aim of this section is to construct the derived Harish-Chandra
homomorphism and trace maps for representation schemes of Lie
algebras. Our starting point is the observation of Section 4.2
interpreting the derived HC homomorphism for associative algebras
in terms of Chevalley-Eilenberg complexes.

Throughout this section,
$\g$ will denote a finite-dimensional reductive Lie algebra,
$ \h \subset \g $ its Cartan subalgebra and $ W $ the Weyl
group of $\g$. Recall that in this case, if $V$ is any $k$-vector space with an action of the reductive group $G$ whose Lie algebra is $\g$,
$V^G \,\cong\,V^{\ad\,\g}$.

\subsection{The derived Harish-Chandra homomorphism}
The natural inclusion $ \h \into \g $ defines a homomorphism of Lie coalgebras
$ \g^{\ast} \onto \h^{\ast} $ and hence, for any $ C \in \mathtt{DGCC}_{k/k} $, a morphism of commutative
DG algebras
\begin{equation}
\la{derHCg}
\Phi_{\g}(C) \,:\,\C^c(\g^{\ast}(C),\g^{\ast};k) \rar \C^c(\h^{\ast}(C),\h^{\ast};k)\,\text{.}
\end{equation}
The following proposition is a generalization of Lemma~\ref{resthom}.
\bprop
\la{Winv}
The image of $\Phi_{\g}(C) $ lies in the DG subalgebra $\C^c(\h^{\ast}(C),\h^{\ast};k)^W$ of chains that are invariant under the action of the Weyl group $W$ of $\g$.
\eprop

\begin{proof}
As $\h $ is an abelian Lie algebra, we have an isomorphism
$ \C^c(\h^{\ast}(C),\h^{\ast}(k);k) \,\cong\, \C^c(\h^{\ast}(\bar{C});k)$.
The map  $\Phi_{\g}(C) $ is thus the restriction of the natural map
$ \C^c(\g^{\ast}(C); k) \rar \C^c(\h^{\ast}(\bar{C});k)$ to
$ \C^c(\g^{\ast}(C), \g^{\ast}; k) \,\cong\,
\C^c(\g^{\ast}(\bar{C});k)^{{\rm ad}\,\g} \cong \C^c(\g^{\ast}(\bar{C});k)^{G} $, where
$ G $ is the Lie group attached to $ \g $. Now, let $N$ denote the normalizer
of $\h $ in $ G $, so that there is a surjective group homomorphism $ N \onto W $.
Since $ W $ acts naturally on $ \h^* $, so does $N$. Thus, $N$ acts on
$ \h^{\ast}(\bar{C})$ as well, making $\h^{\ast}(\bar{C})$ a DG-Lie coalgebra with $N$-action.
This, in turn, induces an $N$-action on the commutative DG algebra $\C^c(\h^{\ast}(\bar{C});k)$. On the other hand, the adjoint action of $ G $ on $ \g $ makes $ g^{\ast}(\bar{C})$
a DG Lie coalgebra with $ G $-action (and hence, $N$) action. Thus, the commutative DG algebra
$\C^c(g^{\ast}(\bar{C});k)$ acquires
a $G$ (and hence, $N$) action. Since the map $ \g^{\ast} \onto \h^{\ast} $ is $N$-equivariant, the map
$\g^{\ast}(\bar{C}) \twoheadrightarrow \h^{\ast}(\bar{C})$ is $N$-equivariant as well. Therefore, the map $\C^c(\g^{\ast}(\bar{C});k)
\rar \C^c(\h_n^{\ast}(\bar{C});k)$ is $N$-equivariant. Since any element of $\C^c(\g^{\ast}({C}),\g^{\ast}(k);k)$ is $G$-invariant
(and hence, $N$-invariant), any element in the image of $\Phi_\g(C) $ is $N$-invariant (and hence, $W$-invariant).
\end{proof}

Thus, we have a morphism of commutative DG algebras
$$
\Phi_{\g}(C)\,:\,\C^c(\g^{\ast}(C),\g^{\ast};k) \rar \C^c(\h^{\ast}(C),\h^{\ast};k)^W\,,
$$
which we call the {\it derived Harish-Chandra homomorphism}.
Suppose that there exists a quasi-isomorphism $\cb_{\mathtt{Comm}}(C) \stackrel{\sim}{\twoheadrightarrow} \mathfrak{a}$ for some Lie algebra $\mathfrak{a}$. By Theorem~\ref{drepg}, the derived Harish-Chandra homomorphism can be viewed as a map in $\Ho(\cDGA_{k/k})$
$$\Phi_{\g}(C)\,:\, \DRep_{\g}(\mathfrak{a})^G \rar \C^c(\h^{\ast}(C),\h^{\ast};k)^W\,\text{.}$$
It follows that the derived Harish-Chandra homomorphism induces the map
$$
\H_{\bullet}(\Phi_{\g}) \,:\,\H_{\bullet}(\mathfrak{a}, \g)^{G} \rar \H_{\bullet}(\h^{\ast}(C),\h^{\ast};k)^W \,\text{.}
$$
If $C$ has zero differential, then $\C^c(\h^{\ast}(C),\h^{\ast};k)$ has also zero differential, and
hence in this case,
$\H_{\bullet}(\Phi_{\g})\,$ maps $\,\H_{\bullet}(\mathfrak{a},\g)^G $ to
$\bSym_k(\h^{\ast} \otimes \bar{C}[-1])^W$.

\begin{example} \la{1dliehc}
Let $k=\c$ and let $\mathfrak{a}\,:=\,\c.x$ be the one-dimensional Lie algebra
with $x$ having weight $1$ and homological degree $0$. The cocommutative coalgebra
$C\,:=\,\bSym^c(\mathfrak{a}[1]) = \Lambda^c(sx)$ is Koszul dual to $\mathfrak{a}$. In this case,
$\g^{\ast}(C)\,\cong\,\g^{\ast}.(sx) \oplus \g^{\ast}$ and
$\C^c(\g^{\ast}(C), \g^{\ast}; \c) \,=\, \Sym(\g^{\ast})^{G}$.
In this case, the derived Harish-Chandra homomorphism becomes the Chevalley isomorphism
$$
\Sym(\g^{\ast})^{G} \stackrel{\sim}{\to} \Sym(\h^{\ast})^W\,\text{.}
$$
\end{example}
\subsection{Drinfeld homology}
We now proceed with constructing the analogues of the trace maps~\eqref{trm3} in the case of Lie algebras. As the first step, we introduce
an appropriate version of cyclic homology for Lie algebras that
will relate to representation homology via the trace maps.

\subsubsection{ }
In \cite{Dr}, Drinfeld introduced the functor
$$
\lambda :\,\DGL_k \to \Com_k\ ,\quad
\mathfrak{a} \mapsto
\Sym^2(\mathfrak{a})/\langle [x,y]\cdot z -x \cdot [y,z]\,:\,x,y,z \in \mathfrak{a}\rangle
$$
that assigns to a Lie algebra $ \mathfrak{a} $ (the target of) the universal invariant bilinear
form on $ \mathfrak{a} $. As shown in \cite{GK}, this functor plays a role of the cyclic functor \eqref{cycf} on the category of Lie algebras: its left derived
$\, \mathfrak L \lambda \,$ exists and defines the analogue of cyclic homology for Lie algebras
({\it cf.} \cite[Theorem (5.3)]{GK}).

More generally, extending Drinfeld's construction, for an integer $ d \ge 1 $, we define
$$
\lambda^{(d)}\,:\,\DGL_k \rar \Com_k \ ,\quad
\mathfrak{a} \mapsto \Sym^d(\mathfrak{a})/[\mathfrak{a}, \Sym^d(\mathfrak{a})]\ .
$$
This functor assigns to a Lie algebra $ \mathfrak{a} $ (the target of) the universal invariant multilinear form of degree $d$ on $ \mathfrak{a} \,$; in particular, for $d=2$, we have
$\,\lambda^{(2)} = \lambda\,$.

Note that the symmetric invariant $d$-multilinear forms $\g \times \ldots \times \g \rar k$ are in one-to-one correspondence with linear maps $\lambda^{(d)}(\g) \rar k$. To be precise, the nondegenerate pairing
$$ \Sym^d(\g) \times \Sym^d(\g^{\ast}) \rar k $$
induces a nondegenerate pairing
$$ \lambda^{(d)}(\g) \times \Sym^d(\g^{\ast})^{\ad\,\g} \rar k \,\text{.}$$

The next theorem generalizes the result of \cite[Theorem~(5.3)]{GK} in the case of
the Lie operad.
\begin{theorem} \la{dlambda}
For each $d \ge 1 $, the functor $\,\lambda^{(d)}\,$ has a (total) left derived functor given by
$$ \L \lambda^{(d)}:\,\Ho(\DGL_k) \rar \Ho(\Com_k)\,,\,\,\,\, \mathfrak{a} \mapsto \lambda^{(d)}(\mathfrak L) \ ,
$$
where $\mathfrak L \stackrel{\sim}{\rar} \mathfrak{a}$ is a cofibrant resolution of
$ \mathfrak{a}$ in $\DGL_k$.
\end{theorem}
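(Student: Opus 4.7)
The plan is to apply Brown's Lemma (cf.\ \cite[Lemma~A.2]{BKR}): to produce a total left derived functor for $\lambda^{(d)}$ it suffices to show that $\lambda^{(d)}$ carries weak equivalences between cofibrant DG Lie algebras to quasi-isomorphisms. The main input will be the Poincar\'e--Birkhoff--Witt theorem combined with Theorem~\ref{ftt}.

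The first step is to establish, functorially in $\mathfrak{a} \in \DGL_k$, a natural isomorphism of complexes
\[
\mathcal{U}(\mathfrak{a})_{\n} \;\cong\; \bigoplus_{d \ge 1}\, \lambda^{(d)}(\mathfrak{a})\,.
\]
The symmetrization map $\sigma:\Sym(\mathfrak{a}) \xrightarrow{\sim} \mathcal U(\mathfrak{a})$ is an $\mathfrak{a}$-equivariant isomorphism of complexes, where $\mathfrak{a}$ acts on the source by derivations extending its adjoint action on itself, and on the target by $\mathrm{ad}(x)u = xu-ux$. A standard argument (writing a commutator $[u,v]$ in $\mathcal{U}(\mathfrak{a})$ by Leibniz expansion) shows that $[\overline{\mathcal U(\mathfrak{a})},\overline{\mathcal U(\mathfrak{a})}] = [\mathfrak{a},\mathcal U(\mathfrak{a})]$, so passing to coinvariants on both sides of $\sigma$ yields the displayed decomposition, graded by the PBW filtration degree. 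This decomposition is manifestly functorial in $\mathfrak{a}$.

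Next, I would observe that the enveloping algebra functor $\mathcal U: \DGL_k \to \DGA_{k/k}$ is a left Quillen functor: its right adjoint is the underlying-Lie-algebra functor, and it preserves (acyclic) cofibrations since it sends free DG Lie algebras to free DG algebras. By Theorem~\ref{ftt}, the reduced cyclic functor $(\,\mbox{--}\,)_\n$ admits a total left derived functor on $\DGA_{k/k}$. Hence the composite $\mathcal U(\,\mbox{--}\,)_\n: \DGL_k \to \Com_k$ carries weak equivalences between cofibrant DG Lie algebras to quasi-isomorphisms.

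Finally, from the functorial direct-sum decomposition above, each $\lambda^{(d)}$ is a direct summand of the functor $\mathcal U(\,\mbox{--}\,)_\n$ at the level of complexes. Since a morphism of complexes is a quasi-isomorphism if and only if each direct summand of it is, $\lambda^{(d)}$ itself preserves weak equivalences between cofibrant objects. Brown's Lemma then supplies the total left derived functor $\L\lambda^{(d)}:\Ho(\DGL_k)\to\Ho(\Com_k)$ computed by $\mathfrak{a}\mapsto\lambda^{(d)}(Q\mathfrak{a})$ for any cofibrant resolution $Q\mathfrak{a}\stackrel{\sim}{\to}\mathfrak{a}$ in $\DGL_k$. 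The principal technical point to verify carefully is the PBW identification and, in particular, the equality of commutator subspaces needed to identify the $\mathfrak{a}$-coinvariants of $\Sym(\mathfrak{a})$ with $\mathcal{U}(\mathfrak{a})_\n$; once that is in hand, the derivation of the result is purely formal.
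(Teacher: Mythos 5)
Your argument is correct, but it takes a genuinely different route from the paper's. The paper proves Theorem~\ref{dlambda} by the direct method: for any $B \in \cDGA_k$ there is a natural map $\lambda^{(d)}(\mathfrak{a} \otimes B) \to \lambda^{(d)}(\mathfrak{a}) \otimes B$, and applying this with the cylinder $B = k[t,dt]$ shows that $\lambda^{(d)}$ carries homotopic maps out of a cofibrant $\mathfrak L$ to homotopic maps in $\Com_k$; then Whitehead's theorem and Brown's lemma finish the argument. Your proof instead realizes each $\lambda^{(d)}$ as a functorial direct summand of $\mathcal{U}(\,\mbox{--}\,)_{\n}$ via the PBW symmetrization isomorphism, together with the identity $[\overline{\mathcal U},\overline{\mathcal U}] = [\mathfrak a, \overline{\mathcal U}]$ (which holds because $\mathcal U$ is generated by $\mathfrak a$ — the graded cyclic identity $[ab,c] = \pm[a,bc] \pm [ca,b]$ gives the induction), and then imports the homotopy invariance of $(\,\mbox{--}\,)_\n$ from the Feigin--Tsygan Theorem~\ref{ftt} along the left Quillen functor $\mathcal U$. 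Since a map of chain complexes that decomposes as a direct sum is a quasi-isomorphism if and only if each summand is, the result follows. This is essentially the argument of Theorem~\ref{hodgeuel} run in reverse; in fact, once you have this decomposition, Theorem~\ref{hodgeuel} drops out immediately. The trade-off is that the paper's proof is more elementary and self-contained — it needs neither PBW nor Theorem~\ref{ftt} — and it generalizes verbatim to the operadic setting of the Appendix, whereas your argument is specific to the Lie operad. One minor sloppiness in your write-up: the claim that $\mathcal U$ ``preserves (acyclic) cofibrations since it sends free DG Lie algebras to free DG algebras'' is not a proof, since cofibrations are retracts of semifree extensions, not just free objects; the clean justification of the left Quillen property is that the right adjoint $\mathcal{Lie}(\overline{(\,\mbox{--}\,)})$ evidently preserves surjections and surjective quasi-isomorphisms.
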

\begin{proof}
Suppose that $\mathfrak L \in \DGL_k$ is cofibrant and that $f,g\,:\,\mathfrak L \rar \mathfrak{a}$ are homotopic. Then, there exists $h\,:\,\mathfrak L \rar \mathfrak{a} \otimes k[t,dt]$ such that $h(0)=f$ and $h(1)=g$. Here, $\mathrm{deg}(t)=0$ and $d(t)\,:=\,dt$ and $h(a)$ denotes postcomposition of $h$ with the map $\id_{\mathfrak{a}}\otimes \mathrm{ev}_a$ where $\mathrm{ev}_a\,:\,k[t,dt] \rar k$ is the map taking $t$ to $a$ and $dt$ to $0$ for any $a \in k$. Note that for any $B \in \cDGA_k$, one has natural maps in $\Com_k $
\begin{equation} \la{mnatcalg} \lambda^{(d)}(\mathfrak{a} \otimes B) \rar \lambda^{(d)}(\mathfrak{a}) \otimes B\,\text{.}
\end{equation}
One therefore, has a map
$H\,:\,\lambda^{(d)}(\mathfrak L) \rar \lambda^{(d)}(\mathfrak{a}) \otimes k[t,dt]$
given by the composition
$$\begin{diagram} \lambda^{(d)}(\mathfrak L) & \rTo^{\lambda^{(d)}(h)} & \lambda^{(d)}(\mathfrak{a} \otimes k[t,dt]) & \rTo^{\eqref{mnatcalg}} &  \lambda^{(d)}(\mathfrak{a}) \otimes k[t,dt] \end{diagram}\,\text{.}$$
Clearly, $H(0)\,=\,\lambda^{(d)}(f)$ and $H(1)\,=\,\lambda^{(d)}(g)$. It follows that the maps $\lambda^{(d)}(f)$ and $\lambda^{(d)}(g)$ are homotopic in $\Com_k$.

Now, if $f\,:\,\mathfrak L \rar \mathfrak L'$ is a weak equivalence between cofibrant objects in $\DGL_k$, there exists a $g\,:\,\mathfrak L' \rar \mathfrak L$ in $\DGL_k$ such that
$fg$ and $gf$ are homotopic to the respective identities. It follows that $\lambda^{(d)}(fg)$ and $\lambda^{(d)}(gf)$ are homotopic to the respective identities as well. Hence,
$\lambda^{(d)}(f)$ is a quasi-isomorphism. In other words, the functors $\lambda^{(d)}$ take weak equivalences between cofibrant objects to weak equivalences. The desired theorem now follows from
Brown's lemma ({\it cf.}~\cite[Lemma 9.9]{DS}).
\end{proof}

We let  $\,\HC^{(d)}_{\bullet}(\mathtt{Lie},\mathfrak{a})\,$ denote the homology
$\,\H_{\bullet}[\L \lambda^{(d)}(\mathfrak{a})] \,$ and refer to it as {\it Drinfeld homology}. 
Note that $ \lambda^{(1)} $ is just the abelianization functor: $\,\mathfrak{a} \mapsto \mathfrak{a}/[\mathfrak{a},\mathfrak{a}]\,$,
and hence $\,\HC^{(1)}_{\bullet}(\mathtt{Lie},\mathfrak{a}) \cong \H_{\bullet + 1}(\mathfrak{a}; k)\,$ 
for any Lie algebra $ \mathfrak{a} $ (see, e.g., \cite[Example~1, Sect.2.6]{BFR}). For $d=2$,
 $\,\HC^{(2)}_{\bullet}(\mathtt{Lie},\mathfrak{a})\,$ is precisely the Lie cyclic
homology introduced in \cite{GK} and denoted $ \mathrm{HA}_{\bullet}(\mathtt{Lie}, \mathfrak{a}) $ in that paper. In general,
the meaning of the homology groups $\,\HC^{(d)}_{\bullet}(\mathtt{Lie},\mathfrak{a})\,$  is clarified by the following theorem which is one of the main results of
this section.
\begin{theorem} \la{hodgeuel}
Let $\mathfrak{a} \in \DGL_k$. The reduced cyclic homology of the universal
enveloping algebra $ \mathcal U(\mathfrak{a}) $ of the Lie algebra $\mathfrak{a}$ has a natural Hodge-type decomposition
\begin{equation} \la{hodged1} \rHC_{\bullet}[\mathcal U(\mathfrak{a})]\,\cong\, \bigoplus_{d= 1}^{\infty} \HC^{(d)}_{\bullet}(\mathtt{Lie},\mathfrak{a}) \,\text{.}\end{equation}
\end{theorem}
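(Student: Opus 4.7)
The plan is to reduce the statement to a Poincar\'e-Birkhoff-Witt (PBW) style identification, working on a cofibrant resolution. Choose a cofibrant resolution $\mathcal L \stackrel{\sim}{\twoheadrightarrow} \mathfrak{a}$ in $\DGL_k$. Because the Lie algebra functor $\mathcal Lie:\DGA_{k/k}\to\DGL_k$ clearly preserves fibrations and weak equivalences, its left adjoint $\mathcal U$ is a left Quillen functor; in particular $\mathcal U(\mathcal L)\stackrel{\sim}{\twoheadrightarrow}\mathcal U(\mathfrak{a})$ is a cofibrant resolution in $\DGA_{k/k}$. By Theorem~\ref{ftt},
\[
\rHC_\bullet[\mathcal U(\mathfrak{a})]\;\cong\;\H_\bullet\bigl(\mathcal U(\mathcal L)_{\n}\bigr)
\;=\;\H_\bullet\bigl(\overline{\mathcal U(\mathcal L)}/[\overline{\mathcal U(\mathcal L)},\overline{\mathcal U(\mathcal L)}]\bigr),
\]
so the problem is to analyze the right-hand side.

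The next step is a PBW-style decomposition of this quotient. The (graded) symmetrization map $\pi:\bSym_k(\mathcal L)\xrightarrow{\sim}\mathcal U(\mathcal L)$ is an isomorphism of DG $\mathcal L$-modules, where $\mathcal L$ acts on $\bSym_k(\mathcal L)$ by derivations extending the adjoint action and on $\mathcal U(\mathcal L)$ by $\ad$. I would then establish the key identity
\[
[\mathcal U(\mathcal L),\,\mathcal U(\mathcal L)]\;=\;[\mathcal L,\,\mathcal U(\mathcal L)]
\]
as subcomplexes of $\mathcal U(\mathcal L)$. The inclusion $\supseteq$ is trivial; for $\subseteq$, one argues by induction on word length in the generators $\mathcal L$, using the graded Leibniz identity $[ab,v]=a[b,v]\pm[a,v]b$ together with the fact that $[\mathcal L,\mathcal U(\mathcal L)]$ is stable under left multiplication by $\mathcal L$ (which in turn follows from the identity $y\cdot[x,u]=[x,yu]+[y,x]\cdot u$ up to signs). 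Under $\pi$, this commutator subspace is transported to $\mathcal L\cdot\bSym_k(\mathcal L)$, the image of the derivation action.

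Since this derivation action preserves the symmetric degree, the quotient splits as a direct sum of complexes indexed by degree:
\[
\mathcal U(\mathcal L)_{\n}\;\cong\;\bigoplus_{d\ge 1}\Sym^d(\mathcal L)/[\mathcal L,\Sym^d(\mathcal L)]\;=\;\bigoplus_{d\ge 1}\lambda^{(d)}(\mathcal L).
\]
Taking homology and invoking Theorem~\ref{dlambda}, which identifies $\H_\bullet(\lambda^{(d)}(\mathcal L))$ with $\HC^{(d)}_\bullet(\mathtt{Lie},\mathfrak{a})$ since $\mathcal L$ is a cofibrant resolution of $\mathfrak{a}$, yields the desired decomposition
\[
\rHC_\bullet[\mathcal U(\mathfrak{a})]\;\cong\;\bigoplus_{d\ge 1}\HC^{(d)}_\bullet(\mathtt{Lie},\mathfrak{a}).
\]
Naturality in $\mathfrak{a}$ follows from the functoriality of each step.

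The main obstacle I expect is the careful handling of signs and differentials in the DG setting: both the PBW symmetrization and the reduction $[\mathcal U,\mathcal U]=[\mathcal L,\mathcal U]$ are classical for ungraded Lie algebras, and must be verified to respect the DG structure (in particular, that the decomposition by symmetric degree is compatible with the differential on $\mathcal U(\mathcal L)_{\n}$, equivalently that the derivation of $\bSym_k(\mathcal L)$ induced by transport of the inner differential on $\mathcal U(\mathcal L)$ preserves the grading modulo $\mathcal L\cdot\bSym_k(\mathcal L)$). Both points are standard but need to be checked with Koszul signs in place.
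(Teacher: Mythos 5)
Your proposal is correct and takes essentially the same route as the paper: both pass to a cofibrant resolution $\mathcal L$, identify $\mathcal U(\mathcal L)_{\n}$ with $\overline{\mathcal U(\mathcal L)}/[\mathcal L, \mathcal U(\mathcal L)]$, transport the quotient across the PBW symmetrization (an isomorphism of DG $\mathcal L$-modules for the adjoint action), and then decompose by symmetric degree. The only differences are cosmetic. The paper fixes the specific resolution $\mathcal L = \cb_{\mathtt{Comm}}(C)$ for $C$ Koszul dual to $\mathfrak{a}$, so that $\mathcal U(\mathcal L) = \cb(C) = T_k(\bar C[-1])$ is a free tensor algebra and the reduction $[\mathcal U,\mathcal U]=[\mathcal L,\mathcal U]$ is the classical statement that the cyclic quotient of $T_k(V)$ can be taken with respect to $[V, T_k(V)]$; you instead prove the same commutator collapse by a telescoping induction valid for any algebra generated by a subspace. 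Both are fine; the paper's choice makes the identification $\H_\bullet[\cb(C)_{\n}]\cong\rHC_\bullet(\mathcal U(\mathfrak a))$ a citation of an existing result, which is perhaps slightly cleaner.

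One minor imprecision worth fixing: the inference ``$\mathcal U$ is left Quillen, hence $\mathcal U(\mathcal L)\to\mathcal U(\mathfrak a)$ is a cofibrant resolution'' is not literally a consequence of the Quillen adjunction. A left Quillen functor preserves cofibrant objects (so $\mathcal U(\mathcal L)$ is cofibrant) and preserves weak equivalences \emph{between cofibrant objects}, but $\mathfrak a$ need not be cofibrant in $\DGL_k$. You need the separate fact that in characteristic zero $\mathcal U$ preserves \emph{all} quasi-isomorphisms of DG Lie algebras (which follows from the PBW filtration, or equivalently from the symmetrization isomorphism you already use). With that remark inserted, the argument is sound.
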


\begin{proof}
Let $ C \in \cDGC_{k/k} $ be a coalgebra Koszul dual to the Lie algebra
$\mathfrak{a}$ (for example, $\,C = \bB_{\mathtt{Lie}}(\mathfrak{a})$). Then we have
a cofibrant resolution  $\,\cb_{\mathtt{Comm}}(C) \stackrel{\sim}{\rar} \mathfrak{a}\,$
in $\DGL_k$.
For a graded $k$-vector space $V$, there are natural isomorphisms of graded vector spaces
\begin{eqnarray*}
T_k(V)_{\n} \, \cong \, T_k(V)/(k+[V, T_k(V)]) \cong
                      T_k(V)/(k+[L_k(V), T_k(V)])\ ,
\end{eqnarray*}
where $ L_k(V) \subset T_k(V)$ is the free (graded) Lie algebra generated by $V$.
It follows that
\begin{equation} \la{cyclie}
\cb(C)_{\n}\,\cong\, \cb(C)/[\cb_{\mathtt{Comm}}(C), \cb(C)]
\end{equation}
as complexes of $k$-vector spaces. By~\eqref{isofun1}, $\cb(C)\,\cong\,\mathcal{U}(\cb_{\mathtt{Comm}}(C))$. On the other hand, since $\,\cb_{\mathtt{Comm}}(C)\,$ is a DG Lie algebra,
we have an isomorphism of DG $\cb_{\mathtt{Comm}}(C)$-modules
\begin{equation} \la{symmap}
\bSym_k[\cb_{\mathtt{Comm}}(C)] \stackrel{\sim}{\rar} \mathcal{U}_k[\cb_{\mathtt{Comm}}(C)]
\end{equation}
given by the symmetrization map. Therefore, writing
$$
\Sym^d[\cb_{\mathtt{Comm}}(C)]_\n \,:=\,
\frac{\Sym^d(\cb_{\mathtt{Comm}}(C))}{[\cb_{\mathtt{Comm}}(C), \Sym^d(\cb_{\mathtt{Comm}}(C))]}\ ,
$$
we get the following decomposition
\begin{equation}
\la{decomp}
\cb(C)_{\n} \,\cong\, \bigoplus_{d=1}^{\infty} \,
\Sym^d[\cb_{\mathtt{Comm}}(C)]_\n\ .\end{equation}
Note that $\cb(C) \stackrel{\sim}{\rar} \mathcal U(\mathfrak{a})$ is a cofibrant resolution in $\DGA_{k/k}$. By \cite[Proposition 3.1]{BKR}), we have
$$
\H_{\bullet}[\cb(C)_{\n}] \,\cong\, \rHC_{\bullet}(\mathcal U(\mathfrak{a}))\,\text{.}$$
On the other hand,
$$ \H_{\bullet}[\Sym^d(\cb_{\mathtt{Comm}}(C))_\n]\,\cong\, \HC^{(d)}_{\bullet}(\mathtt{Lie},\mathfrak{a}) $$
since $C$ is Koszul dual to $\mathfrak{a}$. This proves the desired theorem.
\end{proof}

\subsubsection{Lie-Hodge decomposition}
\la{LieHodge}
Let $\mathfrak{a} \in \DGL_k$ and let $C \in \cDGC_{k/k}$ be Koszul dual to $\mathfrak{a}$. By~\eqref{isofun1}, $\cb(C)\,\cong\,\mathcal U(\cb_{\mathtt{Comm}}(C))$. From this isomorphism, $\cb(C)$ acquires the structure of a (primitively generated)  cocommutative DG Hopf algebra whose DG Lie algebra of primitives is $\cb_{\mathtt{Comm}}(C)$. Let $m_p\,:\,\cb(C)^{\otimes p} \rar \cb(C)$ denote the $p$-fold product and let $\Delta^p\,:\,\cb(C) \rar \cb(C)^{\otimes p}$ denote the $p$-fold coproduct. For each $p \geq 2$, define the Adams operation
$$ \psi^p\,:= \, m_p \circ \Delta^p\,:\,\cb(C) \rar \cb(C)\,\text{.}$$
Note that $\psi^p \circ \psi^q\,=\,\psi^{pq}$. The following proposition is dual to~\cite[Propositions 5.3.4-5.3.6]{FT1}.

\bprop
\la{adamscyc}
The Adams operations $\psi^p\,,\,\,p \geq 2$ descend to Adams operations
$$\psi^p\,:\,\cb(C)_{\n} \rar \cb(C)_{\n}  \,,\,\,p \geq 2 \,\text{.}$$
\eprop

It is verified without difficulty that on the image of $\Sym^d(\cb_{\mathtt{Comm}}(C))$ in $\cb(C)$ under the symmetrization map~\eqref{symmap}, $\psi^p$ coincides with multiplication by $p^d$.
Therefore,
\bprop
\la{eigenvalue}
 $\psi^p$ acts on the direct summand
$ \Sym^d[\cb_{\mathtt{Comm}}(C)]_\n $ of \eqref{decomp} by multiplication by $p^d$.
\eprop
\begin{corollary} \la{adamseival}
There are Adams operations
$$\psi^p\,:\,\rHC_{\bullet}\left[\mathcal U(\mathfrak{a})\right] \rar \rHC_{\bullet}\left[\mathcal U(\mathfrak{a})\right]\,,\,\,p \geq 2 \,\text{.}$$
Further, $\HC^{(d)}_{\bullet}(\mathtt{Lie},\mathfrak{a})$ is precisely the (graded) eigenspace corresponding to the eigenvalue $p^d$ of $\psi^p$ for each $p \geq 2$.
\end{corollary}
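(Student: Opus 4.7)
The plan is to deduce the corollary directly from the two preceding propositions together with the cofibrant model used in the proof of Theorem~\ref{hodgeuel}. Fix a Koszul dual coalgebra $C \in \cDGC_{k/k}$ of $\mathfrak{a}$ (for instance $C = \bB_{\mathtt{Lie}}(\mathfrak{a})$), so that $\cb(C) \stackrel{\sim}{\to} \mathcal{U}(\mathfrak{a})$ is a cofibrant resolution in $\DGA_{k/k}$. By Theorem~\ref{ftt}, one then has $\rHC_\bullet[\mathcal{U}(\mathfrak{a})] \cong \H_\bullet[\cb(C)_\n]$, and by Proposition~\ref{adamscyc} the Adams operations $\psi^p$ descend to chain maps on $\cb(C)_\n$. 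Passing to homology produces the desired operations $\psi^p : \rHC_\bullet[\mathcal{U}(\mathfrak{a})] \to \rHC_\bullet[\mathcal{U}(\mathfrak{a})]$; their independence of the choice of Koszul resolution follows from the naturality of the Hopf algebra structure on $\cb(C)$ with respect to morphisms of conilpotent coalgebras.

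Next, I would invoke the decomposition obtained in the proof of Theorem~\ref{hodgeuel}, namely
\[
\cb(C)_\n \,\cong\, \bigoplus_{d \ge 1} \Sym^d[\cb_{\mathtt{Comm}}(C)]_\n,
\]
which, after taking homology, identifies the $d$-th summand with $\HC^{(d)}_\bullet(\mathtt{Lie}, \mathfrak{a})$. By Proposition~\ref{eigenvalue}, $\psi^p$ acts on $\Sym^d[\cb_{\mathtt{Comm}}(C)]_\n$ by multiplication by $p^d$ already at the chain level. Hence this decomposition is an eigenspace decomposition for every $\psi^p$ with $p \ge 2$, and on homology $\HC^{(d)}_\bullet(\mathtt{Lie}, \mathfrak{a})$ sits inside the $p^d$-eigenspace of $\psi^p$ on $\rHC_\bullet[\mathcal{U}(\mathfrak{a})]$.

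To upgrade this inclusion to an equality, I would use that $k$ has characteristic zero: for any fixed $p \ge 2$ the scalars $\{p^d : d \ge 1\}$ are pairwise distinct and nonzero, so the eigenspace decomposition of $\rHC_\bullet[\mathcal{U}(\mathfrak{a})]$ under $\psi^p$ corresponding to these eigenvalues exhausts $\bigoplus_d \HC^{(d)}_\bullet(\mathtt{Lie}, \mathfrak{a})$, and by Theorem~\ref{hodgeuel} this is all of $\rHC_\bullet[\mathcal{U}(\mathfrak{a})]$. Therefore $\HC^{(d)}_\bullet(\mathtt{Lie}, \mathfrak{a})$ is exactly the $p^d$-eigenspace, as claimed.

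The only step that requires care beyond quoting the preceding propositions is the assertion that $\psi^p$ is well-defined on $\rHC_\bullet[\mathcal{U}(\mathfrak{a})]$ independently of $C$; this is a standard homotopy-invariance check using that $\cb$ sends weak equivalences of conilpotent cocommutative DG coalgebras to weak equivalences of primitively generated DG Hopf algebras (Theorem~\ref{cbbeq} together with \eqref{isofun1}), compatibly with the $m_p$ and $\Delta^p$ used to build $\psi^p$. I expect no other nontrivial obstacle.
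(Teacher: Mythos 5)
Your proposal is correct and follows the same route the paper intends: the corollary is left as an immediate consequence of Theorem~\ref{ftt}, Proposition~\ref{adamscyc}, the decomposition \eqref{decomp} from the proof of Theorem~\ref{hodgeuel}, and Proposition~\ref{eigenvalue}, which is precisely how you assemble it. The extra care you take about independence of the choice of Koszul dual coalgebra is sound but not something the paper dwells on.
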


Corollary~\ref{adamseival} justifies referring to \eqref{hodged1} as a Hodge decomposition. When $C$ is the $k$-linear dual of a smooth commutative algebra $A$, the decomposition~\eqref{hodged1} can be related to the Hodge decomposition of the cyclic cohomology $\rHC^{\bullet}(A)$ by the following proposition, which is an immediate consequence of~\cite[Corollary 6.5.1]{FT1}.

\bprop \la{phodgedual}
Let $ \mathfrak{a} \in \DGL_k $, and let $\, C \in \cDGC_{k/k} $ be a cocommutative coalgebra Koszul dual to $ \mathfrak{a} $. Assume that $ C \, \cong \,A^{\ast}$ for some {\rm smooth} commutative $k$-algebra $A$. Then
$$ \HC^{(d)}_{\bullet}(\mathtt{Lie},\mathfrak{a})\,\cong\, \rHC^{-\bullet}_{(d-1)}(A)[-1]\,\text{.}$$
\eprop

In particular, $\HC^{(2)}_{\bullet}(\mathtt{Lie}, \mathfrak{a})$ is isomorphic (up to a shift) to the Harrison cohomology of $A$. More generally, for $C \in \cDGC_{k/k}$, one can define a Hodge decomposition for $\rHC_{\bullet}(C)$ dual to the decomposition defined in~\cite[Theorem 4.6.7]{L}. When $C\,=\,A^{\ast}$, this Hodge decomposition coincides with that on $\rHC^{-\bullet}(A)$ (after the obvious identification is made). Proposition~\ref{phodgedual} therefore shows that the homology of the direct summand $\,\Sym^d(\cb_{\mathtt{Comm}}(C))_\n[1]\,$ of $ \cb(C)_{\n}[1]$ should be denoted by $\rHC_{\bullet}^{(d-1)}(C)$. Thus,
\begin{equation} \la{drhodge} \HC^{(d)}_{\bullet}(\mathtt{Lie},\mathfrak{a})\,\cong\, \rHC_{\bullet}^{(d-1)}(C)[-1]\,\text{.} \end{equation}

\noindent \textbf{Remark.} Proposition~\ref{phodgedual} holds in the bigraded situation as well. In this case, $\rHC^{\bullet}(A)$ should be interpreted as the cohomology of the bigraded dual of the reduced cyclic chain complex of $A$.

\subsection{Drinfeld trace maps}
\la{secdrinfeldtr}
Let $\g$ be a finite-dimensional reductive Lie algebra over $k$. The adjunction~\eqref{lieadj} gives a universal representation
$$ \pi_{\g}\,:\,\mathfrak{a} \rar \g \otimes \mathfrak{a}_{\g}  $$
for any $\mathfrak{a}\,\in\,\DGL_{k}$. Let $\mathfrak{L} \stackrel{\sim}{\rar} \mathfrak{a}$ be a cofibrant resolution in $\DGL_{k}$. For any $d \geq 1$, consider the composite map
 \begin{equation} \la{trdg} \begin{diagram} \lambda^{(d)}(\mathfrak L) & \rTo^{\lambda^{(d)}(\pi_{\g})} & \lambda^{(d)}(\g \otimes \mathfrak L_{\g}) & \rTo^{\eqref{mnatcalg}} & \lambda^{(d)}(\g) \otimes \mathfrak L_{\g} \end{diagram}\,\,\text{.}\end{equation}
\bprop \la{trdginv}
The image of the composite map~\eqref{trdg} lies in $ \lambda^{(d)}(\g) \otimes \mathfrak L_{\g}^{\ad\,\g} $.
\eprop

\begin{proof}
Equip $\mathfrak L$ with the trivial $\g$-action. Then, $\pi_{\g}\,:\,\mathfrak L \rar \g \otimes \mathfrak L_{\g} $ is $\g$-equivariant. It follows that $\lambda^{(d)}(\pi_{\g})$ is $\g$-equivariant as well. On the other hand, it is easy to verify that the map~\eqref{mnatcalg} from $\lambda^{(d)}(\g \otimes \mathfrak L_{\g} )$ to $\lambda^{(d)}(\g) \otimes \mathfrak L_{\g}$ is also $\g$-equivariant. Since $\g$ acts trivially on $\lambda^{(d)}(\g)$, the desired proposition follows.
\end{proof}
One therefore obtains the maps
$$ \Tr_{\g}\,:\,\lambda^{(d)}(\mathfrak L) \rar  \lambda^{(d)}(\g) \otimes  \mathfrak L_{\g}^{\ad\,\g}  \,\text{.}$$

For $d=2$, the vector space $ \lambda^{(d)}(\g)$ is one-dimensional: indeed, there is a unique (up to a scalar factor) invariant bilinear form on $\g$ (the Cartan-Killing form). Hence, at the level of homology, the map
$\Tr_{\g}$ induces a {\it canonical} trace map
\begin{equation} \la{canlietrace} \Tr_{\g}\,:\,\HC_{\bullet}(\mathtt{Lie},\mathfrak{a}) \rar \H_{\bullet}(\mathfrak{a},\g)^{\ad\,\g}\,\text{.}\end{equation}
More generally, let $P \,\in \,\Sym^d(\g^{\ast})^{\ad\,\g}$.  Then,  one has the trace map
$$ \begin{diagram} \Tr_{\g}^P \,:\, \lambda^{(d)}(\mathfrak L) & \rTo^{\Tr_{\g}} &  \lambda^{(d)}(\g) \otimes  \mathfrak L_{\g}^{\ad\,\g}  & \rTo^{ P(\mbox{--}) \otimes \id } & \mathfrak L_{\g}^{\ad\,\g} \end{diagram}\,\text{.}$$
Recall that we have the Chevalley isomorphism
$$ \Sym(\g^{\ast})^{\ad\,\g} \,\cong\,\Sym(\h^{\ast})^W \,\cong\, k[\bar{P}_1, \ldots,\bar{P}_l] $$
where $\mathrm{deg}(\bar{P}_i)\,=\,d_i$ for $1 \leq i \leq l$ and the $d_i$ are the fundamental degrees of $\g$. Choosing $P_i \in \Sym(\g^{\ast})^{\ad\,\g}$ corresponding to $\bar{P}_i$ under Chevalley's isomorphism, we get a family of trace maps
$$ \Tr^{(d_i)}_{\g}\,:=\,\Tr_{\g}^{P_i}\,:\,\lambda^{(d_i)}(\mathfrak L) \rar \mathfrak L_{\g}^{\ad\,\g}\,,$$
which yields a homomorphism of commutative DG algebras
\begin{equation} \la{lietraces}
\bSym_k[\Tr_{\bullet}(\mathfrak L)]\,:\,\bSym_k[\oplus_{i=1}^l \lambda^{(d_i)}(\mathfrak L)] \rar \mathfrak L_{\g}^{\ad\,\g} \,\text{.} \end{equation}
 We refer to \eqref{lietraces} as the {\it Drinfeld trace map}. We shall sometimes abuse this terminology and use it for closely related maps as well. At the level of homology,~\eqref{lietraces} gives
\begin{equation} \la{lietracesh}
\bSym_k[\Tr_{\bullet}(\mathfrak{a})]\,:\,
\bSym_k[\oplus_{i=1}^l \HC_{\bullet}^{(d_i)}(\mathtt{Lie},\mathfrak{a})]
 \rar \H_{\bullet}(\mathfrak{a}, \mathfrak{g})^{\ad\,\g} \,\text{.} \end{equation}
In particular, if $\mathfrak L\,=\,\cb_{\mathtt{Comm}}(C)$, then by~\eqref{drhodge},
the above map becomes
\begin{equation} \la{hlietraces}
\bSym_k[\Tr_{\bullet}(\mathfrak{a})]\,:\,\bSym_k[\oplus_{i=1}^l \rHC_{\bullet}^{(d_i-1)}(C)[-1]] \rar \H_{\bullet}(\mathfrak{a}, \mathfrak{g})^{\ad\,\g}\,\text{.} \end{equation}

\noindent \textbf{Remark.} The Drinfeld trace map depends on the choice of the $P_i\,,\,\,1 \leq i \leq l$. This choice in turn depends precisely on the choice of an isomorphism
$$\Sym(\h^{\ast})^W \,\cong\, k[\bar{P}_1, \ldots,\bar{P}_l] \,\text{.}$$
\begin{example}
\la{1dieliedt}
Let $\mathfrak{a}\,:=\,k.x$ be a one-dimensional Lie algebra over $k$ with generator $x$ having weight $1$ and homological degree $0$. Note that $\mathfrak{a}$ is a free (and therefore, cofibrant) DG Lie algebra. Since $\mathfrak{a}$ is also abelian,
$$ \L \lambda^{(d)}(\mathfrak{a})\,\cong\, \Sym^d(\mathfrak{a})\,=\,k.x^d \,\text{.}$$
In this case, $\mathfrak{a}_{\g}^{\ad\,\g}\,=\,\Sym(\g^{\ast})^{\ad\,\g}$ and the map
$$ \lambda^{(d)}(\mathfrak{a}) \rar \mathfrak{a}_{\g}^{\ad\,\g} \otimes \lambda^{(d)}(\g) $$
becomes the map dual to the nondegenerate pairing
$$ \lambda^{(d)}(\g) \otimes \Sym^d(\g^{\ast})^{\ad\,\g} \rar k\,\text{.}$$
It follows that for a fixed choice of isomorphism
$$\Sym(\h^{\ast})^W \,\cong\, k[\bar{P}_1, \ldots,\bar{P}_l] \,,$$
the Drinfeld trace becomes the map
$$ k[\bar{P}_1, \ldots,\bar{P}_l] \rar \Sym(\g^{\ast})^{\ad\,\g}\,,\,\,\,\,\, \bar{P}_i \mapsto P_i $$
where the variable $\bar{P_i}$ is identified with $x^{d_i}$. Since $P_i$ corresponds to $\bar{P}_i$ under the Chevalley restriction isomorphism, the Drinfeld trace is indeed a generalization of the map inverse to the Chevalley restriction isomorphism. Combining this observation with Example~\ref{1dliehc}, we conclude that when $\mathfrak{a}$ is a one-dimensional Lie algebra, the derived Harish-Chandra homomorphism and the Drinfeld trace are mutually inverse (quasi-)isomorphisms.
\end{example}

\section{Derived commuting schemes}
\la{sec8}
In this section, we turn to our main example: the derived commuting
scheme associated to a finite-dimensional reductive Lie algebra $\g$. Our goal is
to state a general version of Conjecture 1 for $\g$, deduce the corresponding constant term
identity and present some evidence in favor of this conjecture.
The next section will explain the relation to the famous Macdonald
conjectures proved in~\cite{Ch} and~\cite{FGT}.

\subsection{Main conjecture}
Let $ \mathfrak{a} $ be an abelian Lie algebra of dimension $ r \ge 1 $.
In this case $ \, \mathcal{U}\mathfrak{a} = \Sym(\mathfrak{a})\,$ and
the graded coalgebra $C:=\bSym^c(\mathfrak{a}[1])$ is Koszul dual to $\mathfrak{a}$. Explicitly, $ C  =  k \oplus  \mathfrak{a} \oplus \wedge^2 \mathfrak{a}
\oplus \ldots \oplus \wedge^r \mathfrak{a} $,  where $\wedge^i \mathfrak{a} $ has homological degree $i$. For a reductive Lie algebra $ \mathfrak{g} $, the
derived Harish-Chandra homomorphism \eqref{derHCg} then becomes
$$
\Phi_{\g}\,:\,\DRep_{\g}(\mathfrak{a})^{G} \rar \bSym[\h^{\ast} \otimes (\mathfrak{a} \oplus \wedge^2  \mathfrak{a} \oplus \ldots \oplus \wedge^r  \mathfrak{a})]^W \,,
$$
where $\wedge^i  \mathfrak{a} $ has homological degree $i-1$.
If $\dim\,\mathfrak{a}\,=\,2$, $ \Phi_{\g} $ induces on the $0$-th homology the map
$$
k[\Rep_{\g}(\mathfrak{a})]^G \rar \Sym[\h^{\ast} \otimes  \mathfrak{a}]^W \,,
$$
which is known to be an isomorphism, at least when $\g$ is complex semisimple and $\Rep_{\g}(\mathfrak{a})/\!/G $ is reduced
(see, e.g., \cite{Hai}, Sect.~6.2).
It is therefore, reasonable to make the following conjecture extending
Conjecture~\ref{conj1}.

\begin{conjecture}
\la{conj3}
Let $ \dim(\mathfrak{a}) = 2$. Then, for any reductive Lie algebra $\g$ over $k$,
$$
\Phi_{\g}\,:\, \DRep_{\g}(\mathfrak{a})^G \rar \bSym(\h^{\ast} \oplus \h^{\ast} \oplus \h^{\ast}[1])^W$$ is a quasi-isomorphism (at least when the quotient commuting scheme
$\Rep_{\g}(\mathfrak{a})/\!/G $ is reduced).
\end{conjecture}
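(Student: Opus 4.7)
The plan is to first realize both sides as explicit DG algebras. By the discussion around the explicit model $(k[\g \times \g] \otimes \wedge \g^*, d)^G$ with $d\varphi(\xi,\eta) = \varphi([\xi,\eta])$ for $\varphi \in \g^*$, one identifies $\DRep_\g(\aA)^G$ with this invariant DG subalgebra. The target $\bSym(\h^* \oplus \h^* \oplus \h^*[1])^W$ identifies with $(k[\h \times \h] \otimes \wedge \h^*)^W$ equipped with zero differential, since $\h$ is abelian. Under these identifications $\Phi_\g$ is simply the $W$-equivariant restriction along $\g \twoheadrightarrow \g/\h^\perp$ at the polynomial level tensored with $\wedge\g^* \twoheadrightarrow \wedge \h^*$.

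The second step is to translate via Koszul duality into a problem about relative Lie cohomology. Taking $C = \bSym^c(\aA[1])$, which is finite-dimensional when $\dim\aA = 2$, Theorem~\ref{drepg}$(b)$ gives
\[
\H_\bullet(\aA,\g)^G \cong \H^{-\bullet}(\g(A),\, \g;\, k), \qquad A := C^* \cong \wedge(\aA^*),
\]
and reduces Conjecture~\ref{conj3} to the assertion that the natural restriction map $\H^\bullet(\g(A),\g;k) \to \H^\bullet(\h(A),\h;k)^W$ is an isomorphism of graded algebras. This is a purely Lie-cohomological statement to which one can bring to bear classical invariant theory.

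The third step is to build a candidate inverse via the Drinfeld trace map constructed in Section~\ref{secdrinfeldtr}. Specifically, I would study the composition
\[
\bSym_k\!\left[\bigoplus_{i=1}^{l} \L\lambda^{(d_i)}(\aA)\right] \xrightarrow{\ \bSym\,\Tr_\bullet\ } \DRep_\g(\aA)^G \xrightarrow{\ \Phi_\g\ } \bSym(\h^* \oplus \h^* \oplus \h^*[1])^W,
\]
show that on the left-hand side (for $\aA$ abelian of dimension $2$) the Hodge decomposition~\eqref{lhodge} plus Proposition~\ref{phodgedual} identifies $\L\lambda^{(d_i)}(\aA)$ with an explicit graded piece of cyclic homology of $\Sym(\aA)$, and verify that the composite is the obvious isomorphism (compatible with the Chevalley map of Example~\ref{1dieliedt}). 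Combined with Theorem~\ref{surjhc}-type surjectivity arguments, this would force $\Phi_\g$ to be split surjective on homology; then injectivity is equivalent to the matching of Poincaré series, which is precisely the constant term identity~\eqref{c4} (Conjecture~\ref{conj4}).

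The main obstacle will be this final injectivity/Euler characteristic step for general reductive $\g$. Matching dimensions degree by degree requires proving the $(q,t)$-constant term identity~\eqref{c4}, which is an analogue of Macdonald's identity~\eqref{iqtid}. For the classical series $\gl_n, \sl_n, \mfo_n, \mfsp_{2n}$ one can attempt an inductive/stabilization argument extending Theorem~\ref{surjhc} to produce the missing injectivity, but for exceptional $\g$ no such reduction is available and one must argue intrinsically. The cleanest route is likely to mimic the strategy of~\cite{FGT} for the strong Macdonald conjecture: transport their geometric computation through the Koszul-dual dictionary of Theorem~\ref{main}/Theorem~\ref{drepg} to the representation-homology side. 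Making this translation precise, and in particular recognizing the relevant $W$-invariant exterior generators on the Lie-cohomology side as coming from the Drinfeld traces attached to the fundamental degrees $d_1,\dots,d_l$, is the technical crux on which a complete proof would hinge.
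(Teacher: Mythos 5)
The statement you are attacking is a \emph{conjecture}: the paper offers no proof, only partial results ($\gl_2$ and $\sl_2$ via an explicit inverse, $q$-polynomial analogues via a vanishing theorem, stability as $n\to\infty$, and the Euler characteristic identity \eqref{c4} for types $A$). Your Steps 1 and 2 are correct and coincide with Proposition~\ref{conj5} and Theorem~\ref{drepg}. The gaps are in Step 3, in three places. First, the composite $\Phi_\g \circ \bSym_k[\Tr_\bullet]$ cannot be ``the obvious isomorphism'' for finite rank $\g$: its source $\bSym_k[\oplus_{i=1}^l \L\lambda^{(d_i)}(\aA)]$ is a free graded commutative algebra, while the target $(k[\h\times\h]\otimes\wedge\h^*)^W$ is an invariant ring with relations (for $\gl_n$, the truncated algebra $\bSym^{\leq n}$ of Proposition~\ref{sympowsum}); so the composite has a nonzero kernel and is at best surjective. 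Second, surjectivity of the composite also fails in general: the paper shows in Section~\ref{osl} that for $\g=\mfo_{2n}$ the Drinfeld traces miss classes such as the image of $x_1\cdots x_n$, which is instead hit by genuine Pfaffian cocycles not in the image of the trace map; your route to surjectivity of $\Phi_\g$ via traces therefore does not extend beyond type $A$. Third, even granting surjectivity, reducing injectivity to the constant term identity \eqref{c4} is logically unsound: that identity matches only weighted \emph{Euler characteristics}, not Poincar\'e series, and a degreewise surjective graded map with equal Euler characteristics need not be an isomorphism because excess homology can cancel in pairs of opposite homological parity.

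There is also a conceptual mismatch: the Drinfeld trace map is a quasi-isomorphism precisely in the \emph{mixed}-parity version of the derived commuting scheme (Theorem~\ref{macdonqism}, which is the Fishel--Grojnowski--Teleman strong Macdonald theorem). In the even-parity case addressed by Conjecture~\ref{conj3} the roles swap: the expectation is that $\Phi_\g$ is the quasi-isomorphism while the Drinfeld trace is a proper inclusion onto a subalgebra. So using the Drinfeld trace as a candidate inverse for $\Phi_\g$ attacks the wrong parity sector. The cases the paper does resolve use different methods: an explicitly constructed inverse $\Psi_\bullet$ for $\gl_2$ (Theorem~\ref{conj1n2}), and the vanishing of higher representation homology for $q$-polynomial algebras (Lemma~\ref{bfrvan}, Theorem~\ref{q-conj1}), a vanishing which has no analogue for $\Sym(V)$. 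Your proposal assembles the right ingredients but the synthesis does not close.
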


\begin{remark} The scheme $\Rep_{\g}(\mathfrak{a})$ in Conjecture~\ref{conj3} is precisely the classical commuting scheme of the Lie algebra $\g$. It is known that the underlying variety of $\Rep_{\g}(\mathfrak{a})$ is irreducible for any semisimple complex Lie algebra $\g$ (see~\cite{R}).
However, the question of whether $\Rep_{\g}(\mathfrak{a})$ (or even
$ \Rep_{\g}(\mathfrak{a})^G $) is a reduced scheme remains open in general.
\end{remark}

\vspace{1ex}

Conjecture~\ref{conj3} can be restated in elementary terms, without using the language of derived representation schemes. To this end,  define the DG algebra
$ \left(k[\g \times \g] \otimes \wedge \g^{\ast}, d \right)$, with $\g^{\ast}$ being
in homological degree $1$ and with differential $d\,:\,\g^{\ast}\rar k[\g \times \g]$ given by the formula
$$
d\varphi(\xi,\eta)\,:=\, \varphi([\xi,\,\eta])\ ,\quad
\forall\,\,\xi,\eta\,\in\,\g \, ,\ \varphi \in \g^*\ .
$$
The adjoint action of $G$ on $\g$ induces the diagonal $G$-action on $\left(k[\g \times \g] \otimes \wedge \g^{\ast}, d \right)$, and this last action commutes with differential.
We may therefore consider the invariant DG algebra
$\left(k[\g \times \g] \otimes \wedge \g^{\ast}, d \right)^G$. Since the functions on $\g$ restrict to $\h$, there is a natural DG algebra homomorphism
$$\Phi_{\g}\,:\, \left(k[\g \times \g] \otimes \wedge \g^{\ast}, d \right) \rar k[\h \times \h] \otimes \wedge \h^{\ast} \,,$$
where the right-hand side has zero differential. It is easy to see that the image of $\left(k[\g \times \g] \otimes \wedge \g^{\ast}, d \right)^G$ under $\Phi_{\g}$ lies in $\left(k[\h \times \h] \otimes \wedge \h^{\ast}\right)^W$, and we have the following proposition.
\bprop \la{conj5}
Conjecture~\ref{conj3} is equivalent to the following statement: the DG algebra map
\begin{equation} \la{hcwithoutdrep} \Phi_{\g}\,:\, \left(k[\g \times \g] \otimes \wedge \g^{\ast}, d \right)^G \rar \left(k[\h \times \h] \otimes \wedge \h^{\ast} \right)^W \end{equation}
is a quasi-isomorphism.
\eprop

\bproof
By Theorem~\ref{drepg}, there is an isomorphism
$$\DRep_{\g}(\mathfrak{a})^G\,\cong\,\CE^c(\g^{\ast}(C),\g^{\ast};k) $$
where $C\,=\,\bSym^c(\mathfrak{a}[1])$. Choose any basis $\{x,y\}$ of $\mathfrak{a}$ over $k$. Then, as graded algebras,
$$\CE^c(\g^{\ast}(C),\g^{\ast};k)\,\cong\,\bSym(\g^{\ast}.x \oplus \g^{\ast}.y \oplus \g^{\ast}.\theta) $$
where $\g^{\ast}.x\,:=\,\g^{\ast} \otimes x$, etc., and $x,y$ have homological degree $0$ and $\theta\,:=\,s^{-1}(sx \wedge sy)$ has homological degree $1$. Hence,
$$ \CE^c(\g^{\ast}(C),\g^{\ast};k) \,\cong\, k[\g \times \g] \otimes \wedge \g^{\ast} \,\text{.}$$
In particular, $\bSym(\g^{\ast}.x \oplus \g^{\ast}.y)$ can be identified with $k[\g \times \g]$. A direct computation using~\eqref{diff2} then shows that for any $ \varphi \,\in\,\g^{\ast}$, the differential of the generator $\frac{1}{2}\,\varphi.\theta$ in $\CE^c(\g^{\ast}(C),\g^{\ast};k)$ is equal to the function $ d\varphi \,\in\,k[\g \times \g]$ satisfying $d\varphi (\xi, \eta)\,=\,\varphi([\xi, \eta])$. This identifies $\CE^c(\g^{\ast}(C),\g^{\ast};k) $ with $\left(k[\g \times \g] \otimes \wedge \g^{\ast}, d \right)^G$. The identification of $\CE^c(\h^{\ast}(C),\h^{\ast};k)$ with $k[\h \times\h] \otimes \wedge \h^{\ast}$ is obvious. Since the derived Harish-Chandra homomorphism $\Phi_{\g}\,:\,\CE^c(\g^{\ast}(C),\g^{\ast};k) \rar \CE^c(\h^{\ast}(C),\h^{\ast};k)^W$ is indeed restriction of cocycles from $\g$ to $\h$, it coincides with the map $\Phi_{\g}\,:\, \left(k[\g \times \g] \otimes \wedge \g^{\ast}, d \right)^G \rar \left(k[\h \times \h] \otimes \wedge \h^{\ast} \right)^W$ defined above. This proves the desired proposition.
\eproof

\subsection{A constant term identity}
\la{constid}
We now assume that $ k = \c $ and compare the Euler characteristics of DG algebras in Conjecture~\ref{conj3}.
Write $ \mathfrak{a} = \c x \oplus \c y $ with $\,[x,y] = 0\,$, denote
$ B_{\g} := \DRep_{\g}(\mathfrak{a})$. Recall that as graded algebras,
\begin{equation} \la{bgasgralg}
B_{\g} \,\cong\, \bSym_{\c}(\g^*.x \oplus \g^*.y \oplus \g^*.\theta)\,\cong\,
\bSym_{\c}(\g.x \oplus \g.y \oplus \g.\theta)\ ,
\end{equation}
where $x,y$ have homological degree $0$ and $\theta$ has homological degree $1$. The DG algebra $B_{\g}$ is equipped with an additional $\Z^2$-weight grading, with the subspaces $\g.x$ and $\g.y$ having weights $(1,0)$ and $(0,1)$ and $\g.\theta$ having weight $(1,1)$.

Now, for $ h \in \h $, define $ \chi(B_{\g}, q,t,e^h)$ to be the character-valued Euler
characteristic
$$
\sum_{a,b \ge 0}\,\sum_{i \in \Z}\, (-1)^i\,\Tr(e^h\,|_{(B_{\g})_{i,a,b}})q^at^b\,,
$$
where $V_{a,b}$ denotes the component of a $\Z^2$-graded vector space $V$ of weight $(a,b)$.
Then
\blemma
\la{eulie1}
$$
\chi(B_{\g},q,t,e^h) \,=\, \frac{(1-qt)^{l}}{(1-q)^{l}(1-t)^{l}} \,\prod_{\alpha \in R} \frac{(1-qte^{\alpha(h)})}{(1-qe^{\alpha(h)})(1-te^{\alpha(h)})}\ ,
$$
where $\, l = \dim_{\c} \h \,$ is the rank of $\g $ and $R$ is the
root system associated to $\g$.
\elemma
\begin{proof}
Since $\, \g = \h \oplus (\oplus_{\alpha \in R} \,\g_{\alpha})\,$, it follows from \eqref{bgasgralg} that
$$
B_{\g} \,\cong\, B_{\h} \otimes (\otimes_{\alpha \,\in\,R} B_{\g_{\alpha}}) \ ,
$$
where $B_{\h}\,:=\, \bSym(\h.x \oplus \h.y \oplus \h.\theta) $ and $
B_{\g_{\alpha}}\,:=\,\bSym(\g_{\alpha} .x \oplus \g_{\alpha}.y \oplus \g_{\alpha}.\theta) $.  Since $e^h $ acts as the identity on $\h$ and by multiplication by $e^{\alpha(h)}$ on the root space $ \g_{\alpha} $, we have
\begin{equation} \chi(B_{\h}, q,t,e^h)\,=\,  \frac{(1-qt)^{l}}{(1-q)^{l}(1-t)^{l}} \,,\,\,\quad\,\, \chi(B_{\g_{\alpha}}, q,t,e^h)\,=\, \frac{(1-qte^{\alpha(h)})}{(1-qe^{\alpha(h)})(1-te^{\alpha(h)})}\,\text{.} \end{equation}
The desired lemma now follows from the multiplicativity of the Euler characteristic.
\end{proof}
Let $ Q = Q(R) $ be the root lattice of $\g$, and let $ \Z[Q] $ denote the group ring of $Q$.
For each $ \alpha \in R \subset Q(R)\,$, write $ e^{\alpha} \in \Z[Q] $ for the
corresponding element in $\Z[Q]$, and denote by $ {\rm CT}: \Z[Q] \to \Z $  the
map assigning to a polynomial in $ \Z[Q] $ its constant term which does not involve
any $ e^{\alpha} $. This constant term map naturally extends to the ring $ \Z[Q][[q,t]] $ of
formal power series over $ \Z[Q] \,$: specifically, we define ${\rm CT}\,:\, \Z[Q][[q,t]] \rar \Z[[q,t]]$ by
\begin{equation}
\la{cterm}
{\rm CT}\,\sum_{a,b \ge 0} P_{a,b}(e^\alpha) \,q^a t^b\, :=\,
\sum_{a,b \ge 0} {\rm CT}\,[P_{a,b}(e^\alpha)]\,q^a t^b\ .
\end{equation}
As corollary of Lemma~\ref{eulie1}, we now get
\bcor \la{corlie1} The weighted Euler characteristic of $B_{\g}^G$ is given by
$$
\chi(B_{\g}^G,q,t) \,=\, \frac{1}{|W|}\ \frac{(1-qt)^{l}}{(1-q)^{l}(1-t)^{l}}\ \mathrm{CT}\, \left\{ \prod_{\alpha \in R}  \frac{(1-qte^{\alpha})(1-e^{\alpha})}{(1-qe^{\alpha})(1-te^{\alpha})}
\right\}\ .
$$
\ecor
\begin{proof}
It suffices to verify the above formula for the compact real form of the group $ G $, which
we also denote $ G $.  We have
$$
\chi(B_{\g}^G,q,t) \,=\, \int_{G} \chi(B_{\g},q,t, \,\mathrm{Ad}\,g)\, d g\ ,
$$
where $ d g $ is the Haar measure on $ G $ normalized by $\, \int_{G} d g = 1 \,$.
Now, let $ T $ be the maximal torus of $ G $ corresponding to $ R $. For $ \alpha \in R \,$,
regard $ e^{\alpha} $ as a character of $ T $. Then, by Weyl's integration formula,
$$
\int_{G} \chi(B_{\g},q,t, \,\mathrm{Ad}\,g)\, d g \ = \ \frac{1}{|W|}\,\int_{T} \chi(B_{\g},q,t,  \,{\rm Ad}\,\tau)\,
\prod_{\alpha \in R}(1 - e^{\alpha}(\tau)) \,d \tau
$$
where $ d \tau $ is the normalized Haar measure on $ T $. The result follows immediately
from Lemma~\ref{eulie1} and  the fact that $\,\int_{T} e^{\alpha}(\tau)\,d \tau = 0 \,$ for any nonzero
root $ \alpha $.
\end{proof}

Next, we look at the right-hand side of Conjecture~\ref{conj3}.
We write
$$
A_{\g} := \bSym_\c(\h^{\ast} \oplus \h^{\ast} \oplus \h^{\ast}[1]) \,\cong\,
\bSym_{\c}(\h.x \oplus \h.y \oplus \h.\theta) \ ,
$$
where $ x,y $ are of homological degree
$0$ and $\theta$ of homological degee $1$. Again, the algebra $A_{\g}$ has an additional $\Z^2$-grading,  with $\h.x$ and $\h.y$ having weights $(1,0)$ and $(0,1)$ and $\h.\theta$ having
weight $(1,1)$.
For an element $ w \in W$, let $\, \{\lambda_1,\ldots, \lambda_l\} \,$ be the eigenvalues $ w $
under the natural action of $W$ on $ \h $. Then
$$
\chi(A_{\g},q,t,w)\,=\, \prod_{i=1}^l \frac{(1-qt\lambda_i)}{(1-q\lambda_i)(1-t\lambda_i)}\,=
\,\frac{\det(1 - qt w)}{\det(1-q w)\,\det(1-t w)}\,,
$$
where `$\,\det \,$'  is taken on $ \End\,\h\,$. By the classical Molien formula, we get
$$
\chi(A_{\g}^W,q,t)\,=\,\frac{1}{|W|} \sum_{w \in W}\,\frac{\det(1 - qt w)}{\det(1-q w)\,\det(1-t w)}\ .
$$
Conjecture~\ref{conj3} therefore implies the following constant term identity
generalizing \eqref{conj2}:
\begin{conjecture}
\la{conj4}
The following identity holds:
$$
\frac{(1-qt)^{l}}{(1-q)^{l}(1-t)^{l}}\ \mathrm{CT}\, \left\{ \prod_{\alpha \in R}  \frac{(1-qte^{\alpha})(1-e^{\alpha})}{(1-qe^{\alpha})(1-te^{\alpha})}
\right\}\, = \,
\sum_{w \in W}\,\frac{\det(1 - qt\,w)}{\det(1-q\,w)\,\det(1-t\,w)}\ .
$$
\end{conjecture}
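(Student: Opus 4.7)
The plan is to prove Conjecture~\ref{conj4} by establishing the stronger Conjecture~\ref{conj3} and then passing to Euler characteristics. As observed in the discussion preceding the statement, Corollary~\ref{corlie1} together with the Molien-type computation for $A_\g^W$ show that Conjecture~\ref{conj4} is precisely the equality $|W|\,\chi(B_\g^G,q,t) = |W|\,\chi(A_\g^W,q,t)$; thus a quasi-isomorphism $\Phi_\g\colon B_\g^G \to A_\g^W$ of weight-graded DG algebras would immediately yield the identity. Since both sides are bigraded with finite-dimensional weight components, one does not need anything beyond existence of the quasi-isomorphism at the level of $\Ho(\bcDGA_{k/k})$.

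To prove Conjecture~\ref{conj3} itself, my first attempt would be to extend the $q$-deformation strategy that settled the $\gl_n$ case in Theorems~\ref{q-conj1} and~\ref{Tconj2}. That argument replaces $k[x,y]$ by the $q$-polynomial algebra $A_q$, which for generic $q$ satisfies the crucial vanishing $\H_i(A_q,n)=0$ for $i>0$ (Lemma~\ref{bfrvan}); combined with the trace factorization and Tsygan--Loday--Quillen theorem this forces $\Phi_n(A_q)$ to be a quasi-isomorphism. Since both sides of the target identity are weight-homogeneous polynomials in $q,t$ and $\Phi_n$ is weight-preserving, the conclusion at generic $q$ yields the conclusion at $q=1$. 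For a general reductive $\g$ one must therefore produce either a Koszul-dual cocommutative deformation $C_q$ of $\bSym^c(\mathfrak{a}[1])$, or equivalently (via Proposition~\ref{lierepcobar} and Theorem~\ref{drepg}) a two-generator DG Lie algebra $\mathfrak{a}_q$ whose derived representation homology $\DRep_\g(\mathfrak{a}_q)$ is concentrated in homological degree zero for generic $q$ and specializes to the commuting scheme at $q=1$. Natural candidates are deformations of the form $[x,y]=qx$ or $q$-Heisenberg-type constructions, and the first task is to check whether any such $\mathfrak{a}_q$ gives rise to the requisite vanishing.

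A complementary strategy is to exploit the Drinfeld trace map together with the Lie--Hodge decomposition of Theorem~\ref{hodgeuel}. Using Theorem~\ref{drepg} one identifies $\DRep_\g(\mathfrak{a})^G$ with $\CE^c(\g^\ast(C),\g^\ast;k)$ for $C=\bSym^c(\mathfrak{a}[1])$, and the Drinfeld trace map~\eqref{hlietraces} maps a Koszul-like algebra built from $\bigoplus_i\rHC^{(d_i-1)}_\bullet(C)[-1]$ into $\DRep_\g(\mathfrak{a})^G$. One could then attempt to imitate the spectral-sequence and flat-connection techniques of \cite{FGT}, which resolved the strong Macdonald conjecture and which underlie the proof of Theorem~\ref{macdonqism} in the mixed setting, but now in the \emph{even} setting relevant to Conjecture~\ref{conj4}. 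Showing that the Drinfeld trace is a quasi-isomorphism in this case is essentially equivalent to Conjecture~\ref{conj3} thanks to Example~\ref{1dieliedt} and an inductive use of the two commuting one-dimensional subalgebras of $\mathfrak{a}$.

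The main obstacle in either approach is the absence of a direct homological vanishing input for arbitrary $\g$: the proof of Lemma~\ref{bfrvan} in \cite{BFR} uses an explicit small resolution of the $q$-polynomial algebra and has no evident Lie-algebraic analogue, while the even-case analogue of the Fishel--Grojnowski--Teleman argument requires controlling a different piece of the cohomology than the one handled in \cite{FGT}. Finding the correct finite-parameter deformation of the commuting scheme associated to an arbitrary reductive $\g$, and proving vanishing of its higher representation homology for generic parameters, appears to be the crux; once it is in place, the remaining steps (weight-degree bookkeeping, passing to Euler characteristics, and the Weyl-integration-formula manipulations already carried out in Section~\ref{constid}) are essentially formal.
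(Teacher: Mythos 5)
Conjecture~\ref{conj4} is exactly that --- a conjecture --- in the paper; there is no ``paper's own proof'' of it for a general reductive $\g$. The text derives it as a consequence of Conjecture~\ref{conj3} by equating weighted Euler characteristics, and then verifies it case by case for $\gl_n$ (Theorem~\ref{Tconj2}) and $\sl_n$ (the corollary to Theorem~\ref{glnvssln}). Your opening paragraph reproduces the paper's derivation faithfully: Conjecture~\ref{conj3} asserts that $\Phi_\g$ is a quasi-isomorphism of weight-graded DG algebras, hence $\chi(B_\g^G,q,t)=\chi(A_\g^W,q,t)$; Corollary~\ref{corlie1} evaluates the left side via the Weyl integration formula, the Molien formula evaluates the right side, and multiplying through by $|W|$ gives precisely the identity of Conjecture~\ref{conj4}. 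This is the same reasoning the paper carries out in Section~\ref{constid}, so that part of your proposal is correct and matches the source.

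Where you go beyond the paper is in attempting to prove Conjecture~\ref{conj3} for arbitrary $\g$, and here there is a genuine gap which you yourself flag. Your first strategy correctly identifies the mechanism behind the $\gl_n$ case: for the $q$-deformed polynomial algebra $A_q$ the higher representation homology vanishes generically (Lemma~\ref{bfrvan}), and since the weight-graded Euler characteristic of the complex is insensitive to the deformation parameter, Theorem~\ref{q-conj1} pins down the Euler characteristic at the degenerate value as well. But no analogue of this deformation with the requisite generic vanishing is currently known for the derived commuting scheme of a general reductive $\g$, so this route does not close. Likewise, the spectral-sequence and Drinfeld-trace machinery of \cite{FGT} computes the homology in the mixed-parity grading, which by Proposition~\ref{parity} lives in a genuinely different parity class from the even case that underlies Conjecture~\ref{conj4}; it does not directly yield the needed vanishing/freeness statement here. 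Thus your proposal is not a proof of Conjecture~\ref{conj4}, but it is an accurate account of the derivation from Conjecture~\ref{conj3} and of the obstructions to going further, consistent with the state of knowledge in the paper.
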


\vspace{1ex}

Note that Conjecture~\ref{conj4} can be equivalently rewritten as
\begin{equation}
\la{eqkt}
\int_{G}\, \frac{\det(1\,-\,qt\,\mathrm{Ad}\,g)}
{\det(1\,-\,q\,\mathrm{Ad}\,g)\,\det(1\,-\,t\,\mathrm{Ad}\,g)}\,
d g\ = \ \frac{1}{|W|}\, \sum_{w \in W}\,\frac{\det(1 - qt w)}{\det(1-q w)\,\det(1-t w)}\ .
\end{equation}
For $ t = 0 $, this simplifies to the well-known identity
\begin{equation}\la{eqk0}
\int_{G}\, \frac{d g}{\det(1\,-\,q\,\mathrm{Ad}\,g)} \ = \
\frac{1}{|W|}\, \sum_{w \in W}\,\frac{1}{\det(1 - q w)}\ = \ \prod_{i=1}^l \frac{1}{1-q^{d_i}}\ ,
\end{equation}
which is obtained by equating the Poincar\'{e} series of both sides of the Chevalley isomorphism
$$
\Sym(\g^{\ast})^G \cong \Sym(\h^{\ast})^W .
$$
In \eqref{eqk0}, the numbers $ d_i $ are the fundamental degrees of $ W $, i.e. the degrees of algebraically
independent elements generating $ \Sym(\h^{\ast})^W $. If we expand both sides
of \eqref{eqkt} as power series in $ t $ and compare
the corresponding Taylor coefficients under $ t^k $, we get a sequence of identities
extending \eqref{eqk0} for $ k \ge 1 $. Thus, Conjecture~\ref{conj4} answers a question of I.~Macdonald posed in his original
paper \cite{M} (see {\it op. cit.}, Remark~2, p.~997).

We now provide some evidence for Conjecture~\ref{conj3} and Conjecture~\ref{conj4}.
We recall that we have already proved Conjecture~\ref{conj3} for $ \mathfrak{gl}_2 $
(Theorem~\ref{conj1n2}) and Conjecture~\ref{conj4} for $ \gl_n $ for an arbitrary $ n $
(Theorem~\ref{Tconj2}).

\subsection{Lower order terms}
First, we show that the Taylor expansion in $q,t$ of both sides of Conjecture~\ref{conj4} agree up to degree $2$. In fact, in their normalized form
\eqref{eqkt} the quadratic terms are independent of the root system.
\bprop
\la{2terms}
The first terms of both sides of \eqref{eqkt} (viewed as power series in $q,t$) are equal to
\[
1+q^2+qt+t^2 +\,\ldots\ ,
\]
where the dots stand for terms of degree at least 3 in $q,t$.
\eprop
Proposition~\ref{2terms} follows from Lemma~\ref{l2terms} below that evaluates both sides of~\eqref{eqkt} as integrals of the same ratio of
determinants over a compact Lie group for a real irreducible
nontrivial representation  (we formally think of $W$ as a compact Lie group of dimension $0$) .
\begin{lemma} \la{l2terms}
Let $\rho\colon G\to \mathrm{GL}(V)$ be a nontrivial irreducible representation
of a compact Lie group $G$ over $\mathbb R$ with invariant measure $dg$ and volume $|G|$.
Then
\[
\frac1{|G|}\int_G\frac{\det(1-qt \rho(g))}{\det(1-q\rho(g))\det(1-t\rho(g))}dg=1+q^2+qt+t^2+\ldots
\]
\end{lemma}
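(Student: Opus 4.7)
The plan is to expand the integrand as a formal power series in $q$ and $t$, truncate at total degree $2$, and integrate term-by-term using character orthogonality. Applying the standard identities
\[
\det(1-q\rho(g))^{-1} = \sum_{n\ge 0} q^n \chi_{\Sym^n V}(g)\,, \qquad \det(1-qt\,\rho(g)) = \sum_{k\ge 0} (-qt)^k \chi_{\wedge^k V}(g)\,,
\]
and multiplying the three series, I would check that modulo terms of total degree $\ge 3$ in $(q,t)$,
\[
\frac{\det(1-qt\,\rho(g))}{\det(1-q\rho(g))\det(1-t\rho(g))} \equiv 1 + (q+t)\chi_V(g) + (q^2+t^2)\chi_{\Sym^2 V}(g) + qt\bigl(\chi_V(g)^2-\chi_V(g)\bigr)\,.
\]

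The next step is to integrate term-by-term and apply the Peter--Weyl orthogonality identity $\tfrac{1}{|G|}\int_G \chi_W\, dg = \dim W^G$, valid for any finite-dimensional continuous representation $W$ of $G$. Since $V$ is nontrivial irreducible, $V^G = 0$ and the linear coefficients vanish. For the quadratic terms it remains to verify
\[
\dim_{\mathbb{R}}(\Sym^2 V)^G = 1 \qquad\text{and}\qquad \dim_{\mathbb{R}}(V\otimes V)^G = 1\,.
\]

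To establish these dimensions, I would exploit the real structure of $V$: a $G$-invariant inner product on $V$ (existing by compactness of $G$) gives a $G$-equivariant isomorphism $V \cong V^{*}$, so $(V \otimes V)^{G} \cong \End_G(V)$, which by Schur's lemma equals $\mathbb{R}$ when $V$ is absolutely irreducible, hence is one-dimensional. Since the invariant inner product is symmetric, this one-dimensional invariant lies in $\Sym^2 V$, forcing $\dim(\Sym^2 V)^G = 1$ and $\dim(\wedge^2 V)^G = 0$. Substituting these values yields coefficient $1$ for each of $q^2$, $t^2$ and $qt$, as desired.

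The only subtlety, and the mild obstacle worth flagging, is the implicit hypothesis that $V$ is of \emph{real type}, i.e.\ absolutely irreducible over $\mathbb{R}$: for representations of complex or quaternionic type, $\End_G(V)$ equals $\mathbb{C}$ or $\mathbb{H}$, and the $qt$-coefficient would be $2$ or $4$ instead of $1$. Fortunately, in both applications needed for Proposition~\ref{2terms}---the adjoint representation of a compact semisimple Lie group on its Lie algebra, and the reflection representation of a Weyl group on its Cartan---this absolute irreducibility holds, so the lemma is applicable as stated.
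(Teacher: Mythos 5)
Your proof is correct and follows essentially the same strategy as the paper: expand the integrand to quadratic order in $(q,t)$ and integrate term by term using character orthogonality. The paper organizes the quadratic coefficient via power sums $\mathrm{tr}\,\rho(g)^2$ and $(\mathrm{tr}\,\rho(g))^2$ and invokes the Frobenius--Schur indicator, whereas you use the $\Sym^2$/$\wedge^2$ decomposition and Schur's lemma; these are equivalent packagings of the same invariant-theoretic facts. Your explicit flag that the lemma implicitly requires $V$ to be of real type (absolutely irreducible over $\mathbb{R}$) is a genuine improvement in precision: the paper's appeals to ``the Frobenius--Schur theorem holds'' and ``its character has norm $1$'' are only valid under that hypothesis (for complex or quaternionic type one gets indicator $0$ or $-2$ and norm-squared $2$ or $4$), and you correctly verify absolute irreducibility in the two applications needed for Proposition~\ref{2terms}.
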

\begin{proof}
For any endomorphism $a$ of a finite-dimensional vector space,
$$
\frac{\det(1-qt a)}{\det(1-qa)\det(1-ta)}
=1+(q+t-qt)\mathrm{tr}\,a+\frac12(q+t)^2(\mathrm{tr}\,a)^2+ \frac12(q^2+t^2)\mathrm{tr}(a^2)+\ldots
$$
If $\rho$ is irreducible and nontrivial then
its character has norm 1 and is orthogonal
to the trivial character. Thus
\[
\frac1{|G|}\int_G\mathrm{tr}\,\rho(g)\,dg=0,\quad \frac1{|G|}\int_{G}|\mathrm{tr}\,\rho(g)|^2dg=1.
\]
Now real representations have real-valued characters and
the Frobenius-Schur theorem holds:
\[
\frac1{|G|}\int_G\mathrm{tr}\,\rho(g^2)\,dg=1.
\]
The average over the group is then
\[
1+\frac12(q+t)^2
+
\frac12(q^2+t^2)+\ldots=1+q^2+qt+t^2+\ldots\ .
\]
This finishes the proof of the lemma and Proposition~\ref{2terms}.
\end{proof}

\subsection{The case of $\mathfrak{sl}_n$} Our next result is
\begin{theorem}
\la{glnvssln}
For any $ n \ge 2 $, Conjecture~\ref{conj3}\, for $\sll_n$ is equivalent to
Conjecture~\ref{conj3}\, for $\mathfrak{gl}_n$.
\end{theorem}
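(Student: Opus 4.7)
The key structural observation is that $\gl_n = \sll_n \oplus Z$ where $Z = \c\!\cdot\! I_n$ is the one-dimensional center, and that $[\gl_n, \gl_n] \subseteq \sll_n$ with $Z$ central. Since $\mathfrak{a}$ is abelian, a representation $\mathfrak{a}\to\gl_n$ is nothing other than a pair of commuting elements of $\gl_n$; writing each such element as its $\sll_n$-component plus its $Z$-component, one sees that commutation reduces to commutation of the $\sll_n$-components. The plan is to upgrade this classical decomposition to the derived level, show that it is compatible with taking $G$-invariants and with the Harish-Chandra homomorphism, and conclude.

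First, I would verify the ``splitting'' at the level of (classical) representation schemes: there is a natural isomorphism of commutative algebras
\[
\mathfrak{a}_{\gl_n}\,\cong\,\mathfrak{a}_{\sll_n}\otimes_\c \mathfrak{a}_{Z},
\]
obtained from the adjunction \eqref{lieadj} together with the factorization $\gl_n(B) = \sll_n(B)\oplus Z(B)$ as Lie algebras (for any commutative algebra $B$). To derive this, I would pick a cofibrant resolution $\cb_{\mathtt{Comm}}(C)\stackrel{\sim}{\to}\mathfrak{a}$ with $C \in \cDGC_{k/k}$ (for instance $C=\bSym^c(\mathfrak{a}[1])$), apply Theorem~\ref{drepg}, and use the decomposition of Lie coalgebras $\gl_n^*(\bar C) = \sll_n^*(\bar C)\oplus Z^*(\bar C)$ (with $Z^*(\bar C)$ abelian and central as a sub-Lie-coalgebra) to obtain, in $\Ho(\cDGA_{k/k})$,
\[
\DRep_{\gl_n}(\mathfrak{a})\,\cong\,\CE^c(\gl_n^*(\bar C);\c)\,\cong\,\CE^c(\sll_n^*(\bar C);\c)\otimes\CE^c(Z^*(\bar C);\c)\,\cong\,\DRep_{\sll_n}(\mathfrak{a})\otimes\DRep_Z(\mathfrak{a}).
\]
The central point here is that because $Z^*(\bar C)$ has trivial Lie cobracket and pairs trivially with $\sll_n^*(\bar C)$ under the bracket of $\gl_n^*(\bar C)$, the Chevalley-Eilenberg differential on $\CE^c(\gl_n^*(\bar C);\c)$ respects the tensor factorization.

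Next, I would take $G$-invariants. The adjoint action of $\GL_n$ is trivial on $Z$ and agrees on $\sll_n$ with the natural $\GL_n$-action through its quotient $\PGL_n$; moreover the groups $\GL_n$ and $\SL_n$ have the same adjoint action on $\sll_n$, yielding $\DRep_{\sll_n}(\mathfrak{a})^{\SL_n}\cong\DRep_{\sll_n}(\mathfrak{a})^{\GL_n}$ and hence
\[
\DRep_{\gl_n}(\mathfrak{a})^{\GL_n}\,\cong\,\DRep_{\sll_n}(\mathfrak{a})^{\SL_n}\otimes\DRep_Z(\mathfrak{a}).
\]
On the Cartan side, $\h_{\gl_n}=\h_{\sll_n}\oplus Z$ with Weyl group $S_n$ acting by its reflection representation on $\h_{\sll_n}$ and trivially on $Z$, and the Weyl group of $\sll_n$ is again $S_n$. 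Using Proposition~\ref{conj5}, or directly the Koszul description \eqref{ehc2}, the target of the Harish-Chandra map factors as
\[
\bSym\bigl(\h_{\gl_n}^*\oplus\h_{\gl_n}^*\oplus\h_{\gl_n}^*[1]\bigr)^{S_n}\,\cong\,\bSym\bigl(\h_{\sll_n}^*\oplus\h_{\sll_n}^*\oplus\h_{\sll_n}^*[1]\bigr)^{S_n}\otimes\bSym\bigl(Z^*\oplus Z^*\oplus Z^*[1]\bigr).
\]
By naturality of the construction, the derived Harish-Chandra homomorphism $\Phi_{\gl_n}(\mathfrak{a})$ is, under the above identifications, the tensor product $\Phi_{\sll_n}(\mathfrak{a})\otimes\Phi_Z(\mathfrak{a})$.

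Finally, since $Z$ is a one-dimensional abelian Lie algebra, the map $\Phi_Z(\mathfrak{a})$ is visibly an isomorphism (both source and target are the same polynomial DG algebra on three generators). Thus $\Phi_{\gl_n}(\mathfrak{a})$ is a quasi-isomorphism if and only if $\Phi_{\sll_n}(\mathfrak{a})$ is, which proves the equivalence of the two conjectures. The only real subtlety I anticipate is ensuring that the tensor decomposition $\gl_n^*(\bar C)\cong\sll_n^*(\bar C)\oplus Z^*(\bar C)$ is indeed compatible with both the $\GL_n$-action and the restriction to the Cartan at the derived (DG) level; but since the decomposition is already one of Lie coalgebras in each homological and weight degree, and is preserved by every functor involved, this is straightforward once properly set up.
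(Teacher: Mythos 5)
Your proposal is correct and follows essentially the same route as the paper: split $\gl_n=\sll_n\oplus Z$ via the trace, transport this to a tensor factorization $\C^c(\gl_n^*(C),\gl_n^*(k);k)\cong\C^c(\sll_n^*(C),\sll_n^*(k);k)\otimes\C^c(\bar C;k)$ (and likewise on the Cartan side), observe by naturality that $\Phi_{\gl_n}$ factors as $\Phi_{\sll_n}\otimes\Phi_Z$ with $\Phi_Z$ an isomorphism, and conclude. The only place where the paper is slightly more careful is the converse direction (``$\Phi_{\gl_n}$ qis $\Rightarrow$ $\Phi_{\sll_n}$ qis''): it fixes the minimal homological degree where $\H_\bullet(\Phi_{\sll_n})$ could fail and reads this off the Künneth decomposition $\H_k(\Phi_{\gl_n})\cong\H_k(\Phi_{\sll_n})\otimes\mathrm{Id}_{k[x,y]}\oplus\H_{k-1}(\Phi_{\sll_n})\otimes\mathrm{Id}_{k[x,y]\cdot\theta}$; your ``if and only if'' is asserted without spelling this out, but it is exactly this routine Künneth argument over a field, so there is no real gap.
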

\begin{proof}
Let $\mathfrak{t}_n$ denote the Cartan subalgebra of $\mathfrak{sl}_n$ comprising diagonal matrices with trace $0$. The short exact sequence of Lie algebras
$$
0 \to \mathfrak{sl}_n \to  \gl_n \xrightarrow{\mathrm{Tr}}  k \to 0
$$
has a canonical splitting, giving an isomorphism of Lie algebras:
\begin{equation}
\la{gltosl}
\gl_n \stackrel{\sim}{\rar} \mathfrak{sl}_n \oplus k\,,\,\,\,\, M \mapsto (M -\frac{1}{n}\,\mathrm{Tr}(M)\, \mathrm{Id}_n, \mathrm{Tr}(M)) \,\text{.}
\end{equation}
This restricts to an isomorphism of diagonal Cartan subalgebras $\h_n \,\cong\, \mathfrak{t}_n
\oplus k $, which is equivariant under the action of the Weyl group $S_n$ (with $S_n$ acting trivially on $k$).

Let $C:=\bSym^c(\mathfrak{a}[1])$ where $\mathfrak{a}:=k.x \oplus k.y$ is the two dimensional abelian Lie algebra. The isomorphism~\eqref{gltosl}  induces an isomorphism $\gl_n^*(C) \,\cong\, \sll_n^*(C) \oplus C\,$. Hence,
\begin{equation} \la{cecomp} \C^c(\gl_n^*(C), \gl_n^*(k);k) \,\cong\, \C^c(\mathfrak{sl}_n^*(C), \mathfrak{sl}_n^*(k);k) \otimes \C^c(\bar{C};k) \,\text{.} \end{equation}
Similarly, the $S_n$ equivariant isomorphism $\h_n \cong \mathfrak{t}_n \oplus k$ yields
\begin{equation} \la{cecart}
\C^c(\h_n^*(C), \h_n^*(k);k)^{S_n} \cong \C^c(\mathfrak{t}_n^*(C), \mathfrak{t}_n^*(k);k)^{S_n} \otimes \C^c(\bar{C};k)\,\text{.}
\end{equation}
Let $\Phi_{\gl_n}$ (resp., $\Phi_{\mathfrak{sl}_n}$) denote the derived Harish-Chandra homomorphisms for $\gl_n$ (resp., $\mathfrak{sl_n}$). Since the isomorphism $\h_n \,\cong\, \mathfrak{t}_n \oplus k $ is the restriction of the isomorphism  $\gl_n \cong \mathfrak{sl}_n \oplus k$ to $\h_n$, one has the following commutative diagram:
$$
\begin{diagram}
  \C^c(\gl_n^*(C), \gl_n^*(k);k)  & \rTo^{\cong} &  \C^c(\mathfrak{sl}_n^*(C),\mathfrak{sl}_n^*(k);k) \otimes \C^c(\bar{C};k)\\
 \dTo^{\Phi_{\gl_n}} & & \dTo^{\Phi_{\mathfrak{sl_n}} \otimes \mathrm{Id}} \\
 \C^c(\h_n^*(C), \h_n^*(k);k)^{S_n} & \rTo^{\cong} & \C^c(\mathfrak{t}_n^*(C), \mathfrak{t}_n^*(k);k)^{S_n} \otimes \C^c(\bar{C};k)
\end{diagram}$$
Hence, $\Phi_{\gl_n}$ is a quasi isomorphism if $\Phi_{\mathfrak{sl_n}}$ is. Conversely, if $\Phi_{\mathfrak{sl_n}}$ is not a quasi isomorphism, choose $k$ to be the minimum homological degree such that $\H_k(\Phi_{\mathfrak{sl_n}})$ is not an isomorphism. Since $\C^c(\bar{C};k) \,\cong\,k[x,y,\theta]$,
$$\H_k(\Phi_{\gl_n}) \,\cong\, \H_k(\Phi_{\mathfrak{sl_n}}) \otimes \mathrm{Id}_{k[x,y]} \oplus \H_{k-1}(\Phi_{\mathfrak{sl_n}}) \otimes \mathrm{Id}_{k[x,y].\theta} \,,$$
and we see that $\H_k(\Phi_{\gl_n})$ is not an isomorphism either. This proves the desired theorem.
\end{proof}

As a consequence of (the proof of) Theorem~\ref{glnvssln}, we have

\bcor
Conjecture~\ref{conj4} holds for $\, \mathfrak{sl}_n \,$ for all $ n \ge 2 $.
\ecor

\begin{proof}
We need to show the equality of Euler characteristics:
$$
\chi(\DRep_{\mathfrak{sl}_n}(\mathfrak{a})^{\SL_n}, q,t) = \chi(\C^c(\mathfrak{t}_n^*(C), \mathfrak{t}_n^*(k);k)^{S_n}, q,t)\,\text{.}
$$
In view of isomorphisms~\eqref{cecomp} and ~\eqref{cecart}, we have
$$\chi(\DRep_{\mathfrak{sl}_n}(\mathfrak{a})^{\SL_n}, q,t).\chi(k[x,y,\theta], q,t) =
\chi(\C^c(\gl_n^*(C), \gl_n^*(k);k) , q, t) \,\text{.}$$
and
$$ \chi(\C^c(\mathfrak{t}_n^*(C), \mathfrak{t}_n^*(k);k)^{S_n}, q,t).\chi(k[x,y,\theta],q, t) =
\chi(\C^c(\h_n^*(C), \h_n^*(k);k)^{S_n}, q,t) \,\text{.}$$
Note that $\chi(k[x,y,\theta],q,t) = \frac{(1-qt)}{(1-q)(1-t)}$. Hence, $\chi(k[x,y,\theta],q,t)$ is an invertible power series in $q,t$. The result follows now from
Theorem~\ref{Tconj2}, which says that the  Euler characteristics of
$ \C^c(\gl_n^*(C), \gl_n^*(k);k) $ and $ \C^c(\h_n^*(C), \h_n^*(k);k)^{S_n} $ are equal.
\end{proof}

\subsection{Orthogonal and symplectic Lie algebras}
\la{osl}
We now verify that Conjecture~\ref{conj3} holds in the limit $ n \to \infty $
for orthogonal and symplectic Lie algebras. We begin with
definitions of these classical Lie algebras.

\subsubsection{The orthogonal Lie algebras $\mfo_{2n+1}$} \la{orthliealg}
Fix a basis $\{e_1,\ldots,e_{2n+1}\}$ of $k^{2n+1}$.
Let $\M_{2n+1}$ denote the matrix of the nondegenerate symmetric bilinear form $Q$ satisfying
\begin{eqnarray*}
 Q(e_{2i-1}, e_{2i}) \,=\, Q(e_{2i}, e_{2i-1}) &=& 1\,,\,\,\,\, 1 \leq i \leq n\\
                Q(e_{2n+1}, e_{2n+1}) &=& 1\\
  Q(e_i, e_j) &=& 0 \,\,\,\, \text{otherwise.}
\end{eqnarray*}
Define $\mathrm{SO}(2n+1)$ to be the group of invertible matrices of determinant $1$ preserving the bilinear form $Q$. In other words, $\mathrm{SO}(2n+1)$ is the group of invertible matrices $P$ of size $2n+1$ and determinant $1$ satisfying
$$ \M_{2n+1} = P^t\M_{2n+1}P \,\text{.}$$
The Lie algebra $\mfo_{2n+1}$ of $\mathrm{SO}(2n+1)$ is therefore the Lie subalgebra of $\gl_{2n+1}$ comprising those matrices $X$ satisfying
$$ X^t\M_{2n+1} +\M_{2n+1}X = 0 \,\text{.}$$
Further, let $\M_{2n}$ denote the matrix of the nondegenerate symmetric bilinear form $Q$ satisfying
\begin{eqnarray*}
 Q(e_{2i-1}, e_{2i}) \,=\, Q(e_{2i}, e_{2i-1}) &=& 1\,,\,\,\,\, 1 \leq i \leq n\\
  Q(e_i, e_j) &=& 0 \,\,\,\, \text{otherwise.}
\end{eqnarray*}
The Lie algebra $\mfo_{2n}$ can similarly be defined as the Lie subalgebra of $\gl_{2n}$ comprising those matrices $X$ that satisfy
$$ X^t\M_{2n} +\M_{2n}X = 0 \,\text{.}$$
Padding on the right and bottom with $0$'s gives an embedding $\mfo_n \hookrightarrow \mfo_{n+1} $ for all $n$. We may therefore, form the direct limits
$$\mfo_{2\infty+1}\,:=\,\varinjlim_n \mfo_{2n+1}\,,\,\,\,\,\mfo_{2\infty}\,:=\,\varinjlim_n \mfo_{2n} \,,\,\,\,\,\mfo_{\infty}\,:=\,\varinjlim_n \mfo_n\,\text{.}$$
It is easy to see that one has isomorphisms of Lie algebras: $\mfo_{2\infty+1}\,\cong\,\mfo_{\infty}\,\cong\,\mfo_{2\infty}\,\text{.}$

\subsubsection{Lie algebras with (anti)involution}

An { \it (anti) involution} on a DG Lie algebra $\g$ is a map of complexes $\sigma\,:\,\g \rar \g$  satisfying $\sigma^2=\id$ and
$$ \sigma([X, Y]) \,=\, (-1)^{|X||Y|} [\sigma(Y), \sigma(X)] \,\,\quad\,\, \forall\,\, X, Y\,\in\,\g \text{.}$$
Note that multiplication by $-1$ in an anti involution on every Lie algebra $\g$. For terminological brevity, we refer to an anti involution on a Lie algebra as an involution.  Further, for any involution $\sigma$ on a DG Lie algebra $\g$, the $-1$ eigenspace of $\sigma$ is a DG Lie subalgebra of $\g$.  For example, if $\M_{2n+1}$ is as in Section~\ref{orthliealg}, then
$$ X \mapsto \M_{2n+1}^tX^t \M_{2n+1} \,=\, \M_{2n+1}X^t\M_{2n+1} $$
is an involution on $\gl_{2n+1}$. The $(-1)$-eigensspace of this involution is precisely $\mfo_{2n+1}$.

Note that any involution on a DG Lie algebra $\mathfrak{L}$ extends to an involution on its universal enveloping algebra $\mathcal U \mathfrak{L}$. Equip $\M_n(k)$ with an involution. Recall from~\cite{BR1} that for an involutive DG algebra $A$, one has the commutative DG algebra $\Rep_n^*(A)$ of functions on the DG scheme parametrizing involution preserving representations of $A$ to $\M_n(k)$. Let $\gl_n^{-}$ denote the $(-1)$-eigenspace of this involution. The following proposition is proven in~\cite{BR1}.

\bprop \la{invreplie}
For any DG Lie algebra $\mathfrak{L}$, we have an isomorphism of commutative DG algebras
$$ k[\Rep_{\gl_n^{-}}(\mathfrak{L})]\,\cong\, k[\Rep_n^*(\mathcal U \mathfrak{L})]\,$$
where the involution on $\mathcal U \mathfrak{L}$ is the extension of the involution on $\mathfrak{L}$ given by multiplication by $-1$.
\eprop

Suppose that $\mathfrak{L} \stackrel{\sim}{\rar} \g$ is a cofibrant resolution in $\DGL_{k}$. Then, $\mathcal U \mathfrak{L} \stackrel{\sim}{\rar} \mathcal U\g $ is a cofibrant resolution in $\DGA_{k/k}$. Let $\sigma$ denote the involution on $\mathcal U \mathfrak{L}$ (resp., $\mathcal U\g$) extending multiplication by $-1$ on $\mathfrak{L}$ (resp., $\g$). Suppose that the involution chosen on $\M_n(k)$ is trace preserving. Let $G$ be a Lie subgroup of $\mathrm{GL}_n(k)$ whose Lie algebra is $\gl_n^{-}$. For $A$ an involutive DG algebra, denote
$$
A_{\natural, \sigma}\,:=\, A/(k+[A,A]+\mathrm{Im}(1-\sigma))\,\text{.}
$$
Then there is a natural morphism of complexes
\begin{equation} \la{involutivetrace}
\mathrm{Tr}_n:\,
{\mathcal U}(\mathfrak{L})_{\natural, \sigma} \rar k[\Rep_n^*(\mathcal U \mathfrak{L})]^G \ ,
\end{equation}
induced by the trace map $\,\Tr_n[{\mathcal U}(\mathfrak{L})]:\, U(\mathfrak{L})_{\natural } \to
(\mathcal U \mathfrak{L})_n^{\GL}\,$  (see~\cite{BR1}). This gives the commutative diagram at the level of homology groups:
\begin{equation} \la{cdinv2}
  \begin{diagram}[small, tight]
     \rHC_{\bullet}(\mathcal U \g) & \rTo^{\mathrm{Tr}_n} & \H_{\bullet}(\mathcal U\g, n)^{\GL}\\
       \dOnto & & \dTo\\
      \rHD_{\bullet}(\mathcal U\g) & \rTo^{\mathrm{Tr}_n} & \H_{\bullet}^*(\mathcal U\g, n)^G
      \end{diagram} \end{equation}
Here, $\rHD_{\bullet}(\mathcal U\g)$ denotes the reduced dihedral homology of $\mathcal U\g$ with respect to the involution on $\mathcal U\g$ that extends the involution on $\g$ given by multiplication with $-1$.

Let $\mathfrak{a} = k.x \oplus k.y$. Let $A\,:=\, \mathcal U \mathfrak{a} \,\cong\,\, k[x,y]$ denote the universal enveloping algebra of $\mathfrak{a}$ equipped with the involution that takes $x$ (resp., $y$) to $-x$ (resp., $-y$). Let $C\,:= \bSym^c(\mathfrak{a}[1])$. Then, $\mathfrak{L}\,:=\,\cb_{\mathtt{Comm}}(C)$ is a cofibrant resolution of the abelian Lie algebra $\mathfrak{a}$. Explicitly, $\mathfrak{L}$ is the free Lie algebra generated by $x,y, \theta$ with $x,y$ in degree $0$, $\theta$ in degree $1$ satisfying $d\theta=[x,y]$. Its universal enveloping algebra is the free algebra $R\,:=\,k\langle x, y, \theta \,| \,d\theta=[x,y]\,\rangle$ resolving $k[x,y]$. By Proposition~\ref{invreplie}, we have
$$\DRep_{\mfo_{2n+1}}(\mathfrak{a})\,\cong\, k[\Rep_{\mfo_{2n+1}}(\mathfrak{L})] \,\cong \, k[\Rep^*_{2n+1}(R)] \,\text{.}$$
Hence,  the trace maps~\eqref{involutivetrace} become
$$\mathrm{Tr}_{2n+1}\,:\, R_{\natural, \sigma} \rar k[\Rep_{\mfo_{2n+1}}(\mathfrak{L})]^{\mathrm{SO}_{2n+1}} \,$$
which give the following maps at the level of homology groups.
$$\mathrm{Tr}_{2n+1}\,:\, \rHD_{\bullet}(A) \rar \H_{\bullet}(\mathfrak{a}, \mfo_{2n+1})^{\mathrm{SO}_{2n+1}}\,\text{.}$$
The inclusion of Lie algebras $\mfo_{2n+1} \hookrightarrow \mfo_{2n+3}$ induces a (degreewise surjective) homomorphism of commutative DG algebras
$$ \mu_{2n+1}\,:\,k[\Rep_{\mfo_{2n+3}}(\mathfrak{L})] \twoheadrightarrow  k[\Rep_{\mfo_{2n+1}}(\mathfrak{L})] \,\text{.}$$
It is easy to verify that $\mu_{2n+1}$ maps $k[\Rep_{\mfo_{2n+3}}(\mathfrak{L})]^{\mathrm{SO}_{2n+3}}$ to $k[\Rep_{\mfo_{2n+1}}(\mathfrak{L})]^{\mathrm{SO}_{2n+1}}$ and that
$$\mu_{2n+1} \circ \mathrm{Tr}_{2n+3} \,=\,\mathrm{Tr}_{2n+1} \,\text{.}$$
Hence, one obtains a map of complexes
$$\mathrm{Tr}_{\infty}\,:\,R_{\natural, \sigma}\rar k[\Rep_{\mfo_{2\infty+1}}(\mathfrak{L})]^{\mathrm{SO}_{2\infty+1}} \,:=\, \varprojlim_n k[\Rep_{\mfo_{2n+1}}(\mathfrak{L})]^{\mathrm{SO}_{2n+1}} \,\text{.}$$
Here, the projective limit is taken in the category of {\it bigraded commutative} DG algebras. By multiplicativity, one obtains a morphism of commutative DG algebras
$$\bSym(\mathrm{Tr}_{\infty})\,:\, \bSym(R_{\natural, \sigma}) \rar k[\Rep_{\mfo_{2\infty+1}}(\mathfrak{L})]^{\mathrm{SO}_{2\infty+1}} \,\text{.}$$
The following result is a consequence of the stabilization theorem~\cite[Theorem 4.4]{BR}, for involutive DG algebras. We however, sketch a different proof of this statement.
\bprop \la{stabinvrep}
$\bSym(\mathrm{Tr}_{\infty})\,:\, \bSym(R_{\natural, \sigma}) \rar k[\Rep_{\mfo_{2\infty+1}}(\mathfrak{L})]^{\mathrm{SO}_{2\infty+1}} $ is an isomorphism of bigraded commutative DG algebras.
\eprop
\begin{proof}
Let $E:=C^{\ast}$ denote the bigraded dual of $C$. Then, the complex $R_{\natural, \sigma}$, whose homology is $\rHD_{\bullet}(k[x,y])$, coincides {\it on the nose} with the complex $
D(E)^{\ast}[-1]$, where $\ast$ denotes bigraded dual and $ D(E)$ is the complex computing the relative {\it skew} dihedral homology of $E$ with respect to the trivial involution. On the other hand, by Theorem~\ref{drepg} ,
$$k[\Rep_{\mfo_{2n+1}}(\mathfrak{L})]^{\mathrm{SO}_{2n+1}} \,\cong\, \C(\mfo_{2n+1}(E),\mfo_{2n+1}(k);k)^{\ast}\,\text{.}$$
Hence,
$$ k[\Rep_{\mfo_{2\infty+1}}(\mathfrak{L})]^{\mathrm{SO}_{2\infty+1}} \,\cong\, \C(\mfo_{2\infty+1}(E), \mfo_{2\infty+1}(k);k)^{\ast} \,\text{.}$$
Note that $\mfo_{2\infty+1} \,\cong\,\mfo_{\infty}$. The map $\bSym(\mathrm{Tr}_{\infty})$ can then be verified to be the bigraded dual of a the map
$$  \C(\mfo_{\infty}(E), \mfo_{\infty}(k);k) \rar \bSym(D(E)[1]) $$
induced by the map of complexes denoted in~\cite[Section 10.2.3]{L} by $\mathrm{tr}_* \circ \theta_*$. By the relative version of~\cite[Theorem 5.5]{LP} (see also~\cite[Section 10.5.7]{L}), this last map is an isomorphism of DG coalgebras. This proves the desired proposition.
\end{proof}

 On homologies, $\mathrm{Tr}_{\infty}$ gives a map
 $$ \mathrm{Tr}_{\infty}\,:\,\rHD_{\bullet}(A) \rar \H_{\bullet}(\mathfrak{a}, \mfo_{2\infty+1})^{\mathrm{SO}_{2\infty+1}}\,:=\, \H_{\bullet}(k[\Rep_{\mfo_{2\infty+1}}(\mathfrak{L})]^{\mathrm{SO}_{2\infty+1}})\,\text{.}$$ By Proposition~\ref{stabinvrep}, $\bSym(\mathrm{Tr}_{\infty})$ gives an isomorphism on homologies
$$ \bSym(\mathrm{Tr}_{\infty})\,:\,\bSym(\rHD_{\bullet}(A)) \stackrel{\sim}{\rar} \H_{\bullet}(\mathfrak{a}, \mfo_{2\infty+1})^{\mathrm{SO}_{2\infty+1}} \,\text{.}$$
We therefore need to compute the (relative) dihedral homology $\rHD_{\bullet}(A)$. Note that it is the space of covariants (and hence, invariants) of the $\Z_2$-action on $\rHC_{\bullet}(A)$ induced by the chosen involution on $A$. On the other hand, $\rHC_0(A)\,\cong\, A/k$ and $\rHC_1(A) \cong \Omega^1(A)/dA \,\cong\, A.ydx$ and other reduced cyclic homologies of $A$ vanish.  Let $A^{\mathrm{odd}}$ and $A^{\mathrm{even}}$ denote the subspaces of $A=k[x,y]$ spanned by monomials of odd and even weight respectively (for this definition, $x$ and $y$ are both taken to have weight $1$).

\blemma  \la{dhompoly}
There are isomorphisms of vector spaces
\begin{eqnarray*}
\rHD_0(A)\,\cong\, A^{\mathrm{ev}}/k\,,\,\,\,\,
 \rHD_1(A)\,\cong\, A^{\mathrm{odd}}.ydx
\end{eqnarray*}
\elemma

\begin{proof}
Recall from~\cite[Example 4.1]{BKR} that the monomial $x^ky^l$ in $A/k$ is identified with the cyclic chain $x^ky^l$ in $R_{\natural}$. The involution on $R$ maps $x^ky^l$ to $(-1)^{k+l}y^lx^k$ which coincides with $(-1)^{k+l}x^ky^l$ in $R_{\natural}$. Hence, $x^ky^l$ is a nontrivial basis element of $\frac{R}{k+[R,R]+\mathrm{Im}(1-\sigma)}$ iff $k+l$ is even.

Similarly,~\cite[Example 4.1]{BKR} tells us that the basis element $x^ky^ldx$ ($l \geq 1$) of $\rHC_1(A)$ is identified with the cycle
$\sum_{i=0}^{l-1} y^{l-1-i}x^ky^i\theta$ in $R_{\natural}$. This cycle is mapped by the involution on $R$ to
$ (-1)^{k+l} \sum_{i=0}^{l-1} \theta y^ix^ky^{l-1-i} $, which coincides with $(-1)^{k+l} \sum_{i=0}^{l-1} y^{l-1-i}x^ky^i\theta$ in $R_{\natural}$. It follows that $x^ky^ldx$ is a nontrivial basis element in $\rHD_1(A)$ iff $k+l$ is even and $l \geq 1$. This proves the desired lemma.
\end{proof}

\subsubsection{The derived Harish-Chandra homomorphism for the odd orthogonal case}

Note that for $\mfo_{2n+1}$, one can choose the Cartan subalgebra $\h_{2n+1}$ to be the abelian Lie subalgebra of $\mfo_{2n+1}$ spanned by the basis elements $h_i\,:= \,e_{2i-1,2i-1}-e_{2i,2i}\,,\,\,1 \leq i \leq n$. The Weyl group of $\mfo_{2n+1}$ is $(\Z_2)^n \rtimes S_n$, where elements of $S_n$ permute the $h_i$'s and the element $\gamma_i\,:=\,(1,\ldots, \gamma, \ldots, 1)$ of $(\Z_2)^n$ with the generator $\gamma$ of $\Z_2$ in the $i$-th coordinate transforms $h_i$ to $-h_i$ and leaves other $h_j$'s unchanged.

The derived Harish-Chandra homomorphism therefore becomes a morphism of commutative DG algebras
$$\Phi\,:\, k[\Rep_{\mfo_{2n+1}}(\mathfrak{L})]^{\mathrm{SO}_{2n+1}} \rar k[x_1,\ldots,x_n,y_1,\ldots,y_n,\theta_1,\ldots,\theta_n]^{(\Z_2)^n \rtimes S_n} $$
where elements of $S_n$ simultaneously permute the $x_i$'s, $y_i$'s and $\theta_i$'s and $\gamma_i$ multiplies the generators $x_i, y_i$ and $\theta_i$ by $-1$ and leaves other generators unchanged. In the limit, the derived Harish-Chandra homomorphism becomes the map
$$ \Phi\,:\,  k[\Rep_{\mfo_{2\infty+1}}(\mathfrak{L})]^{\mathrm{SO}_{2\infty+1}}  \rar k[x_1,\ldots; y_1,\ldots; \theta_1,\ldots ]^{(\Z_2)^{\infty} \rtimes S_{\infty}}$$
where $$k[x_1,\ldots; y_1,\ldots;\theta_1,\ldots]^{(\Z_2)^{\infty} \rtimes S_{\infty}}\,:=\,\varprojlim_n k[x_1,\ldots,x_n,y_1,\ldots,y_n,\theta_1,\ldots,\theta_n]^{(\Z_2)^n \rtimes S_n} \,\text{.}$$ Here, the limit is taken in the category of {\it bigraded} commutative algebras.

For a word $w \in R$, let $|w|_x$ (resp., $|w|_y,|w|_{\theta}$) denote the number of occurrences of $x$ (resp, $y,\theta$) in $w$. Note that the homological degree of $w$ is $|w|_{\theta}$. Let $|w|:=|w|_x+|w|_y+|w|_{\theta}$ be the length of $w$ and keep the notation $\,\mathrm{Tr}_{2n+1}\,$
for the composite map
$\,R \onto  R_{\natural, \sigma} \to
k[\Rep_{\mfo_{2n+1}}(\mathfrak{L})]^{\mathrm{SO}_{2n+1}}\,$.

\blemma \la{hcoftr}
$ (\Phi \circ \mathrm{Tr}_{2n+1})(w)\,=\, (1+{(-1)}^{|w|})\sum_{i=1}^n x_i^{|w|_x}y_i^{|w|_y}\theta_i^{|w|_{\theta}} \ $ for any $w \in R$.
\elemma

In particular, if $|w|$ is of odd length, or is of homological degree $\geq 2$, then $(\Phi \circ \mathrm{Tr}_{2n+1})(w)$ vanishes. Lemma~\ref{dhompoly} and Lemma~\ref{hcoftr} enable us to explicitly compute the map
$$\H_{\bullet}(\Phi) \circ \mathrm{Tr}_{2n+1}\,:\,\rHD_{\bullet}(A) \rar k[x_1,\ldots,x_n,y_1,\ldots,y_n,\theta_1,\ldots,\theta_n]^{(\Z_2)^n \rtimes S_n}\,\text{.}$$

\blemma \la{hcoftrforms}
We have
\begin{eqnarray*}
(\H_{\bullet}(\Phi) \circ \mathrm{Tr}_{2n+1})(x^ky^l) \,=\, 2\sum_{i=1}^n x_i^ky_i^l \,,\,\,\,\,\text{ for } k+l\text{ even}\\
(\H_{\bullet}(\Phi) \circ \mathrm{Tr}_{2n+1})(x^ky^ldx)\,=\, 2l \sum_{i=1}^n x_i^ky_i^{l-1}\theta_i\,,\,\,\,\,\text{ for } k+l\text{ even}\\
\end{eqnarray*}
\elemma

\begin{theorem} \la{oddorthlimit}
$(\mathrm{i})$ $\Phi\,:\,  k[\Rep_{\mfo_{2\infty+1}}(\mathfrak{L})]^{\mathrm{SO}_{2\infty+1}}  \rar k[x_1,\ldots;y_1,\ldots;\theta_1,\ldots]^{(\Z_2)^{\infty} \rtimes S_{\infty}}$ is a quasi-isomorphism.\\

$(\mathrm{ii})$ $\H_{\bullet}(\Phi)\,:\,\H_{\bullet}(\mathfrak{a}, \mfo_{2n+1})^{\mathrm{SO}_{2n+1}} \rar k[x_1,\ldots,x_n,y_1,\ldots, y_n,\theta_1,\ldots,\theta_n]^{(\Z_2)^n \rtimes S_n} $ is a surjection.
\end{theorem}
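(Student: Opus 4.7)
The plan is to exploit the involutive trace map $\Tr_{2n+1}$ of~\eqref{involutivetrace} to fit the derived Harish-Chandra homomorphism $\H_{\bullet}(\Phi)$ into the factorization
\[
\bSym_k(\rHD_{\bullet}(A))\ \xrightarrow{\bSym(\Tr_{2n+1})}\ \H_{\bullet}(\mathfrak{a}, \mfo_{2n+1})^{\mathrm{SO}_{2n+1}}\ \xrightarrow{\H_{\bullet}(\Phi)}\ k[x_1,\ldots,x_n, y_1,\ldots,y_n, \theta_1,\ldots,\theta_n]^{(\Z_2)^n \rtimes S_n}
\]
whose composite is $\bSym(\Psi_n)$, where $\Psi_n := \H_{\bullet}(\Phi)\circ\Tr_{2n+1}$. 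By Proposition~\ref{stabinvrep} the first arrow is an isomorphism in the limit $n\to\infty$, so (i) reduces to showing that $\bSym(\Psi_\infty)$ is an isomorphism, and (ii) reduces to showing that $\bSym(\Psi_n)$ is surjective for every finite $n$.

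The key ingredient is a parity refinement of Proposition~\ref{sympowsum}. Let $V_{\mathrm{ev}}$ be the bigraded $k$-vector space with basis $\{q_{a,b,c} : (a,b,c)\in\Z_{\geq 0}^2\times\{0,1\},\ a+b+c \text{ even},\ (a,b,c)\neq 0\}$. I would prove that for each $1\leq n\leq\infty$ the assignment $q_{a,b,c}\mapsto P_{a,b,c}:=\sum_{i=1}^n x_i^a y_i^b \theta_i^c$ induces an isomorphism of bigraded commutative algebras
\[
\bSym^{\leq n}_k V_{\mathrm{ev}}\ \cong\ k[x_1,\ldots,x_n, y_1,\ldots,y_n, \theta_1,\ldots,\theta_n]^{(\Z_2)^n \rtimes S_n}
\]
(with the convention $\bSym^{\leq\infty}:=\bSym$). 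The forward direction is clear since each $P_{a,b,c}$ with $a+b+c$ even is manifestly $(\Z_2)^n\rtimes S_n$-invariant. For the reverse, one observes that the $(\Z_2)^n\rtimes S_n$-invariants are spanned by multi-monomial symmetric sums $m_\lambda$ indexed by multi-partitions $\lambda=(\mathbf{a}_1,\ldots,\mathbf{a}_r)$ with $r\leq n$ and each $|\mathbf{a}_j|$ even, and that expanding $\prod_j P_{\mathbf{a}_j}$ yields $m_\lambda$ modulo symmetric sums of strictly fewer parts whose exponent tuples, being sums of even tuples, remain even. Induction on $r$ together with Proposition~\ref{sympowsum} completes the argument.

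The second step is the concrete evaluation of $\Psi_n$ on generators. Combining Lemma~\ref{dhompoly} with Lemma~\ref{hcoftrforms}, the map $\Psi_n$ sends $[x^k y^l]\in\rHD_0(A)$ (for $k+l$ even, $(k,l)\neq 0$) to $2P_{k,l,0}$ and $[x^k y^l\,dx]\in\rHD_1(A)$ (for $k+l$ even, $l\geq 1$) to $2l\,P_{k,l-1,1}$. As $(k,l)$ ranges over its prescribed domain, these images sweep bijectively (up to the nonvanishing scalars $2$ and $2l$) across the spanning set $\{P_{a,b,0} : a+b \text{ even},\ (a,b)\neq 0\}\cup\{P_{a,b,1} : a+b \text{ odd}\}$ of the image of $V_{\mathrm{ev}}$ in the target. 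Hence $\Psi_n$ is a linear isomorphism from $\rHD_\bullet(A)$ onto the $k$-span of the power sums $P_{a,b,c}$ with $a+b+c$ even, and $\bSym(\Psi_n)$ factors as $\bSym_k(\rHD_\bullet(A)) \xrightarrow{\sim} \bSym_k V_{\mathrm{ev}} \twoheadrightarrow \bSym^{\leq n}_k V_{\mathrm{ev}}$; the second arrow is an isomorphism for $n=\infty$ (yielding (i)) and a surjection for finite $n$ (yielding (ii)).

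The main obstacle is the parity-refinement of Proposition~\ref{sympowsum} in the first step. Although the argument closely parallels Proposition~\ref{sympowsum}, some care is required to verify that the Newton-type recursion expressing a multi-monomial symmetric sum in terms of power sums never strays outside the ``even'' sector -- which ultimately reduces to the elementary observation that sums of integers of even parity remain even.
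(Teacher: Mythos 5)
Your proposal is correct and follows essentially the same route as the paper: you reduce both parts to a parity refinement of Proposition~\ref{sympowsum} (which is exactly the content of the paper's Proposition~\ref{sympowsumeven}), you use Lemma~\ref{dhompoly} and Lemma~\ref{hcoftrforms} to identify $\rHD_\bullet(A)$ with the ``even'' vector space $V_{\mathrm{ev}}$ of power-sum generators via the map $\Psi_n = \H_\bullet(\Phi)\circ\Tr_{2n+1}$, and you invoke Proposition~\ref{stabinvrep} to reduce (i) to the assertion that $\bSym(\Psi_\infty)$ is an isomorphism. The only cosmetic divergence is that the paper deduces (ii) from (i) together with the surjectivity $k[x_1,\ldots;y_1,\ldots;\theta_1,\ldots]^{(\Z_2)^{\infty}\rtimes S_{\infty}}\twoheadrightarrow k[x_1,\ldots,\theta_n]^{(\Z_2)^n\rtimes S_n}$, whereas you read off surjectivity directly from the factorization through $\bSym^{\leq n}_kV_{\mathrm{ev}}$ — the two routes are logically equivalent and use the same input.
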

\begin{proof}
We first prove (i). By Proposition~\ref{stabinvrep}, it suffices to prove that the composite map
$$\begin{diagram} \bSym(R_{\natural, \sigma}) & \rTo^{\cong} & k[\Rep_{\mfo_{2\infty+1}}(\mathfrak{L})]^{\mathrm{SO}_{2\infty+1}}  & \rTo^{\Phi} & k[x_1,\ldots, y_1,\ldots,\theta_1,\ldots,\theta_n]^{(\Z_2)^{\infty} \rtimes S_{\infty}} \end{diagram}$$
is a quasi-isomorphism. This is equivalent to verifying that the map
$$ \begin{diagram} \bSym_k(\rHD_{\bullet}(A)) & \rTo^{\bSym(\H_{\bullet}(\Phi) \circ \mathrm{Tr}_{\infty})} & k[x_1,\ldots;y_1,\ldots;\theta_1,\ldots]^{(\Z_2)^{\infty} \rtimes S_{\infty}} \end{diagram} $$
is an isomorphism. For $\mathbf{a} := (s,p,l) \in \{0,1\} \times \Z_{\geq 0}^2$, let $P_{\mathbf{a}}$ denote the power sum $ \sum_{i \geq 1} x_i^py_i^l\theta_i^s$. Call an element  $\mathbf{a} := (s,p,l)$  of $\{0,1\} \times \Z_{\geq 0}^2$ even if $s+p+l$ is even. By Lemma~\ref{dhompoly}, $\rHD_{\bullet}(A)$ can be identified with the bigraded vector space $V$ spanned by basis vectors $q_{\mathbf{a}}$ for $\mathbf{a}  \in \{0,1\} \times \Z_{\geq 0}^2$  even. The basis element $q_{(1,p,l)}$ corresponds to the form $\frac{1}{2(l+1)}x^py^{l+1}dx$ in $\rHD_1(A)$ and the basis element
$q_{(0,p,l)}$ corresponds to the form $\frac{1}{2}x^py^l$ in $\rHD_0(A)$. Lemma~\ref{hcoftrforms} implies that $\H_{\bullet}(\Phi) \circ \mathrm{Tr}_{\infty}$ maps $q_{\mathbf{a}}$ to the power sum $P_{\mathbf{a}}$ for each even $\mathbf{a}$ in $\{0,1\} \times \Z_{\geq 0}^2$. (i) therefore follows once we verify that $k[x_1,\ldots;y_1,\ldots;\theta_1,\ldots]^{(\Z_2)^{\infty} \rtimes S_{\infty}}$ is isomorphic to the graded symmetric algebra generated by the power sums $P_{\mathbf{a}}$ for $\mathbf{a}$ even.

To see this, note that for any $n \geq 2$, the orbit sum
\begin{equation}
\la{ordsum}
O(x_1^{\alpha_1}\ldots x_n^{\alpha_n}y_1^{\beta_1} \ldots y_n^{\beta_n}\theta_1^{\gamma_1} \ldots \theta_n^{\gamma_n})\,:=\,\sum_{\sigma \in (\Z_2)^n \rtimes S_n} \sigma(x_1^{\alpha_1}\ldots x_n^{\alpha_n}y_1^{\beta_1} \ldots y_n^{\beta_n}\theta_1^{\gamma_1} \ldots \theta_n^{\gamma_n})
\end{equation}
is nonzero iff the triples $\,(\gamma_i, \alpha_i,\beta_i) \,$ are even for $\,1
\leq i \leq n\,$ and $\,(\gamma_i, \alpha_i, \beta_i) \neq
(\gamma_j,\alpha_j,\beta_j)\,$ whenever $\,1 \leq i<j\leq n\,$ and $\,\gamma_i=\gamma_j=1\,$.
Starting with this observation and proceeding as in the proof of Proposition~\ref{sympowsum}, one can show the following analog of Proposition~\ref{sympowsum} without difficulty.

\bprop
\la{sympowsumeven}
{\rm (i)} The homomorphism of bigraded commutative algebras
$$  \bSym_k V \rar k[x_1\,\ldots,x_n,y_1,\ldots,y_n,\theta_1,\ldots,\theta_n]^{(\Z_2)^n \rtimes S_n} \,,\,\,\,\, q_{\mathbf{a}} \mapsto P_{\mathbf{a}}\,\text{ for } \mathbf{a} \text{ even } $$
induces an isomorphism of bigraded vector spaces
$$ \bSym_k^{\leq n} V \,\cong\, k[x_1\,\ldots,x_n,y_1,\ldots,y_n,\theta_1,\ldots,\theta_n]^{(\Z_2)^n \rtimes S_n} \,\text{.}$$
{\rm (ii)}
The homomorphism
$$\bSym_k V \rar k[x_1,\ldots;y_1,\ldots;\theta_1,\ldots]^{(\Z_2)^{\infty} \rtimes S_{\infty}}\,,\,\,\,\, q_{\mathbf{a}} \mapsto P_{\mathbf{a}}$$
is an isomorphism of bigraded commutative algebras.
\eprop

Proposition~\ref{sympowsumeven} (ii) is precisely what we needed to verify to complete the proof of (i). Proposition~\ref{sympowsumeven} (i) implies that  that the natural map
$$k[x_1,\ldots; y_1,\ldots;\theta_1,\ldots]^{(\Z_2)^{\infty} \rtimes S_{\infty}} \rar k[x_1,\ldots, x_n,y_1,\ldots,y_n,\theta_1,\ldots,\theta_n]^{(\Z_2)^{n} \rtimes S_{n}}$$ is surjective. (ii) is now immediate from (i).
\end{proof}

\subsubsection{The even orthogonal and symplectic cases}

The DG algebras on both sides of the Harish-Chandra homomorphism in the even orthogonal case coincide in the limit with their odd  counterparts. Hence, the analog of Theorem~\ref{oddorthlimit} (i) holds in the even orthogonal case as well. The Weyl group for $\mfo_{2n}$ is however, $(\Z_2)^{n-1} \rtimes S_n$ rather than $(\Z_2)^n \rtimes S_n$. Here, $(\Z_2)^{n-1}$ is the subgroup of $(\Z_2)^n$ comprising those elements that flip the signs of an even number of the Cartan basis elements $h_i$.
In this case, it turns out that that the map
\begin{equation} \la{polyinftofin} k[x_1,\ldots; y_1,\ldots;\theta_1,\ldots]^{(\Z_2)^{\infty} \rtimes S_{\infty}} \rar k[x_1,\ldots x_n, y_1,\ldots,y_n,\theta_1,\ldots,\theta_n]^{(\Z_2)^{n-1} \rtimes S_{n}} \end{equation}
is not surjective for $n \geq 2$. For example, the element $x_1\ldots x_n$ of $k[x_1,\ldots,x_n, y_1,\ldots,y_n,\theta_1,\ldots,\theta_n]^{(\Z_2)^{n-1} \rtimes S_{n}}$ is not in the image of~\eqref{polyinftofin}. The argument we used to deduce Part (ii) of Theorem~\ref{oddorthlimit}
from Part (i) in the odd orthogonal case does not therefore work in the even case.

\vspace{2ex}

\begin{remark}
Note, however, that the elements $\,x_1 \dots x_n\,$, and similarly, the elements of the form
$$
O(x_1\dots x_k \,y_{k+1} \dots y_n)\qquad \mbox{and}\qquad
O(x_1\dots x_k\, y_{k+1} \dots y_{n-1}\,\theta_n)\ ,
$$
where $\,O(\dots)\,$ denotes the orbit sum as in \eqref{ordsum}, do not lead to trivial counterexamples to Conjecture~\ref{conj3}. In fact, these elements are the images of
the following `Pfaffian' cocycles in $\, k[\Rep_{\mfo_{2n}}(\mathfrak{L})]^{\mathrm{SO}_{2n}} $:
$$
\sum_\sigma (-1)^\sigma \,x_{\sigma(1)\sigma(2)} \,\dots \,x_{\sigma(2k-1)\sigma(2k)}\,
y_{\sigma(2k+1)\sigma(2k)}\, \dots \, y_{\sigma(2n-1)\sigma(2n)}
$$
and
$$
\sum_\sigma (-1)^\sigma \,x_{\sigma(1)\sigma(2)}\,\dots \,x_{\sigma(2k-1)\sigma(2k)}\, y_{\sigma(2k+1)\sigma(2k)}\, \dots\, y_{\sigma(2n-3)\sigma(2n-2)}\,\theta_{\sigma(2n-1)\sigma(2n)}\,.
$$
A small computation shows that the last element is indeed a cocycle, and it cannot be exact,
since its expression contains no repeated indices, while repeated indices are necessarily
introduced by the differential (recall that the differential of $\theta_{ij}$ is given by
$ \sum_k (x_{ik}y_{kj}-y_{ik}x_{kj})$).
\end{remark}

\vspace{2ex}

%
%

The case of $\,\mathfrak{sp}_{2n}\,$ is analogous to that of $\mfo_{2n+1}$. The only modification here is a different involution on $\M_{2n}(k)$: it is given by
$\, X \mapsto \M_{2n}^tX^t\M_{2n}\,$,
where $\M_{2n}$ is the matrix of the nondegenerate skew-symmetric bilinear form
$Q$ on $k^{2n}$ satisfying
\begin{eqnarray*}
 Q(e_{2i-1}, e_{2i}) \,=\, -Q(e_{2i}, e_{2i-1}) &=& 1\,,\,\,\,\, 1 \leq i \leq n\\
  Q(e_i, e_j) &=& 0 \,\,\,\, \text{otherwise.}
\end{eqnarray*}
In particular, the obvious analogues of both parts of Theorem~\ref{oddorthlimit} hold in the symplectic case.

\section{Macdonald conjectures}
\la{Macd}
In this section, we explain how our Conjecture~\ref{conj3} is related to the strong Macdonald
conjecture proved in \cite{FGT}. The key point is to consider the $G$-equivariant derived commuting scheme $\DRep_{\g}(\mathfrak{a})^G$ of the two-dimensional abelian Lie algebra $\mathfrak{a}$
with `shifted' homological degrees. Following the conventions of~\cite{FGT}, we will
assume in this section that $k=\c$. As in the previous section, $ \g $ will denote an arbitrary finite-dimensional reductive Lie algebra with associated complex algebraic group $ G $.

\subsection{Graded commuting schemes}

Let $\mathfrak{a} \,:=\, \c.u \oplus \c.v$ be a homologically graded Lie algebra with trivial bracket, where $u$ has degree $-1$ and $v$ has degree $-2$.
Let $ z \in (\mathfrak{a}[1])^* $ and $ w \in (\mathfrak{a}[1])^* $ denote basis vectors dual to
$ su \in \mathfrak{a}[1] $ and $ sv \in \mathfrak{a}[1] $ respectively. Thus $z$ and $w$ have homological degrees $0$ and $1$. We equip $\mathfrak{a}$ with a $\Z^2$-weight grading by setting the weight of $u$ to be $(1,0)$ and that of $v$ to be $(0,1)$. By convention, the weight of the dual of a finite-dimensional weight-homogeneous vector space $W$
will coincide with that of $W$. Also, in this section, $(\,\mbox{--}\,)^{\ast}$ shall mean bigraded dual.

\blemma \la{drepgl}
$\DRep_{\g}(\mathfrak{a})^G \,\cong\, \CE(\g[z, w],\g; \c)^{\ast}$.
\elemma

\begin{proof}
Since $\mathfrak{a}$ is abelian, a cofibrant resolution of $\mathfrak{a}$ in $\mathtt{DGL}_{k}$ is given by $\mathbf{\Omega}_{\mathtt{Comm}}(\bSym^{c}(\mathfrak{a}[1]))$. Note that the bigraded dual of $\bSym^{c}(\mathfrak{a}[1])$ is exactly $\c[z,w]$. By Theorem~\ref{drepg},
 $$\DRep_{\g}(\mathfrak{a})^G \,\cong\, \CE^c(\g^{\ast}(\bSym^{c}(\mathfrak{a}[1])), \g^{\ast}; \c) \,\text{.}$$
The desired lemma now follows from the isomorphism~\eqref{duality2}.
\end{proof}

We have the following restatement of the (now proven) strong Macdonald conjecture (see~\cite[Theorem 1.5]{FGT}) in terms of derived representation schemes.

\begin{theorem}
\la{macdonqism}
The map
$$ \bSym_k[\Tr_{\g}(\mathfrak{a})]\,:=\,\bSym_k[\oplus_{i=1}^l \Tr^{(d_i)}_{\g}]\,:\, \bSym_k[\oplus_{i=1}^l \L\lambda^{(d_i)}(\mathfrak{a})] \rar \DRep_{\g}(\mathfrak{a})^G$$
is a quasi-isomorphism. In particular, the induced map
\begin{equation} \la{fte1}
\bSym_k[\Tr_{\g}(\mathfrak{a})]\,:\,\bSym_k[\oplus_{i=1}^l \HC_{\bullet}^{(d_i)}(\mathtt{Lie},\mathfrak{a})] \rar \H_{\bullet}(\mathfrak{a}, \g)^G
\end{equation}
is an isomorphism of graded algebras.
\end{theorem}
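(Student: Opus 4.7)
The plan is to reduce the statement to the strong Macdonald conjecture of \cite{FGT} via the identification furnished by Lemma~\ref{drepgl}. First, by that lemma together with the isomorphism~\eqref{duality2}, the target decomposes as $\H_{\bullet}(\mathfrak{a},\g)^{G} \cong \H^{-\bullet}(\g[z,w],\,\g;\,\c)$, the relative Chevalley–Eilenberg cohomology of the super-current Lie algebra $\g[z,w]=\g\otimes\c[z,w]$ with $|z|=0$ and $|w|=1$. The theorem of Fishel–Grojnowski–Teleman computes this cohomology explicitly as a free graded super-commutative algebra on $3l$ generators, one triple $(a_i,b_i,c_i)$ for each fundamental degree $d_i$ of $\g$, with bidegrees determined by $d_i$.

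Next, I would compute the source. Since $\mathfrak{a}$ is abelian of dimension $2$, the universal enveloping algebra is $\mathcal{U}(\mathfrak{a})\cong\c[u,v]$, Koszul dual to the smooth commutative algebra $\c[z,w]$. Theorem~\ref{hodgeuel} together with Proposition~\ref{phodgedual} identifies the Drinfeld homology in the form
\[
\HC^{(d)}_{\bullet}(\mathtt{Lie},\mathfrak{a}) \;\cong\; \rHC^{-\bullet}_{(d-1)}(\c[z,w])[-1],
\]
which for a smooth commutative algebra is a free module over $\c[z,w]$ on explicit de Rham-type generators. Counting dimensions in each bidegree, one finds that the bigraded vector space $\bSym_{\c}\bigl[\oplus_{i=1}^{l}\L\lambda^{(d_i)}(\mathfrak{a})\bigr]$ is freely generated by $3l$ classes whose bidegrees match, one-for-one, those of the FGT generators.

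The central step is to identify these two sets of generators. By construction (see Section~\ref{secdrinfeldtr}), the Drinfeld trace $\Tr^{(d_i)}_{\g}$ is assembled from the invariant polynomial $P_i\in\Sym^{d_i}(\g^{*})^{G}$ applied to the universal representation, whereas the FGT generators arise by the classical Cartan–Chevalley transgression of the very same invariants $P_i$. I would check, by tracing through both constructions on a chosen cofibrant resolution $\mathbf{\Omega}_{\mathtt{Comm}}\bigl(\bSym^{c}(\mathfrak{a}[1])\bigr)\stackrel{\sim}{\to}\mathfrak{a}$, that the Drinfeld trace of the natural cycle representing the Hodge piece $\rHC^{(d_i-1)}(\c[z,w])$ is cohomologous (up to a nonzero scalar) to the corresponding FGT generator. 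This is essentially a transgression-compatibility check in the relative Chevalley–Eilenberg bicomplex.

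Once the matching of generators is verified, the conclusion is immediate: both bigraded algebras are free graded super-commutative on the same system of generators with matching bidegrees, and the Drinfeld trace map sends generators to generators, hence is an isomorphism of algebras in homology; by a standard argument (both complexes have cohomology concentrated in the image of the respective generators and the map is multiplicative) it is a quasi-isomorphism at the level of complexes. The main obstacle, and the only step that is not formal, is the explicit transgression-level comparison of the Drinfeld trace with the FGT generators; everything else is bookkeeping of bidegrees and formal Koszul duality. At the level of Euler characteristics this quasi-isomorphism recovers the classical Macdonald identity~\eqref{iqtid}, which provides an independent numerical sanity check on the matching of generators.
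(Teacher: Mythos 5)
Your overall strategy — dualize the target via Lemma~\ref{drepgl}, identify the source via Theorem~\ref{hodgeuel} and Proposition~\ref{phodgedual}, then cite the Fishel--Grojnowski--Teleman theorem — is the same reduction the paper uses. Where you diverge is the final step: the paper simply observes that after these Koszul-duality identifications the map $\bSym_k[\Tr_{\g}(\mathfrak{a})]$ \emph{is} the map appearing in \cite[Equation~3.1]{Te}, so \cite[Theorem~1.5]{FGT} applies directly. You instead propose to (i) show both sides are free graded super-commutative on matching systems of generators, and then (ii) verify that the Drinfeld trace carries one generating system to the other up to nonzero scalar. The structure is logically fine, but it is strictly more work, and (ii) is exactly the point the paper's one-line identification disposes of. As it stands, your (ii) is only described as something you ``would check''; without it, the argument that the map hits generators nontrivially (rather than, say, vanishing on some) is not supplied, so the proposal has a real gap at precisely the step you flag as central.

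There is also a concrete slip in (i): the claim that $\H^{-\bullet}(\g[z,w],\g;\c)$ is free on ``$3l$ generators'' is wrong. Corollary~\ref{rephommacd}, restating the FGT computation, shows there are \emph{infinitely many} free generators: one pair for each exponent $m$ of $\g$ and each $n\ge 0$, in homological degrees $-(2m+1)$ and $-(2m+2)$ and weights $(n+1,m)$, $(n,m+1)$, coming from the classes $z^n\,dz\,w(dw)^{m-1}$ and $z^n w(dw)^m$ spanning $\rHC^{(m)}_{\bullet}(\c[z,w])$. The number $3l$ belongs to the even (Harish-Chandra) situation, where the answer is $\bSym(\h^{*}\oplus\h^{*}\oplus\h^{*}[1])^{W}$; here, in the mixed case, the free algebra is not finitely generated. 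Your later phrase ``counting dimensions in each bidegree'' suggests you may have the right picture in mind, but the explicit statement should be corrected. Once that is fixed and the generator-compatibility check is actually carried out (or replaced by the paper's direct identification with the map in [Te]), your argument would go through.
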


\bproof
Since the bigraded dual of $\bSym^{c}(\mathfrak{a}[1])$, namely $\,\c[z,w]$, is a smooth graded commutative algebra, Proposition~\ref{phodgedual} applies. Thus, the map~\eqref{fte1}
becomes
  \begin{equation} \la{fte}
\bSym_k[\Tr_{\g}(\mathfrak{a})]\,:\, \bSym_k[\oplus_{i=1}^l \rHC_{\bullet-1}^{(m_i)}(\c[z,w])^{\ast}] \rar \CE(\g[z, w],\g; \c)^{\ast}\,,\end{equation}
where $m_i$, $1 \leq i \leq r$ are the exponents of $\g$. It turns out that~\eqref{fte} is precisely the map in~\cite[Equation 3.1]{Te}:~\cite[Theorem 1.5]{FGT} states that this map is a quasi-isomorphism.
\eproof

\bcor \la{rephommacd} $\H_{\bullet}(\mathfrak{a}, \g)^G$ is isomorphic to the symmetric algebra with one generator in homological degree $-(2m+1)$ and weight $(n+1,m)$ and one generator of homological degree $-(2m+2)$ and weight $(n,m+1)$ for each exponent $m$ of $\g$ and each $n \geq 0$.
\ecor

\bproof
Indeed,~\cite[(1.7)]{FGT} shows that $\rHC_{2m+1}^{(m)}(\c[z,w])$ is identified with the (weight graded) vector space $\c[z].w(dw)^m$ for each exponent $m$ of $\g$. The duals of the basis elements $z^nw(dw)^m$ of $\rHC_{2m+1}^{(m)}(\c[z,w])$ give generators of $ \bSym_k[\oplus_{i=1}^l \rHC_{\bullet-1}^{(m_i)}(\c[z,w])^{\ast}]$ having homological degree $-2m-2$ and weight $(n,m+1)$ for each $n \geq 0$ and for each exponent $m$ of $\g$. Similarly,~\cite[(1.7)]{FGT} shows that for each exponent $m$ of $\g$, $\rHC_{2m}^{(m)}(\c[z,w])$ is identified with the (weight graded) vector space
$ \c[z].dz w(dw)^{m-1}$. The duals of $z^ndzw(dw)^{m-1}$ give generators of $ \bSym_k[\oplus_{i=1}^l \rHC_{\bullet-1}^{(m_i)}(\c[z,w])^{\ast}]$ having homological degree $-2m-1$ and weight $(n+1,m)$. Theorem~\ref{macdonqism} (more precisely, its proof) then implies the desired corollary.
\eproof

\subsection{Lie algebra (co)homology and parity}
For a fixed pair of integers $ (p,r) \in \Z^2 $, let $\mathfrak{a}_{p,r}\,:=\,\c.u \oplus \c.v$ denote the homologically graded abelian Lie algebra, with $u$ and $v$ having degrees $p$ and $r$ respectively. We now demonstrate that much of the behavior of $\DRep_{\g}(\mathfrak{a}_{p,r})^G$ depends only on the parity of $p$ and $r$.

\subsubsection{Functors on complexes}
Let $\Gamma$ be an abelian group. Let $\Gamma\text{-}\Com$ denote the category of $\Gamma$-weight graded complexes of $\c$-vector spaces. In other words, any $V \,\in\,\Gamma\text{-}\Com$ is a direct sum of subcomplexes
$$V\,:=\,\oplus_{\gamma \in \Gamma} V_{\gamma} \,\text{.}$$
Here, $V_{\gamma}$ is the subcomplex of $V$ of weight $\gamma$. Let $\Gamma\text{-}\Com_2$ denote the category of $\Gamma$-weight graded $\Z_2$-homologically graded complexes. One has a functor
$$ \Pi\,:\,\Gamma\text{-}\Com \rar \Gamma\text{-}\Com_2\,,\quad \,V \mapsto (\oplus_{n \in \Z} V_{2n} \rightleftarrows \oplus_{n \in \Z} V_{2n+1})\,\text{.}$$
The functor $\Pi$ (for ``parity") remembers only the parity of the homological grading while retaining the $\Gamma$-weight grading. Note that the functor $\Pi$ is faithful but not full. For most of this section, $\Gamma\,=\,\Z^2$. The following lemma is obvious.
\blemma \la{par}
A morphism $\phi\,\in\,\Gamma\text{-}\Com$ is a quasi-isomorphism iff $\Pi(\phi)$ is a quasi-isomorphism in $\Gamma\text{-}\Com_2$.
\elemma

For any $k \geq 1$, there is a functor
$$ \mathrm{F}_k\,:\, \Z^2\text{-}\Com \rar \Z\text{-}\Com\,,$$
that assigns to a $\Z^2$-weight graded complex $V\,=\,\oplus_{(a,b)\,\in\,\Z^2} V_{(a,b)}$ the $\Z$-weight graded complex $\mathrm{F}_k(V)$ with component of weight $r$ being $\mathrm{F}_k(V)_r\,=\,\oplus_{a+kb\,=\,r} V_{(a,b)}$. Note that  $\mathrm{F}_k$ only changes weights without changing the homological grading or differential.

\subsubsection{}
 As before, we equip $u$ and $v$ with $\Z^2$-weights $(1,0)$ and $(0,1)$ respectively. The proof of the following lemma is essentially the same as that of Lemma~\ref{drepgl}.
\blemma
\la{drepglgen}
There is an isomorphism of DG algebras
$$
\DRep_{\g}(\mathfrak{a}_{p,r})^G \,\cong\, \CE(\g[z,w],\g; \c)^{\ast}\ ,
$$
where $z$ has degree $-(p+1)$ and weight $(1,0)$ and $w$ has  degree $-(r+1)$ and weight $(0,1)$.
\elemma
Hence
\bprop
\la{parity}
$(i)$ The weighted Euler characteristic of
$\DRep_{\g}(\mathfrak{a}_{p,r})^G $ depends only on the parities of $p$ and $r$.\\
$(ii)$ If either the derived Harish-Chandra map
$$ \Phi_{\g} \,:\,\DRep_{\g}(\mathfrak{a})^G \rar \bSym_{\c}(\h^{\ast} \otimes \overline{\bSym^c}(\mathfrak{a}[1]))^W $$
or the Drinfeld trace map
$$\bSym_k[\Tr_{\g}(\mathfrak{a})]\,:\, \bSym_k[\oplus_{i=1}^l \L \lambda^{(d_i)}(\mathfrak{a})] \rar \DRep_{\g}(\mathfrak{a})^G $$
is a quasi-isomorphism for $\mathfrak{a}_{p,r}$, then it is a quasi-isomorphism for $\mathfrak{a}_{p',r'}$ whenever $p, p'$ and $r, r'$ have the same parity.
\eprop
\begin{proof}
 Note that if $p,p'$ and $r,r'$ have the same parity,
  $$ \Pi[\Phi_{\g}(\mathfrak{a}_{p,r})]\,=\,\Pi[\Phi_{\g}(\mathfrak{a}_{p',r'})]\,,\,\,\,\,\, \Pi(\bSym_k[\Tr_{\g}(\mathfrak{a}_{p,r})])\,=\,\Pi(\bSym_k[\Tr_{\g}(\mathfrak{a}_{p',r'})])\,\text{.}$$
 The desired proposition therefore follows from Lemma~\ref{par}
\end{proof}
Thus, when $p, r$ are both {\it even}, we expect that the derived Harish-Chandra homomorphism
gives the quasi-isomorphism
$$
\DRep_{\g}(\mathfrak{a}_{p,r})^G \stackrel{\sim}{\to}
\bSym(\h^{\ast}[p] \oplus \h^{\ast}[r] \oplus \h^{\ast}[p+r+1])^W
$$
and we get Conjecture~\ref{conj3} and Conjecture~\ref{conj4}.

On the other hand, when $p,r$ are of
{\it opposite} parity, by Theorem~\ref{macdonqism}, the Drinfeld trace map must be a quasi-isomorphism, so that the homology
of $ \DRep_{\g}(\mathfrak{a}_{p,r})^G $ is a free graded commutative algebra: and we get the classical $(q,t)$-Macdonald identity (see Section~\ref{eulermacd} below).

\subsection{Euler characteristics} \la{eulermacd}

Let $B_{\g}\,:=\, \CE(\g[z,w]/\g;\c)$. It follows from Lemma~\ref{drepgl} that the weighted Euler characteristic of $\DRep_{\g}(\mathfrak{a})^G $ coincides with that of its bigraded dual, namely, $B_{\g}^{\ad\,\g}\,=\,\CE(\g[z,w],\g;\c)$.  We compute this Euler characteristic by computing the character valued Euler characteristic
$$\chi(B_{\g},q,t,e^h)\,:=\, \sum_{a,b \geq 0}\sum_{i \in \Z} (-1)^i \mathrm{Tr}(e^h|_{(B_{\g})_{a,b}})q^at^b \,\text{.}$$
\blemma \la{chimd1}
$$
 \chi(B_{\g},q,t,e^h)\,=\, \prod_{n \geq 0} \left[\left(\frac{1-q^{n+1}}{1-q^nt}\right)^l \,\prod_{ \alpha \in R} \frac{1-q^{n+1}e^{\alpha}}{1-q^nte^{\alpha}}\right] \,\text{.}
$$
where $l:=\dim_{\c}\,\h$ is the rank of $\g$.
\elemma

\bproof
The proof is identical to that of Lemma~\ref{eulie1}.
\eproof

Note that $\,B_{\g}^G = B_{\g}^{\ad\,\g}\,$, where $G$ is the complex reductive Lie group whose Lie algebra is $\g$. Hence,
\bcor \la{chimd2}
\begin{equation} \la{macde0}
\chi(B_{\g}^{\ad\,\g},q,t)\,=\, \frac{1}{|W|} \prod_{n \geq 0} \left(\frac{1-q^{n+1}}{1-q^nt}\right)^l\, \mathrm{CT}
\left\{\prod_{n \geq 0} \prod_{ \alpha \in R} \frac{1-q^{n}e^{\alpha}}{1-q^nte^{\alpha}} \right\} \text{.}
\end{equation}
\ecor

\bproof
The proof is similar to the proof of Corollary~\ref{corlie1} from Lemma~\ref{eulie1}.
\eproof

As a consequence, we obtain the following proposition:

\bprop
The following identity holds:
\begin{equation}
\la{qtid}
\frac{1}{|W|} \mathrm{CT}\left\{\prod_{n \geq 0} \prod_{ \alpha \in R} \frac{1-q^{n}e^{\alpha}}{1-q^nte^{\alpha}} \right\}
  =  \prod_{n \geq 0} \prod_{i=1}^l  \frac{(1-q^nt)(1-q^{n+1}t^{m_i})}{(1-q^{n+1})(1-q^nt^{m_i+1})} \,,
\end{equation}
where $m_i\,:=\,d_i-1$ are the exponents of the Lie algebra $\g$.
\eprop

\bproof
Let $l$ bs the rank of $\g$. The Euler characteristic of $\DRep_{\g}(\mathfrak{a})^G$ equals that of its homology. By Corollary~\ref{rephommacd}, the latter Euler characteristic is equal to
\begin{equation} \la{macde1}
\prod_{n \geq 0} \prod_{i=1}^l  \frac{1-q^{n+1}t^{m_i}}{1-q^nt^{m_i+1}} \ .
\end{equation}
On the other hand, by Lemma~\ref{drepgl}, the Euler characteristic of $\DRep_{\g}(\mathfrak{a})^G$ is equal to that of $B_{\g}^{\ad\,\g}$. By Corollary~\ref{chimd2}, the Euler characteristic of $B_{\g}^{\ad\,\g}$ also equals the right hand side of~\eqref{macde0}. Equating the right hand side of~\eqref{macde0} with the expression~\eqref{macde1} and multiplying both sides by $\prod_{n \geq 0} \left(\frac{1-q^{n+1}}{1-q^nt}\right)^l$, we obtain the desired identity.
\eproof

\noindent
\textbf{Remark}.
The left-hand side of Eq.~\eqref{qtid} is exactly the expression denoted by
$\frac{1}{|W|} [\Delta_{q^{\frac{1}{2}}, t^{\frac{1}{2}}}]_0 $ in~\cite{Kir}. Thus
\eqref{qtid} is equivalent to the standard $(q,t)$-version of Macdonald's constant term identity
(see~\cite[Eq. ~2.7]{Kir}), which is the special case of the inner product identity  for
$ P_{\lambda} = 1 $ (see~\cite[Theorem 2.4]{Kir}).

\subsubsection{The $q$-Macdonald identity}. Let $V\,\in\,\Z^2\text{-}\Com$ be such that the subcomplexes $V_{a,b}$ of weight $(a,b)$ are finite-dimensional and nonzero only for $a,b \geq 0$. Recall the definition of the functor $\mathrm{F}_k\,:\, \Z^2\text{-}\Com \rar \Z\text{-}\Com$ above. There is an equality of Euler characteristics
\begin{equation*} \chi(V, q, q^k) \,=\, \chi(\mathrm{F}_k(V), q)\,\text{.} \end{equation*}
As a result, we get
\blemma \la{qmacd}
$$ \chi(\CE(\g[z]/z^k, \g;\c), q)\,=\,\chi(B_{\g}^{\ad\,\g}, q, q^k)\,\text{.}$$
\elemma
\bproof
Consider the DG Lie algebra
$ \g_{\partial}[z,w]\,:=\,(\g[z,w]\,,\,\,\partial w=z^k)$, which is quasi-isomorphic to $ \g[z]/z^k$. Hence, $\CE(\g_{\partial}[z,w], \g;\c)$ is quasi-isomorphic to $\CE(\g[z]/z^k, \g;\c)$. Note that the differential on $\CE(\g_{\partial}[z,w], \g;\c)$ is obtained by twisting that on $B_{\g}^{\ad\,\g}\,=\,\CE(\g[z,w], g;k)$ with the differential induced by $\partial$. The latter differential also preserves weights provided we apply the functor $\mathrm{F}_k$ to $B_{\g}^{\ad\,\g}$. Hence,
$$ \chi(\CE(\g[z]/z^k, \g;\c), q)\,=\, \chi(\CE(\g_{\partial}[z,w], \g;\c), q)\,=\, \chi(\mathrm{F}_k(B_{\g}^{\ad\,\g}), q)\,=\, \chi(B_{\g}^{\ad\,\g}, q, q^k)\,\text{.}$$
This finishes the proof of the lemma.
\eproof
On the other hand, the identity~\eqref{qtid} implies that
\blemma
The following identity holds:
\begin{equation}
\la{qid}
\frac{1}{|W|} \mathrm{CT}\left\{\prod_{j=0}^{k-1} \prod_{ \alpha \in R} (1-q^je^{\alpha}) \right\}
= \prod_{i=1}^l  \prod_{j=1}^{k-1} \frac{1-q^{km_i+j}}{1-q^j}
=  \prod_{i=1}^l  \binom{kd_i-1}{k-1}_q
\end{equation}
\elemma
\bproof
Putting $t=q^k$ in~\eqref{qtid}, we obtain~\eqref{qid}. Equivalently, we can multiply both sides of~\eqref{qid} by $ \prod_{j=1}^{k-1} (1-q^j)^l$ to obtain:
\begin{equation} \la{macde2}
\frac{1}{|W|}\prod_{j=1}^{k-1} (1-q^j)^l\, \mathrm{CT}\left\{ \prod_{j=0}^{k-1} \prod_{ \alpha \in R} (1-q^je^{\alpha}) \right\} = \prod_{i=1}^l  \prod_{j=1}^{k-1} (1-q^{km_i+j})
\end{equation}
By Corollary~\ref{chimd2} and Lemma~\ref{qmacd}, the left-hand side of the above identity is the Euler characteristic of $\CE(\g[z]/z^k, \g;\c)$. That this Euler characteristic equals the right hand side of the above identity follows from~\cite[Theorem A]{FGT}, which explicitly computes $\H_{\bullet}(\g[z]/z^k, \g;\c)$ as a free bigraded commutative algebra and specifies the weights and homological degrees of a set of homogeneous generators. We remind the reader that~\cite[Theorem A]{FGT} is proven from~\cite[Theorem 1.5]{FGT} by replacing $\g[z]/z^k$ by the quasi-isomorphic DG Lie algebra $\g_{\partial}[z,w]$ and computing $\H_{\bullet}(\g_{\partial}[z,w],g;\c)$ by appealing to~\cite[Theorem 1.5]{FGT} (which computes $\H_{\bullet}(\g[z,w],\g;\c)$) and using a simple spectral sequence argument.
\eproof

\noindent
\textbf{Remark}. Being the Euler characteristic of $\CE(\g[z]/z^k, \g;\c)$, the left-hand side of~\eqref{macde2}  can be rewritten as
$$
\int_G \,\prod_{j=1}^{k-1} \mathrm{det}(1-q^j\mathrm{Ad}g) dg \ .
$$
The identity~\eqref{qid} is thus equivalent to
\begin{equation} \la{macde3}
\int_G \, \prod_{j=1}^{k-1} \mathrm{det}(1-q^j\mathrm{Ad}g) dg =  \prod_{i=1}^l  \prod_{j=1}^{k-1} (1-q^{km_i+j})\ ,
\end{equation}
which is a version of the original Macdonald identity (see ~\cite[Conjecture 3.1$'\,$]{M}).

\vspace{2ex}

\appendix

\section{Derived representation schemes of algebras over an operad}
\la{appendix}
In this Appendix, we construct (derived) representation schemes for (DG) algebras
over an arbitrary binary quadratic operad. Our generalization covers the representation schemes of associative algebras studied in \cite{BKR, BR}, the representation algebras defined in~\cite{Tu} as well as the representation schemes of Lie algebras introduced in Section~\ref{sec6}. We also construct canonical trace maps from operadic cyclic homology to representation homology, generalizing the derived character maps of \cite{BKR}. For cyclic operads our notion of operadic cyclic homology agrees with that of~\cite{GK}. The main result of this section, Theorem~\ref{dreplieoperads}, unifies Theorem~\ref{main} and Theorem~\ref{drepg}.

Throughout this section, $\,\mathcal P$ will denote a finitely generated binary quadratic operad
and $\mathcal Q = {\mathcal P}^{!} $ will stand for its (quadratic) Koszul dual. Unlike Getzler and Kapranov~\cite{GK} who work with cyclic operads, which are not necessarily binary quadratic, we will work with binary quadratic operads, which are not necessarily cyclic. This is due to the following fact that we need for Theorem~\ref{dreplieoperads}:  if $A$ is a DG $\mathcal P$-algebra and if $B$ is a DG $\mathcal Q$-algebra then $A \otimes B$ has a natural DG Lie algebra structure (see~\cite[Prop. 7.6.5]{LV}).

\subsection{Internal $\Hom$-functor and the representation functor}

 Let $\mathtt{DGPA}$ (resp., $\mathtt{DGQA}$) denote the category of DG $\mathcal P$-algebras (resp., DG $\mathcal Q$-algebras) over $k$. If $\mathfrak{C}$ is a DG $\mathcal P$-coalgebra, the complex $\mathbf{Hom}(\mathfrak{C}, \bar{B}) $
naturally acquires the structure of a DG $\mathcal P$-algebra for any $B \in \cDGA_{k/k}$.  We therefore have a functor
 $$ \mathbf{Hom}_{\ast}(\mathfrak{C},\,\mbox{--}\,)\,:\, \cDGA_{k/k} \rar \mathtt{DGPA} \,,\,\,\,\, B \mapsto \mathbf{Hom}(\mathfrak{C},\bar{B})\,, $$
where $\ast$ indicates that we form the convolution algebra with the augmentation ideal $\bar{B}$. Let $A\,\in\,\mathtt{DGPA}$. Each generating operation $m\,\in\,\mathcal P(2)$ gives $k$-linear maps
$$m_A\,:\,A \otimes A \rar A\,,\,\,\,\, m_{\mathfrak{C}}\,:\,\mathfrak{C} \rar \mathfrak{C} \otimes \mathfrak{C}\,\text{.}$$
For $a,b\,\in\,A$ and $x \in \mathfrak{C}$, define $(m_{\mathfrak{C}}(x),a,b)\,\in\,\bSym_k({\mathfrak{C}} \otimes A)$ to be the image of the element $x \otimes a\otimes b\,\in\,{\mathfrak{C}} \otimes A \otimes A$ under the composite map
$$\begin{diagram} {\mathfrak{C}} \otimes A \otimes A & \rTo^{\,\,\,\,m_{\mathfrak{C}} \otimes \id \otimes \id\,\,\,\,} & {\mathfrak{C}} \otimes {\mathfrak{C}} \otimes A \otimes A & \rTo^{\,\,\,\,\id \otimes \tau_{23} \otimes \id\,\,\,\,} & ({\mathfrak{C}} \otimes A) \otimes ({\mathfrak{C}} \otimes A) & \rOnto^{\,\,\mathtt{can}\,\,} & \bSym^2({\mathfrak{C}} \otimes A) & \rInto & \bSym_k({\mathfrak{C}} \otimes A) \end{diagram}$$
The proof of the following proposition is a straightforward generalization of that of Proposition~\ref{lierep}. We therefore, leave it to the interested reader.
\bprop
\la{genoperadrep}
The functor $\mathbf{Hom}_{\ast}({\mathfrak{C}},\,\,\mbox{--}\,)\,:\,\cDGA_{k/k} \rar \mathtt{DGPA} $ has a left adjoint
$${\mathfrak{C}} \ltimes \,\mbox{--}\,\,:\,\mathtt{DGPA}_k \rar \cDGA_{k/k}\,,\,\,\,\, A \mapsto {\mathfrak{C}} \ltimes A \,:=\,\bSym_k({\mathfrak{C}} \otimes A)/{\mathcal I}_{A,{\mathfrak{C}}}\,,$$
where $\mathcal I_{A,{\mathfrak{C}}}$ is the ideal generated by the elements
$$ x \otimes m_A(a,b)-(m_{\mathfrak{C}}(x),a,b)\,, m \in \mathcal P(2)\,,\,x\,\in\,{\mathfrak{C}}\,,\,a,b \,\in A\,,\text{.}$$
In particular, there is a natural isomorphism
\begin{equation} \la{natadjoperads} \Hom_{\mathtt{DGPA}}(A, \mathbf{Hom}_{\ast}({\mathfrak{C}}, B))\,\cong\, \Hom_{\cDGA_{k/k}}({\mathfrak{C}} \ltimes A ,B)\,\text{.} \end{equation}
\eprop

The category $\mathtt{DGPA}$ has a natural model structure where fibrations are degreewise surjections and weak equivalences are quasi-isomorphisms (see~\cite{H}). It is easy to verify that the functor $\mathbf{Hom}_{\ast}({\mathfrak{C}},\,\,\mbox{--}\,)$ preserves fibrations (i.e, degreewise surjections) and acyclic fibrations. Hence,

\begin{theorem} \la{augrepm}
$\mathrm{(a)}$ The following functors form a Quillen pair:
$${\mathfrak{C}} \ltimes \,\mbox{--}\,\,:\,\mathtt{DGPA} \rightleftarrows \cDGA_{k/k}\,:\,\mathbf{Hom}_{\ast}({\mathfrak{C}},\,\mbox{--}\,) \,\text{.}$$

$\mathrm{(b)}$ The functor ${\mathfrak{C}} \ltimes \,\mbox{--}\, \,:\,\mathtt{DGPA} \rightleftarrows \cDGA_{k/k}$ has a (total) left derived functor
$$ {\mathfrak{C}} \stackrel{\L}{\ltimes} \,\mbox{--}\, :\,\Ho(\mathtt{DGPA}) \rar \Ho(\cDGA_{k/k})\,,\,\,\,\,A \mapsto {\mathfrak{C}} \ltimes QA\,,$$
where $QA \stackrel{\sim}{\twoheadrightarrow} A$ is any cofibrant resolution of $A$ in $\mathtt{DGPA}$.

$\mathrm{(c)}$ For any $A \in \mathtt{DGPA}$ and $B \in \cDGA_{k/k}$, there is a canonical isomorphism
$$ \Hom_{\Ho(\mathtt{DGPA})}(A, \mathbf{Hom}({\mathfrak{C}}, \bar{B}))\,\cong\,\Hom_{\Ho(\cDGA_{k/k})}({\mathfrak{C}} \ltimes A , B)\,\text{.}$$
\end{theorem}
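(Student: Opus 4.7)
My plan is to verify the Quillen adjunction of part (a); parts (b) and (c) will then follow by standard model-categorical machinery. The adjunction itself has already been established in Proposition~\ref{genoperadrep}, so the only content of (a) is the Quillen condition, for which it suffices (see~\cite[Lemma 9.9]{DS}) to check that the right adjoint $\mathbf{Hom}_{\ast}(\mathfrak{C}, \mbox{--})$ preserves fibrations and acyclic fibrations.

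Preservation of fibrations is formal: for a degreewise surjection $f: B \twoheadrightarrow B'$ in $\cDGA_{k/k}$, the induced map $\bar{f}: \bar{B} \to \bar{B'}$ is a degreewise surjection of complexes of $k$-vector spaces, and since $k$-linear surjections split, the maps $\Hom_k(\mathfrak{C}_n, \bar{B}_{n+p}) \to \Hom_k(\mathfrak{C}_n, \bar{B'}_{n+p})$ are surjective for all $n, p$; taking products yields a degreewise surjection $\mathbf{Hom}(\mathfrak{C}, \bar{B}) \to \mathbf{Hom}(\mathfrak{C}, \bar{B'})$, hence a fibration in $\mathtt{DGPA}$. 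Preservation of acyclic fibrations reduces, via the long exact homology sequence of the (degreewise split) kernel sequence, to showing that $\mathbf{Hom}(\mathfrak{C}, K)$ is acyclic whenever $K$ is acyclic. I would prove this by a Mittag-Leffler argument: filter $\mathfrak{C}$ by the sub-DG-coalgebras $F^m\mathfrak{C} := \bigoplus_{i \le m} \mathfrak{C}_i$ (stable under the degree $-1$ differential), with successive quotients $F^m\mathfrak{C}/F^{m-1}\mathfrak{C} \cong \mathfrak{C}_m$ concentrated in a single degree. Since products commute with homology over a field, $\mathbf{Hom}(\mathfrak{C}_m, K) \cong \prod_{\mathfrak{C}_m} K$ is acyclic, and induction on $m$ using the degreewise-split short exact sequences $0 \to F^{m-1}\mathfrak{C} \to F^m\mathfrak{C} \to \mathfrak{C}_m \to 0$ shows that every $\mathbf{Hom}(F^m\mathfrak{C}, K)$ is acyclic. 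The filtration is exhaustive, so $\mathbf{Hom}(\mathfrak{C}, K) = \plim_m \mathbf{Hom}(F^m\mathfrak{C}, K)$; the transition maps in this tower are degreewise surjective (again by splitting of $k$-linear surjections), and the Milnor $\plim^1$ short exact sequence then forces $\H_\bullet[\mathbf{Hom}(\mathfrak{C}, K)] = 0$.

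Part (b) is then immediate from Ken Brown's lemma (cf.~\cite[Lemma A.2]{BKR}): any left Quillen functor admits a total left derived functor computed by cofibrant replacement. Part (c) is the derived adjunction of a Quillen pair, a formal consequence of (a). The main obstacle in the whole argument, and the one point requiring real work, is the filtration-plus-Mittag-Leffler step establishing that $\mathbf{Hom}_{\ast}(\mathfrak{C}, \mbox{--})$ preserves quasi-isomorphisms; once this is in hand, the rest is either formal or parallels the proofs of Theorem~\ref{nS2.1t2} and Theorem~\ref{dlierep}, of which the present result is a common operadic generalization.
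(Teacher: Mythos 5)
Your overall strategy matches the paper's: the theorem is reduced to checking that $\mathbf{Hom}_{\ast}(\mathfrak{C}, \mbox{--})$ preserves fibrations and acyclic fibrations, and then (b), (c) follow formally from Brown's lemma and the Quillen adjunction. The paper itself simply asserts this preservation is ``easy to verify'' and gives no details, so your proposal is an expansion of the same route. The fibration part, and the reduction of acyclic fibrations to acyclicity of $\mathbf{Hom}(\mathfrak{C}, K)$ for $K$ acyclic via the degreewise-split kernel sequence, are both correct.

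However, your Mittag--Leffler step has a genuine gap. You filter $\mathfrak{C}$ by $F^m\mathfrak{C} := \bigoplus_{i\le m}\mathfrak{C}_i$ and argue ``by induction on $m$'' that each $\mathbf{Hom}(F^m\mathfrak{C},K)$ is acyclic, but this induction has no base case unless $\mathfrak{C}$ is bounded below: for every $m$, the complex $F^m\mathfrak{C}$ is still potentially unbounded below, so you never hit a trivial starting point. Nothing in the statement restricts the homological support of $\mathfrak{C}$, so as written the argument does not go through in general. Moreover, the whole filtration-plus-$\plim^1$ machinery is heavier than needed. The simpler observation, which is surely what the authors regard as ``easy,'' is that over a field $k$ an acyclic complex $K$ is automatically \emph{contractible}: choose a contracting homotopy $h$ of degree $+1$ with $d_K h + h\,d_K = \id_K$. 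Then $H(f) := h \circ f$ defines a degree $+1$ endomorphism of $\mathbf{Hom}(\mathfrak{C},K)$, and a short sign computation using the convolution differential $df = d_K\circ f - (-1)^{|f|} f\circ d_{\mathfrak{C}}$ gives $dH + Hd = \id$, so $\mathbf{Hom}(\mathfrak{C},K)$ is itself contractible, hence acyclic. This argument is shorter, requires no boundedness hypothesis on $\mathfrak{C}$, and avoids the inverse-limit bookkeeping entirely. If you insist on the filtration route you must either impose a bound below on $\mathfrak{C}$ or replace the naive induction by something that handles the unbounded case (e.g.\ appeal to contractibility of $K$ anyway, at which point the filtration is superfluous).
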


We define \emph{derived representation scheme} of $A$ over coalgebra $\mathfrak{C}$ by
$$ \DRep_{\mathfrak{C}}(A) \,:=\, {\mathfrak{C}} \stackrel{\L}{\ltimes} A \,\in \Ho(\cDGA_{k/k})$$
and the \emph{representation homology} of $A$ over ${\mathfrak{C}}$ by
$$ \H_{\bullet}(A, {\mathfrak{C}})\,:=\,\H_{\bullet}[\DRep_{\mathfrak{C}}(A)]\,\text{.}$$

\subsection{Representation homology vs Lie homology: the operadic setting} \la{repoperadalg}

Recall that $\mathcal Q$ denotes the Koszul dual of the operad $\mathcal P$. The main result of this section, Theorem~\ref{dreplieoperads},  identifies the representation homology $\H_{\bullet}(A,{\mathfrak{C}})$ with the homology of the Lie coalgebra ${\mathfrak{C}} \otimes C$ constructed over a $\mathcal Q$-coalgebra $C$ Koszul dual to $A$: this unifies Theorem~\ref{main} and Theorem~\ref{drepg} in the main body of the paper. We begin by recalling some facts on twisting chains and Koszul duality in the operadic setting. The main reference for this material is~\cite{LV}.

\subsubsection{Twisting chains and Koszul duality}

Let $\mathtt{DGQC}$ denote the category of conilpotent DG $\mathcal Q$-coalgebras. Let $A \in \mathtt{DGPA}$ and $C \in \mathtt{DGQC}$. The following proposition is a special case of~\cite[Proposition 11.1.1]{LV}.

\blemma \la{liehomoperad}
$\mathrm{(a)}$ The complex $\mathbf{Hom}(C, A)$ has the natural structure of a DG Lie algebra.

$\mathrm{(b)}$ If ${\mathfrak{C}}$ is a DG $\mathcal P$-coalgebra, the complex ${\mathfrak{C}} \otimes C$ has the natural structure of a (conilpotent) DG Lie coalgebra.
\elemma

\begin{proof}
We note that $\mathbf{Hom}(C, A)$ has the natural structure of an algebra over the Hadamard product $\mathcal P \otimes_{\H} \mathcal Q$ of the operads $\mathcal P$ and $\mathcal Q$ (see~\cite[Section 5.1.12]{LV}). The Lie algebra structure on $\mathbf{Hom}(C, A)$ comes from the morphism of operads $\mathtt{Lie} \rar \mathcal P \otimes_{\H} \mathcal Q$ in~\cite[Prop. 7.6.5]{LV}. This proves (a).

 On the other hand, ${\mathfrak{C}} \otimes C$ is naturally a DG coalgebra over the operad $\mathcal P \otimes_{\H} \mathcal Q$. The DG Lie coalgebra structure on ${\mathfrak{C}} \otimes C$ then comes from the morphism of operads $\mathtt{Lie} \rar \mathcal P \otimes_{\H} \mathcal Q$ in~\cite[Prop 7.6.5]{LV}. The conilpotency of ${\mathfrak{C}} \otimes C$ follows from the conilpotency of $C$. This proves (b).

\end{proof}

A {\it twisting chain} from $C$ to $A$ is a degree $-1$ element $\tau\,\in\, \mathbf{Hom}(C, A)$ satisfying the Maurer-Cartan equation
$$d\tau+\frac{1}{2}[\tau\,,\,\tau] \,=\,0 \,\text{.}$$
Let $\Tw(C, A)$ denote the set of all twisting chains from $C$ to $A$. It can be shown (see~\cite[Prop. 11.3.1]{LV}) that, for a fixed $\mathcal P$-algebra $ A $, the functor
$$
\Tw(\mbox{--}, A):\,\mathtt{DGQC} \to \Sets\ ,\quad C \mapsto \Tw(C,A)\ ,
$$
is representable; the corresponding coalgebra $ \bB_{\mathcal P}(A) \in \mathtt{DGQC} $ is called the {\it bar construction} of $ A \,$ (see~\cite[Sect. 11.2.1]{LV}). Dually, for a fixed coalgebra $ C $, the functor
$$
\Tw(C, \mbox{--}):\,\mathtt{DGPA} \to \Sets\ ,\quad A \mapsto \Tw(C,A)\ ,
$$
is corepresentable; the corresponding algebra $ \bOmega_{\mathcal Q}(C) \in \mathtt{DGPA} $ is called the
{\it cobar construction} of $ C \,$ (see~\cite[Sect. 11.2.5]{LV}).
Thus, we have canonical isomorphisms
\begin{equation}
\la{fismoperad}
\Hom_{\mathtt{DGPA} }(\bOmega_{\mathcal Q}(C), A)\, = \,\Tw(C,A)\, =\, \Hom_{\mathtt{DGQC}}(C, \bB_{\mathcal P}(A))
\end{equation}
showing that $ \bOmega_{\mathcal Q}: \mathtt{DGQC} \rightleftarrows \mathtt{DGQC}: \bB_{\mathcal P} $ are adjoint functors. We say that $C\,\in\,\mathtt{DGQC}$ is {\it Koszul dual} to $A \in \mathtt{DGPA} $ if there is a quasi-isomorphism
$\cb_{\mathcal Q}(C) \stackrel{\sim}{\rar} A $ in $\mathtt{DGPA}$. Note that since $\cb_{\mathcal Q}(C)$ is free as a graded $\mathcal P$-algebra (see~\cite[Sect. 11.2.5]{LV}),
$\cb_{\mathcal Q}(C) \stackrel{\sim}{\rar} A$ is a cofibrant resolution of $A$ in $\mathtt{DGPA}$ when $C$ is Koszul dual to $A$. Further, if $ {\mathcal P} $ is a Koszul operad,
there is at least one $C \in \mathtt{DGQC}$ Koszul dual to $A$, namely,
$\bB_{\mathcal P}(A)$ (see \cite[Sect. 11.3.3]{LV}).

\subsubsection{} For $C\,\in\, \mathtt{DGQC} $ and for any DG $\mathcal P$-coalgebra ${\mathfrak{C}}$, let ${\mathcal Lie}^c({\mathfrak{C}} \otimes C)$ denote ${\mathfrak{C}} \otimes C$ equipped with the Lie coalgebra structure from Lemma~\ref{liehomoperad}. Then,

\begin{theorem} \la{dreplieoperads}
There is a natural isomorphism of functors from $\,\mathtt{DGQC} $ to $ \cDGA_{k/k}$:
$$ ({\mathfrak{C}} \ltimes \,\mbox{--}\,) \circ \cb_{\mathcal Q}(\,\mbox{--}\,) \,\cong\, \cb_{\mathtt{Lie}}[{\mathcal Lie}^c({\mathfrak{C}} \otimes\,\mbox{--}\,)]\, \text{.}$$
As a result, if $C \in \mathtt{DGQC}$ is Koszul dual to $A \in \mathtt{DGPA}$, there are isomorphisms in $\Ho(\cDGA_{k/k})$
$$ \DRep_{\mathfrak{C}}(A)\,\cong\,\C^c(\mathcal Lie^c({\mathfrak{C}} \otimes C); k)\,\text{.}$$
Consequently,
$$\H_{\bullet}(A,{\mathfrak{C}})\,\cong\,\H_{\bullet}({\mathcal Lie}^c({\mathfrak{C}} \otimes C);k) \,\text{.}$$
\end{theorem}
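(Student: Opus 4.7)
The plan is to mimic the associative-algebra proof of Proposition~\ref{ncmain} in the operadic setting, chaining together four adjunctions and then invoking Yoneda. Fix $C\in\mathtt{DGQC}$ and an arbitrary $B\in\cDGA_{k/k}$. First I would apply the adjunction of Proposition~\ref{genoperadrep} (its natural isomorphism \eqref{natadjoperads}) to get
\[
\Hom_{\cDGA_{k/k}}\!\bigl({\mathfrak{C}}\ltimes\cb_{\mathcal Q}(C),\,B\bigr)\;\cong\;\Hom_{\mathtt{DGPA}}\!\bigl(\cb_{\mathcal Q}(C),\,\mathbf{Hom}_{\ast}({\mathfrak{C}},B)\bigr).
\]
Next, I would apply the operadic bar/cobar adjunction \eqref{fismoperad} to rewrite the right-hand side as the Maurer--Cartan set
$\Tw\bigl(C,\,\mathbf{Hom}_{\ast}({\mathfrak{C}},B)\bigr)=\mathbf{MC}\bigl(\mathbf{Hom}(C,\mathbf{Hom}({\mathfrak{C}},\bar B))\bigr)$, where the convolution Lie bracket comes from Lemma~\ref{liehomoperad}(a) applied to the DG $\mathcal Q$-coalgebra $C$ and the DG $\mathcal P$-algebra $\mathbf{Hom}_{\ast}({\mathfrak{C}},B)$.

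The third step is the tensor--hom identification
\[
\mathbf{Hom}\bigl(C,\mathbf{Hom}({\mathfrak{C}},\bar B)\bigr)\;\cong\;\mathbf{Hom}({\mathfrak{C}}\otimes C,\bar B)
\]
together with the observation that, under this identification, the convolution Lie bracket above coincides with the convolution Lie bracket arising from viewing ${\mathfrak{C}}\otimes C={\mathcal Lie}^{c}({\mathfrak{C}}\otimes C)$ as a DG Lie coalgebra (via Lemma~\ref{liehomoperad}(b)) and $\bar B$ as a commutative DG algebra. This compatibility is the key point, and is the main obstacle: both brackets are obtained by transport along the canonical operad morphism $\mathtt{Lie}\to \mathcal P\otimes_{\H}\mathcal Q$ of \cite[Prop.~7.6.5]{LV}, so the verification reduces to unwinding the Hadamard product structure and the associativity of the convolution construction under hom--tensor duality. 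Once this is checked, the Maurer--Cartan set becomes $\Tw\bigl({\mathcal Lie}^{c}({\mathfrak{C}}\otimes C),\,B\bigr)$, where $\Tw$ now denotes Lie twisting cochains.

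For the fourth step I would apply the Lie bar/cobar adjunction \eqref{cbbcom} to obtain
\[
\Tw\bigl({\mathcal Lie}^{c}({\mathfrak{C}}\otimes C),\,B\bigr)\;\cong\;\Hom_{\cDGA_{k/k}}\!\bigl(\cb_{\mathtt{Lie}}({\mathcal Lie}^{c}({\mathfrak{C}}\otimes C)),\,B\bigr).
\]
Composing the four natural bijections and appealing to Yoneda yields the desired isomorphism of functors ${\mathfrak{C}}\ltimes\cb_{\mathcal Q}(\,\mbox{--}\,)\cong\cb_{\mathtt{Lie}}[{\mathcal Lie}^{c}({\mathfrak{C}}\otimes\,\mbox{--}\,)]$.

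For the second statement, if $\cb_{\mathcal Q}(C)\xrightarrow{\sim} A$ is the Koszul resolution, then since $\cb_{\mathcal Q}(C)$ is cofibrant in $\mathtt{DGPA}$ (being quasi-free as a $\mathcal P$-algebra), Theorem~\ref{augrepm}(b) gives $\DRep_{\mathfrak{C}}(A)\cong{\mathfrak{C}}\ltimes\cb_{\mathcal Q}(C)$ in $\Ho(\cDGA_{k/k})$, which by the functor isomorphism just established is canonically $\cb_{\mathtt{Lie}}({\mathcal Lie}^{c}({\mathfrak{C}}\otimes C))=\C^{c}({\mathcal Lie}^{c}({\mathfrak{C}}\otimes C);k)$; the identification of representation homology with Lie-coalgebra homology is then immediate. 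Note that Theorem~\ref{main} arises by taking $\mathcal P=\mathtt{Ass}$, ${\mathfrak{C}}=\M_{n}^{\ast}(k)$, and Theorem~\ref{drepg} by taking $\mathcal P=\mathtt{Lie}$, ${\mathfrak{C}}=\mathfrak{g}^{\ast}$, which explains why this single statement unifies both.
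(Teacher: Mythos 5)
Your proof follows the paper's argument exactly: the same chain of four natural bijections (the $\ltimes/\mathbf{Hom}_{\ast}$ adjunction, the operadic bar/cobar adjunction, hom--tensor duality for convolution Lie algebras via Lemma~\ref{liehomoperad}, and the Lie cobar adjunction), capped by Yoneda, with the second and third statements deduced from cofibrancy of $\cb_{\mathcal Q}(C)$. You flag the compatibility of the two convolution Lie brackets under hom--tensor duality as the key point needing verification; the paper asserts this without detail, so your treatment is at least as careful.
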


\begin{proof}
Let $C \,\in\,\mathtt{DGQC}$. For any $B \in \cDGA_{k/k}$, we have natural isomorphisms
 \begin{eqnarray*}
                   \Hom_{\cDGA_{k/k}}({\mathfrak{C}} \ltimes \cb_{\mathcal Q}(C), B) & \cong &   \Hom_{\mathtt{DGPA}}(\cb_{\mathcal Q}(C), \mathbf{Hom}({\mathfrak{C}}, \bar{B}))\\
                                                                            & \cong & \Tw(C, \mathbf{Hom}({\mathfrak{C}}, \bar{B}))\\
                                                                            & \cong & \Tw({\mathcal Lie}^c({\mathfrak{C}} \otimes C), \bar{B})\\
                                                                            & \cong & \Hom_{\cDGA_{k/k}}(\cb_{\mathtt{Lie}}[{\mathcal Lie}^c({\mathfrak{C}} \otimes C)], B)
\end{eqnarray*}
The DG Lie algebra structure on $\mathbf{Hom}(C, \mathbf{Hom}({\mathfrak{C}}, \bar{B}))$ comes from Lemma~\ref{liehomoperad} and the fact that $\mathbf{Hom}({\mathfrak{C}}, \bar{B})$ acquires the structure of a $\mathcal P$-algebra. The third isomorphism above is from the isomorphism of DG Lie algebras $\mathbf{Hom}(C, \mathbf{Hom}({\mathfrak{C}}, \bar{B}))\,\cong\, \mathbf{Hom}({\mathcal Lie}^c({\mathfrak{C}} \otimes C), \bar{B})$ coming from hom-tensor duality (and Lemma~\ref{liehomoperad}).  The natural isomorphism
$$ {\mathfrak{C}} \ltimes \cb_{\mathcal Q}(C)\,\cong\, \cb_{\mathtt{Lie}}[{\mathcal Lie}^c({\mathfrak{C}} \otimes C)] \,=\, \C^c(\mathcal Lie^c({\mathfrak{C}} \otimes C); k)$$
follows from the natural isomorphism
$$  \Hom_{\cDGA_{k/k}}({\mathfrak{C}} \ltimes \cb_{\mathcal Q}(C), B)\,\cong\, \Hom_{\cDGA_{k/k}}(\cb_{\mathtt{Lie}}[{\mathcal Lie}^c({\mathfrak{C}} \otimes C)], B) $$
by Yoneda's lemma. This proves the first statement of the desired theorem.

If $C \in \mathtt{DGQC}$ is Koszul dual to $A \in \mathtt{DGPA}$, $\cb_{\mathcal Q}(C) \stackrel{\sim}{\rar} A$ is a cofibrant resolution of $A$. Hence, $\DRep_{\mathfrak{C}}(A)\,\cong\,{\mathfrak{C}} \ltimes \cb_{\mathcal Q}(C)\,\cong\, \C^c(\mathcal Lie^c({\mathfrak{C}} \otimes C); k)$. This proves the second assertion in the above theorem. The final statement in the above theorem now follows immediately.
\end{proof}

\subsection{Examples}

\subsubsection{Derived reresentation schemes for associative algebras}
When $\mathcal P\,=\,\mathcal Q \,=\,\mathtt{Ass}$, the operad governing associative algebras, $\mathtt{DGPA}$ becomes $\DGA$, the category of {\it nonunital} DG $k$-algebras. Recall that there is an equivalence of categories (see Section~\ref{basicdga}):
$$ \DGA \rar \DGA_{k/k} \,,\,\,\,\,A \rar A' \,:=\,k \oplus A\,\text{.}$$
The inverse is the functor that assigns to each augmented DG algebra $A$ its augmentation ideal $\bar{A}$. In this case, Proposition~\ref{natadjoperads} is a generalization of~\cite[Lemma 2.1]{Tu} to the DG setting. Let $\mathfrak{C}$ be a counital DG coalgebra.  Denote the composite functor
$$
\DGA_{k/k} \rar \DGA \xrightarrow{{\mathfrak{C}} \ltimes \,\mbox{--}\,} \cDGA_{k/k}
$$
by ${\mathfrak{C}} \ltimes \,\mbox{--}\,$. For $A \in \DGA_{k/k}$, one has a natural isomorphism
$${\mathfrak{C}} \ltimes A\,\, \,\cong\, \, ({\mathfrak{C}} \rhd A)_{\nn} \,,$$
where
\begin{equation}
\la{swidpr}
{\mathfrak{C}} \rhd A\,:=\,
T_k({\mathfrak{C}} \otimes A)/ \langle m \otimes ab - (m^{(1)} \otimes a) \varotimes (m^{(2)} \otimes b) \,,\ m \otimes 1 - u(m).1 \rangle\
\text{.}
\end{equation}
Here, $ u(m) \in k $ and $ \Delta(m) = \sum m^{(1)} \otimes m^{(2)} $ are the counit and
the coproduct of $ {\mathfrak{C}} $ evaluated at an element $ m \in {\mathfrak{C}} $. This operation is introduced in~\cite[Section 3.4]{AJ}, where it is called the {\it Sweedler product}.

Note that the algebra homomorphisms $\, {\mathfrak{C}} \rhd A \to A \,$ correspond precisely
to the left actions $ {\mathfrak{C}} \otimes A \to A $ commuting with the
multiplication $ A \otimes A \to A $ and the unit map $ k \to A$ on $A$. This clarifies
the meaning of the defining relations and the notation for the algebra $ {\mathfrak{C}} \rhd A $
(cf. \cite{Maj}).

Now, if $\, {\mathfrak{C}} = \M^*_n(k) $ is the finite-dimensional coalgebra dual to the matrix algebra $\M_n(k)$, the functor $ \mathbf{Hom}({\mathfrak{C}}, \,\mbox{--}\,) $  is canonically isomorphic to $ \M_n(\,\mbox{--}\,) \,$, Hence, by adjunction (see Proposition~\ref{Bprop}), we have a natural isomorphism
$$
A_n \,\cong\, \M^*_n(k) \ltimes A\,\text{.}
$$
This allows one to regard the representation functor \eqref{rootab}
as a special case of the general construction of this section.
In particular, all results of Section~\ref{secdrep} (along with Theorem~\ref{main}) follow from results proved in
the this section (e.g., Theorem~\ref{nS2.1t2} is
a special case of Theorem~\ref{augrepm}, Theorem~\ref{main} is a special case of Theorem~\ref{dreplieoperads} etc.)\\

\noindent
\textbf{Remark.} The existence of the noncommutative `lifting' $\mathfrak{C} \rhd A$ of $\mathfrak{C} \ltimes A$ appears to be a special phenomenon that occurs only in the case $\mathcal P\,=\,\mathcal Q\,=\,\mathtt{Ass}$. In this case, the natural morphism of operads $\mathtt{Lie} \rar \mathtt{Ass} \otimes_{\H} \mathtt{Ass}$ factors through the associative operad. Hence, from the point of view of a coalgebra-algebra pair over a Koszul dual pair of operads, it is the abelianization of the Sweedler product (rather than the Sweedler product itself) that is the natural construction.

\subsubsection{Derived representation schemes of Lie algebras}  When $\mathcal P\,=\,\mathtt{Lie}$, the operad governing Lie algebras, then $\mathcal Q\,=\,\mathtt{Comm}$, the operad governing commutative algebras. In this case, the constructions of this section specialize to those of Section~\ref{sec6}. For example, Proposition~\ref{genoperadrep} specializes to Proposition~\ref{lierep}. Theorem~\ref{dreplieoperads} specializes to Proposition~\ref{lierepcobar} and Theorem~\ref{drepg}, etc.

\subsection{$\mathcal P$-cyclic homology} \la{secoptr}

We now construct derived trace maps relating (operadic) cyclic homology to (operadic) representation homology. Our construction generalizes the construction of traces in~\cite{BKR} as well as Drinfeld traces in Section~\ref{secdrinfeldtr}.

\subsubsection{Invariant bilinear forms}

We say that a symmetric bilinear form $B$ on a DG $\mathcal P$-algebra is {\it invariant} if for all $m \,\in\,\mathcal P(2)$,
$$ B(m(a\,,\,b)\,,\,c)\,=\, (-1)^{|a||m|} B(a\,,\,m(b\,,\,c))\,\,\,\,\,\text{ for all } a,b,c\,\in\, A \,\text{.}$$
Let $A_{\n}$ denote the target of the universal invariant bilinear form on $A$: this is equal to the quotient of $A \otimes A$ by the subcomplex spanned of the images of the maps
$$ (m \otimes \id-\id \otimes m)\,:\,A \otimes A \otimes A \rar A \otimes A \,,$$
where $m$ runs over all (homogeneous) elements of $\mathcal P(2)$. If $\mathcal P$ is a {\it cyclic} binary quadratic operad, our notion of an invariant bilinear form agrees with that of~\cite{GK} (cf.~\cite[Prop. 4.3]{GK}). In this case,  $A_{\n}$ is denoted in~\cite{GK} by $\lambda(A)$.

Dually, if $\mathfrak{C}$ is a DG $\mathcal P$-coalgebra, a (homogeneous) $2$-tensor $\alpha$ is called {\it invariant} if it is a (homogeneous) element of $\mathfrak{C} \otimes \mathfrak{C}$ such that for all $m \,\in\,\mathcal P(2)$, the composite map of complexes
$$ \begin{diagram} k[|\alpha|] & \rTo^{\alpha} & \mathfrak{C} \otimes \mathfrak{C} & \rTo^{\,\,\,m \otimes \id -\id \otimes m\,\,\,} & \mathfrak{C} \otimes \mathfrak{C} \otimes \mathfrak{C} \end{diagram} $$
vanishes. We denote the subcomplex of invariant tensors on 
$\mathfrak{C} $ by $\mathfrak{C}^{\n}$.

The following theorem is a generalization of~\cite[Theorem 5.3]{GK} to arbitrary binary quadratic operads. Since its proof is similar to that of Theorem~\ref{dlambda}, we omit it in order to avoid being repetitive.
\begin{theorem}
The functor
$$(\,\mbox{--}\,)_{\n}\,:\,\mathtt{DGPA} \rar \Com_k\,,\,\,\,\, A \mapsto A_{\n} $$
has a (total) left derived functor
$$\L (\,\mbox{--}\,)_{\n}\,:\,\Ho(\mathtt{DGPA}) \rar \Ho(\Com_k) \,,\,\,\,\, A \mapsto R_{\n} $$
for any cofibrant resolution $R \stackrel{\sim}{\rar} A$ in $\mathtt{DGPA}$.
\end{theorem}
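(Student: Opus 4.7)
The plan is to mimic the proof of Theorem~\ref{dlambda}, replacing the functor $\lambda^{(d)}$ by $(\mbox{--})_{\n}$, and to conclude via Brown's Lemma. It suffices to show that $(\mbox{--})_{\n}$ sends weak equivalences between cofibrant objects of $\mathtt{DGPA}$ to quasi-isomorphisms in $\Com_k$.

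The first step is to construct, for $A \in \mathtt{DGPA}$ and $B \in \cDGA_k$, a natural morphism of complexes
\begin{equation}\label{natmapP}
(A \otimes B)_{\n} \,\longrightarrow\, A_{\n} \otimes B\ ,
\end{equation}
analogous to \eqref{mnatcalg}. This map is induced by $(a\otimes b)\otimes(a'\otimes b')\,\mapsto\,\pm (a\otimes a')\otimes bb'$, where $A\otimes B$ is regarded as a DG $\mathcal P$-algebra with operations acting on the $A$-factor (this uses commutativity of $B$). A direct verification on generators shows that each invariance relation $m(\alpha,\beta)\otimes\gamma-\alpha\otimes m(\beta,\gamma)$ in $A\otimes B$ is carried to a multiple of the corresponding invariance relation in $A_{\n}$, so \eqref{natmapP} is well-defined and natural in both arguments.

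The second step is a homotopy invariance argument. Suppose $\mathfrak L \in \mathtt{DGPA}$ is cofibrant and $f,g\,:\,\mathfrak L \rar A$ are (left) homotopic, witnessed by $h\,:\,\mathfrak L \rar A\otimes k[t,dt]$ with $h(0)=f$ and $h(1)=g$. Applying $(\mbox{--})_{\n}$ and post-composing with \eqref{natmapP} for $B=k[t,dt]$ yields a morphism of complexes
\[
H:\ \mathfrak L_{\n}\ \xrightarrow{\ h_{\n}\ }\ (A\otimes k[t,dt])_{\n}\ \xrightarrow{\eqref{natmapP}}\ A_{\n}\otimes k[t,dt]
\]
with $H(0)=f_{\n}$ and $H(1)=g_{\n}$. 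Thus $f_{\n}$ and $g_{\n}$ are chain homotopic in $\Com_k$. Now if $f\,:\,\mathfrak L \rar \mathfrak L'$ is a weak equivalence between cofibrant objects of $\mathtt{DGPA}$, a standard argument produces $g\,:\,\mathfrak L' \rar \mathfrak L$ with $fg$ and $gf$ homotopic to the identities. Applying the preceding, $f_{\n}g_{\n}$ and $g_{\n}f_{\n}$ are chain homotopic to the identities, so $f_{\n}$ is a quasi-isomorphism.

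Having shown that $(\mbox{--})_{\n}$ preserves weak equivalences between cofibrant objects, Brown's Lemma (see \cite[Lemma 9.9]{DS}) guarantees the existence of the total left derived functor, computed on a cofibrant resolution $R \stackrel{\sim}{\rar} A$ in $\mathtt{DGPA}$ as $R_{\n}$. The main technical point is the verification of \eqref{natmapP}, which is a purely operadic bookkeeping exercise; everything else is formal.
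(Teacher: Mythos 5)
Your proof is correct and follows precisely the route the paper intends: the authors explicitly say they omit the proof because it is "similar to that of Theorem~\ref{dlambda}," and you have carried out exactly that adaptation — constructing the natural transformation $(A\otimes B)_{\n}\to A_{\n}\otimes B$ in place of \eqref{mnatcalg}, running the cylinder $k[t,dt]$ homotopy-invariance argument, and invoking Brown's Lemma. The verification that the map \eqref{natmapP} kills the relations $m(\alpha,\beta)\otimes\gamma-\alpha\otimes m(\beta,\gamma)$ is indeed the only nontrivial step, and your sketch of it (using that $A\otimes B$ carries the $\mathcal P$-algebra structure with $m$ acting on the $A$-factor because $B$ is commutative, so that applying the map factors each relation through the corresponding relation in $A_{\n}$ tensored with $b_1b_2b_3$) is right.
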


We denote the homology $\H_{\bullet}(\L A_{\n})$ by $\HC_{\bullet}(\mathcal P, A)$ and call it the {\it $\mathcal P$-cyclic homology} of $A$. When $\mathcal P\,=\,\mathtt{Lie}$ and $A\,=\,\mathfrak{a}$, this is exactly the Lie cyclic homology $\HC_{\bullet}(\mathtt{Lie}, \mathfrak{a})$ used in Section~\ref{sec7} (which~\cite{GK} denote by $\mathrm{HA}_{\bullet}(\mathtt{Lie}, A)$).

\subsection{Traces}

Suppose that the DG $\mathcal P$-coalgebra $\mathfrak{C}$ is equipped with a degree $0$ invariant $2$-tensor 
$$ 
\cTr\,:\,k \rar \mathfrak{C}^{\n} \,\text{.}
$$ 
Then, for any $A \in \mathtt{DGPA}$,
there is a natural morphism of DG $\mathcal P$-algebras
$$\pi_A\,:\, A \rar \mathbf{Hom}_{\ast}(\mathfrak{C}, \mathfrak{C} \ltimes A) \hookrightarrow \mathbf{Hom}(\mathfrak{C}, \mathfrak{C} \ltimes A)\,, $$
where the first arrow corresponds under the adjunction~\eqref{natadjoperads} to the identity morphism on $\mathfrak{C} \ltimes A$. Denote by $\pi_{A \otimes A}$ the composite map
$$ \begin{diagram} A \otimes A & \rTo^{\pi_A \otimes \pi_A} &  \mathbf{Hom}(\mathfrak{C}, \mathfrak{C} \ltimes A) \otimes \mathbf{Hom}(\mathfrak{C}, \mathfrak{C} \ltimes A)
& \rTo & \mathbf{Hom}(\mathfrak{C}^{\otimes 2}, (\mathfrak{C} \ltimes A)^{\otimes 2}) & \rTo & \mathbf{Hom}(\mathfrak{C} \otimes \mathfrak{C}, \mathfrak{C} \ltimes A) \end{diagram}\,,$$
where the last arrow is induced by the product on $\mathfrak{C} \ltimes A$. The map of complexes
$$ \begin{diagram} A \otimes A & \rTo^{\pi_{A \otimes A}} & \mathbf{Hom}(\mathfrak{C} \otimes \mathfrak{C}, \mathfrak{C} \ltimes A) & \rTo & \mathbf{Hom}(\mathfrak{C}^{\n}, \mathfrak{C} \ltimes A) \end{diagram}$$
induced by the inclusion $\mathfrak{C}^{\n} \hookrightarrow \mathfrak{C} \otimes \mathfrak{C}$ clearly factors through $A_{\n}$ giving
$$ A_{\n} \rar \mathbf{Hom}(\mathfrak{C}^{\n}, \mathfrak{C} \ltimes A)\,\text{.}$$
Composing this last map with the map $\mathbf{Hom}(\mathfrak{C}^{\n}, \mathfrak{C} \ltimes A) \rar \mathbf{Hom}(k,\mathfrak{C} \ltimes A) \,=\, \mathfrak{C} \ltimes A$ induced by $\cTr$, we get
\begin{equation} \la{eoptrace1}  \Tr_{\mathfrak{C}}\,:\, A_{\n} \rar \mathfrak{C} \ltimes A \,\text{.} \end{equation}
We remark that, in general, $\Tr_{\mathfrak{C}}$ depends on the choice of 
invariant $2$-tensor $\cTr$ on $\mathfrak{C}$; however, instead of making that choice, 
we could work with the universal invariant $2$-tensor $\,
\mathfrak{C}^{\n} \into \mathfrak{C} \otimes \mathfrak{C}\,$.

By construction, $\Tr_{\mathfrak{C}}$ gives  a natural transformation of functors
\begin{equation} \la{eoptrace2}  (\,\mbox{--}\,)_{\n} \rar \mathfrak{C} \ltimes \,\mbox{--}\,\,:\, \mathtt{DGPA} \rar \Com_k \,\text{.}\end{equation}
It therefore, gives a natural transformation of {\it derived} functors
\begin{equation} \la{eoptrace3}  \Tr_{\mathfrak{C}}\,:\, \L(\,\mbox{--}\,)_{\n} \rar \mathfrak{C} \stackrel{\L}{\ltimes} \,\mbox{--}\,\,:\, \Ho(\mathtt{DGPA}) \rar \Ho(\Com_k) \,\text{.}\end{equation}
Applying this to $A \in \mathtt{DGPA}$ and taking homologies, one obtains maps of graded vector spaces
\begin{equation} \la{eoptrace4} \Tr_{\mathfrak{C}}\,:\, \HC_{\bullet}(\mathcal P, A) \rar \H_{\bullet}(A, \mathfrak{C}) \,\text{.} \end{equation}

We now give a few examples of the trace~\eqref{eoptrace4}.

\subsubsection{Example} Let $\mathcal P$ be a cyclic binary quadratic operad. Suppose that $\mathfrak{C}$ is a finite-dimensional $\mathcal P$ coalgebra in homological degree $0$. Let $\mathcal S\,:=\, \mathfrak{C}^{\ast}$ and let $\mathrm{tr}\,:\,\mathscr S_{\n}\rar k$ denote the dual of the map $\cTr$. Let $G:= \mathtt{Aut}(\mathcal S, \mathrm{tr})$ denote the algebraic group of trace preserving automorphisms of the $\mathcal P$-algebra $\mathscr S$ (see~\cite[Section 6]{Gi}). Suppose $A$ is a $\mathcal P$-algebra concentrated in homological degree $0$. In this case, $ \mathfrak{C} \ltimes A\,=\, k[\Rep_{\mathcal P}(A, \mathcal S)]$. The trace~\eqref{eoptrace4} induces a trace map
$$ \Tr_{\mathcal S} \,:\, A_{\n} \rar k[\Rep_{\mathcal P}(A, \mathcal S)] \,$$
on $0$-th homologies. It is easy to verify that the image of $\Tr_{\mathcal S}$ actually lies in $k[\Rep_{\mathcal P}(A, \mathscr S)]^{G}$. The map
$$\Tr_{\mathcal S}\,:\, A_{\n} \rar k[\Rep_{\mathcal P}(A, \mathcal S)]^G$$ appears in~\cite[Section 6]{Gi}.

\subsubsection{Example} Let $\mathcal P = \mathtt{Ass}$ and let $\mathfrak{C}= \M_n^{\ast}(k)$. Let $A$ be a {\it unital} DG $k$-algebra. Then (see~\cite[Section 4]{GK}) $A_{\n} \,\cong\, A/[A,A]$. Dually, since $\M_n^{\ast}(k)$ is counital, $[\M_n^{\ast}(k)]^{\n}$ can be identified with the cocommutator subspace of $\M_n^{\ast}(k)$. This subspace is a $1$-dimensional subspace of $\M_n^{\ast}(k)$ generated by the dual of the usual trace map. Choosing the dual of the usual trace map as our cotrace and applying the construction in this section, one sees that the trace map~\eqref{eoptrace4} becomes the trace map
$$\Tr_{n}(A)_{\bullet}\,:\,\HC_{\bullet}(A) \rar \H_{\bullet}(A, n) $$
constructed in~\cite[Section 4]{BKR}.

\subsubsection{Example} Let $\mathcal P\,=\,\mathtt{Lie}$ and let $\mathfrak{C}\,=\,\g^{\ast}$ where $\g$ is a semisimple Lie algebra. Then, $\mathfrak{C}^{\n}$ is a one dimensional $k$-vector space generated by the Killing form on $\g$. Taking the Killing form as our invariant $2$-tensor, the trace~\eqref{eoptrace4} specializes to the canonical trace~\eqref{canlietrace}.


\end{document}